\documentclass[11pt, final]{article}

\usepackage[utf8]{inputenc} 
\usepackage{lmodern} 
\usepackage{a4wide, amsmath, amssymb, mathrsfs}        
\usepackage{amsthm}         
\usepackage[numbers, sort&compress]{natbib}         
\usepackage{enumitem} 
\setlist[enumerate]{label={\rm(\roman*)}} 
\usepackage[a4paper, margin=1in]{geometry} 
\usepackage{stmaryrd} 
\usepackage{authblk} 

\usepackage[hyphens]{url} 
\usepackage{hyperref} 
\hypersetup{colorlinks=true, linkcolor=teal, citecolor=violet} 

\usepackage{tikz-cd}
\usepackage{tikz}

\newtheorem*{theorem*}{Theorem}

\newtheorem{theorem}{Theorem}[section]

\theoremstyle{definition}
\newtheorem{definition}[theorem]{Definition}

\theoremstyle{definition}

\theoremstyle{plain}
\newtheorem{proposition}[theorem]{Proposition}

\theoremstyle{plain}
\newtheorem{lemma}[theorem]{Lemma}

\theoremstyle{plain}
\newtheorem{corollary}[theorem]{Corollary}

\theoremstyle{definition}
\newtheorem{remark}[theorem]{Remark}

\theoremstyle{definition}

\theoremstyle{plain}
\newtheorem{observation}[theorem]{Observation}

\theoremstyle{plain}

\theoremstyle{plain}

\numberwithin{equation}{section}

\def\integers{\mathbb{Z}}
\def\diff{\mathrm{d}}

\def\reals{\mathbb{R}}
\def\nat{\mathbb{N}}

\def\diam{\operatorname{diam}}
\def\dist{\operatorname{{dist}}}
\def\spt{\operatorname{spt}}

\def\conv{\operatorname{conv}}

\def\H{\mathcal{H}}
\def\Hn{\mathcal{H}^n}

\def\metric{d}

\newcommand{\dom}{\operatorname{Dom}}

\newcommand{\tLip}{\textnormal{Lip}}

\newcommand{\Id}{\operatorname{Id}}
\newcommand{\E}{\mathcal{E}}
\newcommand{\F}{\mathcal{F}}
\def\teps{\tilde{\varepsilon}}

\title{Perturbations of measures and sets having curves in $d$ directions}
\author{Jakub Tak\'a\v{c}}
\affil{University of Warwick\\
University of Trento\\}
\begin{document}
\maketitle

\begin{abstract}
    We show that whenever a separable subset $S$ of a complete metric space $X$ admits a $d$-dimensional weak tangent field, the set $S$ is close to being $d$-dimensional in the following sense. Whenever $\mu$ is a Borel finite measure on $X$ supported on $S$, then a typical $1$-Lispchitz map (in the sense of Baire category) into a Euclidean space maps $\mu$-almost all of $S$ into a set of Hausdorff dimension at most $d$. When taking $d=0$, this implies that any $1$-purely unrectifiable set is typically carried into a Hausdorff $0$-dimensional set up to a $\mu$-null set. We show that the result is sharp in Euclidean spaces and, more generally, in strictly convex Banach spaces of finite dimension.
\end{abstract}
\section{Introduction}

Recall that a set is called $n$-rectifiable, if it can be covered, up to an $\Hn$-null set, by a countable number of Lipschitz images of subsets of $\reals^n$. On the other hand a set $S$ is called $n$-purely unrectifiable, if for every $n$-rectifiable set $E$, one has $\Hn(S\cap E)=0$. Here and hereafter $\Hn$ is the $n$-dimensional Hausdorff measure. In this paper, we study the geometric idea that $n$-purely unrectifiable sets are, in some sense, close to being at most $(n-1)$-dimensional.

This is motivated by the previous work of Bate and Li \cite{BateLi} and Bate and Weigt \cite{BW}. It follows from their result that an $n$-purely unrectifiable set $S$ of \emph{finite} Hausdorff measure essentially admits a so-called $(n-1)$-dimensional weak tangent field. This can be used to show that an arbitrarily small $1$-Lipschitz perturbation of the identity may be found, say $f$, satisfying $\H^n(f(S))=0$ \cite[Theorem 1.1]{B}. In this sense, the set $S$ is close to being $\H^n$-null. However, since the argument crucially relies on the fact that $S$ has an $(n-1)$-dimensional weak tangent field, a natural question is whether one can produce a perturbation $f$ as above with the stronger property $\dim_H(f(S))\leq n-1$, where $\dim_H$ stands for the Hausdorff dimension.

The condition that $S$ be of finite Hausdorff measure is, in all of the aforementioned results, crucial. Therefore, it is also of interest to see what kind of a result, say in the spirit of the said perturbation result of Bate \cite{B}, can be obtained for general purely unrectifiable sets $S$. 

\subsection{A geometric property of planar purely unrectifiable sets}\label{SS:geometric-properties}

To further motivate our work, we begin with a simple result that can be obtained from the classical Besicovitch projection theorem.

\begin{proposition}\label{P:easy-squashing}
    If $S\subset \reals^2$ is a compact, $1$-purely unrectifiable set with $\H^1(S)<\infty$, then there exists a sequence of $1$-Lipschitz maps $f_i$ converging to the identity uniformly such that $f_i(S)$ is finite for each $i\in \nat$.
\end{proposition}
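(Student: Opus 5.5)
The plan is to use the Besicovitch projection theorem to find two \emph{orthogonal} directions in which $S$ projects to a Lebesgue-null set, and then to collapse each coordinate separately with a one-dimensional $1$-Lipschitz map that is close to the identity.

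\textbf{Step 1: choosing the directions.} By the Besicovitch projection theorem, since $S$ is $1$-purely unrectifiable with $\H^1(S)<\infty$, the orthogonal projection $P_\theta(S)$ onto the line in direction $\theta\in[0,\pi)$ has $\H^1(P_\theta(S))=0$ for almost every $\theta$. Let $B\subset[0,\pi)$ be the null set of bad directions. The set $B\cup(B+\pi/2 \bmod \pi)$ is still null. Hence we can pick $\theta$ outside it, which gives an orthonormal basis $e_1,e_2$ such that both $P_1(S)=\{\langle x,e_1\rangle: x\in S\}$ and $P_2(S)=\{\langle x,e_2\rangle: x\in S\}$ are Lebesgue-null subsets of $\reals$. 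They are also compact, because $S$ is compact.

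\textbf{Step 2: one-dimensional collapsing.} Fix $\varepsilon>0$ and $j\in\{1,2\}$. Since $P_j(S)$ is compact and null, it is covered by finitely many open intervals whose union $U_j$ has total length less than $\varepsilon$. Define
\[
g_j(t)=\int_0^t \mathbf 1_{\reals\setminus U_j}(s)\,\diff s .
\]
This function is nondecreasing and $1$-Lipschitz. It is constant on each component of $U_j$, so $g_j(P_j(S))$ is finite, with at most as many points as there are intervals. Moreover
\[
|g_j(t)-t|=\Bigl|\int_0^t \mathbf 1_{U_j}\Bigr|\le \H^1(U_j)<\varepsilon
\]
for every $t$.

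\textbf{Step 3: the planar map.} Set
\[
f(x)=g_1(\langle x,e_1\rangle)e_1+g_2(\langle x,e_2\rangle)e_2 .
\]
Since $e_1,e_2$ is orthonormal and each $g_j$ is $1$-Lipschitz,
\[
|f(x)-f(y)|^2\le \langle x-y,e_1\rangle^2+\langle x-y,e_2\rangle^2=|x-y|^2,
\]
so $f$ is $1$-Lipschitz. Furthermore $|f(x)-x|<\sqrt2\,\varepsilon$ for all $x$, and
\[
f(S)\subset g_1(P_1(S))\,e_1+g_2(P_2(S))\,e_2,
\]
which is a finite set. Taking $\varepsilon=1/i$ produces the required sequence $f_i$ converging uniformly to the identity.

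The only delicate point is Step 1. Collapsing a single direction only maps $S$ into finitely many parallel lines, so we genuinely need two orthogonal null directions simultaneously. This is exactly what the ``almost every direction'' form of Besicovitch's theorem provides, and it is also why the Euclidean, coordinatewise $1$-Lipschitz estimate in Step 3 works.
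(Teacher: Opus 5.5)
Your proof is correct and follows the paper's route: Besicovitch's projection theorem gives two orthogonal directions whose projections of $S$ are null. Each coordinate is then collapsed by $t\mapsto\int_0^t\chi_{\reals\setminus U_j}$ over a finite cover $U_j$ of small total length, and combining the two coordinates gives a $1$-Lipschitz map within $\sqrt{2}\,\varepsilon$ of the identity with finite image of $S$. Your choice of $\theta$ outside the null set $B\cup(B+\pi/2)$ simply makes explicit the rotation the paper takes ``without loss of generality''.
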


\begin{proof}
    Recall the assertion of the Besicovitch projection theorem which states that for almost every direction $\theta$, the orthogonal projection to the line through the origin in the direction of $\theta$, say $p_\theta\colon \reals^2 \to L_\theta$, satisfies
\begin{equation*}
    \H^1(p_\theta(S))=0.
\end{equation*}
Thus, it is without loss of generality to assume that the orthogonal projections to the two canonical vectors, say $p_1$ and $p_2$, carry $S$ into a set of zero length.

By the definition of $\H^1$, for any $\varepsilon>0$, each of the sets $p_i(S)$ can be covered by a countable number of intervals such that the total sum of their lengths does not exceed $\varepsilon$. Since $S$ is compact, this collection can also be taken to be finite. 
Say we denote the union of these intervals by $G_i$. We now let $f_i\colon \reals\to\reals$ be given by
\begin{equation*}
    f_i(t)=\int_0^t\chi_{\reals\setminus G_i}.
\end{equation*}
Note that $f_i$ carries $p_i(S)$ into a finite set.
It follows that the map
\begin{equation*}
    f\colon \reals^2 \to \reals^2;\quad f(x)=(f_1(p_1(x)), f_2(p_2(x))),
\end{equation*}
carries $S$ into a finite set, is $1$-Lipschitz and satisfies
    $\lVert f -\Id \rVert_{\infty}\leq \sqrt{2}\varepsilon$.
\end{proof}

\subsection{Perturbations in metric spaces}

The previous proposition verifies a particular special instance of the idea discussed above, namely that $1$-purely unrectifiable sets of finite length are close to being $0$-dimensional (even finite). However, same method cannot be applied to $n$-purely unrectifiable sets with $n>1$ and, more crucially, to sets having infinite Hausdorff measure. To obtain results in these more general settings, we rely on different techniques based on \emph{metric} notions. The perturbation result of Bate serves as an entry point to our approach and therefore we now continue by presenting the result explicitly.

Given a metric space $X$, the symbol $\tLip_1(X,\reals^m)$ stands for the space of $1$-Lipschitz and bounded maps from $X$ to $\reals^m$. We equip the space with the supremum norm, making it into a \emph{complete} metric space.

Recall that a subset $H$ of a metric space is called \emph{residual} if it contains an intersection of a countable number of dense open sets. The Baire category theorem asserts that in a complete metric space, residual sets are dense. Therefore, residual sets are a suitable notion of ``large'' sets. In particular, if we wish to assert that a Lipschitz map $f$ may be approximated with maps having a given property (cf.~Proposition \ref{P:easy-squashing}), this immediatelly follows if we are able to show that the set of maps possessing the given property is residual. We shall say that a \emph{typical} element of the space $H$ satisfies the given property, say $(P)$, if the set of elements satisfying $(P)$ is residual.

The following theorem of Bate \cite{B} originally required an additional assumption, but it can be removed by applying the recent result of Bate and Weigt \cite{BW} obtaining thereby the formulation we state below.

\begin{theorem}[\cite{B, BW}]\label{T:bate}
    Let $X$ be a complete metric space.
    \begin{enumerate}
        \item\label{Enum:B1} If $S\subset X$ is purely $n$-purely unrectifiable and $\Hn(S)<\infty$, 
        then a typical $f \in \tLip_1(X,\reals^m)$ satisfies $\Hn(f(S))=0$.
        \item If $E\subset X$ is $n$-rectifiable, $\Hn(E)>0$ and $m\geq n$, then the set of maps $f \in \tLip_1(X,\reals^m)$ satisfying $\Hn(f(E))>0$ is open and dense.
    \end{enumerate}
\end{theorem}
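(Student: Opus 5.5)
Part (i) is the main content; the plan follows the original argument of Bate \cite{B}, with the extra hypothesis removed by \cite{BW}.

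\textbf{Part (i).} First reduce to a $G_\delta$ statement. Since $\H^n(S)<\infty$, we may replace $S$ by a $\sigma$-compact subset $S'\subset S$ with $\H^n(S\setminus S')=0$. Then $\H^n(f(S\setminus S'))\le \H^n(S\setminus S')=0$ for every $1$-Lipschitz $f$, so we may assume $S=\bigcup_j K_j$ with each $K_j$ compact. Fix $j,k$ and let
\[
U_{j,k}=\{f\in\tLip_1(X,\reals^m): \H^n_\infty(f(K_j))<1/k\}.
\]
This set is open. Indeed, covering the compact set $f(K_j)$ by finitely many open sets of small total $\H^n_\infty$-content gives a cover that still works for any $g$ uniformly close to $f$. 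A typical $f$ lies in $\bigcap_{j,k}U_{j,k}$, which forces $\H^n(f(S))=0$ because $\H^n_\infty$-null sets are $\H^n$-null. It therefore remains to prove that each $U_{j,k}$ is dense.

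\textbf{Density of $U_{j,k}$.} This is the main obstacle. Fix $f_0$ and $\varepsilon>0$; we may first shrink $f_0$ to $(1-\varepsilon)f_0$, which is Lipschitz with constant $1-\varepsilon$. By Bate--Weigt \cite{BW}, since $K_j$ is $n$-purely unrectifiable with finite measure, $\H^n\llcorner K_j$ has an $(n-1)$-dimensional weak tangent field. Concretely, there are Lipschitz maps $\varphi\colon X\to\reals^{n-1}$ and Borel pieces $A_i$ covering $K_j$ up to a null set, such that every $1$-rectifiable curve through $A_i$ is, infinitesimally, ``seen'' only by $\varphi$.

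Bate's construction \cite{B} then works in two steps. First, via a duality/Alberti-representation argument, this property lets us find small perturbations $g$ of $f_0$ along those directions. These perturbations have Lipschitz constant close to $1$ and locally collapse each $A_i$ at scale $r$ onto an $(n-1)$-dimensional set plus a small-content error. Second, gluing the local modifications on a disjoint finite family of balls, with the McShane/Kirszbraun extension componentwise and a $\sqrt m$ loss absorbed by the initial shrinking, gives $g$ with $\lVert g-f_0\rVert_\infty<\varepsilon$ and $\H^n_\infty(g(K_j))<1/k$.

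The delicate point is keeping the Lipschitz constant $\le1$ while collapsing. This is exactly where the weak tangent field enters: the absence of curves in $n$ independent directions is what allows an $n$-dimensional squeeze at no Lipschitz cost. I would import this step essentially verbatim from \cite[proof of Theorem 1.1]{B}, replacing its extra hypothesis by \cite{BW}.

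\textbf{Part (ii).} Openness follows from the stability of positive measure images for rectifiable sets. By Kirchheim's area formula, $E$ contains a compact piece $K$ on which $f$ is bi-Lipschitz-like onto its image with $\H^n(f(K))>0$, i.e., $f|_K$ has nondegenerate metric Jacobian on a density set. For $g$ sufficiently uniformly close to $f$, a degree/topological argument on small balls then shows that $g(K)$ still has positive measure.

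For density, take a bi-Lipschitz chart $\gamma\colon A\subset\reals^n\to E$ with $\H^n(\gamma(A))>0$. Perturb $f$ near a density point by adding a small multiple of a Lipschitz map $\gamma^{-1}$, extended to $X$ and composed with an injection $\reals^n\to\reals^m$ (this uses $m\ge n$). Rescaling keeps the result $1$-Lipschitz, and it is injective with positive Jacobian on a piece of $E$, which yields $\H^n(f(E))>0$.
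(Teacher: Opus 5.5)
Your overall route for (i) is the one the paper relies on. The paper gives no proof of its own: it invokes Bate's argument in \cite{B}, with \cite{BW} supplying the Alberti-representation input that \cite{B} originally obtained only under an extra hypothesis. Your reduction to density of the open sets $U_{j,k}$, via upper semicontinuity of $\H^n_\infty$, is exactly Bate's. However, the gluing step fails as you describe it. Extending an $\reals^m$-valued $1$-Lipschitz map componentwise by McShane gives Lipschitz constant up to $\sqrt m$. A multiplicative factor $\sqrt m$ cannot be absorbed by first passing to $(1-\varepsilon)f_0$ with $\varepsilon$ small. Kirszbraun's theorem is not available either, since $X$ is an arbitrary metric space. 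For example, take three points at mutual distance $2$, each at distance $1$ from a fourth point, and send the three to the vertices of an equilateral triangle of side $2$; this admits no $1$-Lipschitz extension to the fourth point. The gluing of \cite[Lemma 4.6]{B}, used in the proof of Theorem \ref{T:StV-main}, needs no extension theorem. First fix compact pieces $\tilde S_i$ with pairwise disjoint $\rho$-neighbourhoods. Only then choose the local perturbations $G_i$ with $\lVert G_i-F\rVert_\infty\le\teps$, and interpolate, e.g.\ $G=F+\sum_i\phi_i(G_i-F)$, where $\phi_i$ is a $\rho^{-1}$-Lipschitz cutoff equal to $1$ on $\tilde S_i$ and vanishing off its $\rho$-neighbourhood. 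This costs only an additive $2\teps/\rho$, which is small by the order of quantifiers. Together with the additive error $\delta C_V$ of the local step, it is absorbed by the slack $\tLip(F)<1$.

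What \cite{BW} supplies is also not a chart $\varphi\colon X\to\reals^{n-1}$ that \emph{sees} every curve. It gives that $\H^n_{|K_j}$ has no piece with $n$ independent Alberti representations. By Theorem \ref{T:dimA-iff-AFd}, this yields, for the specific map $f_0$ being perturbed, an $\tilde A(f_0,n-1)$ decomposition. That is, there are Borel sets $S_1,\dots,S_M$ covering $K_j$ up to a null set, and $(n-1)$-dimensional subspaces $W_i\subset\reals^m$, such that no curve fragment in the $f_0$-direction of $E(W_i,\theta)$ charges $S_i$. The local map keeps $P_{W_i}\circ f_0$ and replaces the orthogonal coordinates of $f_0$ by the locally flat functions of \cite[Proposition 3.5]{B}. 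That the Lipschitz constant stays $1+o(1)$ uses $\tilde K(\ell_2^m,n-1)=1$, i.e.\ orthogonality of the splitting, and not only the absence of curves. Your sketch of (ii) is reasonable, with two provisos. In the density part, the multiple $s$ must be chosen generically, since for a.e.\ point only finitely many $s$ make $D(f\circ\gamma)+sJ$ non-injective. In the openness part, the degree argument must be run on a Lipschitz extension of $g\circ\gamma$ over a small ball at a density point, with the uniform Lipschitz bound controlling the image of the complement of the chart domain.
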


Thus in particular, amongst sets of finite $\H^n$ measure, the $n$-purely unrectifiable ones are characterised by the fact that they can be perturbed, with a Lipschitz map, into an $\Hn$-null set. In this sense, $n$-purely unrectifiable sets of finite measure are close to being $\Hn$-null.

Let us now examine closely the requirement of $\H^n(S)<\infty$ in the first item of the theorem. Clearly, it may weakened to only requiring that $\H^n_{|S}$ be $\sigma$-finite. On the other hand, it may not be removed entirely, as can be seen by the following example. Suppose $S\subset \reals^2$ is the Koch curve. Then any Lipschitz (even continuous) map $f\colon S \to \reals^2$ which is sufficiently close, in the supremum norm, to the identity, maps $S$ into a \emph{path-connected} set containing more than $1$ point. The image therefore must have positive length. Since $S$ is $1$-purely unrectifiable, it follows that Theorem \ref{T:bate} fails without the finite measure assumption.

Therefore the natural question is whether some analogous statement can be formulated, which would work without the finite measure assumption. The content of our main result is precisely such a statement, taking into consideration also the previously discussed idea of $n$-purely unrectifiable sets being ``nearly'' $(n-1)$-dimensional. We begin by stating the special case for $1$-purely unrectifiable sets.

\begin{theorem}\label{T:intro-main-1d}
    Let $X$ be a complete metric space and suppose $S\subset X$ is separable. If $S$ is purely $1$-unrectifiable, then for any finite Borel measure $\mu$ on $S$ and any $m\in\nat$, the set
    \begin{equation*}
        \{f\in\tLip_1(X, \reals^m): \exists N\subset S, \mu(N)=0\;\text{and}\; \dim_H f(S\setminus N)=0\}
    \end{equation*} 
    is residual in $\tLip_1(X, \reals^m)$.
\end{theorem}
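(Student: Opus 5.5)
Because $S$ is purely $1$-unrectifiable, for a finite Borel measure $\mu$ on $S$ I would first use the Bate--Weigt decomposition \cite{BW} to say that $\mu$ admits a $0$-dimensional weak tangent field. Concretely, $\mu$-almost every point of $S$ lies on no curve fragment of positive measure, so $\mu$ charges no Lipschitz curve fragment. This is the measure-theoretic input that takes the place of the finiteness of $\H^1(S)$. The theorem then follows from the general $d$-dimensional statement with $d=0$, and I would prove that case directly. Fix a countable dense family of open balls, and for $s>0$, $k\in\nat$ put
\[
\mathcal{U}_{s,k}=\Bigl\{f\in\tLip_1(X,\reals^m): \exists \text{ compact } K\subset S,\ \mu(S\setminus K)<\tfrac1k,\ \H^s_{\infty}(f(K))<\tfrac1k\Bigr\},
\]
or, more precisely, an open variant of it in which $f(K)$ is covered by finitely many open sets $U_j$ with $\sum(\diam U_j)^s<1/k$. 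Openness in the supremum norm is then automatic: a small uniform perturbation keeps $f(K)$ inside a slightly enlarged finite cover. If every $\mathcal{U}_{s,k}$ is dense, take $f\in\bigcap_{s\in\mathbb{Q}_{>0},k}\mathcal{U}_{s,k}$. For each $s$ and $k$ this gives compact sets $K_{s,k}$, and $N=S\setminus\bigcup_j\bigcap_{s,\,k\ge j}K_{s,k}$ (with indices chosen suitably, e.g.\ $\mu(S\setminus K_{s,k})<2^{-k}$ and a diagonal over $s$) is $\mu$-null. Also $f(S\setminus N)$ is a countable union of sets each of which has $\H^s$-measure zero for every rational $s>0$. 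Hence $\dim_H f(S\setminus N)=0$.

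The key step is density. Given $g\in\tLip_1$ and $\varepsilon>0$, I would first replace $g$ by $(1-\varepsilon)g$ to gain slack in the Lipschitz constant. Next I would pick a compact $K\subset S$ with $\mu(S\setminus K)$ small. Then I would construct $f$ with $\|f-g\|_\infty<\varepsilon$ that collapses most of $K$ onto finitely many points, or at least onto a set of tiny $s$-content. The idea follows Proposition~\ref{P:easy-squashing} and the construction in \cite{B}. Because $\mu$ has a $0$-dimensional weak tangent field, for $\mu$-almost every point and at small scales there is a $1$-Lipschitz function $h$ (e.g.\ built from distance functions to a compact set of tiny content) that is ``almost constant'' on most of $K$ in the sense that matters. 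Composing coordinate-wise with maps of the form $t\mapsto\int_0^t\chi_{\reals\setminus G}$, where $G$ is a finite union of intervals covering the relevant images, makes the image of a large part of $K$ finite while moving points by at most $\varepsilon$. Gluing the local modifications on a finite disjoint family of small balls is done via McShane extension coordinatewise. Keeping the resulting map $1$-Lipschitz in $\reals^m$ costs a factor $\sqrt m$, which the slack from $(1-\varepsilon)$ and a rescaling absorb.

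I expect the main obstacle to be exactly this density/squashing step in a general metric space. There is no projection theorem, so one must turn the absence of $\mu$-positive curve fragments into an explicit Lipschitz map that compresses a set of almost full $\mu$-measure into a set of small $\H^s$-content for \emph{every} $s>0$ (it is enough to do each $s$ separately). My first attempt would be to adapt Bate's proof of Theorem~\ref{T:bate}\ref{Enum:B1}, which yields $\H^1$-null images when $\H^1(S)<\infty$. In that proof I would replace $\H^1|_S$ by $\mu$ throughout and replace ``length'' by ``$\mu$-measure of curve fragments''. The iteration would then produce finitely many image clusters of small diameter, instead of an $\H^1$ bound. This is consistent with the Koch curve example, since only $\mu$-almost all of $S$ is controlled.
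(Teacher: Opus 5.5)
Your openness argument and the diagonal extraction of a single $\mu$-null set $N$ are fine and agree with the paper (the sets $\mathcal{G}_{\delta,\eta,\varepsilon}$ and Lemma~\ref{L:semicont}). The gap is in the density step, exactly where you expect it, and the tool you propose cannot close it.

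A map $t\mapsto\int_0^t\chi_{\reals\setminus G}$ moves points by up to $\mathcal{L}^1(G)$. More generally, any $1$-Lipschitz $\phi$ with $|\phi-\Id|\le\varepsilon$ on $[-D,D]$ satisfies $\int_{-D}^{D}\phi'\ge 2D-2\varepsilon$, and $\phi'=0$ a.e.\ on the preimage of a finite set. So the preimage of any finite set has length at most $2\varepsilon$. Your composition therefore collapses a $(1-\eta)$-portion of $\mu$ only if the pushforward of $\mu$ under your flat map already gives mass about $1-\eta$ to a set of length $O(\varepsilon)$. That is essentially what you are trying to prove. Flatness on the balls $B(x,\rho)$ does not supply it: it controls each ball, not how the images of the many balls overlap, and $\rho$ is only produced after $\theta$ is fixed. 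Bounding the total size of the image is exactly what $\H^1(S)<\infty$ does in Proposition~\ref{P:easy-squashing} and Theorem~\ref{T:bate}, which is why ``replacing $\H^1|_S$ by $\mu$'' in Bate's argument does not work.

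The missing idea (Subsection~\ref{SS:real-line}, Theorem~\ref{T:RVP-main}) is to post-compose the flat map $f$ of Proposition~\ref{prop:basic-perturbation} with a real map $h$ that is \emph{not} $1$-Lipschitz. This $h$ collapses each long interval $J_k$ to a point and stretches each short interval $I_k$ by a factor $N\ge 1/\eta$. The offset $t_0$ is chosen by pigeonhole so that $f_\#\mu(\bigcup_k I_k)\le 1/N$. Then $h$ is uniformly close to the identity and $1$-Lipschitz at scales $\ge r$ (after a harmless rescaling), but only $N$-Lipschitz below. The small-scale expansion is paid for by flatness. If $3N(1-\theta)\le\delta$, Lemma~\ref{L:RVP-domain-to-target-flatness} gives $|f(y)-f(z)|\le 3(1-\theta)\lVert T\rVert\tLip(F)\,\metric(y,z)$ whenever $y,z\in S$ and $|f(y)-f(z)|\le r$. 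Hence $|h(f(y))-h(f(z))|\le|T(F(y)-F(z))|+\delta\lVert T\rVert\tLip(F)\,\metric(y,z)$ on $S$, and Lemma~\ref{L:RVP-extension} extends this to $X$.

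Your passage to $\reals^m$ also fails as written. Coordinatewise McShane extensions with $1$-Lipschitz coordinates only give a $\sqrt m$-Lipschitz map, and that factor cannot be absorbed. The slack in $(1-\varepsilon)g$ is only $1+O(\varepsilon)$, and rescaling by $1/\sqrt m$ moves the map by about $(1-1/\sqrt m)\lVert g\rVert_\infty$. What is needed is the coordinatewise domination $|g_i(y)-g_i(z)|\le|F_i(y)-F_i(z)|+\delta\tLip(F)\,\metric(y,z)$ of item~\ref{Enum:RVP-main-1} of Theorem~\ref{T:RVP-main}. This is why Lemma~\ref{L:RVP-extension} extends in the metric $|\varphi(x)-\varphi(y)|+\delta\,\metric(x,y)$ rather than by plain McShane. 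Minkowski's inequality then gives $\tLip\le(1+\sqrt m\,\delta)\tLip(F)$ (Lemma~\ref{L:scalar-to-vector-B}, with $\tilde K(\ell_2^m,0)=1$), and separate pieces are glued with \cite[Lemma 4.6]{B}, not with McShane.

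You also misidentify the input from pure unrectifiability. Bate--Weigt is not needed, and ``$\mu$ charges no curve fragment'' is false (take a Dirac mass) and beside the point. What is used is simply $\H^1(\gamma\cap S)=0$ for every $\gamma\in\Gamma(X)$, i.e.\ $S$ is an $\tilde A(F,0)$ set for every $F$. That is the hypothesis of Proposition~\ref{prop:basic-perturbation} on compact subsets of $S$.

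Finally, choosing a compact $K\subset S$ with $\mu(S\setminus K)$ small tacitly needs $\mu$ to be tight. This is automatic for Borel $S$ but not for an arbitrary separable $S$. The paper's passage to $\overline S$ in Corollary~\ref{C:main-euclidian} has the same issue, since the tangent-field hypothesis is only on $S$.
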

If this result is applied to a natural measure on the Koch curve (e.g.~the one obtained from the self-similar construction, or the Hausdorff measure of the correct dimension) we can see that while almost all of the curve is typically mapped into a $0$-dimensional set, the full image is typically a non-trivial continuous curve. Thus, a typical $1$-Lipschitz map sends $\mu$-large sets into Hausdorff-small sets and $\mu$-small sets into Hausdorff-large sets.

We continue with the presentation of the higher-dimensional version of Theorem \ref{T:intro-main-1d}. Firstly, the notion of $n$-pure unrectifiability has to be replaced with a more rigid notion relying on weak tangent fields. 
Given a subset $S$ of $\reals^k$, we say that $S$ admits a $d$-dimensional weak tangent field if there is a map $\tau$ from $S$ to the Grassmanian of $d$-dimensional subspaces of $\reals^k$ with the following property. 
For every Lipschitz curve $\gamma$ in $\reals^k$ and for $\H^1$-a.e. $x\in \gamma\cap S$,
\begin{equation*}
    \gamma'(\gamma^{-1}(x))\in\tau(x).
\end{equation*}
For example, an embedded $d$-dimensional $C^1$-surface $M$ in $\reals^k$ admits a $d$-dimensional tangent field - this can be taken as the classical tangent field of the surface itself.
Also, it follows immediately from the definitions that $S$ is $1$-purely unrectifiable if and only if $S$ admits a $0$-dimensional weak tangent field. We remark that this notion can be made sense of in a general metric space by considering mappings into Euclidean spaces of higher dimension - see Definition \ref{def:atilde}.

For a set $S$ with $\H^n(S)<\infty$ it follows from the aforementioned results \cite{BateLi, BW} that $S$ is purely $n$-unrectifiable if and only if $S$ admits an $(n-1)$-dimensional weak tangent field up to an $\H^n$-null set. Thus, we argue that in the absence of the finite measure assumption, the correct condition to impose on $S$ is a dimensionality of a weak tangent field as opposed to pure unrectifiability. 

Our main result follows. Notice that by taking $d=0$ below, one recovers Theorem \ref{T:intro-main-1d}

\begin{theorem}\label{T:intro-squashing}
    Let $X$ be a complete metric space and $S\subset X$ a separable subset which admits a $d$-dimensional weak tangent field. Then for any finite Borel measure $\mu$ on $S$ and any $m\in\nat$, the set
    \begin{equation*}
        \{f\in\tLip_1(X,\reals^m): \exists N\subset S, \mu(N)=0\;\text{and}\; \dim_H(f(S\setminus N))\leq d\}
    \end{equation*}
    is residual in $\tLip_1(X,\reals^m)$.
\end{theorem}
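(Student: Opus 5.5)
Since the set in question is residual as soon as it contains a countable intersection of dense open sets, I will write it as such an intersection. Fix $\varepsilon>0$, $s>d$ and $\delta>0$, and let $\mathcal{U}(\varepsilon,s,\delta)$ be the set of $f\in\tLip_1(X,\reals^m)$ for which there are a compact $K\subset S$ with $\mu(S\setminus K)<\varepsilon$ and a finite cover of $f(K)$ by sets $U_j$ with $\diam U_j<\delta$ and $\sum_j(\diam U_j)^s<\delta$. Openness should be essentially automatic: replace each $U_j$ by an open neighbourhood with slightly larger diameter, keeping the sum below $\delta$. Then every $g$ with $\|g-f\|_\infty$ small still has $g(K)\subset\bigcup_j U_j$, with the same $K$.

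Given $f\in\bigcap_{k}\mathcal{U}(1/k,\, d+1/k,\, 1/k)$, I take the compact sets $K_k$ from the definition and put $N=S\setminus\bigcup_{k\geq k_0}\bigcap_{l\geq k}K_l$; then $\mu(N)=0$ by Borel–Cantelli, after passing to a summable subsequence of the parameters. A point of $S\setminus N$ lies in $K_l$ for all large $l$, so $\H^{s}_\infty$-type estimates give $\H^{s'}(f(S\setminus N))=0$ for every $s'>d$, because the countable union of the sets $\bigcap_{l\ge k}K_l$ each has $f$-image of $\H^{s'}$-measure zero. Hence $\dim_H f(S\setminus N)\le d$. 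Inner regularity of the finite Borel measure $\mu$ on the separable set $S$ lets me use compact $K$ throughout.

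The main work is density. Given $g\in\tLip_1(X,\reals^m)$ and $\eta>0$, I first replace $g$ by $(1-\eta)g$ to gain room in the Lipschitz constant. I then want a $1$-Lipschitz $f$ with $\|f-g\|_\infty<\eta$ that compresses $g$ on a large compact piece $K$ of $S$ onto a set coverable in dimension $s>d$. The plan is to imitate Bate's construction behind Theorem \ref{T:bate} using the weak tangent field. Via a Kuratowski-type embedding (Definition \ref{def:atilde}), on a compact $K$ the tangent field $\tau$ is close to a fixed $d$-plane $V$ on small pieces, up to a set of small $\mu$-measure. I would decompose $K$ into finitely many such pieces $K_i$ of small diameter $r$, separated from one another. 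On each $K_i$, the weak tangent condition says that every Lipschitz curve through $K_i$ moves essentially only in directions of $V$. A quantitative form of this, as in Bate's $\tilde A$ sets, gives a $1$-Lipschitz function that is nearly constant on $K_i$ in the $m-d$ directions complementary to $V$, up to error $\ll r$, while preserving the $d$ directions along $V$. Consequently $f(K_i)$ lies in a slab around an $r$-ball of width $\rho\ll r$. Covering the slab by about $(r/\rho)^d$ balls of radius $\rho$ gives the contribution $\sum_i (r/\rho)^d\rho^s$. The number of pieces is bounded through the doubling structure of $g(K)\subset\reals^m$, about $r^{-m}$, and choosing $\rho$ tiny relative to $r$ makes the total small because $s>d$.

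I expect the main obstacle to be this local squashing step: producing, from the purely curve-based weak tangent hypothesis, a $1$-Lipschitz map that collapses the $m-d$ normal directions on a set of large $\mu$-measure, when $\mu$ need not be $\H^n$-finite. My first attempt would be to use the duality between curves and Lipschitz functions, namely Alberti–Marchese decomposability bundles or Bate's construction with $\tilde A(\cdot)$. This gives a function whose derivative in the normal directions is small along all curves, hence whose oscillation on $K_i$ is small in the intrinsic path sense. The final step is to glue the local modifications using separation of the pieces and McShane extensions componentwise, while controlling the Lipschitz constant with the slack $\eta$.
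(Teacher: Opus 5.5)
Your Baire-category frame is sound and matches the paper's. Your sets $\mathcal{U}(\varepsilon,s,\delta)$ play the role of the paper's $\mathcal{G}_{\delta,\eta,\varepsilon}$: the paper uses $\H^{d+\delta}_\infty(f(E))<\varepsilon$ together with Lemma \ref{L:semicont}, and your finite covers work equally well. Your Borel--Cantelli step is also the same.

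The gap is in density, and it is not where you place it. The local flattening you ask for is already supplied by Proposition \ref{prop:basic-perturbation}. You apply it with $F=g$ and with $T$ vanishing on the plane $W_i$ of an $\tilde A(g,d,\theta)$ decomposition (Theorem \ref{T:afd-wtf}); no Kuratowski embedding is needed. The normal components then become $\epsilon'$-Lipschitz on $S\cap B(x,\rho)$, where $\epsilon'=3(1-\theta)\lVert T\rVert\tLip(g)$. But $\rho$ is produced only after $\epsilon'$ is fixed, and you have no control over it. Consequences:
\begin{itemize}
\item On a piece of diameter $r\le\rho$ the normal width is only $\epsilon' r$, so you cannot make the width $\ll r$ at a fixed $r$.
\item Your bound is therefore (number of pieces)$\cdot\epsilon'^{\,s-d}r^s$, with $r\to0$ forced after $\epsilon'$ is chosen.
\item This product need not be small. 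For the Koch curve with its natural measure, $d=0$ and $s\in(0,1)$, covering a proportion $1-\eta$ of the measure by pieces of diameter $r$ needs $\gtrsim r^{-\log 4/\log 3}$ pieces. The bound is then $\gtrsim \epsilon'^{\,s}r^{s-\log 4/\log 3}\to\infty$.
\item The count ``about $r^{-m}$'' is unjustified. The pieces live in $X$, so their number is a covering number of $K\subset X$, which doubling of $\reals^m$ does not bound.
\item Grouping pieces by image cubes instead loses the domain-local flatness. Counting $\epsilon' r$-balls in $\reals^m$ only gives $(\epsilon' r)^{s-m}$, which again blows up.
\end{itemize}

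The missing idea is to let $\mu$ decide where the map may stretch, so that no counting is needed. The paper treats one normal coordinate $T_i=b_i^*\circ Q$ at a time.
\begin{itemize}
\item It takes the flattened real-valued $f$ from Proposition \ref{prop:basic-perturbation} and composes it with a staircase $h\colon\reals\to\reals$ (Subsection \ref{SS:real-line}).
\item The staircase has slope $N$ on short intervals $I_k$ and slope $0$ on the intervals $J_k$. The offset is chosen so that $f_\#\mu(\bigcup_k I_k)\le 1/N$ (Theorem \ref{T:construction-in-real-line}). Hence $h\circ f$ takes finitely many values on a compact set of measure at least $1-\eta$.
\item The slope $N$ is paid for at small target scales by Lemma \ref{L:RVP-domain-to-target-flatness}: pairs in $S$ with $|f(y)-f(z)|$ below a threshold satisfy $|f(y)-f(z)|\le\epsilon'\metric(y,z)$.
\item At larger target scales $h$ is almost $1$-Lipschitz (Lemma \ref{L:RL-constr-large-scale-constant-est}).
\item So one only needs $3(1-\theta)N\le\delta$. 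Thus $\theta$ depends on $\eta$ and the Lipschitz slack, never on a scale or a number of pieces (Theorem \ref{T:RVP-main}).
\end{itemize}
Recombining coordinates with Lemma \ref{L:scalar-to-vector-B} and gluing over the finitely many pieces of the $\tilde A$ decomposition (Theorem \ref{T:StV-main}) puts $G(E)$ into finitely many affine $d$-planes. Then $\H^{s}(G(E))=0$ for every $s>d$, and your sets $\mathcal{U}(\varepsilon,s,\delta)$ are dense. Without this measure-adapted collapsing, or a substitute for it, your density step does not go through.
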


Next, we are concerned with the sharpness of Theorem \ref{T:intro-squashing}. Firstly, we can now see that the Hausdorff dimension is not stable under perturations. Recall that given a finite Borel measure $\mu$ on some metric space $X$, its \emph{Hausdorff dimension}, $\dim_H \mu$, is the infimum of the set of those $s\in \reals$ for which there exists a $\mu$-null set $N$ with
\begin{equation*}
    \H^s(X\setminus N)<\infty.
\end{equation*}
Since the Hausdorff dimension is not stable under perturbations, we may ``forcefully'' stabilize it via the relaxation:
\begin{equation}\label{E:stable-id}
    \begin{split}
        \dim_H^{\textnormal{stable}}(\mu)&=\max_{m\in\nat}\inf\{s\in[0,\infty]: \exists f_i\in\tLip_1(\reals^k,\reals^k)\\
        & f_i\to \Id \;\text{uniformly on bounded sets and}
        \dim_H({f_i}_\#\mu)\leq s
        \},
    \end{split}
\end{equation}
for a finite Borel measure $\mu$ on the Euclidean space $\reals^k$. This may be modified for the metric setting via
\begin{equation}\label{E:stable-dense}
    \begin{split}
        \dim_H^{\textnormal{stable}}(\mu)&=\max_{m\in\nat}\inf\{s\in[0,\infty]: \{f\in\tLip_1(X,\reals^m): \dim_H({f}_\#\mu)\leq s\}\\
        & \text{is dense in $\tLip_1(X,\reals^m)$}
        \},
    \end{split}
\end{equation}
for a finite Borel measure $\mu$ on the complete metric space $X$.
The coincidence of the two definitions whenever $X$ is Euclidean is neither obvious, nor crucial for our argument. It does follow from the theory developed in the rest of this paper, see Corollaries \ref{C:final} and \ref{C:stability-for-Euclidean-Haus} and the discussion at the end of Section \ref{S:stability}.

Theorem \ref{T:intro-preserve} gives a one-sided estimate on the quantity $\dim_H^{\textnormal{stable}}(\mu)$ in terms of the weak tangent field. Namely, for a subset $S$ of a metric space $X$, we define its \emph{tangential dimension}, $\dim_T S$, by
\begin{equation*}
    \dim_T(S)=\min\{d\in\nat\cup\{0\}: \textnormal{$S$ admits a $d$-dimensional weak tangent field}\},
\end{equation*}
where we convene that the minimum of the empty set is $\infty$. For a finite Borel measure $\mu$ on the complete metric space $X$, we let its \emph{tangential dimension}, $\dim_T \mu$, be given by
\begin{equation*}
    \dim_T \mu = \min\{d\in \nat\cup\{0\}: \exists N\subset X, \mu(N)=0\;\textnormal{and}\; \dim_T(X\setminus N)\leq d\}.
\end{equation*}

Equipped with this notation, Theorem \ref{T:intro-squashing} in particular implies that 
\begin{equation*}
    \dim_H^{\textnormal{stable}}(\mu)\leq \dim_T \mu.
\end{equation*}

In the case when $X$ is a Euclidean space, we are able to also obtain the converse inequality. 
The principal intermediate result which we prove is that for measures in the Euclidean space (or, more generally, strictly convex finite dimensional Banach spaces) the tangential dimension is preserved under small perturbations.
\begin{theorem}\label{T:intro-preserve}
    Suppose $\mu$ is a finite Borel measure in the strictly convex finite dimensional Banach space $X$.
    Then there is a compact convex set $C\subset X$ and $\varepsilon>0$ such that whenever a $1$-Lipschitz map $f\colon X\to X$ satisfies $\lVert f - \Id\rVert_{\ell^\infty(C)}\leq \varepsilon$, then
    \begin{equation*}
        \dim_T(f_\#\mu)\geq \dim_T \mu.
    \end{equation*}
\end{theorem}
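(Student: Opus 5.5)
The plan is to turn the statement into one about curves. Put $d=\dim_T\mu$ (if $d=0$ there is nothing to prove). By definition, $\mu$ is not carried by a set that admits a $(d-1)$-dimensional weak tangent field. I would use the Alberti--Marchese decomposability bundle, together with its characterisation of weak tangent fields (this holds in any finite dimensional normed space, since only the linear structure matters), to extract the following data:
\begin{itemize}
\item a Borel set $A$ with $\mu(A)>0$;
\item linearly independent unit vectors $v_1,\dots,v_d$ and a small aperture $\alpha>0$;
\item for each $i$, a probability measure $P_i$ on Lipschitz curves $\gamma\colon[0,L]\to X$ with $\lVert\gamma'\rVert\le 1$ and $\lVert \gamma'-v_i\rVert\le\alpha$ almost everywhere,
\end{itemize}
such that $\mu|_A\ll\int \H^1|_{\gamma}\,\diff P_i(\gamma)$ for each $i$. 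The aperture $\alpha$ is chosen so small that any choice of vectors $w_i$ with $\lVert w_i-v_i\rVert\le 3\alpha$ is still linearly independent. Shrinking $A$ and truncating the $P_i$, I may assume that all curves lie in a fixed ball $B$, that the length $L$ is fixed, and that the densities are bounded. I then take $C$ to be a closed ball (a compact convex set) containing a neighbourhood of $B$. The constant $\varepsilon$ will be chosen in terms of $L$, $\alpha$ and the modulus of convexity of $X$.

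The key geometric step uses strict convexity. In finite dimensions, strict convexity is equivalent to uniform convexity, so there is a modulus $\delta(\cdot)$ with the following property: if $\lVert w\rVert\le1$ on a set, and the average of $w$ over an interval is within $\eta$ of a unit vector $u$, then $\lVert w-u\rVert\le\alpha$ outside a subset of proportion at most $\delta(\eta)$, where $\delta(\eta)\to0$ as $\eta\to0$. Now let $f$ be $1$-Lipschitz with $\lVert f-\Id\rVert_{\ell^\infty(C)}\le\varepsilon$, and let $\gamma$ be one of the curves above. Apply this to $f\circ\gamma$ on subintervals of length $\ell$. The curve $f\circ\gamma$ is still $1$-Lipschitz, and on such a subinterval its endpoint displacement differs from that of $\gamma$ by at most $2\varepsilon$. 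The displacement of $\gamma$ itself is within $\alpha\ell$ of $\ell v_i$. Hence on most of each subinterval $(f\circ\gamma)'$ lies within roughly $2\alpha$ of $v_i$, provided $\varepsilon\ll\alpha\ell$. The key consequence is that the bad parameter set has $\H^1$-measure at most $\delta\, L$, with $\delta$ as small as desired, uniformly in $f$ and in $\gamma$. On the good set we have $\lVert(f\circ\gamma)'\rVert\ge1-2\alpha$, so the area formula gives the lower bound
\[
\H^1(\gamma^{-1}(f^{-1}E)\cap\mathrm{good})\le (1-2\alpha)^{-1}\H^1(f\circ\gamma\cap E).
\]

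To conclude, suppose for contradiction that $f_\#\mu$ is carried by a set $T$ with a $(d-1)$-dimensional weak tangent field $\tau$. Consider the set $E$ of points $y\in T$ at which, for some $i$, $\tau(y)$ contains no vector within $3\alpha$ of $v_i$. By the choice of $\alpha$, every $y\in T$ lies in $E$, because a $(d-1)$-dimensional subspace cannot contain $d$ independent vectors. On the other hand, for $P_i$-a.e.\ $\gamma$ the curve $f\circ\gamma$ has $\H^1$-a.e.\ tangent in $\tau$. Hence $f\circ\gamma$ can meet $E_i$ (the part of $E$ attached to index $i$) only along its bad parameters, up to an $\H^1$-null set. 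Using the absolute continuity of $\mu|_A$ with respect to the curve measures and the bounded densities, this gives $\mu(A\cap f^{-1}E_i)\lesssim\delta$ for each $i$, and therefore $\mu(A)\lesssim d\,\delta$. Choosing $\delta$ (hence $\varepsilon$) small compared to $\mu(A)$ gives a contradiction, so $\dim_T(f_\#\mu)\ge d$.

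I expect the main obstacle to be the first step: producing, in a non-Euclidean strictly convex norm, curve families whose tangents lie uniformly in narrow cones, with bounded densities and a common length scale, so that $\varepsilon$ can be chosen independently of $f$. The second delicate point is making the bound on the bad set quantitatively uniform. I would first try to handle both by working in a Lebesgue-density ball of $A$ and cutting the curves into pieces of a fixed length $\ell$.
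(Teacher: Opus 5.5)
Your architecture matches the paper's: independent Alberti representations on a piece of $\mu$, uniform convexity to keep the tangents of $f\circ\gamma$ in slightly wider (still independent) cones, and absolute continuity to transfer this to $\mu$. However, the step meant to make the exceptional set small uniformly in $f$ and $\gamma$ does not follow from your argument. The cone condition $\lVert\gamma'-v_i\rVert\le\alpha$ only controls an \emph{averaged} defect of the chord of $\gamma$ over a cell $I$, and hence of the chord of $f\circ\gamma$ up to $2\varepsilon/\ell$. Uniform convexity cannot convert such an averaged defect into pointwise closeness at a scale comparable to $\alpha$. Already in $\reals^n$ one has $1-\langle w,v_i\rangle\ge\frac12|w-v_i|^2$ for $|w|\le1$, with equality when $|w|=1$, so your data yield only
\[
\frac{1}{\ell}\int_I\bigl|(f\circ\gamma)'-v_i\bigr|^2\,\diff t\le\alpha^2+\frac{4\varepsilon}{\ell}.
\]
This is compatible with $|(f\circ\gamma)'-v_i|>3\alpha$ on almost a ninth of $I$, however small $\varepsilon$ is, and the endpoint-displacement argument gives nothing better. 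So neither ``within roughly $2\alpha$ on most of each subinterval'' nor ``bad set of measure at most $\delta L$ with $\delta$ as small as desired'' is justified. The exceptional proportion is governed by the aperture $\alpha$, i.e.\ by the slope defect of $\gamma$, not by $\varepsilon$.

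Fixing the target closeness $\beta$ by independence and taking $\alpha\ll\beta$ does make the exceptional proportion small. It still fails in the order you set things up: $A$, the $P_i$, $L$ and the density bound are produced after $\alpha$ is fixed. The final inequality $\sum_i\mu(A\cap f^{-1}E_i)<\mu(A)$ needs the exceptional mass below a threshold that depends on them, so the choices are circular.

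The missing idea is that chord slopes close to $1$ must come from differentiation, not from the cone. For each fixed curve and a.e.\ parameter $t$, the slope of $\gamma$ over short intervals containing $t$ tends to $|\gamma'(t)|=1$. On such intervals uniform convexity places both $\gamma'$ and $(f\circ\gamma)'$ near the normalised chord, so $(f\circ\gamma)'\approx\gamma'$ on most of the interval. This is what Lemma~\ref{L:part} and Theorems~\ref{T:con-in-measure-C1} and~\ref{T:con-in-measure-frag} do, via $C^1$ approximation, with fragments extended affinely inside the convex set $C$. The control is not uniform in $\gamma$, only in $P_i\otimes\H^1$-measure (Lemma~\ref{L:m-to-measure}, Corollary~\ref{C:Ki-fill}). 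It is passed to $\mu$ through Lemma~\ref{L:ac-with-error}, with all choices made after the representations are fixed, and no narrowing of cones is needed.

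In your direct, non-sequential formulation you can arrange this as follows. Choose $\ell$ so that, off a set of small $P_i\otimes\H^1$-measure, the chord of $\gamma$ over the length-$\ell$ cell containing $t$ has slope at least $1-\kappa$; this condition does not involve $f$. Then take $\varepsilon\ll\kappa\ell$. With that replacement your endgame goes through: contradicting a putative $(d-1)$-dimensional weak tangent field of $f_\#\mu$ directly is a legitimate alternative to the paper's route of pushing forward restricted Alberti representations and invoking Theorem~\ref{T:dimA-iff-wtf}.
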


By relying on the theory of so-called Alberti representations, which we discuss extensively in Subsection \ref{sec:alberti}, it follows that the number of independent Alberti representations a measure has is preserved under perturbations. Thus, available results in literature \cite{DeRindler, BKO, ArRaDimension} almost immediately yield the partial converse to Theorem \ref{T:intro-squashing}.

\begin{corollary}\label{C:converse}
    Suppose $\mu$ is a Borel finite measure in $\reals^k$ with $\dim_T(\mu)\geq d\leq m$. Then the set
    \begin{equation*}
        \{f\in \tLip_1(\reals^k, \reals^m): \dim_H(f_\#\mu)\geq d\}
    \end{equation*}
    has non-empty interior.
\end{corollary}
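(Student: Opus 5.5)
Corollary \ref{C:converse} will follow from Theorem \ref{T:intro-preserve}, combined with the known result (De Philippis--Rindler, Bate--Kangasniemi--Orponen, Arroyo-Rabasa) that a measure with $d$ independent Alberti representations (equivalently, $\dim_T\geq d$) is absolutely continuous on $d$-dimensional ``rectifiable directions''. In particular, such a measure satisfies $\dim_H\geq d$, and in fact gives no mass to sets $E$ with $\H^d(E)=0$.

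\textbf{Step 1: the open set.} Apply Theorem \ref{T:intro-preserve} to $\mu$ in $X=\reals^k$ (Euclidean, hence strictly convex). This gives a compact convex $C$ and $\varepsilon>0$ such that every $1$-Lipschitz $g\colon\reals^k\to\reals^k$ with $\lVert g-\Id\rVert_{\ell^\infty(C)}\leq\varepsilon$ satisfies $\dim_T(g_\#\mu)\geq\dim_T\mu\geq d$. The target is $\reals^m$ with $m\geq d$, so we cannot compose with $\Id$ directly. Fix the orthogonal projection $P\colon\reals^k\to\reals^m$ when $m\leq k$, and the inclusion when $m>k$.

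In the inclusion case, $P$ is an isometry onto a subspace. For $f$ close to $P$ on $C$, compose with the orthogonal projection $Q$ onto $P(\reals^k)$. Then $Q\circ f$ is $1$-Lipschitz into $\reals^k$ and close to $\Id$. This gives $\dim_T((Qf)_\#\mu)\geq d$, hence $\dim_H((Qf)_\#\mu)\geq d$. Since $Q$ is $1$-Lipschitz, Hausdorff dimension can only decrease under $Q$, so $\dim_H(f_\#\mu)\geq d$.

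In the projection case we must ensure the image still has tangential dimension $\geq d$, and this is the main obstacle. The point is that $\dim_T\mu\geq d$ does not survive an arbitrary projection: the $d$ independent directions might be killed. I would handle it by choosing the projection generically. Decompose $\mu$ into countably many pieces, each with $d$ independent Alberti representations whose directions lie in cones around fixed $d$-planes $V_j$. A generic linear map to $\reals^m$ ($m\geq d$) is injective on some $V_j$ carrying positive mass. After a rotation of $\reals^m$, $P$ becomes a $1$-Lipschitz map that restricts isometrically, up to small distortion, on that cone. Then $P_\#(\mu\llcorner A)$ has $d$ independent Alberti representations.

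\textbf{Step 2: stability.} To show this persists for $f$ near $P$ on $C$, I would rerun the proof of Theorem \ref{T:intro-preserve}, or rather the underlying stability of Alberti representations it provides, with $\reals^m$ in place of $X$. The mechanism is that a $1$-Lipschitz map uniformly close to a map which is ``isometric in the cone directions'' still sends the representing curves to curves with speed bounded below in independent cones. Alternatively, reduce to Theorem \ref{T:intro-preserve} by embedding: set $\tilde f=(f,\,P^\perp)$, using the spare coordinates $P^\perp$ (the components killed by $P$), suitably rescaled so that $\tilde f$ is $1$-Lipschitz. Then $\tilde f$ is close to a fixed linear map and falls under the theorem after a linear change of variables.

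\textbf{Step 3: conclusion.} Finally, $\dim_T(f_\#\mu)\geq d$ implies $\dim_H(f_\#\mu)\geq d$ by the cited results: $d$ independent Alberti representations force $f_\#\mu\ll\H^d$ restricted to rectifiable pieces (\cite{DeRindler}), so no $\mu$-conull set has $\H^s<\infty$ for $s<d$. The set of admissible $f$ contains a sup-ball on $C$ around $P$, which is open in $\tLip_1(\reals^k,\reals^m)$. This gives the nonempty interior.
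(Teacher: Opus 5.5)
For $m\ge k$, your argument is the paper's. Theorem \ref{T:stability-for-Euclidean} applies Theorem \ref{T:stability-for-X} to $P_I\circ f$, where $P_I$ is the orthogonal projection of $\reals^m$ onto $I(\reals^k)$. Corollary \ref{C:stability-for-Euclidean-Haus} then turns $\dim_T\ge d$ into $\dim_H\ge d$ via Lemma \ref{L:DeRindler}. You pass to $\dim_H$ before undoing the projection rather than after, which is equivalent. Two small corrections:
\begin{enumerate}
\item A linear $P$ is unbounded, so it is not an element of $\tLip_1(\reals^k,\reals^m)$. Your sup-ball on $C$ is still open and nonempty, because it contains a bounded $1$-Lipschitz $F$ with $F_{|C}=P_{|C}$; this is how the paper formulates it.
\item \cite{DeRindler} gives $\nu\ll\H^d$ on the piece outright; nothing about ``rectifiable pieces'' is involved. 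Your deduction of $\dim_H\ge d$ is still correct.
\end{enumerate}

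The case $d\le m<k$ is not covered by the paper's proof either. That proof only uses isometries $I\colon\reals^k\to\reals^m$, which exist only for $m\ge k$. So you are right that this case needs an argument, but your sketch does not yet supply one. It has two concrete problems.

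\textbf{First}, a generic linear map injective on $V_j$, followed by a rotation of $\reals^m$, is not near-isometric on the cone, since rotations do not change $|Pv|/|v|$. This is not cosmetic. The mechanism of Lemma \ref{L:rect-estimate} is that the mean of $(f\circ\gamma)'$ is close to the mean $u$ of the reference derivative, which forces $|(f\circ\gamma)'|\le|\gamma'|$ to be almost saturated and hence aligned. If $|Pv|\le\lambda|v|$ on the cone with $\lambda$ not close to $1$, then $A\ge 1-\lambda$ in that lemma, and its conclusion carries no useful direction information.

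\textbf{Second}, the embedding alternative fails for the same reason. A $1$-Lipschitz $f$ uniformly close to $P$ can have unit slope in directions of $\ker P$. For example, take $k=2$, $m=1$ and
\[
f(x)=\max\{x_1-\varepsilon,\min\{x_1,|x_2|\}\}.
\]
Then $(f,P^\perp)$ can have Lipschitz constant $\sqrt2$. After rescaling it to be $1$-Lipschitz, it is close to $\lambda\Id$ with $\lambda\le 1/\sqrt2$, which is not a situation covered by Theorem \ref{T:intro-preserve}. Undoing the rescaling by a linear change of variables destroys $1$-Lipschitzness.

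What does work is your ``rerun'' option, with a specific non-generic choice of $P$:
\begin{enumerate}
\item Take one compact piece with $d$ independent representations and narrow its cones around independent axes $u_1,\dots,u_d$. Finite subdivision plus pigeonhole keeps a piece of positive measure.
\item Choose an $m$-plane $U\supseteq\operatorname{span}\{u_1,\dots,u_d\}$, and let $P$ be the orthogonal projection onto $U\cong\reals^m$.
\item Then $|P\gamma'|\ge(1-o(1))|\gamma'|$ along the representing curves as the cones shrink. Lemma \ref{L:rect-estimate} goes through with $u$ the mean of $(P\circ\gamma)'$, using $|(f\circ\gamma)'|\le|\gamma'|$.
\item Uniform convexity then places $(f\circ\gamma)'$ near $P\gamma'$ on most of each domain. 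The rest of Section \ref{S:stability} runs with the image cones, which remain independent after slight widening. The reduction from fragments to $C^1$ curves needs only the obvious adjustment.
\end{enumerate}
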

The combination of Theorem \ref{T:intro-squashing} and the preceding Corollary \ref{C:converse} implies the equality 
\begin{equation*}
    \dim^{\textnormal{stable}}_H \mu=\dim_T \mu
\end{equation*}
for any Borel finite measure $\mu$ on a Euclidean space. A trivial, yet remarkable consequence of this equality is that $\dim_H^{\textnormal{stable}} (\mu)$ is an integer. In particular $\dim_H^{\textnormal{stable}} (\mu)$ is at most the integer part of $\dim_H (\mu)$.

\subsection{On some related results}
The aforementioned Alberti representations of a finite Borel measure $\mu$ on $\reals^k$ are used in the work of Alberti and Marchese \cite{AM} to construct the \emph{decomposability bundle} $V_\mu$ which is a $\mu$-measurable map on $\reals^k$  taking values in the Grassmanian of linear subspaces of $\reals^k$. This bundle can be seen as a kind of tangent field of $\mu$ and one of the main results of the paper \cite{AM} is a characterisation of $\mu$-a.e.~differentiability of Lipschitz functions $f\colon \reals^k \to \reals$ in terms of $V_\mu$. A seemingly different construction of a kind of tangent field of a measure is carried out by Bouchitt\'e et al \cite{BCJ}, which can be shown to be equivalent to the bundle of Alberti and Marchese \cite{AdolfoPrivate}.

Alberti and Marchese \cite{AM} need to construct Lipschitz mappings, which are not differentiable in a prescribed direction and on a prescribed set. At this point in time, the kind of construction they rely on is fairly well established in the literature. The main geometric point is that Lipschitz functions may be constructed that have ``bad behaviour'' in those direction in which there are no Lipschitz curves. Similar type of a construction is used, for example, in the aforementioned work of Bate \cite{B}, the paper of Schioppa \cite{SchDer} on derivations, the earlier work of Bate \cite{BateLDS} on Lipschitz differentiability spaces or the paper of Aliaga et al \cite{AGPP} on $1$-purely unrectifiable spaces. 

In order to prove our Theorem \ref{T:intro-squashing} we also rely on this type of construction, namely we explicitly make use of the form of this construction found in Bate's paper \cite[Proposition 3.5]{B}. The main point in our case is not a construction of a Lipschitz map which is not differentiable (in a prescribed direction) but rather a map that ``squashes'' nearby points together (also only in a prescribed direction).
The existence of these maps that squash points together is what allows us to construct (different) maps that effectively concentrate measure into sets that have small Hausdorff content of the given dimension. This is the core idea behind Theorem \ref{T:intro-squashing} and is carried out rigorously in Section \ref{S:Squashing}.

Finally, we discuss a connection of Marstrand's projection theorem \cite{Mars,Kaufman} to Theorem \ref{T:intro-squashing}. To make our point clear, let us first recall the Besicovitch--Federer projection theorem. The theorem states that for a purely $n$-unrectifiable subset $S$ of a Euclidean space, of finite measure, almost every orthogonal projection to an $n$-dimensional subspace carries $S$ into a null set. If $S$ is $n$-rectifiable and has positive measure, than the opposite conclusion holds. 
Therefore, if one simply takes the Besicovitch--Federer theorem and replaces ``almost every projection'' with ``a typical $1$-Lipschitz map'', the resulting statement is true; in fact it is a special case of Theorem \ref{T:bate} above due to Bate \cite{B}. Therefore the natural question is whether the same can be done for a variant of the Marstrand's projection theorem.

One formulation of said theorem found in the book of Mattila \cite[Corollary 9.8]{Mat} implies in particular that if a set $E\subset \reals^k$ has positive Hausdorff $s$-measure, then almost every projection of the set to an $m$-dimensional subspace with $m<s$ has Hausdorff dimension at least $m$.
If, on the other hand, $\dim E <m$, then the dimension of almost every projection to an $m$-dimensional subspace is at least $\dim E$ \cite[Corollary 9.4]{Mat}. In this sense almost all projections preserve the Hausdorff dimension. 
This is contrary to our results, which state that if typical $1$-Lipschitz maps are considered (as opposed to almost all projections) then preservation of Hausdorff dimension cannot be expected.

To illustrate our point on a specific example, if one considers once again the Koch curve in the plane equipped with the Hausdorff measure of the correct dimension, then almost every projection to a $1$-dimensional subspace has dimension one (in fact, the pushforward of the measure is absolutely continuous with respect to the Lebesgue measure and its density lies in $L^2$ \cite{Kaufman}), while a typical $1$-Lipschitz pushforward of the measure is Hausdorff $0$-dimensional.

\subsection{Structure of the paper}
Section \ref{S:prel} contains preliminaries needed in the rest of the paper. Sections \ref{S:Squashing} and \ref{S:residuality-Euclidean} are dedicated to proving Theorem \ref{T:intro-squashing} together with some additional results concerning certain cases when the target space $\reals^m$ is replaced by a finite dimensional $\ell^p$ space. We also provide the result in several forms, each essentially equivalent to every other. Section \ref{S:Squashing} concentrates on the density part of the argument (recall that in order to show that a set is residual, one needs to show density and openness of certain other sets). In this case, the openness part of the argument is much simpler and is contained in the proofs of the general statements in Section \ref{S:residuality-Euclidean}. Finally, Section \ref{S:stability} is concerned with proving Theorem \ref{T:intro-preserve} and Corollary \ref{C:converse}. 

\subsection{Acknowledgements}
    I would like to thank my PhD supervisor David Bate for many discussions concerning this work and a careful reading of the final version of this manuscript. I would also like to express my gratitude to Giovanni Alberti and Andrea Marchese. There is a somewhat complicated story behind the conception of the main ideas presented in this work. Firstly, Theorem \ref{T:intro-squashing} is a statement that Giovanni, David and Andrea worked on and tried to prove some time ago. They were able to come up with a rather complicated proof of the theorem in the special case where $\mu$ is some instance of a Hausdorff measure. It does not appear that their method can yield the result for a general Borel measure $\mu$. Therefore, David suggested this to me as an open problem. After some time, I came up with the proof of the general Theorem \ref{T:intro-squashing}.

    In order to show that the result is sharp (at least in Euclidean spaces), we first prove Theorem \ref{T:intro-preserve}. This is something that was advised to me by David, who has previously also discussed this result with Giovanni and Andrea. Nevertheless, the others suggested to me to publish the results as the only author.

    I would like to thank Adolfo Arroyo-Rabasa and Filip Rindler, the examiners of my PhD oral exam, for their helpful suggestions on the improvement of my thesis, which also included a previous version of this work. 

    Finally, I wish to express my gratitude to Milo\v{s} Ta\v{s}i\'c, whose support proved unexpectedly nourishing while I was working on this problem.

    \bigskip

    The author is supported by the Warwick Mathematics Institute Centre for Doctoral Training and by the European Union’s Horizon 2020 research and innovation programme (Grant agreement No.
    948021)
\section{Preliminaries}\label{S:prel}

In this section we collect some basic preliminary facts needed to carry out the arguments in the rest of the paper. We rely largely on existing literature, however, there are several basic results concerning \emph{measurability} of certain constructions that are not available and we must develop the theory ourselves . 

\subsection{Basic measure theory}
In the following we recall some basic measure theory in general metric spaces, which we will use without further reference.
We call a metric space $X$ which is complete and separable a \emph{Polish metric space}.
If $X$ is a set and we say that $\mu$ is a measure on $X$, we mean that $\mu$ is an outer measure. If $X$ is a set and $\Sigma$ is a $\sigma$-algebra on $X$ and we say that $\mu$ is a measure on $(X,\Sigma)$, we mean that $\mu$ is an outer measure on $X$ and each set in $\Sigma$ is $\mu$-meausrable. If the $\sigma$-algebra with which we are working is clear from the context, we shall omit specifying it.
We say that a measure $\mu$ on the metric space $X$ is Borel, if the $\sigma$-algebra of $\mu$-measurable sets contains the Borel $\sigma$-algebra. Recall that if $X$ is a Polish metric space and $\mu$ is a $\sigma$-finite Borel measure on $X$, then $\mu$ is \emph{inner regular}, i.e.~for every Borel set $B\subset X$ there exists a sequence of compact sets $K_i\subset B$ such that
\begin{equation*}
    \mu(B\setminus \bigcup_i K_i)=0.
\end{equation*}
In particular, if $\mu(B)<\infty$, then to each $\varepsilon>0$, there exists a compact set $K\subset B$ such that $\mu(B\setminus K)<\varepsilon$.

We say that a measure $\mu$ on a space $X$ is supported on a $\mu$-measurable set $E\subset X$ if $\mu(X\setminus E)=0$. We let $\spt \mu$ be the least closed set $F\subset X$ such that $\mu(X\setminus F)=0$.
If $X$ is a complete metric space and $\mu$ is a Borel $\sigma$-finite measure on $X$ which is supported on a separable set, then $\mu$ is inner regular. Indeed, if $\mu$ is supported on the separable set $S$, then $\spt\mu\subset \overline{S}$ and $\overline{S}$ is a Polish metric space.

\begin{definition}
    Let $X$ be a metric space and $s\in[0,\infty)$. For a subset $E\subset X$ and $\delta\in(0,\infty]$, we define the \emph{($s$-dimensional) Hausdorff $\delta$-content} of $E$ by
    \begin{equation*}
        \H^s_\delta(E)=\inf\{\sum_i\diam(E_i)^s: E\subset \bigcup_i E_i;\; E_i\subset X\}.
    \end{equation*}
    We call $\H^k_\infty(E)$ the \emph{($s$-dimensional) Hausdorff content} of $E$.
    The \emph{$s$-dimensional Hausdorff measure} of $E$ is defined as
    \begin{equation*}
        \H^s(E)=\sup_{\delta>0}\H^s_\delta(E).
    \end{equation*}
\end{definition}
We let $\dim_H(X)$ be the infimum of those values of $s$ for which $\H^s(X)=0$. If no such $s$ exists, we let $\dim_H(X)=\infty$. The quantity $\dim_H(X)$ is called the Hausdorff dimension of $X$.

Given a measure space $(R,\mu)$, and $f_i, f$ measurable functions valued in some metric space $(X, \metric)$, we say that $f_i\to f$ in measure, if for every $\varepsilon>0$,
\begin{equation*}
    \mu(\{x\in R: \metric(f_i(x), f(x))\geq \varepsilon\})\to 0.
\end{equation*}
We recall the basic fact that if $\mu$ is $\sigma$-finite, then this notion of convergence is induced by a metric.
The following averaging lemma is the main tool we use to show convergence in measure.

\begin{lemma}\label{L:av-new}
    Suppose $(R,\mu)$ is a probability space and $A\in[0,1]$. Whenever $\xi\colon R \to \reals$ a measurable function such that
    \begin{equation*}
        \xi\leq 1 \quad\text{a.e.}\quad\text{and}\quad \int_R \xi\diff\mu\geq 1-A,
    \end{equation*}
    then
    \begin{equation*}
        \mu\{(x\in R: \xi(x)>1-\sqrt{A})\}\geq 1-\sqrt{A}.
    \end{equation*}
\end{lemma}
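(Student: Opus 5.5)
This is a Markov-type inequality applied to the nonnegative function $1-\xi$. Set $\eta=1-\xi$; since $\xi\leq 1$ a.e., $\eta\geq 0$ a.e. Because $\mu(R)=1$, the integral hypothesis becomes
\begin{equation*}
    \int_R \eta\diff\mu = 1-\int_R\xi\diff\mu\leq A.
\end{equation*}
If $\xi$ is not integrable, we interpret $\int_R\xi\diff\mu\geq 1-A$ as saying the integral exists. Since $\xi^+\leq 1$ is integrable, the integral is well defined in $[-\infty,1]$, and the hypothesis forces it to be finite. Hence $\eta$ is integrable.

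For $A>0$, Markov's inequality gives
\begin{equation*}
    \mu(\{\eta\geq\sqrt{A}\})\leq \frac{1}{\sqrt{A}}\int_R\eta\diff\mu\leq \frac{A}{\sqrt{A}}=\sqrt{A}.
\end{equation*}
The complement of $\{\eta\geq\sqrt{A}\}$ is, up to a null set, exactly $\{x\in R:\xi(x)>1-\sqrt{A}\}$. Its measure is therefore at least $1-\sqrt{A}$, which is the claim.

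The case $A=0$ must be handled separately, because the Markov bound would divide by zero. Here $\int\eta\diff\mu\leq 0$ with $\eta\geq 0$, so $\eta=0$ a.e. Then $\xi=1>1-\sqrt{0}$ a.e., so the set has full measure $1=1-\sqrt{A}$.

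The argument has no real obstacle. The only points needing care are the degenerate case $A=0$ and the use of $\mu(R)=1$ to convert $\int\xi\geq 1-A$ into $\int(1-\xi)\leq A$.
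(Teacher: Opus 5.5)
For $A\in(0,1]$ your argument is correct and matches the paper's. The paper sets $\lambda=\mu(\{\xi>1-\sqrt{A}\})$, bounds $\xi\le 1$ on that set and $\xi\le 1-\sqrt{A}$ off it, and obtains $1-A\le \lambda+(1-\lambda)(1-\sqrt{A})$, i.e.\ $\sqrt{A}(1-\lambda)\le A$. That is exactly Markov's inequality for $1-\xi$, written out by hand. Your remark on the integrability of $\xi$ is a harmless extra.

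The step that fails is your treatment of $A=0$. From $\xi=1$ a.e.\ you conclude $\xi>1-\sqrt{0}$ a.e. But $1-\sqrt{0}=1$, and $1>1$ is false, so the set $\{\xi>1\}$ is $\mu$-null. In fact the statement is false at $A=0$: take $\xi\equiv 1$, which satisfies both hypotheses while $\mu(\{\xi>1\})=0<1$. So no argument can cover that case, and you should have flagged it rather than asserting it.

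The paper's proof has the same blind spot, since its final ``elementary arithmetic'' divides by $\sqrt{A}$. There it is harmless, because the lemma is only applied with $A>0$: in Lemma \ref{L:ac-with-error} with $A=\varepsilon/\nu(R)$, and in Lemma \ref{L:rect-estimate} with $A\ge 2\varepsilon/(b-a)$. The correct repair is either to assume $A\in(0,1]$, or to weaken the conclusion to $\mu(\{\xi\ge 1-\sqrt{A}\})\ge 1-\sqrt{A}$. Your Markov argument gives the weakened form for $A>0$, and it holds trivially at $A=0$ because $\xi=1$ a.e.
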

\begin{proof}
    Let us denote $\lambda=\mu\{(x\in R: \xi(x)>1-\sqrt{A})\}$. Then we have
    \begin{equation*}
        1-A\leq \int_R \xi \diff \mu = \int_{\{(x\in R: \xi(x)>1-\sqrt{A})\}} \xi\diff\mu + \int_{\{(x\in R: \xi(x)\leq1-\sqrt{A})\}}\xi\diff\mu\leq \lambda + (1-\lambda)(1-\sqrt{A}).
    \end{equation*}
    Using elementary arithmetic yields
    \begin{equation*}
        \lambda\geq 1- \sqrt{A}.
    \end{equation*}
\end{proof}

\begin{lemma}\label{L:ac-with-error}
    Let $(R,\Sigma)$ be a measurable space and suppose $\mu$ and $\nu$ are finite measures on $R$ with $\mu\ll\nu$.
    To each $\alpha>0$, there exists $\varepsilon>0$ such that whenever $\tilde{\nu}$ is another finite measure on $R$ with $\tilde\nu\leq \nu$ and $(\nu-\tilde\nu)(R) \leq \varepsilon$, then there is a measurable set $E\subset R$ such that $\mu(R\setminus E)\leq \alpha$ and $\mu_{|E}\ll \tilde\nu$.
\end{lemma}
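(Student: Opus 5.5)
Since $\mu\ll\nu$ and both measures are finite, the Radon--Nikodym theorem gives a density $h=\diff\mu/\diff\nu\in L^1(\nu)$ with $h\geq 0$. Because $\tilde\nu\leq\nu$, we also have $\tilde\nu\ll\nu$, so there is a density $g=\diff\tilde\nu/\diff\nu$ with $0\leq g\leq 1$ $\nu$-a.e. The hypothesis $(\nu-\tilde\nu)(R)\leq\varepsilon$ then reads
\begin{equation*}
    \int_R (1-g)\diff\nu\leq\varepsilon .
\end{equation*}
The natural candidate is $E=\{g>0\}$, where we fix a measurable representative of $g$. On $E$ the density $g$ is strictly positive, so $\tilde\nu(F)=0$ forces $\nu(F\cap E)=0$ and hence $\mu(F\cap E)=0$. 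This gives $\mu_{|E}\ll\tilde\nu$.

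It remains to make $\mu(R\setminus E)$ small uniformly in $\tilde\nu$. By Chebyshev's inequality,
\begin{equation*}
    \nu(R\setminus E)=\nu(\{1-g=1\})\leq \int_R(1-g)\diff\nu\leq\varepsilon .
\end{equation*}
Since $\mu\ll\nu$ with $\mu$ finite, the standard $\varepsilon$--$\delta$ characterisation of absolute continuity, i.e.\ the absolute continuity of the integral $\int h\diff\nu$, yields the following. For the given $\alpha$ there is $\varepsilon>0$ such that $\nu(A)\leq\varepsilon$ implies $\mu(A)\leq\alpha$. This $\varepsilon$ depends only on $\mu,\nu,\alpha$, so it works for every admissible $\tilde\nu$, and therefore $\mu(R\setminus E)\leq\alpha$.

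The only point needing care is the measure-theoretic conventions of the paper, where measures are outer measures. All of the above takes place on the $\sigma$-algebra $\Sigma$, where $\mu,\nu,\tilde\nu$ are countably additive. For a possibly non-measurable null set $F$, one passes to a measurable hull in $\Sigma$ of equal $\tilde\nu$-measure to obtain $\mu_{|E}\ll\tilde\nu$. I expect no real obstacle here; the key step is simply choosing $E$ as the positivity set of $\diff\tilde\nu/\diff\nu$ and using uniform absolute continuity.
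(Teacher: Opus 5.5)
Your proof is correct, and it differs from the paper's in the choice of $E$. Both arguments use the density $\diff\tilde\nu/\diff\nu$ and the uniform absolute continuity of $\mu$ with respect to $\nu$. You take $E$ to be the positivity set $\{\diff\tilde\nu/\diff\nu>0\}$ and bound $\nu(R\setminus E)$ by applying Markov's inequality to $1-\diff\tilde\nu/\diff\nu$. The paper instead takes the superlevel set $E=\{\diff\tilde\nu/\diff\nu\geq 1-\sqrt{\varepsilon/\nu(R)}\}$ and controls its complement through the averaging Lemma~\ref{L:av-new}, which only gives $\nu(R\setminus E)\leq\sqrt{\varepsilon\,\nu(R)}$.

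Your route is more economical. It gives the linear bound $\nu(R\setminus E)\leq\varepsilon$, so you can take $\varepsilon$ equal to the absolute-continuity threshold itself. It also needs no auxiliary lemma and never divides by $\nu(R)$.

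The paper's choice buys something stronger than the statement asks for. For small $\varepsilon$ the density is bounded below by a positive constant on $E$, which gives a quantitative comparison $\nu_{|E}\leq c^{-1}\tilde\nu_{|E}$ rather than mere absolute continuity. For the lemma as stated, and for its use in Theorem~\ref{T:pw-main}, your positivity set suffices.

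Your closing remark about outer measures and measurable hulls is harmless but unnecessary, since the statement concerns the measures on $\Sigma$.
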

\begin{proof}
    Let $g=\frac{\diff \mu}{\diff \nu}\in L^1(\nu)$. For any $r>0$, there exists $t_0>0$ such that if $G\subset R$ is measurable and $\nu(G)\leq t_0$, then 
    \begin{equation*}
        \mu(G)=\int_G g \diff \nu \leq \alpha.
    \end{equation*}
    Let $\varepsilon>0$ be such that $\sqrt{\varepsilon\nu(R)}< t_0$. Let $f=\frac{\diff \tilde\nu}{\diff \nu}$. Then, by our assumption,
    \begin{equation*}
        f\leq 1\quad\text{and}\quad \frac{1}{\nu(R)}\int f\diff\nu\geq 1- \frac{\varepsilon}{\nu(R)}.
    \end{equation*}
    Thus, by Lemma \ref{L:av-new}, the set $E=\{x\in R: f(x)\geq 1- \sqrt{\frac{\varepsilon}{\nu(R)}}\}$ satisfies
    \begin{equation*}
        \frac{1}{\nu(R)}\nu(E)\geq 1- \sqrt{\frac{\varepsilon}{\nu(R)}},
    \end{equation*}
    i.e.
    \begin{equation*}
        \nu(R\setminus E)\leq \sqrt{\varepsilon\nu(R)}.
    \end{equation*}
    Whence, by our assumption on $\varepsilon$, $\mu(R\setminus E)\leq \alpha$. On the other hand, since $f$ is bounded away from zero on $E$, it is the case that $\nu_{|E}\ll \tilde\nu_{|E}$, which implies $\mu_{|E}\ll \tilde\nu$.
\end{proof}

\subsection{Hausdorff distance}
Let $(X, \metric_X)$ and $(Y, \metric_Y)$ be metric spaces. On the product $X\times Y$ we shall convene to define the maximum product metric:
\begin{equation*}
    \metric_{X\times Y}((x_0,y_0), (x_1,y_1))=\metric((x_0,y_0), (x_1,y_1))
    =\max\{\metric_X(x_0,x_1), \metric_Y(y_0,y_1)\}.
\end{equation*}
For a subset $E\subset X$ and $r\geq 0$, we let
\begin{equation*}
    B_X(E, r)=B(E,r)=\{x\in X: \dist(x,y)\leq r\}.
\end{equation*}
For a pair of non-empty subsets $E,F\subset X$, we define their \emph{Hausdorff distance} by
\begin{equation*}
    \metric_{H(X)}(E,F)=\max\{\inf\{r\geq 0: E\subset B(F,r)\},\inf\{r\geq 0: F\subset B(E,r)\}\}.
\end{equation*}
By $H(X)$ we denote the collection of non-empty closed \emph{bounded} subsets of $X$. With this notation, $(H(X), \metric_{H(X)})$ is a metric space. If $X$ is complete, separable or compact, respectively, so is $H(X)$ \cite{AT}. The following lemma follows immediately from the definition of the maximum product metric.

\begin{lemma}\label{L:product-hausdorff-est}
    Suppose $X,Y$ are metric spaces and let $p_X\colon X\times Y \to X$ and $p_Y\colon X\times Y \to Y$ be the coordinate projections. Let $E,F\subset X\times Y$ be non-empty and suppose $r\geq 0$ is such that
    \begin{equation*}
    \begin{split}
        &B_X(p_X(E), r)\supset p_X(F);\\
        &B_Y(p_Y(E), r)\supset p_Y(F).
    \end{split}
    \end{equation*}
    Then $B_{X\times Y}(E,r)\supset F$.
\end{lemma}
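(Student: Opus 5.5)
The plan is to unwind the definitions of $B_{X\times Y}(\cdot,r)$ and of the maximum product metric, and to check the inclusion $F\subset B_{X\times Y}(E,r)$ one point at a time.

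First, fix an arbitrary $(x,y)\in F$. Then $x=p_X(x,y)\in p_X(F)\subset B_X(p_X(E),r)$. So there is a point $e_1=(a_1,b_1)\in E$ with $\metric_X(x,a_1)\le r$. Likewise $y\in p_Y(F)\subset B_Y(p_Y(E),r)$ yields $e_2=(a_2,b_2)\in E$ with $\metric_Y(y,b_2)\le r$. Whenever the infimum defining the distance to $p_X(E)$ or $p_Y(E)$ is not attained, I would work with $r+\eta$ for arbitrary $\eta>0$ and let $\eta\to 0$ at the end.

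Second, I would produce a single point $e=(a,b)\in E$ that is close in both coordinates at once. The natural candidate is $e=(a_1,b_2)$. Once $e\in E$, the definition of the maximum product metric gives
\begin{equation*}
\metric_{X\times Y}\bigl((x,y),(a_1,b_2)\bigr)=\max\{\metric_X(x,a_1),\metric_Y(y,b_2)\}\le r,
\end{equation*}
hence $(x,y)\in B_{X\times Y}(E,r)$. Since $(x,y)\in F$ was arbitrary, this proves the lemma.

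The only genuine obstacle is the second step: knowing that the mixed point $(a_1,b_2)$ belongs to $E$. The hypotheses, read literally, only control the projections of $E$ separately. This membership holds exactly when $E$ contains $p_X(E)\times p_Y(E)$, that is, when $E$ is a product set. I cannot derive it from the hypotheses as stated, and in fact the statement fails without it: take $E=\{(0,0),(1,1)\}$, $F=\{(0,1)\}$ and $r=0$ in $\reals\times\reals$. Both projection inclusions hold, but $(0,1)\notin E=B(E,0)$.

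So the argument proves the lemma only under this extra membership condition. I would check at each place the lemma is used later that $E$ is a product set, or that the two witnesses can be chosen to coincide. If so, the two coordinate estimates combine under the maximum metric and give the conclusion directly. If not, the statement needs an added hypothesis before it can be proved.
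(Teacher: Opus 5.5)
Your analysis is correct: as literally stated the lemma is false, and your counterexample is valid. It does not rely on $r=0$ either. In the maximum metric $\dist((0,1),E)=1$, so the same $E$ and $F$ violate the conclusion for every $r\in[0,1)$, while both projection hypotheses hold for every $r\geq 0$.

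The paper gives no argument beyond saying that the lemma ``follows immediately from the definition of the maximum product metric''. That is exactly your coordinatewise argument. It is valid only when the two witnesses can be combined into a single point of $E$, for instance when $E=p_X(E)\times p_Y(E)$.

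This does not damage the paper. The lemma is invoked only in Lemma \ref{L:isometry-product-hausdorff}, for the product sets $E\times F$ and $G\times A$, where your restricted version applies verbatim. The one place where non-product sets are implicitly involved is Lemma \ref{L:isom-embedding-curves}. There the graphs $\{((\gamma,t),t): t\in\dom\gamma\}$ are diagonal sets, not the triple products the paper writes. Your alternative condition holds for them: any $s\in\dom\gamma_1$ that witnesses closeness in the first coordinate also witnesses it in the second.
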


\begin{lemma}\label{L:isometry-product-hausdorff}
    Suppose $X,Y$ are metric spaces.
	The identity map \[\Id\colon H(X)\times H(Y) \to H(X\times Y)\] given by
    \begin{equation*}
        \Id(E,F)=E\times F\quad\text{for $(E,F)\in H(X)\times H(Y)$}
    \end{equation*}
    is an isometry.
\end{lemma}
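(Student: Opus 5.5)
The plan is to check directly that the map $\Id(E,F)=E\times F$ lands in $H(X\times Y)$ and preserves distances. First, if $E\in H(X)$ and $F\in H(Y)$ are non-empty, closed and bounded, then so is $E\times F$ in the maximum product metric. Non-emptiness and boundedness are immediate: $\diam(E\times F)=\max\{\diam E,\diam F\}$. Closedness holds because the complement of $E\times F$ is the union $(X\setminus E)\times Y\cup X\times(Y\setminus F)$ of two open sets. So the map is well defined, and it remains to prove
\begin{equation*}
    \metric_{H(X\times Y)}(E\times F, E'\times F')=\max\{\metric_{H(X)}(E,E'),\metric_{H(Y)}(F,F')\}.
\end{equation*}

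For the inequality ``$\leq$'', let $r$ exceed the right-hand side. Then $E'\subset B_X(E,r)$ and $F'\subset B_Y(F,r)$. Since $p_X(E\times F)=E$, $p_Y(E\times F)=F$, and similarly for the primed sets (all being non-empty), Lemma \ref{L:product-hausdorff-est} gives $E'\times F'\subset B_{X\times Y}(E\times F,r)$. The same argument with the roles of the primed and unprimed sets swapped gives the reverse inclusion. Taking the infimum over $r$ yields the bound.

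For the inequality ``$\geq$'', suppose $E'\times F'\subset B_{X\times Y}(E\times F,r)$. Given $x'\in E'$, pick any $y'\in F'$. Then for each $\eta>0$ there is $(x,y)\in E\times F$ with $\max\{\metric_X(x,x'),\metric_Y(y,y')\}\leq r+\eta$, so in particular $\metric_X(x',E)\leq r+\eta$. Hence $E'\subset B_X(E,r+\eta)$ for every $\eta>0$. The symmetric statements for the other three inclusions follow in the same way, so each of $\metric_{H(X)}(E,E')$ and $\metric_{H(Y)}(F,F')$ is at most the product Hausdorff distance.

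The only mildly delicate point is this bookkeeping between infima and the closed-ball convention $B(E,r)$: one must allow an $\eta$-slack, or note that infima over $r$ absorb it. This is also the step where non-emptiness of the factors is genuinely used, since we need to choose $y'\in F'$ in order to project. I do not expect any real obstacle beyond this.
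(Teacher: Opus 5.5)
Your proposal is correct and follows the paper's proof. The bound ``$\leq$'' comes from Lemma \ref{L:product-hausdorff-est}, which you apply only with product sets; that is the situation in which it genuinely holds, since $\dist((x',y'),E\times F)=\max\{\dist(x',E),\dist(y',F)\}$. Your projection argument for ``$\geq$'' unpacks the paper's remark that $p_X$ and $p_Y$ are $1$-Lipschitz, and your check that $E\times F$ is non-empty, closed and bounded fills in a detail the paper leaves implicit.
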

\begin{proof}
    Let $E,G\in H(X)$ and $F,A\in H(Y)$. From Lemma \ref{L:product-hausdorff-est}, it follows that
    \begin{equation*}
        \metric_{H(X\times Y)}(E\times F, G\times A)\leq \max\{\metric_{H(X)}(E,G), \metric_{H(Y)}(F,A)\}.
    \end{equation*}
    The converse inequality is an immediate consequence of the fact that the projection maps $p_X$ and $p_Y$ are $1$-Lipschitz, i.e.
    \begin{equation*}
        d_X(p_X(x_0,y_0), p_X(x_1, y_1))\leq d_{X\times Y} ((x_0, y_0),(x_1, y_1)),
    \end{equation*}
    and similarly for $p_Y$.
\end{proof}

\subsection{Spaces of Lipschitz maps and residuality}
Suppose $(X, \metric_X)$ and $(Y, \metric_Y)$ are metric spaces and $f\colon X\to Y$ is a map. We call the least number $L$ such that
\begin{equation*}
    \metric_Y(f(x_0),f(x_1))\leq L \metric_X(x_0,x_1)\quad\text{for all $x_0,x_1\in X$,}
\end{equation*}
the \emph{Lipschitz constant} of $f$ and denote it by $\tLip(f)$. If $\tLip(f)$ is finite, we call $f$ Lipschitz and for $L\in[0,\infty)$, we say that $f$ is $L$-Lipschitz if $\tLip(f)\leq L$.

Recall the McShane extension theorem which asserts that whenever $X$ is a metric space, $E\subset X$ and $f\colon E\to \reals$ is $L$-Lipschitz, then there exists an extension of $f$ to $X$ which is also $L$-Lipschitz.

Suppose now $X$ is a metric space and $(Y, \lVert \cdot \rVert_Y)$ is a normed linear space. For $L\in[0,\infty)$ we define the metric space
\begin{equation*}
    \tLip_L(X,Y)=\{f\colon X\to Y: f\;\text{is $L$-Lipschitz and bounded}\}
\end{equation*}
equipped with the supremum metric
\begin{equation*}
    d(f,g)=\lVert f-g \rVert_\infty=\sup_{x\in X} \lVert g(x)-f(x)\rVert_Y.
\end{equation*}
If $Y$ is complete, then so is $\tLip_L(X,Y)$.

Given a metric space $X$ a set $H\subset X$ is called \emph{residual}, if it contains a countable intersection of dense and open sets. Recall the Baire category theorem, which asserts that if $X$ is complete and $H\subset X$ is residual, then $H$ is dense. In particular, if $Y$ is complete, then residual subsets of $\tLip_L(X,Y)$ are dense.

\subsection{Curve fragments and Alberti representations}
\label{sec:alberti}
Let us fix a metric space $X$.
We let
\begin{equation*}
    \Gamma(X)=\{\gamma:\dom\gamma\to X: \gamma\in\tLip(\dom\gamma, X);\; \dom\gamma\subset[0,1]\}.
\end{equation*}
The elements of the set $\Gamma(X)$ are called \emph{Lipschitz curve fragments}. Note that formally, by the set-theoretic definition of a function, each $\gamma\in \Gamma(X)$ is a closed subset of $X\times [0,1]$. Thus, we equip $\Gamma(X)$ with the \emph{Hausdorff metric} inherited from $H(X\times [0,1])$. We denote the metric by $\metric_{\Gamma(X)}$.
We define the space
\begin{equation*}
    P(X)=\{(\gamma, t)\in \Gamma(X)\times [0,1]: t\in\dom\gamma\}.
\end{equation*}
and equip it with the product metric inherited from $\Gamma(X)\times [0,1]$.

The notion of a curve fragment will later allow us to introduce Alberti representations and weak tangent fields, which are geometric notions which form the basis of the present paper. However, more set-theoretical aspects of the theory need to be developed first. This is made imperative by our efforts is Section \ref{S:stability}, where the geometric idea of what we do is relatively clear (and essentially contained in Subsection \ref{SS:perturbations}), however, the rigorous proofs require us to show measurability of certain operations. This is by no means obvious and requires the theory which we introduce in the remainder of this subsection and the subsequent subsection.

The main point being that given a measure $\eta$ on the space $\gamma$, we shall need to do a kind of restriction of $\eta$-a.e.~$\gamma$ to some subset of $\dom(\gamma)$. The problem is that the particular subset to which we restrict could, in principle, vary wildly from one curve fragment to another. The two main tools we use are the following lemma and the ``natural'' embedding of $\Gamma(X)$ into $P(\Gamma(X))$ introduced below.

\begin{lemma}[Lemma 3.6 \cite{BW}]\label{L:restrictions}
	Suppose $X$ is a Polish metric space and $K\subset X$ is compact. Let \[\Gamma_K(X)=\{\gamma\in\Gamma(X): \gamma^{-1}(K)\not=\emptyset\}.\] Then $\Gamma_K(X)$ is closed and the map $R_K\colon \Gamma_K(X)\to\Gamma(X)$ given by 
    \begin{equation*}
        R_K(\gamma)=\gamma_{|\gamma^{-1}(K)}
    \end{equation*}
    is a Borel map. In particular, given any compact set $C\supset K$, the map $R_K\colon \Gamma_K(X)\to\Gamma(C)$ is Borel.
\end{lemma}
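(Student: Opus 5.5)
We will prove the two claims of Lemma \ref{L:restrictions} separately: first that $\Gamma_K(X)$ is closed, and then that $R_K$ is Borel. Throughout, each $\gamma$ is viewed as its graph, a closed subset of $X\times[0,1]$; we assume this graph is compact, so that Hausdorff convergence behaves well.

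\textbf{Closedness of $\Gamma_K(X)$.} Suppose $\gamma_i\to\gamma$ in $\metric_{\Gamma(X)}$ with $\gamma_i\in\Gamma_K(X)$. Choose $t_i\in\dom\gamma_i$ with $\gamma_i(t_i)\in K$. The points $(\gamma_i(t_i),t_i)$ lie in the compact set $K\times[0,1]$, so after passing to a subsequence they converge to some $(x,t)$ with $x\in K$. Hausdorff convergence of the graphs puts $(x,t)$ in the graph of $\gamma$. Hence $t\in\dom\gamma$ and $\gamma(t)=x\in K$, so $\gamma\in\Gamma_K(X)$.

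\textbf{Borel measurability of $R_K$.} Note that $R_K(\gamma)$ is exactly the graph of $\gamma$ intersected with $K\times[0,1]$. The plan is to show that $R_K$ is upper semicontinuous in the following sense: if $\gamma_i\to\gamma$, then
\begin{equation*}
    \limsup_i R_K(\gamma_i)\subset R_K(\gamma).
\end{equation*}
This holds by the same compactness argument as above, since any limit of points $(\gamma_i(t_i),t_i)\in K\times[0,1]$ lies in the graph of $\gamma$ and in $K\times[0,1]$.

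The key step is to turn upper semicontinuity into Borel measurability. For this we use the standard description of the Borel structure of the hyperspace of compact sets: it is generated by the sets $\{F: F\cap U\neq\emptyset\}$ and $\{F: F\subset U\}$ for $U$ open. It therefore suffices to check preimages of these generators.
\begin{enumerate}
    \item For $U$ open, the set $\{\gamma: R_K(\gamma)\subset U\}$ is open in $\Gamma_K(X)$. Indeed, if $R_K(\gamma_i)\not\subset U$ along a sequence $\gamma_i\to\gamma$, then by compactness some limit point of $R_K(\gamma_i)$ lies outside $U$, and by upper semicontinuity that point belongs to $R_K(\gamma)$.
    \item Write an open $U$ as a countable increasing union of closed sets $F_n$ with $F_n\subset\operatorname{int}F_{n+1}$. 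Then $R_K(\gamma)\cap U\neq\emptyset$ if and only if $R_K(\gamma)\cap F_n\neq\emptyset$ for some $n$. Each set $\{\gamma: R_K(\gamma)\cap F\neq\emptyset\}$ with $F$ closed is closed, by the same compactness argument. So the preimage of $\{F: F\cap U\neq\emptyset\}$ is $F_\sigma$.
\end{enumerate}
Hence $R_K$ is Borel.

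We must also check that $R_K(\gamma)$ really belongs to $\Gamma(X)$. It is the restriction of $\gamma$ to the closed set $\gamma^{-1}(K)$, which is nonempty because $\gamma\in\Gamma_K(X)$, and it remains Lipschitz. The final assertion is immediate: $R_K(\gamma)$ takes values in $K\subset C$, and $\Gamma(C)$ carries the subspace metric inherited from $\Gamma(X)$.

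\textbf{Main obstacle.} The delicate point is the compactness used in the limit arguments. Points of $R_K(\gamma_i)$ stay inside $K\times[0,1]$, which is compact, so the limits above are safe. For a general closed $F$ in item (ii), compactness should instead come from the graph of $\gamma$ being compact: Hausdorff convergence $\gamma_i\to\gamma$ then makes $\overline{\bigcup_i\gamma_i}\cup\gamma$ compact.
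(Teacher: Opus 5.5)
The paper gives no proof of this lemma; it is quoted from Bate--Weigt [BW, Lemma 3.6], so there is no in-paper argument to compare with. Your proof is correct and self-contained. Closedness of $\Gamma_K(X)$ and the upper semicontinuity $\limsup_i R_K(\gamma_i)\subset R_K(\gamma)$ both follow from compactness of $K\times[0,1]$ and closedness of the limit graph. Checking preimages of the Vietoris generators $\{A: A\subset U\}$ (open) and $\{A: A\cap U\neq\emptyset\}$ ($F_\sigma$) then gives Borel measurability.

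A few minor points, none affecting correctness:
\begin{itemize}
\item Your worry in the final paragraph is unnecessary. In item (ii) the points of $R_K(\gamma_i)\cap F$ still lie in the compact set $K\times[0,1]$, so the argument from (i) applies verbatim. Your remark about $\overline{\bigcup_i\gamma_i}\cup\gamma$ being compact is true but not needed.
\item The nesting $F_n\subset\operatorname{int}F_{n+1}$ is superfluous; $F_n=\{p: \operatorname{dist}(p,U^c)\geq 1/n\}$ suffices.
\item The reduction to the generators uses two facts you should state. The Hausdorff-metric topology on compact subsets of $X\times[0,1]$ coincides with the Vietoris topology. It is second countable because $X$ is separable, so every open set is a countable union of finite intersections of subbasic sets. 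This is where the Polish hypothesis enters.
\item The compactness of graphs you assume is justified. The paper treats each $\gamma$ as a closed subset of $X\times[0,1]$, and a Lipschitz map into a complete space with closed graph has closed domain. Hence $\dom\gamma$ is compact, and the graph is its continuous image under $t\mapsto(\gamma(t),t)$.
\end{itemize}
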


\begin{lemma}\label{L:H1-Borel}
    Let $X$ be a complete and separable metric space. The function $\H^1\colon \Gamma(X)\to [0,\infty)$ given by $\H^1(\gamma)=\H^1(\gamma(\dom\gamma))$ is Borel. In fact, it is a pointwise limit of upper semi-continuous maps.
\end{lemma}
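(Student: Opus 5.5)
We will show that the function $\gamma\mapsto \H^1(\gamma(\dom\gamma))$ is a pointwise limit of upper semi-continuous functions on $\Gamma(X)$. Upper semi-continuous functions are Borel and pointwise limits of Borel functions are Borel, so this gives both claims. Fix a countable dense set $D\subset X$. For $\delta>0$ consider the Hausdorff $\delta$-content computed only with finite covers by closed balls $\overline{B}(q,r)$ with $q\in D$, $r\in\mathbb{Q}\cap(0,\delta)$. Since each image $\gamma(\dom\gamma)$ is compact (a Lipschitz image of the compact set $\dom\gamma$, which is closed because $\gamma\in H(X\times[0,1])$), we make two reductions. First, any countable cover by sets of diameter $<\delta$ can be replaced by open balls of slightly larger radius, up to a factor $2$ in the $\H^1$ gauge. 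Second, by compactness it can be taken finite. Define
\begin{equation*}
    h_\delta(\gamma)=\inf\Bigl\{\sum_{j=1}^N 2r_j:\ \gamma(\dom\gamma)\subset \bigcup_{j=1}^N U(q_j,r_j),\ q_j\in D,\ r_j\in\mathbb{Q}\cap(0,\delta)\Bigr\},
\end{equation*}
with open balls $U(q,r)$. Then $\H^1_{\delta'}\le h_\delta\le c\,\H^1_{\delta''}$ for suitable comparable scales. Instead of matching constants, the cleaner route is to use the spherical Hausdorff measure $\mathcal{S}^1$. In general $\H^1\le\mathcal{S}^1\le 2\H^1$, so this comparison alone does not yield equality; we therefore refine below.

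\textbf{Upper semicontinuity.} This is the key step. Fix $\gamma$ and a finite open cover $\{U(q_j,r_j)\}$ of the compact set $K=\gamma(\dom\gamma)$ nearly achieving $h_\delta(\gamma)$. By compactness there is $\eta>0$ with $B(K,\eta)\subset\bigcup_j U(q_j,r_j)$. If $\metric_{\Gamma(X)}(\gamma,\sigma)<\eta$ in the Hausdorff metric on $X\times[0,1]$ with the max metric, then every point of the graph of $\sigma$ is within $\eta$ of a point of the graph of $\gamma$. Hence $\sigma(\dom\sigma)\subset B(K,\eta)$, so the same cover works for $\sigma$ and $h_\delta(\sigma)\le h_\delta(\gamma)+\varepsilon$. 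Note that we only need the inclusion of $\sigma$'s image into a neighbourhood of $K$, not the reverse; this is exactly why upper (not lower) semicontinuity holds. Thus $h_\delta$ is upper semi-continuous, and so is the countable-infimum variant with open sets.

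\textbf{Passing to the limit.} To obtain exactly $\H^1$ rather than a spherical variant, replace balls by arbitrary open sets of diameter $<\delta$. The same argument applies verbatim: for compact $K$, $\H^1_\delta$ computed with finite covers by open sets of diameter $<\delta$ is monotone in $\delta$ and its limit as $\delta\to0$ is $\H^1(K)$, since any cover by sets of diameter $<\delta$ enlarges to open sets with arbitrarily small extra cost and then reduces to a finite subcover. The upper semicontinuity proof uses only openness and finiteness of the cover, never countability of the family of admissible sets. Define
\begin{equation*}
    g_\delta(\gamma)=\inf\Bigl\{\sum_j\diam U_j:\ \text{finitely many open } U_j,\ \diam U_j<\delta,\ \gamma(\dom\gamma)\subset\bigcup_j U_j\Bigr\}.
\end{equation*}
Each $g_\delta$ is upper semi-continuous by the argument above, and $g_{1/n}(\gamma)\uparrow\H^1(\gamma(\dom\gamma))$. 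The main point needing care is the identification $g_\delta(K)=\H^1_\delta(K)$ up to the strict/non-strict inequality in the diameter bound. This is harmless because $\H^1_\delta$ is monotone in $\delta$ and we only take the limit. Finally, a pointwise limit of upper semi-continuous functions is Borel, which proves the lemma. Separability and completeness are not really used, beyond ensuring that $\Gamma(X)$ is a reasonable metric space.
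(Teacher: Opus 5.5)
Your argument is correct and is essentially the paper's proof. In both, the key step is upper semicontinuity of the $\delta$-content at the compact image $K=\gamma(\dom\gamma)$: a finite open cover yields a uniform neighbourhood $B(K,\eta)$, which also contains the images of all curve fragments within Hausdorff distance $\eta$ of $\gamma$; one then lets $\delta\to 0$. The paper phrases the first step on $H(X)$ and then uses that $\gamma_k\to\gamma$ in $\Gamma(X)$ forces the images to converge in $H(X)$. Your opening detour through balls and $\mathcal{S}^1$ is not needed.

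One small caveat concerns your closing remark that completeness is unused. It conflicts with your own justification that $\dom\gamma$ is closed because the graph is closed, since that implication needs completeness of $X$ unless compact domains are built into the definition of $\Gamma(X)$. For example, with $X=(0,1]$, the graph of $t\mapsto t$ on $(0,1]$ is closed in $X\times[0,1]$, but its image is not compact. Compactness of the image is exactly what the finite-subcover step and the positivity of $\eta$ rely on.
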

\begin{proof}
    Firstly, recall that for any metric space $M$, any set $A\subset M$, and any $\delta>0$ one may alternatively define the Hausdorff content via
    \begin{equation*}
        \H^1_\delta(A)=\inf\{\sum_i\diam A_i: A\subset \bigcup_i A_i, \; \text{$A_i$ are open}\}.
    \end{equation*}
    Thus, the map
    \begin{equation*}
        \H^1_\delta\colon H(M)\to [0,\infty]
    \end{equation*}
    is upper semi-continuous at each compact $A\in H(M)$. Indeed, let $A_i$ be an admissible cover of $A$. Then, by compactness, there is a finite subcover $A_{i_1}, \dots, A_{i_n}$. It follows that there is some $r>0$ such that $B(A,r)\subset \bigcup_i A_i$. Thus, if $B\subset M$ has Hausdorff distance from $A$ at most $r$, it follows that $A_i$ also forms an admissable cover of $B$. In particular, if $B_k$ is a sequence of elements of $H(M)$ converging to $A$, it holds that
    \begin{equation*}
        \H^1_\delta(A)\geq \limsup_{k\to \infty} \H^1_\delta(B_k).
    \end{equation*}

    Suppose now $\gamma_k, \gamma \in \Gamma(X)$ and $\gamma_k\to \gamma$ in $\Gamma(X)$. Then by definition also $\gamma_k(\dom\gamma_k)\to \gamma(\dom\gamma)$ in $H(X)$. Thus, using the above paragraph for $M=X$, it follows that
    \begin{equation*}
        \H^1_\delta(\gamma(\dom(\gamma)))\geq \limsup_{k\to \infty}\H^1_\delta(\gamma_k(\dom\gamma_k)).        
    \end{equation*}
    That is, $\gamma\mapsto \H^1_\delta(\gamma(\dom(\gamma)))$ is upper semi-continuous on $\Gamma(X)$. It follows that $\H^1$, defined as above, is a pointwise limit of upper semi-continuous maps and therefore it is Borel.
\end{proof}

Combination of the two previous lemmas implies that for any compact set $K\subset X$, the functional
\begin{equation*}
    \gamma\mapsto \H^1(\gamma\cap K)
\end{equation*}
is Borel. 

 \begin{definition}
     Let $X$ be a complete and separable metric space and $\eta$ a finite Borel measure on $\Gamma(X)$. We define the measure $B(\eta)$ on $X$ given by
     \begin{equation*}
         B(\eta)(K)=\int_{\Gamma_K(X)}\int_{R_K(\gamma)}\diff\H^1_{R_K(\gamma)}\diff\eta(\gamma)\quad\text{for $K\subset X$ compact,}
     \end{equation*}
     and
     \begin{equation*}
         B(\eta)(G)=\sup\{B(\eta)(K): K\subset G; \;K\;\text{compact} \} \quad \text{for $G\subset X$ open,}
     \end{equation*}
     and
     \begin{equation*}
        B(\eta)(E)=\inf\{B(\eta)(G): G\supset E; \;G\;\text{open} )\}\quad \text{for $E\subset X$}.
     \end{equation*}
     This defines a Borel measure on $X$.
 \end{definition}
 One can verify that for any Borel set $E\subset X$, the map
 \begin{equation*}
     \gamma \mapsto \int_{\gamma^{-1}(E)}|\gamma'|\diff \H^1
 \end{equation*}
 is $\eta$-measurable and the formula
 \begin{equation}\label{E:formula-Barycenter}
     B(\eta)(E)=\int_{\Gamma(X)}\int_{\gamma^{-1}(E)}|\gamma'|\diff \H^1 \diff\eta(\gamma), \quad \text{for any Borel set $E\subset X$,}
 \end{equation}
 holds. The formula \eqref{E:formula-Barycenter} is the only formula for $B(\eta)$ we shall be using. 

Given a closed $E\subset X$, we identify $P(E)$ and $\Gamma(E)$ with the \emph{closed} subsets of $P(X)$ and $\Gamma(X)$ via
\begin{equation*}
    P(E)=\{(\gamma,t)\in P(X): \gamma\subset E \};\quad \Gamma(E)=\{\gamma\in\Gamma(X):\gamma\subset E\}.
\end{equation*}
Note that the topologies of $P(E)$ and $\Gamma(E)$ obtained by the construction coincide with those inherited from $P(X)$ and $\Gamma(X)$ after the identification above.

\begin{definition}\label{D:AR}
    Let $X$ be a complete and separable metric space and $\mu$ a finite measure on $X$. We say that a finite Borel measure $\eta$ on $\Gamma(X)$ is an \emph{Alberti representation of $\mu$}, if $\eta$ is supported on injective curve fragments and
    \begin{equation*}
        \mu\ll B(\eta).
    \end{equation*}
\end{definition}

\begin{lemma}\label{L:isom-embedding-curves}
    Let $X$ be a metric space and consider the map $E\colon \Gamma(X)\to \Gamma(P(X))$ given by
    \begin{equation*}
    \begin{split}
        &\dom(E(\gamma))=\dom\gamma \quad \text{for $\gamma\in\Gamma(X)$},\\
        &E(\gamma)(t)=(\gamma,t)\in P(X)\quad \text{for $\gamma\in \Gamma(X)$ and $t\in \dom (E(\gamma))$.}
    \end{split}
    \end{equation*}
    The map $E$ is an isometry.
\end{lemma}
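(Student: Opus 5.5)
The plan is to compute $\metric_{\Gamma(P(X))}(E(\gamma),E(\sigma))$ directly from the definitions and compare it with $\metric_{\Gamma(X)}(\gamma,\sigma)$. Throughout, curve fragments are regarded as closed subsets of a product, and every product carries the maximum metric. By definition, $E(\gamma)$ is the set
\begin{equation*}
    E(\gamma)=\{((\gamma,t),t): t\in\dom\gamma\}\subset P(X)\times[0,1]\subset \Gamma(X)\times[0,1]\times[0,1],
\end{equation*}
while $\gamma=\{(\gamma(t),t):t\in\dom\gamma\}\subset X\times[0,1]$. Since $P(X)$ carries the metric inherited from $\Gamma(X)\times[0,1]$, the distance between two points of $E(\gamma)$ and $E(\sigma)$ is
\begin{equation*}
    \metric\bigl(((\gamma,t),t),((\sigma,s),s)\bigr)=\max\{\metric_{\Gamma(X)}(\gamma,\sigma),|t-s|\}.
\end{equation*}
Before computing, I would verify that $E$ is well defined, that is, that $E(\gamma)$ is Lipschitz on $\dom\gamma$. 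This holds because $t\mapsto(\gamma,t)$ is $1$-Lipschitz into $P(X)$.

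Next I would compute the Hausdorff distance between $E(\gamma)$ and $E(\sigma)$. For any point $((\gamma,t),t)$, the distance to $E(\sigma)$ equals $\max\{\metric_{\Gamma(X)}(\gamma,\sigma),\dist(t,\dom\sigma)\}$. Taking the supremum over $t$ and symmetrising gives
\begin{equation*}
    \metric_{\Gamma(P(X))}(E(\gamma),E(\sigma))=\max\{\metric_{\Gamma(X)}(\gamma,\sigma),\ \metric_{H([0,1])}(\dom\gamma,\dom\sigma)\}.
\end{equation*}
One could phrase this step through Lemma \ref{L:isometry-product-hausdorff}, but the direct computation is just as short.

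It then remains to show that $\metric_{H([0,1])}(\dom\gamma,\dom\sigma)\le \metric_{\Gamma(X)}(\gamma,\sigma)$. This follows because the coordinate projection $X\times[0,1]\to[0,1]$ is $1$-Lipschitz for the maximum metric and maps $\gamma$ onto $\dom\gamma$, and $1$-Lipschitz maps do not increase Hausdorff distance; this is the same argument as in the second half of the proof of Lemma \ref{L:isometry-product-hausdorff}. Hence the maximum above equals $\metric_{\Gamma(X)}(\gamma,\sigma)$, and $E$ is an isometry.

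The only point requiring care is the infimum in the definition of Hausdorff distance: the equality is an equality of suprema of distances to sets, not of attained values. This causes no difficulty, because for fixed $\gamma,\sigma$ the first coordinate of the distance is the constant $\metric_{\Gamma(X)}(\gamma,\sigma)$, so it passes outside every infimum and supremum. The degenerate case $\dom\gamma=\emptyset$ I would handle by the convention used for $H(\cdot)$, namely non-empty sets, or by treating it separately.
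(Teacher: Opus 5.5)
Your proposal is correct and follows essentially the same route as the paper: both reduce $\metric_{\Gamma(P(X))}(E(\gamma),E(\sigma))$ to $\max\{\metric_{\Gamma(X)}(\gamma,\sigma),\metric_{H([0,1])}(\dom\gamma,\dom\sigma)\}$, and then absorb the domain term because the projection $X\times[0,1]\to[0,1]$ is $1$-Lipschitz. The only difference is in the first step. The paper gets that identity from Lemma \ref{L:isometry-product-hausdorff}, loosely writing $E(\gamma)$ as the product $\{\gamma\}\times\dom\gamma\times\dom\gamma$. Your pointwise computation uses the actual graph $\{((\gamma,t),t): t\in\dom\gamma\}$, so it avoids that abuse of notation.
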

\begin{proof}
    Using Lemma \ref{L:isometry-product-hausdorff} and the definition of $E$, for any $\gamma_0, \gamma_1\in \Gamma(X)$, we have
    \begin{equation*}
        \begin{split}
        &\metric_{\Gamma(P(X))}(E(\gamma_0), E(\gamma_1))\\
        &=\metric_{H(P(X)\times[0,1])}(\{\gamma_0\}\times \dom(\gamma_0)\times\dom(\gamma_0),\{\gamma_1\}\times \dom(\gamma_1)\times\dom(\gamma_1))\\
        &=\max\{\metric_{H(P(X))}(\{\gamma_0\}\times \dom(\gamma_0),\{\gamma_1\}\times\dom{\gamma_1}), \metric_{H([0,1])}(\dom(\gamma_0), \dom(\gamma_1))\}
        \end{split}
    \end{equation*}
    However, since $P(X)$ is a subset of $\Gamma(X)\times[0,1]$, it follows from Lemma \ref{L:isometry-product-hausdorff}, that the last expression above is equal to
\begin{equation*}
 \begin{split}
    &\max\{\metric_{H(\Gamma(X))}(\{\gamma_0\}, \{\gamma_1\}), \metric_{H([0,1])}(\dom(\gamma_0), \dom(\gamma_1)), \metric_{H([0,1])}(\dom(\gamma_0), \dom(\gamma_1))\}\\
    &=\max\{\metric_{\Gamma(X)}(\gamma_0, \gamma_1), \metric_{H([0,1])}(\dom(\gamma_0), \dom(\gamma_1))\}
    =\metric_{\Gamma(X)}(\gamma_0, \gamma_1),
 \end{split}
\end{equation*}
where the first inequality uses the fact that the Hausdorff distance of singletons agrees with the distance of their elements and the second equality uses the definition of distance in $\Gamma(X)$.
\end{proof}

Suppose $X$ is a metric space and $(Y,\metric)$ is another metric space, $E\subset X$ is closed and $\mathcal{E}_i, \mathcal{E}\colon P(X)\to Y$ are Borel measurable maps. We say that the sequence $\mathcal{E}_i$ $m$-converges to $\mathcal{E}$ on $E$, and write $\E_i\overset{m}{\to} \E$ on $E$, if for every $\gamma\in\Gamma(E)$, the functions $\E_i(\gamma,\cdot)$ converge in measure to $\E(\gamma, \cdot)$ on $\dom(\gamma)$.

\begin{lemma}\label{L:m-to-measure}
    Suppose $E\subset X$ is closed and $\E_i, \E\colon P(X)\to Y$ are Borel measurable maps with $\E_i\overset{m}{\to}\E$ on $E$. Let $\eta$ be a finite Borel measure on $\Gamma(X)$ which is supported on $\Gamma(E)$. Then $\E_i\to\E$ in measure on $P(X)$ with respect to the measure $(\eta\times\H^1_{[0,1]})_{|P(X)}$.
\end{lemma}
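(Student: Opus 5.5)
The plan is to reduce convergence in measure on $P(X)$ to convergence along each fibre, and then to integrate over $\gamma$ using dominated convergence. Fix $\varepsilon>0$ and put $\nu=(\eta\times\H^1_{[0,1]})_{|P(X)}$. Define
\begin{equation*}
    A_i=\{(\gamma,t)\in P(X): \metric(\E_i(\gamma,t),\E(\gamma,t))\geq\varepsilon\}.
\end{equation*}
This set is Borel, because $\E_i$ and $\E$ are Borel and $(y,z)\mapsto \metric(y,z)$ is continuous. We have to show $\nu(A_i)\to 0$.

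By Fubini--Tonelli for the product measure $\eta\times\H^1_{[0,1]}$ on $\Gamma(X)\times[0,1]$, which is finite because $\eta$ is finite and $\H^1([0,1])=1$, we can write
\begin{equation*}
    \nu(A_i)=\int_{\Gamma(X)} \H^1\bigl(\{t\in\dom\gamma: \metric(\E_i(\gamma,t),\E(\gamma,t))\geq\varepsilon\}\bigr)\diff\eta(\gamma).
\end{equation*}
Here $P(X)$ is closed in $\Gamma(X)\times[0,1]$, so $A_i$ is a Borel subset of the product, and the slice of $A_i$ at $\gamma$ is exactly the displayed subset of $\dom\gamma$. Tonelli then also gives that the integrand $h_i(\gamma)$ is a Borel function of $\gamma$. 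A small point is needed so that Fubini applies: $\Gamma(X)$ should be separable, so that the Borel $\sigma$-algebra of the product coincides with the product $\sigma$-algebra. If $X$ is not separable, I would note that $\eta$ is a finite Borel measure and hence, or by the standing assumptions, concentrated on a separable subset, and restrict to it.

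Next I bound the integrand pointwise. Since $\eta$ is supported on $\Gamma(E)$, for $\eta$-a.e.\ $\gamma$ we have $\gamma\in\Gamma(E)$. For such $\gamma$, the hypothesis $\E_i\overset{m}{\to}\E$ on $E$ says precisely that $\E_i(\gamma,\cdot)\to\E(\gamma,\cdot)$ in measure on $\dom\gamma$ with respect to $\H^1$. Hence $h_i(\gamma)\to 0$. Moreover $0\le h_i\le 1$ and $\eta$ is finite, so dominated convergence gives $\nu(A_i)\to0$, which is the claim.

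The only real obstacle is the measurability step: checking that $A_i$ is measurable in the product and that the slice functions $h_i$ are measurable. Both follow from the Borel measurability of $\E_i$ and $\E$ on $P(X)$ together with the closedness of $P(X)$, once separability is in place so that Fubini is legitimate.
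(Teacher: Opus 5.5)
Your argument is correct and is essentially the paper's: Fubini to write the measure of the bad set as $\int_{\Gamma(X)} h_i\,\diff\eta$, then pointwise convergence $h_i\to 0$ on $\Gamma(E)$, then a limit theorem. The paper uses Egorov's theorem together with the bound $h_i\leq 1$ by hand, which is just your dominated convergence step written out.

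Your separability caveat is unnecessary, and the workaround you sketch for it is not needed. Since $[0,1]$ is second countable, every open subset of $\Gamma(X)\times[0,1]$ is a countable union of open rectangles. So the Borel $\sigma$-algebra of the product already coincides with the product of the two Borel $\sigma$-algebras, and Tonelli applies with no assumption on $X$.
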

\begin{proof}
    Fix $\delta>0$. Consider for each $i\in\nat$ the function $F^i_\delta\colon \Gamma(X)\to \reals$ given by
    \begin{equation*}
        F^i_\delta(\gamma)=\H^1(\{t\in\dom\gamma: |\E_i(\gamma, t)-\E(\gamma, t)|\geq \delta\}).
    \end{equation*}
    Since each $F^i_\delta$ is the measure of a slice of the Borel measurable set $(|\E_i-\E|)^{-1}((-\infty, -\delta]\cup[\delta, \infty))$, Fubini's theorem implies that each $F^i_\delta$ is $\eta$-measurable and, moreover,
    \begin{equation}\label{E:curve-fubini}
        (\eta\times\H^1_{[0,1]})(\{(\gamma, t): |\E_i(\gamma, t)-\E(\gamma, t)|\geq \delta\}) = \int_{\Gamma(X)}F^i_\delta(\gamma)\diff \eta(\gamma).
    \end{equation}
    Since $\E_i \overset{m}{\to} \E$ on $E$, $F^i_\delta \to 0$ pointwise on $\Gamma(E)$. By Egorov's theorem, since $\eta$ is supported on $\Gamma(E)$, to each $\varepsilon>0$, we find some Borel set $\mathcal{U}\subset \Gamma(E)$ such that
    \begin{equation}\label{E:curve-small-set}
        \eta(\Gamma(X)\setminus \mathcal{U})<\varepsilon
    \end{equation}
    and $F^i_\delta \to 0$ uniformly on $\mathcal{U}$. Thus, for every $r>0$, we find some $i_0\in \nat$ such that for all $i\geq i_0$,
    \begin{equation}\label{E:curve-uniform}
        F^i_\delta(\gamma)\leq r \quad \text{for all $\gamma\in \mathcal{U}$}.
    \end{equation}
    Using \eqref{E:curve-fubini}, we have
    \begin{equation*}
    \begin{split}
        (\eta\times\H^1_{[0,1]})(\{(\gamma, t): &|\E_i(\gamma, t)-\E(\gamma, t)|\geq \delta\})
        =\int_{\mathcal{U}}F^i_\delta \diff \eta +\int_{\Gamma(X)\setminus\mathcal{U}}F^i_\delta \diff \eta\\
        &\overset{\eqref{E:curve-uniform}, \eqref{E:curve-small-set}}{\leq} r \eta(\mathcal{U})+ \eta(\Gamma(X)\setminus \mathcal{U})\leq r \eta(\Gamma(X))+\varepsilon.
    \end{split}
    \end{equation*}
    Since $\delta$ was arbitrary and $\eta(\Gamma(X))$ is finite, the statement follows by sending $\varepsilon, r \to 0$.
\end{proof}

\subsection{Restriction operators}\label{sec:restriction}
\begin{definition}
    Let $X$ be a metric space and $\mathcal{U}\subset \Gamma(X)$. A map $\mathfrak{R}\colon \mathcal{U} \to \Gamma(X)$ is called a \emph{restriction on $\mathcal{U}$}, if for every $\gamma\in\mathcal{U}$, $\dom(\mathfrak{R}(\gamma))\subset \dom(\gamma)$ and for each $t\in \dom (\mathfrak{R}(\gamma))$ it holds that $\mathfrak{R}(\gamma)(t)=\gamma(t)$. The restriction $\mathfrak{R}$ is called Borel if both $\mathcal{U}$ and $\mathfrak{R}\colon \mathcal{U} \to \Gamma(X)$ are Borel.
    The \emph{relative density} of the restriction $\mathfrak{R}$ is the map $\mathcal{D}\colon \mathcal{U}\to [0,1]$ given by
    \begin{equation*}
        \mathcal{D}(\gamma)=\begin{cases}\displaystyle
				\frac{\H^1(\mathfrak{R(\gamma)})}{\H^1(\gamma)}
					& \text{if $\H^1(\gamma)>0$},
					\\[\bigskipamount]
				1
					& \text{otherwise},
			\end{cases}
    \end{equation*}
\end{definition}
\begin{theorem}\label{T:density-est-for-restrictions}
    Suppose $X$ is a complete and separable metric space and $\mathfrak{R}\colon \mathcal{U} \to \Gamma(X)$ is a Borel restriction. Then the relative density $\mathcal{D}$ of $\mathfrak{R}$ is Borel. Suppose $\eta$ is a finite Borel measure on $\Gamma(X)$, supported inside $\mathcal{U}$. Then
    \begin{equation*}
    \begin{split}
        &B(\mathfrak{R}_\#\eta)\leq B(\eta);\\
        &(B(\eta)-B(\mathfrak{R}_\#\eta))(X)= \int_{\mathcal{U}} (1-\mathcal{D}(\gamma))\H^1(\gamma) \diff\eta(\gamma).
    \end{split}
    \end{equation*}
\end{theorem}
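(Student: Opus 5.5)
Borel measurability of $\mathcal{D}$ comes from Lemma \ref{L:H1-Borel}. By that lemma, $\gamma\mapsto\H^1(\gamma)$ is Borel on $\Gamma(X)$. Since $\mathfrak{R}$ is Borel on the Borel set $\mathcal{U}$, the composition $\gamma\mapsto\H^1(\mathfrak{R}(\gamma))$ is Borel on $\mathcal{U}$. The set $\{\gamma\in\mathcal{U}:\H^1(\gamma)>0\}$ is Borel, and on it $\mathcal{D}$ is a quotient of Borel functions; on its complement $\mathcal{D}\equiv 1$. Hence $\mathcal{D}$ is Borel.

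For the measure inequality I will use formula \eqref{E:formula-Barycenter}. Fix a Borel set $E\subset X$. By the change of variables for pushforwards,
\begin{equation*}
B(\mathfrak{R}_\#\eta)(E)=\int_{\mathcal{U}}\int_{\mathfrak{R}(\gamma)^{-1}(E)}|\mathfrak{R}(\gamma)'|\diff\H^1\diff\eta(\gamma).
\end{equation*}
The integrand is $\eta$-measurable: it is the composition of the measurable map $\sigma\mapsto\int_{\sigma^{-1}(E)}|\sigma'|\diff\H^1$ with the Borel map $\mathfrak{R}$. Since $\mathfrak{R}(\gamma)=\gamma_{|\dom\mathfrak{R}(\gamma)}$, we have $\mathfrak{R}(\gamma)^{-1}(E)=\gamma^{-1}(E)\cap\dom\mathfrak{R}(\gamma)$. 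Moreover, at $\H^1$-a.e.\ density point of $\dom\mathfrak{R}(\gamma)$ the metric derivative of $\mathfrak{R}(\gamma)$ coincides with that of $\gamma$, because metric speed is computed along the domain and Lebesgue density makes the restricted limit agree with the full one. Therefore the inner integral is at most $\int_{\gamma^{-1}(E)}|\gamma'|\diff\H^1$. Integrating against $\eta$, which is supported in $\mathcal{U}$, gives $B(\mathfrak{R}_\#\eta)(E)\le B(\eta)(E)$.

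For the mass identity I take $E=X$, so the inner integrals become $\int_{\dom\gamma}|\gamma'|\diff\H^1$ and $\int_{\dom\mathfrak{R}(\gamma)}|\gamma'|\diff\H^1$. By the area formula for Lipschitz curve fragments these equal $\H^1(\gamma)$ and $\H^1(\mathfrak{R}(\gamma))$, respectively. This step is the main obstacle: the area formula counts multiplicity, so it matches the image measure $\H^1(\gamma(\dom\gamma))$ only when $\gamma$ is injective. I would handle this first by checking whether the paper's intended setting, with $\eta$ an Alberti representation, is supported on injective fragments, as in Definition \ref{D:AR}. Otherwise I would either read $\H^1(\gamma)$ with multiplicity or add injectivity $\eta$-a.e.\ as a standing assumption. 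With injectivity in hand, both integrals equal the image measures, and by definition $\H^1(\mathfrak{R}(\gamma))=\mathcal{D}(\gamma)\H^1(\gamma)$; this also holds when $\H^1(\gamma)=0$, since both sides vanish. Subtracting the two totals, which are finite because $B(\eta)(X)\le\int\H^1(\gamma)\diff\eta$ and fragments have bounded length on compact pieces, yields
\begin{equation*}
(B(\eta)-B(\mathfrak{R}_\#\eta))(X)=\int_{\mathcal{U}}(1-\mathcal{D}(\gamma))\H^1(\gamma)\diff\eta(\gamma).
\end{equation*}
If $B(\eta)(X)$ is infinite, the identity is read as an equality of integrals of the nonnegative difference, which is justified by the pointwise inequality above.
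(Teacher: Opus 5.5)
This is essentially the paper's own argument: Borel measurability of $\mathcal{D}$ by composing $\mathfrak{R}$ with the Borel map of Lemma \ref{L:H1-Borel}, the inequality from $\mathfrak{R}(\gamma)$ being a sub-fragment of $\gamma$, and the identity by subtracting total masses.

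The one real difference is in how the totals are computed. The paper writes them directly as $\int_{\mathcal{U}}\H^1(\gamma)\diff\eta$ and $\int_{\mathcal{U}}\H^1(\mathfrak{R}(\gamma))\diff\eta$. That is the image-measure form of $B$ from the compact-set definition, so neither the area formula nor injectivity enters. Your caveat is still justified, because that form agrees with \eqref{E:formula-Barycenter} only for injective fragments, and with \eqref{E:formula-Barycenter} the identity can genuinely fail. Take $X=\reals$, $\gamma(t)=1-|2t-1|$ on $[0,1]$, $\mathcal{U}=\{\gamma\}$, $\mathfrak{R}(\gamma)=\gamma_{|[0,1/2]}$ and $\eta=\delta_\gamma$. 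Then the left side is $2-1=1$, while $\mathcal{D}(\gamma)=1$ makes the right side $0$. This is harmless in practice: in Section \ref{S:stability} the measures are Alberti representations and their restrictions, which are supported on injective fragments.

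One small correction. The totals need not be finite, since $\H^1(\gamma)\le\tLip(\gamma)$ is not uniformly bounded on $\Gamma(X)$. For example, $\eta=\sum_n 2^{-n}\delta_{\gamma_n}$ with $\gamma_n(t)=4^nt$ gives $B(\eta)(X)=\infty$. Reading the identity as the integral of the nonnegative pointwise difference, as your fallback does, is the right fix. The paper also subtracts without comment, and its subsequent corollary imposes a uniform Lipschitz bound for exactly this reason.
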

\begin{proof}
    The fact that the relative density is Borel follows from the fact that it is a composition of $\mathfrak{R}$ and $\H^1$, which are both Borel (by Lemma \ref{L:H1-Borel}).
    Since curve fragments $\gamma$ with $\H^1(\gamma)=0$ contribute nothing to the barycenter, it is without loss of generality to assume that $\mathcal{U}$ contains only curve fragments with positive length. Then $B(\mathfrak{R}_\#\eta)\leq B(\eta)$ follows immediately from the definition of restriction. Moreover,
    \begin{equation*}
        B(\mathfrak{R}_\#(\eta))(X)=\int_{\Gamma(X)} \int_{\gamma}\diff\H^1\diff \mathfrak{R}_\#(\eta)(\gamma)=\int_{\mathcal{U}}\int_{\mathfrak{R}(\gamma)}\diff\H^1\diff\eta(\gamma),
    \end{equation*}
    and, since $\eta$ is suported inside $\mathcal{U}$,
    \begin{equation*}
        B(\eta)(X)
        =\int_{\mathcal{U}}\int_{\gamma}\diff\H^1\diff\eta(\gamma).
    \end{equation*}
    Substracting these quantities yields
    \begin{equation*}
        (B(\eta)-B(\mathfrak{R}_\#(\eta)))(X)=\int_{\mathcal{U}}(\H^1(\gamma)-\H^1(\mathfrak{R}(\gamma))) \diff\eta(\gamma)
        =\int_{\mathcal{U}}(1-\mathcal{D}(\gamma))\H^1(\gamma) \diff \eta(\gamma).
    \end{equation*}
\end{proof}

\begin{corollary}
    Suppose $X$ is a complete and separable metric space and $\mathfrak{R}_i\colon \mathcal{U}\to \Gamma(X)$ is a sequence of Borel restrictions whose relative densities $\mathcal{D}_i$ converge to $1$ pointwise in $\mathcal{U}$. Suppose $\eta$ is a finite Borel measure in $\Gamma(X)$ supported on $\mathcal{U}$. Suppose further that every $\gamma\in\mathcal{U}$ satisfies $\H^1(\gamma)>0$ and that there is some $L<\infty$ such that every $\gamma\in \mathcal{U}$ is $L$-Lipschitz. Then $(B((\mathfrak{R}_i)_\#\eta)- B(\eta))(X)\to 0$. 
\end{corollary}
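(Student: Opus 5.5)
The plan is to apply Theorem \ref{T:density-est-for-restrictions} to each restriction $\mathfrak{R}_i$ and then pass to the limit with the dominated convergence theorem. The theorem gives $B((\mathfrak{R}_i)_\#\eta)\le B(\eta)$, so the quantity in the statement is, up to sign, the total mass of a nonnegative measure:
\begin{equation*}
    (B(\eta)-B((\mathfrak{R}_i)_\#\eta))(X)=\int_{\mathcal{U}}(1-\mathcal{D}_i(\gamma))\H^1(\gamma)\diff\eta(\gamma).
\end{equation*}
It therefore suffices to show that the right-hand side tends to $0$.

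The integrands behave as follows.

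\textbf{Measurability.} Each $\mathcal{D}_i$ is Borel by Theorem \ref{T:density-est-for-restrictions}, and $\gamma\mapsto \H^1(\gamma)$ is Borel by Lemma \ref{L:H1-Borel}. Hence the functions $g_i(\gamma)=(1-\mathcal{D}_i(\gamma))\H^1(\gamma)$ are Borel on $\mathcal{U}$.

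\textbf{Pointwise limit.} By hypothesis $\mathcal{D}_i\to 1$ pointwise on $\mathcal{U}$, so $g_i\to 0$ pointwise on $\mathcal{U}$.

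\textbf{Domination.} Since $\mathcal{D}_i$ takes values in $[0,1]$, we have $0\le g_i\le \H^1(\gamma)$. Every $\gamma\in\mathcal{U}$ is $L$-Lipschitz on $\dom\gamma\subset[0,1]$. The image of a set under an $L$-Lipschitz map has $\H^1$ measure at most $L$ times the $\H^1$ measure of the set, so
\begin{equation*}
    \H^1(\gamma)=\H^1(\gamma(\dom\gamma))\le L\,\H^1(\dom\gamma)\le L.
\end{equation*}
Thus $0\le g_i\le L$ on $\mathcal{U}$. The constant $L$ is $\eta$-integrable because $\eta$ is finite.

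\textbf{Conclusion.} Since $\eta$ is supported on $\mathcal{U}$, dominated convergence gives $\int_{\mathcal{U}} g_i\diff\eta\to0$, which is the claim. The positivity assumption $\H^1(\gamma)>0$ only ensures that $\mathcal{D}_i$ is given by the genuine ratio rather than by the default value $1$; this does not affect the argument.

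There is no real obstacle. The only step needing care is the uniform bound $\H^1(\gamma)\le L$, which is the reason for the Lipschitz hypothesis; without it, the domination needed for the limit would fail.
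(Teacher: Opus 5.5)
Your proposal is correct and follows the paper's proof essentially verbatim: you apply Theorem \ref{T:density-est-for-restrictions} to write the mass difference as $\int_{\mathcal{U}}(1-\mathcal{D}_i)\H^1\,\diff\eta$, bound the integrand by $L$ using $\dom\gamma\subset[0,1]$ and the $L$-Lipschitz hypothesis, and conclude by dominated convergence. Your additional remarks on Borel measurability of the integrands and on the sign convention are consistent with the paper's argument.
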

\begin{proof}
    Since the domain of each $\gamma \in \mathcal{U}$ is contained in $[0,1]$, it follows that $\H^1(\gamma)\leq L$ for every $\gamma\in \mathcal{U}$. Thus, the sequence $(1-\mathcal{D}_i)\H^1(\cdot)$, where each $\mathcal{D}_i$ is the relative density of $\mathfrak{R}_i$, is a uniformly bounded sequence of Borel maps converging pointwise to $0$ on $\mathcal{U}$. Since $\eta$ is a finite Borel measure, by dominated convergence theorem, we have
    \begin{equation*}
        \int_{\mathcal{U}} (1-\mathcal{D}_i(\gamma))\H^1(\gamma) \diff\eta(\gamma)\to 0,
    \end{equation*}
    and so the statement follows from Theorem \ref{T:density-est-for-restrictions}.
\end{proof}

Finally, we shall introduce a way to obtain restriction maps which are guaranteed to be Borel. This is obtained by considering restrictions which arise as \emph{slices} of compact sets in $P(X)$.

\begin{theorem}\label{T:Borel-restriction}
    Suppose $X$ is a complete separable metric space and $\mathcal{K}\subset P(X)$ is a compact set. Let $p\colon P(X)\to \Gamma(X)$ be the projection map $p(\gamma, t)=\gamma$.
    The set $\mathcal{U}=p(\mathcal{K})$ is a compact subset of $\Gamma(X)$.
    The map
    \begin{equation*}
        \mathfrak{R}\colon \mathcal{U}\to \Gamma(X)
    \end{equation*}
    given by
    \begin{equation*}
        \mathfrak{R}(\gamma)=\gamma_{|\{t\in\dom\gamma: (\gamma,t)\in\mathcal{K}\}}
    \end{equation*}
    is a well defined, Borel restriction on $\mathcal{U}$.
\end{theorem}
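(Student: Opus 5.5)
We prove the three assertions in turn: compactness of $\mathcal{U}$, that $\mathfrak{R}$ is a well-defined restriction, and Borel measurability of $\mathfrak{R}$.

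\textbf{Compactness and well-definedness.} The projection $p$ is the restriction to $P(X)$ of the coordinate projection $\Gamma(X)\times[0,1]\to\Gamma(X)$. This projection is $1$-Lipschitz for the maximum product metric, so $\mathcal{U}=p(\mathcal{K})$ is compact. For $\gamma\in\mathcal{U}$ let
\begin{equation*}
    A_\gamma=\{t\in\dom\gamma:(\gamma,t)\in\mathcal{K}\},
\end{equation*}
which is the slice of $\mathcal{K}$ over $\gamma$. It is nonempty because $\gamma\in p(\mathcal{K})$. It is closed in $[0,1]$, being the preimage of $\mathcal{K}$ under $t\mapsto(\gamma,t)$, which is continuous from $\dom\gamma$ into $P(X)$, and $\dom\gamma$ is itself closed. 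Consequently $\gamma_{|A_\gamma}$ is a Lipschitz map on a nonempty compact subset of $[0,1]$, so it lies in $\Gamma(X)$. The two defining properties of a restriction hold by construction.

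\textbf{Borel measurability.} We show that $\mathfrak{R}$ is upper semicontinuous in the following sense: if $\gamma_k\to\gamma$ in $\mathcal{U}$, then every subsequential Hausdorff limit of the closed sets $\mathfrak{R}(\gamma_k)\subset X\times[0,1]$ is contained in $\mathfrak{R}(\gamma)$.

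To see this, take points $(\gamma_k(t_k),t_k)$ with $t_k\in A_{\gamma_k}$ converging to some $(x,t)$. Then $(\gamma_k,t_k)\in\mathcal{K}$ converges to $(\gamma,t)$, and since $\mathcal{K}$ is closed we get $t\in A_\gamma$. Moreover, $\gamma_k\to\gamma$ in the Hausdorff metric, so the graph points $(\gamma_k(t_k),t_k)$ accumulate only on the graph of $\gamma$. It follows that $x=\gamma(t)$.

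Now fix a compact $C\subset X$ containing all $\gamma(\dom\gamma)$ for $\gamma\in\mathcal{U}$; such a $C$ exists because $\mathcal{U}$ is compact. Then all the sets involved lie in $H(C\times[0,1])$, which is compact. A map into this hyperspace that is upper semicontinuous in the above sense is Borel. The standard argument: upper semicontinuity makes the preimage of each set $\{F:F\subset G\}$, with $G$ open, an open set. The same property makes the preimage of each set $\{F:F\cap V\neq\emptyset\}$, with $V$ open, an $F_\sigma$ set, because $V$ is a countable union of closed sets $V_j$ and $\{F:F\cap V_j\neq\emptyset\}$ is closed. These two families generate the Borel $\sigma$-algebra of the Vietoris topology, and on the compact space $H(C\times[0,1])$ the Vietoris topology agrees with the Hausdorff metric topology.

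Finally, $\Gamma(X)$ carries exactly the Hausdorff metric topology on graphs. Since $\Gamma(C)$ is closed in $\Gamma(X)$, Borelness of $\mathfrak{R}$ into $\Gamma(C)$ gives Borelness into $\Gamma(X)$.

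\textbf{Main obstacle.} The hardest step is the measurable-selection point, namely deducing Borelness from upper semicontinuity of the set-valued map. An alternative is to compose with Lemma \ref{L:restrictions}: apply it on the space $P(X)$ to the curve $E(\gamma)$ from Lemma \ref{L:isom-embedding-curves} and the compact set $\mathcal{K}$. This gives exactly $R_{\mathcal{K}}(E(\gamma))=E(\gamma)_{|A_\gamma}$, which is Borel in $\gamma$ since $E$ is an isometry. Projecting onto the $X$-coordinate then yields $\mathfrak{R}(\gamma)$. This route only requires checking that the pushforward of fragments under the $1$-Lipschitz evaluation $P(X)\to X$ is continuous on $\Gamma$. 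I would use it as a backup if the semicontinuity argument becomes messy.
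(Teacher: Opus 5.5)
Your argument is correct, and your primary route differs from the paper's. The paper never checks semicontinuity by hand. It passes through the isometric embedding $E\colon\Gamma(X)\to\Gamma(P(X))$ of Lemma~\ref{L:isom-embedding-curves} and imports Borelness of the restriction $R_{\mathcal{K}}$ from Lemma~\ref{L:restrictions} (Bate--Weigt), applied in the Polish space $P(X)$ with the compact set $\mathcal{K}$; this is essentially your backup route.

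One point about the paper's version: $R_{\mathcal{K}}(E(\gamma))$ is the fragment $t\mapsto(\gamma,t)$ on $A_\gamma$, whose first coordinate is still $\gamma$. It therefore lies in $E(\Gamma(X))$ only when $A_\gamma=\dom\gamma$, so the paper's formula $\mathfrak{R}=E^{-1}\circ R\circ E$ really needs the projection step you describe.

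In that step, the evaluation $(\gamma,t)\mapsto\gamma(t)$ is \emph{not} $1$-Lipschitz on $P(X)$. The points $(\gamma,0)$ and $(\gamma,s)$ are at distance $s$, while $|\gamma(s)-\gamma(0)|$ can be as large as the Lipschitz constant of $\gamma$ times $s$. The evaluation is only continuous, but that suffices: if $A_k\to A$ in the Hausdorff metric, then $A\cup\bigcup_k A_k$ is compact, the map is uniformly continuous there, and so the induced map on compact sets is continuous.

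Your main argument avoids this bookkeeping. Compactness of $\mathcal{K}$ puts everything inside the compact hyperspace $H(C\times[0,1])$. There, the closed-graph property you verify gives upper semicontinuity and hence Borelness; in fact preimages of open sets are $F_\sigma$. The cost is reproving by hand what the paper takes from Bate--Weigt, whose lemma also handles non-compact families of fragments.

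One wording fix: the preimage of $\{F:F\cap V_j\neq\emptyset\}$ is closed because it is the complement of the open preimage of $\{F:F\subset (C\times[0,1])\setminus V_j\}$. Closedness of the hyperspace set itself is not what you are using.
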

\begin{proof}
    The validity of the definition follows from the fact that since $\mathcal{K}$ is compact, also any set of the form
    \begin{equation*}
        \{t\in\dom\gamma: (\gamma,t)\in\mathcal{K}\}
    \end{equation*}
    is compact. Since the map $p$ is continuous, $\mathcal{U}$ is a continuous image of a compact set and thus compact. We have
    \begin{equation*}
        \mathfrak{R}= E^{-1}\circ R \circ E,
    \end{equation*}
    where $E\colon \Gamma(X)\to \Gamma(P(X))$ is defined in Lemma \ref{L:isom-embedding-curves} and $R$ is given by
    \begin{equation*}
        R(\Phi)=\Phi\cap \mathcal{K}=\Phi_{|\Phi^{-1}(\mathcal{K})}\quad \text{for $\Phi\in \Gamma(P(X))$ with $\Phi\cap\mathcal{K}\not=\emptyset$.}
    \end{equation*}
    By Lemma \ref{L:restrictions}, the map $R$ is Borel. Since $E$ is an isometry and $R$ maps $E(\Gamma(X))$ into $E(\Gamma(X))$, it follows that $\mathfrak{R}$ is Borel.
\end{proof}

\subsection{Cones and independence}
\begin{definition}
  \label{D:cones}
  Let $V$ be a finite dimensional Banach space. For $u\in V^*$
   and a \emph{width} $0<\theta<1$, we define the \emph{cone centred on $u$ of width $\theta$} to be
  \begin{equation*}
        C(u,\theta)= \{v\in V: \langle u, v \rangle \geq (1-\theta)\lVert v\rVert\}.  
  \end{equation*}
  We say that cones $C_1,\ldots, C_n \subset V$ are \emph{independent} if, for any choice of $v_i \in C_i \setminus \{0\}$ for each $1\leq i \leq n$, the $v_i$ are linearly independent. If $V=\reals^n$ we identify in the standard way $(\reals^n)^*$ and $\reals^n$ and for a vector $u\in\reals^n$ and a thickness $\theta$ we have
  \begin{equation*}
      C(u,\theta)=\{v\in V: u\cdot v \geq (1-\theta)\lVert v\rVert\}.
  \end{equation*}

  For $W\subset V$ a subspace, we define the ``conical complement'' of $W$ to be
  \begin{equation*}
      E(W,\theta)= \{v\in V : d(v,W) \geq (1-\theta)\|v\|\}.
  \end{equation*}
  Note that both of the above sets become wider as $\theta\to 1$.
\end{definition}

For the remainder of this subsection we consider a fixed metric space $X$ and a fixed finite dimensional Banach space $V$.

\begin{definition}\label{d:alberti-direction}
  Let $F\colon X \to V$ be Lipschitz and $D$ a set of the form $C(w,\theta)$ or $E(W,\theta)$.
  We say that a curve fragment $\gamma \in \Gamma$ is in the \emph{$F$-direction} of $D$ if
  \[(F\circ\gamma)'(t) \in D\setminus \{0\}\]
  for $\H^1$-a.e.\ $t\in \dom\gamma$.
\end{definition}

\begin{definition}
  \label{def:atilde}
  Fix a Lipschitz $F\colon X \to V$ and an integer $d \leq \dim V$.
  For $0<\theta<1$ we define $\tilde A(F, d, \theta)$ to be the set of all $S\subset X$
  for which there exists a Borel decomposition $S=S_{1} \cup \ldots \cup S_{M}$ and $d$-dimensional subspaces $W_{i} \subset V$
  such that, for each $1\leq i \leq M$, $\H^1(\gamma\cap S_{i})= 0$
  for every $\gamma\in \Gamma$ in the $F$-direction of $E(W_{i},\theta)$.
  Further, we define $\tilde A(F,d)$ to be the set of all $S\subset X$ that belong to $\tilde A(F,d,\theta)$ for each $0<\theta<1$.

  Let $S\subset X$ be Borel.
  A Borel $\tau \colon S \to G(m,d)$ is a \emph{$d$-dimensional weak tangent field to $S$ with respect to $F$} if, for every $\gamma\in\Gamma(X)$,
  \begin{equation*}
    (F\circ\gamma)'(t) \in \tau(\gamma(t)) \quad \text{for } \H^1 \text{-a.e. } t\in\gamma^{-1}(S).
  \end{equation*}
\end{definition}

\begin{definition}\label{D:AR-independence}
    Suppose $\eta$ is a finite Borel measure on $\Gamma(X)$.
    Suppose $F\colon X\to V$ is a Lipschitz map and $C$ a cone in $V$.
    We say that $\eta$ \emph{goes in the $F$-direction of $C$} if $\eta$-a.e.~$\gamma$ goes in the direction of $C$.
    If $X=\reals^m$ and $F$ is the identity, we say that $\eta$ \emph{goes in the direction of $C$}.
    We say that finite Borel measures $\eta_1, \dots, \eta_d$ on $\Gamma(X)$ are \emph{independent} if there exist independent cones $C_1, \dots, C_d$ in $\reals^m$ and a Lipschitz map $F\colon X \to \reals^m$ such that each $\eta_i$ goes in the $F$-direction of $C_i$.
\end{definition}

\begin{definition}
    We say that a set $S\subset X$ \emph{admits a $d$-dimensional weak tangent field} if for every $m\in\nat$ and every Lipschitz map $F\colon S \to \reals^m$, there exists a $d$-dimensional weak tangent field to $S$ with respect to $F$. We let
    \begin{equation*}
        \dim_T(S)=\min\{d\in\nat \cup\{0\}: S\; \textnormal{admits a $d$-dimensional weak tangent field}\}.
    \end{equation*}
    Given a Borel measure $\mu$ on $X$, we let
    \begin{equation*}
        \dim_T(\mu)=\min\{d\in\nat\cup\{0\}: \exists N\subset X,\; \mu(N)=0,\; \dim_T(S\setminus N)\leq d\}.
    \end{equation*}
    In both cases we convene that the minimum of an empty set is $\infty$.
\end{definition}

\begin{definition}
    Let $\mu$ be a $\sigma$-finite Borel measure on $X$. Let $d\in\nat$. We say that $\mu$ \emph{has a piece with $d$ independent Alberti representations} if there exists a compact set $K\subset X$ such that $\mu(K)>0$ and $\mu_K$ has $d$ independent Alberti representations. 
\end{definition}

The number of independent Alberti representations can be related to the notion of $\tilde{A}$ sets via \cite[Theorem 2.11]{B}, which we present explicitly in a slightly different, but equivalent, form.
\begin{theorem}\label{T:dimA-iff-AFd}
    Suppose $\mu$ is a $\sigma$-finite measure on $X$ and let $d\in\nat\cup\{0\}$. Then $\mu$ has no pieces with $d+1$ independent Alberti representations if and only if, for every $m\in\nat$ and every Lipschitz map $F\colon X \to \reals^m$, there exists an $\tilde{A}(F,d)$ set $E\subset X$ with
    \begin{equation*}
        \mu(X\setminus E)=0.
    \end{equation*}
\end{theorem}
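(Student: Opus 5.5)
This statement is a reformulation of \cite[Theorem 2.11]{B}. The plan is to deduce each direction from the formulation there, which (as I recall it) reads: for a $\sigma$-finite $\mu$ and Lipschitz $F\colon X\to\reals^m$, a Borel set $S$ with $\mu(S)>0$ either has $\mu_{|S}$ admitting $d+1$ independent Alberti representations in the $F$-directions of independent cones, or there is a $\mu$-positive Borel $S'\subset S$ belonging to $\tilde A(F,d)$. The argument rests on a dichotomy (via Hahn--Banach/duality for Alberti representations): either $\mu_{|S}$ has an Alberti representation in the $F$-direction of $E(W,\theta)$, or a positive piece of $S$ is null on every curve going in that direction. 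The task is to turn this piecewise dichotomy into the global statement "full $\mu$-measure $\tilde A(F,d)$ set" and to match the quantifiers.

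\emph{($\Leftarrow$)} Suppose that for every $F$ some $\tilde A(F,d)$ set $E$ carries $\mu$, yet a compact $K$ with $\mu(K)>0$ has $d+1$ independent Alberti representations $\eta_1,\dots,\eta_{d+1}$ going in the $F$-directions of independent cones $C_1,\dots,C_{d+1}\subset\reals^m$ (with $F$ as in Definition \ref{D:AR-independence}). Take the corresponding $E$ and its decomposition $E=S_1\cup\dots\cup S_M$ with $d$-dimensional subspaces $W_i$. Some $S_i$ has $\mu(K\cap S_i)>0$. Independence of the $d+1$ cones and $\dim W_i=d$ force some $C_j$ to meet $W_i$ only at $0$. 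By compactness of the unit sphere, $C_j\setminus\{0\}\subset E(W_i,\theta)$ for some $\theta<1$, so $\eta_j$-a.e.\ $\gamma$ goes in the $F$-direction of $E(W_i,\theta)$. Since $E\in\tilde A(F,d,\theta)$, this gives $\H^1(\gamma\cap S_i)=0$, and formula \eqref{E:formula-Barycenter} yields $B(\eta_j)(S_i)=0$. On the other hand $\mu_{|K}\ll B(\eta_j)$ and $\mu(K\cap S_i)>0$ give $B(\eta_j)(S_i)>0$, a contradiction.

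\emph{($\Rightarrow$)} Fix $F$. I would use an exhaustion argument. Let $\alpha$ be the supremum of $\mu(E)$ over Borel $\tilde A(F,d)$ sets $E$, and show that it is attained by a countable union. The difficulty is that $\tilde A(F,d,\theta)$ requires a \emph{finite} decomposition, so it is not obviously closed under countable unions. I would handle this by reading the notion "up to $\mu$-null sets". For each $\theta=1/k$ the finite decomposition can be attained up to measure $2^{-k}$; intersecting over $k$ and discarding the null remainder gives one set lying in every $\tilde A(F,d,\theta)$. If the complement of the resulting set had positive measure, pick a compact $K$ of positive measure inside it (inner regularity). Applying \cite[Theorem 2.11]{B} to $K$, the hypothesis rules out $d+1$ independent representations, so it produces a positive-measure $\tilde A(F,d)$ subset of $K$. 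This contradicts maximality.

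The main obstacle is exactly this bookkeeping of finite decompositions against countable exhaustion, together with checking that the piecewise form in \cite{B} indeed delivers sets in $\tilde A(F,d)$ for all $\theta$ simultaneously. If the exhaustion fails, the fallback is to quote the global version of \cite[Theorem 2.11]{B} directly, treating the statement as a verbatim restatement.
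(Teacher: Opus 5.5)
The paper gives no argument for this statement. It takes it as an equivalent rephrasing of \cite[Theorem 2.11]{B}, which in \cite{B} is already a global statement: for each Lipschitz $F$ there is a Borel decomposition $X=A\cup S$ with $S\in\tilde A(F,d)$, where $\mu_{|A}$ has $d+1$ Alberti representations in the $F$-directions of independent cones. In that form, ($\Rightarrow$) is immediate: if $\mu(A)>0$, a compact $K\subset A$ with $\mu(K)>0$ is a piece with $d+1$ independent representations. So no exhaustion is needed, and ($\Leftarrow$) is your contradiction argument. Your fallback is therefore the paper's route. The self-contained derivation you sketch, however, has a gap in each direction. Both gaps have the same missing ingredient: compactness of the Grassmannian $G(m,d)$, not just of the unit sphere.

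In ($\Leftarrow$), the pieces $S_i$ and subspaces $W_i$ depend on $\theta$. You cannot choose $\theta$ after seeing $W_i$: you need $C_j\setminus\{0\}\subset E(W_i,\theta_0)$ for the very $\theta_0$ at which the decomposition was taken. The fix is to choose $\theta_0$ first. The function $c(W)=\max_j\min\{\dist(v,W): v\in C_j,\ \|v\|=1\}$ is continuous on $G(m,d)$ and strictly positive by independence. Hence $c(W)\geq c>0$, and $\theta_0=1-c$ works for all $W$ simultaneously; with that choice the rest of your argument is correct.

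In ($\Rightarrow$), the ``intersect over $k$'' device does not produce a full-measure set. Intersecting sets with deficits $2^{-k}$ leaves a deficit of up to $\sum_k 2^{-k}$. Making the deficit null, e.g.\ via a $\liminf$/Borel--Cantelli set, is again a countable union of $\tilde A(F,d)$ sets, which is exactly what you were trying to avoid. What you need is that $\tilde A(F,d)$ is closed under countable unions, and this is true:
\begin{itemize}
\item If $\sup_{\|v\|=1}|\dist(v,W)-\dist(v,W')|\leq\epsilon$, then $E(W',\theta-\epsilon)\subset E(W,\theta)$.
\item Given countably many $\tilde A(F,d,\theta)$ sets, regroup all their pieces according to a finite $\epsilon$-net $W'_1,\dots,W'_N$ of $G(m,d)$.
\item Countable subadditivity of $\H^1$ along each curve then shows the union lies in $\tilde A(F,d,\theta-\epsilon)$.
\item Since an $\tilde A(F,d)$ set lies in every $\tilde A(F,d,\theta)$, this gives closure of $\tilde A(F,d)$ under countable unions.
\end{itemize}
With this lemma, and after replacing the $\sigma$-finite $\mu$ by an equivalent finite measure, your exhaustion goes through. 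Also, the levels should be $\theta_k\to 1$ rather than $\theta=1/k$, since $\tilde A(F,d,\theta)$ shrinks as $\theta\to1$.
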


Furthermore, the notion of $\tilde{A}$ sets can be characterised via weak tangent fields. This is the content of \cite[Lemma 2.8 and Lemma 2.9]{B} which we present in the form of a single theorem.
\begin{theorem}\label{T:afd-wtf}
    Let $S\subset X$, and $F\colon X \to V$ be a Lipschitz function. Then $S$ admits a $d$-dimensional tangent field with respect to $F$ if and only if $S$ is an $\tilde A(F,d)$ set.
\end{theorem}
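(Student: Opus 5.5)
We prove the two implications separately. The forward one is a compactness argument on the Grassmannian. The converse combines a curve-restriction argument with a Borel choice of limit subspaces. Throughout, $V=\reals^m$ (any finite dimensional $V$ is handled the same way after fixing a norm), and $G(m,d)$ carries the usual compact metric, e.g.\ the Hausdorff distance between unit spheres of subspaces. The key elementary fact is the following: if $\metric(W,W')<\delta$ and $v\in W'$, then $d(v,W)\le \delta\lVert v\rVert$.

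\emph{Tangent field $\Rightarrow$ $\tilde A(F,d)$.} Let $\tau\colon S\to G(m,d)$ be a Borel weak tangent field with respect to $F$, and fix $0<\theta<1$. Pick $\delta<1-\theta$. By compactness, cover $G(m,d)$ by finitely many balls $B(W_i,\delta)$, $1\le i\le M$, and set $S_i=\tau^{-1}(B(W_i,\delta))\setminus\bigcup_{j<i}S_j$; these are Borel. Let $\gamma$ be in the $F$-direction of $E(W_i,\theta)$. For $\H^1$-a.e.\ $t\in\gamma^{-1}(S_i)$ we have $0\ne v=(F\circ\gamma)'(t)\in\tau(\gamma(t))$, so $d(v,W_i)\le\delta\lVert v\rVert<(1-\theta)\lVert v\rVert$. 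This contradicts $v\in E(W_i,\theta)$. Hence $\H^1(\gamma^{-1}(S_i))=0$, and since $\gamma$ is Lipschitz, $\H^1(\gamma\cap S_i)=0$. As $\theta$ was arbitrary, $S\in\tilde A(F,d)$.

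\emph{$\tilde A(F,d)$ $\Rightarrow$ tangent field.} For $\theta_k=1-1/k$ take decompositions $S=\bigcup_i S^k_i$ with subspaces $W^k_i$. Define the Borel map $W^k\colon S\to G(m,d)$ by $W^k(x)=W^k_i$ for $x\in S^k_i$, after disjointifying the decomposition.

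The first claim is the following. For every $\gamma\in\Gamma(X)$ and every $k$, for $\H^1$-a.e.\ $t\in\gamma^{-1}(S)$,
\[
(F\circ\gamma)'(t)=0\quad\text{or}\quad d\bigl((F\circ\gamma)'(t),W^k(\gamma(t))\bigr)<\tfrac1k\lVert(F\circ\gamma)'(t)\rVert .
\]
To prove it, fix $i$ and let $A\subset\gamma^{-1}(S^k_i)$ be the measurable set where the derivative exists and lies in $E(W^k_i,\theta_k)\setminus\{0\}$. By inner regularity, write $A$ up to a null set as a countable union of compact sets $K_j$. Each restriction $\gamma_{|K_j}$ is a curve fragment in the $F$-direction of $E(W^k_i,\theta_k)$, because derivatives of a restriction agree with those of $\gamma$ at density points. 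Therefore $\H^1(\gamma(K_j)\cap S^k_i)=0$. Since $\gamma$ need not be injective, we do not pass from this image statement to a parameter statement directly. Instead we refine $K_j$ further into countably many compact pieces on which $\gamma$ is bi-Lipschitz up to a null set, or on which the metric derivative vanishes; the latter are excluded by $v\ne0$ via $\lVert (F\circ\gamma)'\rVert\le \tLip(F)\,|\gamma'|$. On the bi-Lipschitz pieces the image statement transfers to the parameters, and together this gives $\H^1(A)=0$, proving the claim.

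Next we choose the field. Embed $G(m,d)$ into matrices via orthogonal projections. The multifunction $x\mapsto\{\text{cluster points of }(W^k(x))_k\}$ has nonempty compact values and is Borel measurable, being defined from countably many Borel maps. By the Kuratowski--Ryll-Nardzewski selection theorem there is a Borel $\tau\colon S\to G(m,d)$ with $\tau(x)$ a cluster point of $W^k(x)$. Fix $\gamma$. By the claim, off a countable union of null sets, $v=(F\circ\gamma)'(t)$ satisfies $d(v,W^k(\gamma(t)))\le\frac1k\lVert v\rVert$ for all $k$. Passing to a subsequence with $W^{k}(\gamma(t))\to\tau(\gamma(t))$ gives $v\in\tau(\gamma(t))$, so $\tau$ is the required $d$-dimensional weak tangent field.

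The main obstacle is the measure-theoretic step in the first claim: transferring the $\H^1$-nullity of $\gamma\cap S_i$ (an image statement) to a parameter statement for the possibly non-injective restricted fragments. I would handle this with the bi-Lipschitz decomposition of Lipschitz curves (Kirchheim-type). Checking Borel measurability of the cluster-point multifunction is a secondary technical point.
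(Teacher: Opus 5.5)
Your proof is correct, but it takes a different route from the paper. The paper's proof is a one-line citation: for $V=\reals^m$ it invokes Lemmas 2.8 and 2.9 of Bate's paper, and for general $V$ it appeals to a linear bi-Lipschitz identification with $\reals^m$.

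You instead give a self-contained argument. The forward implication comes from a finite cover of the compact Grassmannian by balls of radius $\delta<1-\theta$. For the converse, you show that $(F\circ\gamma)'(t)\notin E(W^k(\gamma(t)),\theta_k)\setminus\{0\}$ for $\H^1$-a.e.\ $t\in\gamma^{-1}(S)$. You then choose a Borel cluster point of the fields $W^k$ via Kuratowski--Ryll-Nardzewski.

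Your route buys two things:
\begin{enumerate}
\item Everything is done directly in the norm of $V$. You never need to check how the cones $E(W,\theta)$ distort under a linear isomorphism, which is harmless only because $\tilde A(F,d)$ quantifies over all $\theta$.
\item You deal explicitly with the fact that in this paper $\Gamma(X)$ consists of merely Lipschitz, possibly non-injective fragments, while $\H^1(\gamma\cap S_i)$ is an image quantity. The paper's citation does not spell this point out.
\end{enumerate}

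The cost is two external tools, and one of them can be avoided. On each $K_j$ you have $\H^1(F(\gamma(K_j)))\le\tLip(F)\,\H^1(\gamma(K_j))=0$ and $(F\circ\gamma)'\neq 0$ a.e. The Euclidean area formula applied to the finite-dimensional map $F\circ\gamma_{|K_j}$ then already gives $\H^1(K_j)=0$, so Kirchheim's bi-Lipschitz decomposition is not needed.
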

\begin{proof}
    In case $V=\reals^m$, this is precisely the statement of Lemmas 2.8 and 2.9 of \cite{B}. For a general $V$, the statement follows from the fact that $V$ is linearly biLispchitz to $\reals^m$ for $m=\dim V$.
\end{proof}

Combination of the two above results relates Alberti representations to weak tangent fields.
\begin{theorem}\label{T:dimA-iff-wtf}
    Let $\mu$ be a $\sigma$-finite measure on $X$ and let $d\in\nat\cup\{0\}$. Then $\dim_T(\mu)\leq d$ if and only if there exists Borel set $E\subset X$ admitting a $d$-dimensional weak tangent field with
    \begin{equation*}
        \mu(X\setminus E)=0.
    \end{equation*}
    In particular,
    \begin{equation*}
        \dim_T(\mu)=\sup\{d\in\nat\cup\{0\}: \mu\;\text{has a piece with $d$ independent Alberti representations}\}
    \end{equation*}
\end{theorem}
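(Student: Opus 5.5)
The plan is to chain Theorems \ref{T:dimA-iff-AFd} and \ref{T:afd-wtf}, taking care over the quantifier ``for every $F$'' and the null sets.

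For the first equivalence, suppose first that $\dim_T(\mu)\leq d$. By definition there is a $\mu$-null set $N$ such that $S\setminus N$ admits a $d$-dimensional weak tangent field, where $S$ is the ambient set (here $X$). The set $S\setminus N$ need not be Borel, and we want a Borel $E$. Since $\mu$ is Borel, we replace $N$ by a Borel $\mu$-null set $N'\supset N$. The property of admitting a $d$-dimensional weak tangent field with respect to every Lipschitz $F$ passes to subsets: restrict $\tau$ and note that the defining condition only becomes weaker. Hence $E=X\setminus N'$ is Borel, $\mu$-conull, and admits a $d$-dimensional weak tangent field. The converse direction is immediate from the definition with $N=X\setminus E$. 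For the Borel version of the tangent field on a non-Borel set, one can instead pass through Theorem \ref{T:afd-wtf}. Being $\tilde A(F,d)$ is hereditary, since the Borel decomposition can be intersected with a Borel superset. So it is cleanest to work with $\tilde A$ sets throughout.

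For the formula, set $D=\sup\{d:\mu\text{ has a piece with $d$ independent Alberti representations}\}$. We show $\dim_T(\mu)\leq d$ if and only if $D\leq d$. If $\dim_T(\mu)\leq d$, take $E$ as above. By Theorem \ref{T:afd-wtf}, for every Lipschitz $F\colon X\to\reals^m$ the set $E$ is $\tilde A(F,d)$ with $\mu(X\setminus E)=0$. Theorem \ref{T:dimA-iff-AFd} then says $\mu$ has no piece with $d+1$ independent representations. Consequently no piece with $d'>d$ exists either, since $d+1$ of the $d'$ representations remain independent: take the corresponding sub-collection of cones. So $D\leq d$.

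Conversely, suppose $D\leq d$. Theorem \ref{T:dimA-iff-AFd} gives, for each $F$, an $\tilde A(F,d)$ set $E_F$ of full measure. This is the main obstacle: a single set $E$ independent of $F$ is needed, yet the $E_F$ are uncountably many, and Theorem \ref{T:dimA-iff-AFd} as stated only produces $F$-dependent sets. I would handle it in two steps. First, observe that the proof in \cite{B} yields a canonical choice. One takes a maximal decomposition of $\mu$ into the part with $d+1$ independent representations, which is null by assumption, and its complement. The complement is then $\tilde A(F,d)$ for all $F$ simultaneously, because the obstruction to being $\tilde A(F,d)$ is exactly the existence of a piece with $d+1$ representations going in independent $F$-directions.

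Alternatively, if one wants to use the stated results as black boxes, proceed by contradiction. Let $E$ be a Borel set maximizing (up to null sets) among Borel sets admitting a $d$-dimensional weak tangent field; such an $E$ exists by an exhaustion argument on the finite measure $\mu$, using that a countable union of such sets again admits one by gluing the fields. If $\mu(X\setminus E)>0$, there is some $F$ for which no Borel subset of positive measure of $X\setminus E$ is $\tilde A(F,d)$. This contradicts the existence of $E_F$ of full measure. Taking the minimum over $d$ then gives $\dim_T(\mu)=D$.
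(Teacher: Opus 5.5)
Your handling of the first equivalence, and of the implication from $\dim_T(\mu)\le d$ to ``no piece with $d+1$ independent Alberti representations'', is correct. It agrees with the paper, which obtains the statement only by chaining Theorem~\ref{T:afd-wtf} with Theorem~\ref{T:dimA-iff-AFd}. The gap is in the reverse implication, which is needed for $\dim_T(\mu)\le\sup\{\dots\}$. Theorem~\ref{T:dimA-iff-AFd} gives, for each Lipschitz $F$, a conull set $E_F\in\tilde A(F,d)$. But $\dim_T(\mu)\le d$ requires one Borel conull set that is $\tilde A(F,d)$ for all $F$ simultaneously. You correctly isolate this interchange of quantifiers, but neither of your arguments proves it. The paper's one-line derivation gives no argument for it either, so there is nothing to borrow there.

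\emph{Remedy (a).} A splitting of $\mu$ into ``the part with $d+1$ independent representations'' and ``its complement'' is defined only up to $\mu$-null sets. Membership in $\tilde A(F,d)$, however, is tested by $\H^1$ on arbitrary curve fragments and is destroyed by adding $\mu$-null sets. For instance, for $\mu=\delta_0$ on $\reals^2$ and $d=0$, the ``complement'' could be all of $\reals^2$. The decompositions that do produce genuine $\tilde A$ sets (those behind Theorem~\ref{T:dimA-iff-AFd}) are carried out for one fixed $F$ and fixed cones, and yield $F$-dependent sets. Your sentence about ``the obstruction'' is just Theorem~\ref{T:dimA-iff-AFd} for a fixed $F$, so asserting that one complement works for every $F$ restates what has to be proved.

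\emph{Remedy (b).} The exhaustion is fine: Borel sets admitting a $d$-dimensional weak tangent field with respect to every $F$ are closed under Borel subsets and countable unions, so a $\mu$-maximal $E$ exists. However, maximality only says that each Borel $B\subset X\setminus E$ with $\mu(B)>0$ fails $\tilde A(F_B,d)$ for some $F_B$ depending on $B$. Producing a single $F$ that is bad for all such $B$ is the same quantifier swap. In fact that intermediate claim is false under your hypothesis: $\mu_{|X\setminus E}$ has no piece with $d+1$ independent representations, so Theorem~\ref{T:dimA-iff-AFd} gives, for \emph{every} $F$, an $\tilde A(F,d)$ subset of $X\setminus E$ of full measure.

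\emph{What is actually missing.} Intersecting the sets $E_{F_k}$ handles any countable family of maps. So the real requirement is a countable family of test maps $F_k$ such that being $\tilde A(F_k,d)$ for all $k$ forces $\tilde A(F,d)$ for all $F$. For $d=0$ such a family exists: distances to a countable dense set control the metric speed $|\dot\gamma|$, and hence every $(F\circ\gamma)'$. For $d\ge 1$ nothing in the quoted results supplies such a family. As stated, they only yield the $F$-dependent version of the characterisation.
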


It should also be noted that \emph{it does not follow} from the preceding theorem, that for a Borel subset $E$ of a complete separable metric space $X$, $E$ admits a $d$-dimensional weak tangent field if and only if every $\sigma$-finite Borel measure $\mu$ on $X$ satisfies $\dim_T(\mu)\leq d$. In fact, the former statement implies the latter, but the latter statement only implies that for any one measure $\mu$ on $E$ one can choose a $\mu$-null set $N$ such that $E\setminus N$ has a $d$-dimensional weak tangent field.

The following result shows that $\dim_H \geq \dim_T$. The same conclusion may also be obtained from \cite{ArRaDimension}
by entirely different methods. Moreover, the deep result of DePhilippis and Rindler \cite[Corollary 1.12]{DeRindler} may be used to assert an even stronger result (see Remark \ref{R:AC}).
\begin{lemma}[Theorem 5.3 \cite{BKO}]\label{L:DeRindler}
    Suppose $X$ is a Polish metric space and $\mu$ a $\sigma$-finite Borel measure on $X$. If $\mu$ has $d$-independent Alberti representations, then $\dim_H \mu\geq d$.
\end{lemma}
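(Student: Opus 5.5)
Since this lemma is quoted from \cite[Theorem 5.3]{BKO}, I would reprove it by a Frostman-type density argument. The aim is to show that $\mu$ is absolutely continuous with respect to $\H^d$ on a piece of positive measure; equivalently, that $\liminf_{r\to0}\mu(B(x,r))/r^{s}=0$ for $\mu$-a.e.\ $x$ whenever $s<d$. By the mass distribution principle, the conclusion $\dim_H\mu\geq d$ then follows once we know that every Borel $E$ with $\mu(E)>0$ has $\H^s(E)>0$ for each $s<d$. So it suffices to show: if $\mu(E)>0$, then $\H^d(E)>0$.

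\textbf{Reductions.} Fix the data from Definition \ref{D:AR-independence}: independent Alberti representations $\eta_1,\dots,\eta_d$, independent cones $C_1,\dots,C_d\subset\reals^m$, and a Lipschitz $F\colon X\to\reals^m$ with $\eta_i$ going in the $F$-direction of $C_i$. Using inner regularity, restrict to a compact $K\subset E$ with $\mu(K)>0$. Since $\mu\ll B(\eta_i)$ for every $i$, we may also assume that on $K$ all the Radon--Nikodym densities $\diff\mu/\diff B(\eta_i)$ are bounded above and below.

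\textbf{Main step.} Push everything forward by $F$. Let $\nu=F_\#(\mu_{|K})$, and replace each fragment $\gamma$ by $F\circ\gamma$ (after a Borel restriction to the part of the domain where it is injective, using Theorem \ref{T:Borel-restriction}). This produces $d$ Alberti representations of $\nu$ in $\reals^m$ in the independent cones $C_i$. I would then apply \cite[Corollary 1.12]{DeRindler}. The independence of the cones forces the polar vectors to span a $d$-dimensional space at $\nu$-a.e.\ point, so that corollary gives $\nu\ll\H^d$ on $F(K)$, hence $\H^d(F(K))>0$. Because $F$ is Lipschitz, $\H^d(K)\geq \tLip(F)^{-d}\H^d(F(K))>0$, which completes the argument.

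\textbf{Main obstacle.} The hard step is transferring the Alberti representations through $F$: one has to control the loss of mass in the barycentres where $F\circ\gamma$ fails to be injective, or where $(F\circ\gamma)'=0$. The cone condition $(F\circ\gamma)'\in C_i\setminus\{0\}$ a.e.\ should handle this. It gives local bi-Lipschitz behaviour of $F\circ\gamma$ along the fragment, so that after countably many restrictions the pushed-forward fragments are injective, and Lemma \ref{L:ac-with-error} keeps $\mu$ absolutely continuous on most of $K$.

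If one wants to avoid the deep result of De Philippis--Rindler, there is an alternative for this step: a direct Fubini-type argument. Build a $d$-parameter family of Lipschitz maps from $[0,1]^d$ by concatenating flows along the independent cones. Their images sweep $K$, and the area formula together with the independence of the cones gives the lower $\H^d$ bound. I expect this alternative to be technically heavy, which is why I would use the De Philippis--Rindler route first.
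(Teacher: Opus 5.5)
Your reduction is sound: by the paper's definition of $\dim_H\mu$ it suffices to show $\H^d(K)>0$ for every compact $K$ with $\mu(K)>0$. Pulling back through a Lipschitz map is harmless. The paper does not prove this lemma; it cites \cite[Theorem 5.3]{BKO}, whose argument rests on the multilinear Kakeya inequality and gives exactly the dimension bound. Your De Philippis--Rindler route is the one the paper mentions in Remark \ref{R:AC}, and it would give the stronger conclusion $\mu\ll\H^d$.

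\textbf{First gap: the ambient dimension.} \cite[Corollary 1.12]{DeRindler} concerns $d$ normal $1$-currents in $\reals^d$ whose orienting vectors span all of $\reals^d$, and it concludes absolute continuity with respect to $\mathcal{L}^d$. Vectors spanning only a $d$-dimensional subspace of $\reals^m$, $m>d$, give nothing: for $d$ divergence-measure fields in $\reals^m$ the wave cone is the set of all $m\times d$ matrices. So you cannot conclude $\nu\ll\H^d$ directly. You need an extra reduction:
\begin{enumerate}
\item split each $\eta_i$ according to a partition of $C_i$ into sufficiently thin subcones;
\item pass to a compact $K'\subset K$ with $\mu(K')>0$ on which, for each $i$, a single subcone piece of $\eta_i$ is still an Alberti representation;
\item take a linear $P\colon\reals^m\to\reals^d$ injective on the span of the axes $w_1,\dots,w_d$ of these subcones, so the projected thin cones stay independent;
\item apply the corollary to $(P\circ F)_\#\mu_{|K'}$ in $\reals^d$, and conclude $\H^d(K)\geq\H^d(K')\geq\tLip(P\circ F)^{-d}\H^d(P(F(K')))>0$.
\end{enumerate}

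\textbf{Second gap: currents versus fragments.} The corollary is about normal currents $T_i$ with $\nu\ll\lVert T_i\rVert$, not about measures on curve fragments. This conversion, not injectivity, is the real technical step of your route. A fragment with Cantor-like domain gives the current $\omega\mapsto\int_{\dom\gamma}\langle\omega(F\gamma(t)),(F\circ\gamma)'(t)\rangle\diff t$, whose boundary generally has infinite mass. To fix this:
\begin{enumerate}
\item restrict to pieces on which the difference quotients of $F\circ\gamma$ lie uniformly in a slightly wider cone $C(u,\theta')$, with speed bounded below;
\item extend affinely across the gaps, so the added segments stay in that cone;
\item use $\langle u,v\rangle\geq(1-\theta')\lVert v\rVert$ on the cone to rule out cancellation in the superposition, so that $\lVert T_i\rVert$ dominates a fixed multiple of the barycentre;
\item use Lemma \ref{L:ac-with-error} to keep most of the mass.
\end{enumerate}
What you flag as the main obstacle is not one. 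Going in the $F$-direction of $C_i$ already forces $(F\circ\gamma)'\neq0$ a.e., so the pushed-forward barycentre dominates $F_\#B(\eta_i)$ exactly as in the proof of Theorem \ref{T:commutativity}. Non-injectivity only adds multiplicity. With these two repairs your argument proves the lemma, and in fact the stronger absolute continuity.

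Two smaller points:
\begin{itemize}
\item Your opening claim that the goal is equivalent to $\liminf_{r\to0}\mu(B(x,r))/r^s=0$ a.e.\ is false. That is a packing-type condition; Hausdorff bounds need control of the $\limsup$. Nothing later uses it.
\item The ``sweeping'' fallback is not viable. Fragments of different representations need not fit into any Lipschitz $d$-parameter family. Overcoming exactly that is what multilinear Kakeya in \cite{BKO}, or the PDE structure theorem in \cite{DeRindler}, is for.
\end{itemize}
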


We conclude the preliminary section by explicitly stating a fairly technical result from the work of Bate \cite{B} concerning a particular construction of a map that ``squashes'' a set in a direction in which the set does not have any curves. This is a basic building block of the argument we carry out in Section \ref{S:Squashing}.

\begin{proposition}[\cite{B} Proposition 3.5]
  \label{prop:basic-perturbation}
  Let $F\colon X \to V$ be Lipschitz, $0<\theta<1$ and $W \subset V$ a subspace.
  Suppose that a compact $S\subset X$ satisfies $\H^1(\gamma\cap S)=0$ for every $\gamma\in \Gamma(X)$ in the $F$-direction of $E(W,\theta)$.
  Further, suppose that $T\colon V\to \reals$ is linear with $W\leq \ker T$.
  Then for any $\varepsilon>0$ there exists a Lipschitz $f\colon X \to \reals$ and a $\rho>0$ such that:
  \begin{itemize}
  \item For every $y,z\in X$,
    \begin{equation*}
      |f(y)-f(z)| \leq |T(F(y)-F(z))| + 3(1-\theta)\|T\| \tLip(F) \metric(y,z);
    \end{equation*}
  \item For every $x\in X$,
    \begin{equation*}
      |T(F(x))-f(x)| < \varepsilon;
    \end{equation*}
  \item For every $x\in S$ and $y,z \in B(x,\rho) \cap S$,
    \begin{equation*}
      |f(y)-f(z)| \leq 3(1-\theta) \|T\|\tLip(F) \metric(y,z).
    \end{equation*}
  \end{itemize}
\end{proposition}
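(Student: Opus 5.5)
The plan is to build $f$ as a one-sided ``inf-convolution'' of $T\circ F$ with a modified cost that is cheap inside small pieces of $S$. We then use the absence of curves in the $F$-direction of $E(W,\theta)$ to show that this cheapness cannot accumulate into a macroscopic discrepancy from $T\circ F$. Normalise $\lVert T\rVert\tLip(F)=1$ and write $\lambda=3(1-\theta)$.

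\textbf{Step 1: the chain functional.} Fix $\rho>0$, to be chosen. For $y,z\in X$ define the step cost $c(y,z)=\lambda\metric(y,z)$ if $y,z\in S\cap B(x,\rho)$ for some $x\in S$, and $c(y,z)=|T(F(z)-F(y))|+\lambda\metric(y,z)$ otherwise. Take
\begin{equation*}
f(x)=\inf\Bigl\{T(F(x_0))+\sum_{j}c(x_{j-1},x_j): x_0,\dots,x_n=x\Bigr\}.
\end{equation*}
The trivial chain gives $f\leq T\circ F$. The triangle inequality for chains gives the first bullet, $|f(y)-f(z)|\leq c(y,z)\leq |T(F(y)-F(z))|+\lambda\metric(y,z)$; a symmetrised version of $f$ may be needed to get both signs. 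The third bullet holds by construction, since $c$ is $\lambda\metric$ on $\rho$-local pairs in $S$. Hence everything reduces to the second bullet, namely $T(F(x))-f(x)<\varepsilon$ for all $x$, once $\rho$ is small.

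\textbf{Step 2: a discrepancy chain forces a cone curve.} Suppose the second bullet fails for $\rho_k\to0$. Then there are chains whose total cost undercuts $T(F(x))-T(F(x_0))$ by at least $\varepsilon$. On non-cheap steps the cost dominates the increment of $T\circ F$, so the gain must come from cheap steps inside $S$ at scale $\rho_k$. On such a step $T(F(z)-F(y))$ exceeds $\lambda\metric(y,z)$, so $F(z)-F(y)$ has a $T$-component of size at least $3(1-\theta)\lVert F(z)-F(y)\rVert$ in the normalisation. Because $W\leq\ker T$, this places $F(z)-F(y)$ in $E(W,\theta)$.

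Parametrise the cheap steps by arclength. Using compactness of $S$ and Arzel\`a--Ascoli in $\Gamma(X)$ (equivalently, compactness in the Hausdorff metric on curve fragments with bounded Lipschitz constant in a compact set), pass to a limit fragment $\gamma$ with $\gamma\subset S$. Its length is bounded below by a quantity comparable to $\varepsilon$, because the total gain is at most the total length of the cheap steps. Its $F$-derivative lies a.e. in the closed cone $E(W,\theta)$, which is closed under the limit of difference quotients. This gives $\H^1(\gamma\cap S)>0$ with $\gamma$ in the $F$-direction of $E(W,\theta)$, contradicting the hypothesis. Upper semicontinuity of $\H^1_\delta$ along converging fragments, as in Lemma \ref{L:H1-Borel}, is what lets positive length survive the limit.

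\textbf{Main obstacle.} I expect the hardest part to be this limiting argument. Cheap steps are only $\rho_k$-local and need not chain consecutively, so one must extract a single fragment on which the accumulated gain is carried. The first attempt would be to discard the non-cheap steps, keep only the cheap jumps (which lie in $S$), and reorder them by the value of $T\circ F$. This produces a monotone, hence Lipschitz after reparametrisation by the $T\circ F$-value, discrete fragment in $S$. Its limit as $\rho_k\to0$ is then a genuine Lipschitz fragment with the required cone property.

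Finally, the role of the factor $3$ should be checked: it absorbs both the loss from approximating difference quotients and the margin needed for strict cone membership. If the direct chain construction proves awkward, I would instead follow the route via the compactness statement. That statement says there is an open $U\supset S$ and $\rho>0$ such that every $\rho$-chain in $U$ in the $F$-direction of $E(W,\theta)$ has $T$-variation at most $\varepsilon$. One then defines $f$ as the McShane-type infimum using that bound.
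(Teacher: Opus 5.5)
The paper does not prove this statement; it quotes it from Bate. So I am judging your argument on its own.

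Step~1 is fine, and in fact loses nothing. Since $c$ is symmetric you get $|f(y)-f(z)|\le c(y,z)$ directly, which gives the first and third bullets. Moreover, your $f$ is the largest function with $f\le T\circ F$ and $f(y)-f(z)\le c(y,z)$. Hence any $f'$ satisfying the three bullets for $(\varepsilon,\rho)$ has $f'-\varepsilon\le f\le T\circ F$. Everything therefore hinges on a uniform bound, as $\rho\to 0$, for the chain discrepancy $TF(x_n)-TF(x_0)-\sum_j c(x_{j-1},x_j)$. Your Step~2 does not establish this bound; there is a genuine gap, for three concrete reasons.

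(i) Discarding the non-cheap steps and reordering the cheap ones by the value of $T\circ F$ does not give a Lipschitz fragment. Cheap steps from different runs can have overlapping or adjacent $T\circ F$-values while lying far apart in $X$. Parametrising by $T\circ F$ is Lipschitz only if $\metric(x,x')\lesssim|TF(x)-TF(x')|$ for consecutive points, and you know this only inside a single gaining step.

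(ii) A lower bound on the lengths of the approximating chains does not survive the limit, since length is only lower semicontinuous. Upper semicontinuity of $\H^1_\delta$ cannot help either, because the approximants are finite sets with zero content.

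(iii) The a.e.\ cone property is not inherited. The derivative of the limit is a limit of averages of many small increments, so what matters is convexity, not closedness. $E(W,\theta)$ is not convex: for a line $W\subset\reals^2$ it is a double cone whose convex hull contains $W$. The condition $(F\circ\gamma)'\neq 0$ is not closed either. Moreover, a discrepancy chain contains non-gaining cheap steps and jumps, so no a.e.\ statement on the whole limit can hold.

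Your fallback has the same problem. A bound for cone-chains inside a neighbourhood $U$ does not by itself control chains made of many short runs in $S$ joined by long jumps, and that accumulation is exactly what must be ruled out.

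Here is one way to close the gap.
\begin{enumerate}
\item Reduce to chains with all points in $S$. Steps before the first and after the last point of $S$ contribute $\le 0$, and a maximal stretch outside $S$ can be replaced by a single step of no larger cost.
\item \emph{Keep} the jumps. Parametrise the whole chain by cumulative length $s_j$, and let $g_k$ interpolate $TF(x_j)$ piecewise linearly.
\item Normalise $\lVert T\rVert=\tLip(F)=1$. The discrepancy is then exactly $g_k(\ell_k)-g_k(0)-\lambda\ell_k-\sum_J|\Delta_J(TF)|$, where the last sum runs over non-cheap steps. In particular $\ell_k\le\diam(TF(S))/\lambda$.
\item Pass to a Hausdorff limit: a $1$-Lipschitz $\gamma\colon D\to S$ with $0,\ell\in D$, and $g_k\to g$ uniformly with $g=TF\circ\gamma$ on $D$.
\item Cheap steps have parameter length at most $2\rho_k$. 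So on compact subintervals of each gap $(a,b)$ of $D$, eventually only jumps are interpolated. Passing to the limit along a subsequence gives $\sum_{(a,b)}|g(b)-g(a)|\le\lim_k\sum_J|\Delta_J(TF)|$, and therefore
\[
\int_D\bigl(T((F\circ\gamma)'(t))-\lambda|\gamma'|(t)\bigr)\,\diff t\ \ge\ \varepsilon .
\]
\item Let $G$ be the set, of positive measure, where this integrand is positive. On $G$, $(F\circ\gamma)'$ is nonzero and lies in the \emph{convex} one-sided cone $\{v: Tv\ge\lambda\lVert v\rVert\}$. This cone is contained in $E(W,\theta)$, because $|Tv|\le d(v,W)$ when $W\le\ker T$.
\item Take a compact $G'\subset G$ of positive measure. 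Then $\gamma|_{G'}$ is in the $F$-direction of $E(W,\theta)$ and lies in $S$. Also $\H^1(\gamma(G'))\ge\H^1(TF(\gamma(G')))>0$, which contradicts the hypothesis.
\end{enumerate}
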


\section{Perturbations of measures to sets of low dimension}\label{S:Squashing}
The purpose of this section is to prove the density statement in Theorem \ref{T:intro-squashing} (recall that when showing a set is residual, one always needs to prove \emph{density} and \emph{openness} of certain sets). 
That is, given a measure $\mu$ on a metric space $X$, we show that any map on $X$ can be approximated with maps that squash a very large $\mu$-proportion of $X$ into a small set. 

The first subsection concerns a construction of a map that perturbs a measure on the real line. In Subsection \ref{SS:into-real-line}, this is combined with Proposition \ref{prop:basic-perturbation} to obtain mappings of measures on general metric spaces. Explicitly, we use the aforementioned proposition to obtain a map $f$ from the metric space $X$ into $\reals$ and then perturb the measure $f_\#\mu$ via a map $g\colon \reals \to \reals$. The particular properties of the two constructions result in a map $g\circ f\colon X \to \reals$ which ``squashes'' most of $\mu$ and is $1$-Lipschitz.

In Subsection \ref{SS:vector-valued-local}, we work with maps into finite dimensional Banach spaces ``coordinate by coordinate''.
By using the results of the previous subsections, we obtain maps that squash a small piece of $\mu$ (i.e.~we obtain \emph{local} results). Finally, in Subsection \ref{SS:vector-valued-global} we use a particular ``glueing'' technique from \cite{B} to glue the local results into global ones.

It is essential that the map $g\circ f$ obtained in the first two subsections is 1-Lipschitz.
This is to ensure that function constructed in the latter subsections lies in $\tLip_1(X,\reals^m)$.
A very subtle point of our construction is that the map $g$ is \textbf{not} 1-Lipschitz.
Indeed, for sometime we sought a proof for which $g$ is 1-Lipschitz, so that $g\circ f$ is automatically 1-Lipschitz.
However, it is not clear to us how to obtain such a $g$.
Instead, in the construction presented below, we permit $g$ to expand the distance between points, but only when this stretching is compensated for by $f$.

\subsection{Concentrating measure in the real line}\label{SS:real-line}

We begin with a construction that concentrates measure on the real line into a finite set, with a function that approximates the identity and which is required to be $1$-Lipschitz on scales which are not too small.
On the other hand, we allow the Lipschitz constant to be very large locally, but we require that the size of the neighbourhoods on which this happens is arbitrarily small.

Suppose $R\in (0,\infty)$, $N\in\nat, N\geq 2$, $t_0\in\reals$ are given. Define, for $k\in \mathbb{Z}$, the intervals
\begin{equation*}
    \begin{split}
        &I_k=I_k(R,N,t_0)=[t_0+kR, t_0+kR+\tfrac{R}{N}),\\
        &J_k=J_k(R,N,t_0)=[t_0+kR+\tfrac{R}{N},t_0+(k+1)R).
    \end{split}
\end{equation*}
These intervals form a partition of $\reals$ as we vary $k\in \mathbb{Z}$. Furthermore, if $R$ and $N$ are fixed, then the intervals
\begin{equation*}
    I_k(R,N,i\tfrac{R}{N}),\quad i\in\{0,\dots,N-1\}, k\in \mathbb{Z}
\end{equation*}
form a partition of $\reals$. Consequently,
    given any finite Borel measure $\mu$ on $\reals$ and any $R\in (0,\infty)$, $N\geq 2$, there exists $t_0\in [0,R)$ such that
    \begin{equation}\label{E:RL-constr-total-measure-est}
        \mu(\bigcup_{k} I_k(R,N,t_0))\leq \frac{1}{N}\mu(\reals).
    \end{equation}
We define
\begin{equation*}
    \varphi_{R,N,t_0}(t)=\begin{cases}
				N
					& \text{if $t\in I_k(R,N,t_0)$ for some $k\in\mathbb{Z}$},
					\\
				0
					& \text{otherwise},
			\end{cases}
\end{equation*}
and we let
\begin{equation*}
    g_{R,N,t_0}(t)=\int_{t_0}^t \varphi(s)\diff s.
\end{equation*}
We will write $\varphi$ instead of $\varphi_{R,N,t_0}$ and $g$ instead of $g_{R,N,t_0}$ whenever the relevant parameters are clear from the context.
Note that $g$ maps $\bigcup_k J_k$ into a countable set.

\begin{lemma}\label{L:RL-constr-sup-norm-est}
    For any $t_0\in \reals$, $R\in (0,\infty)$, $N\in \nat$ and all $t\in \reals$ it holds that
    \begin{equation}\label{E:RL-constr-identity-estimate}
        |g(t)-t|\leq |t_0|+R.
    \end{equation}
\end{lemma}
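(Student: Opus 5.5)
The plan is to exploit the periodic structure of $\varphi$. On each period $[t_0+kR, t_0+(k+1)R)$ the function $\varphi$ equals $N$ on $I_k$, which has length $R/N$, and vanishes on $J_k$. The integral of $\varphi$ over one full period is therefore exactly $N\cdot\frac{R}{N}=R$, which is the length of the period. This already suggests that $g(t)-(t-t_0)$ is $R$-periodic and controlled by $R$.

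The first step is to compute $g$ at the endpoints of the periods. Summing the contributions of whole periods gives
\begin{equation*}
    g(t_0+kR)=kR
\end{equation*}
for every $k\in\integers$. For $k<0$ one uses the convention $\int_{t_0}^{t}=-\int_t^{t_0}$, and the same per-period count applies.

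The second step is to handle a general $t$. Pick $k$ with $t\in[t_0+kR,t_0+(k+1)R)$. Since $g$ is nondecreasing, $g(t)\in[kR,(k+1)R]$. The point $t-t_0$ lies in $[kR,(k+1)R)$, so
\begin{equation*}
    |g(t)-(t-t_0)|\le R.
\end{equation*}

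The final step is the triangle inequality, which gives $|g(t)-t|\le |g(t)-(t-t_0)|+|t_0|\le R+|t_0|$. There is no real obstacle here. The only points needing care are the sign convention for negative $k$ and confirming that $\varphi$ integrates to exactly $R$ over each period; this holds because the half-open intervals $I_k$ and $J_k$ partition each period.
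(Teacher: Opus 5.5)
Your proof is correct and follows essentially the same route as the paper: both compute $g(t_0+kR)=kR$ by integrating $\varphi$ over whole periods, then use monotonicity of $g$ to get $g(t)\in[kR,(k+1)R]$ for $t\in[t_0+kR,t_0+(k+1)R)$. You also write out the final triangle-inequality step $|g(t)-t|\le|g(t)-(t-t_0)|+|t_0|$ and the sign convention for $k<0$, both of which the paper leaves implicit.
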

\begin{proof}
    For each $k\in\integers$, $k\geq 0$ we have
    \begin{equation*}
        I_k\cup J_k= [t_0+kR, t_0+ (k+1)R)
    \end{equation*}
    and for $t\in I_k\cup J_k$, we have,
    by definition,
    \begin{equation*}
         kR=\int_{t_0}^{t_0 + kR}\varphi(s)\diff s \leq g(t) \leq
         \int_{t_0}^{t_0 + (k+1)R}\varphi(s)\diff s=(k+1)R.
    \end{equation*}
    That is, $g(t)\in [kR, (k+1)R]$, which implies \eqref{E:RL-constr-identity-estimate}. The case $k<0$ is proven analogously.
\end{proof}

\begin{observation}\label{O:RL-constr-lip}
    For any $t_0\in \reals$, $R\in (0,\infty)$ and $N\in \nat$, the function $g$ is $N$-Lipschitz on $\reals$.
\end{observation}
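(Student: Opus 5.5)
Since $g(t)=\int_{t_0}^t\varphi(s)\,\diff s$ with $0\leq\varphi\leq N$ everywhere, the claim reduces to estimating the integral of a bounded nonnegative density. We first note that $\varphi=\varphi_{R,N,t_0}$ is a Borel (indeed piecewise constant) function. It takes only the values $0$ and $N$, because the intervals $I_k(R,N,t_0)$ and $J_k(R,N,t_0)$, $k\in\integers$, partition $\reals$. In particular $\varphi$ is locally integrable, so $g$ is well defined and finite everywhere.

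For $s<t$ in $\reals$ we use additivity of the integral to write
\begin{equation*}
    g(t)-g(s)=\int_s^t\varphi(r)\,\diff r .
\end{equation*}
Since $0\leq\varphi\leq N$ pointwise, this gives
\begin{equation*}
    0\leq g(t)-g(s)\leq N(t-s)=N|t-s| .
\end{equation*}
The case $s>t$ follows by symmetry, and the case $s=t$ is trivial. Hence $|g(t)-g(s)|\leq N|t-s|$ for all $s,t\in\reals$, which is exactly the statement that $g$ is $N$-Lipschitz. As a by-product, $g$ is nondecreasing.

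There is no genuine obstacle here. The only point needing a moment's care is the well-definedness of $\varphi$, namely that every $t$ lies in exactly one of the intervals $I_k$ or $J_k$. This holds because the intervals $I_k\cup J_k=[t_0+kR,t_0+(k+1)R)$ tile $\reals$ as $k$ ranges over $\integers$. Note also that the bound $N$ is not improvable in general, since $g'=N$ on the interior of each $I_k$. This is precisely why, in the subsequent construction, the expansion caused by $g$ must be compensated by the squashing map from Proposition \ref{prop:basic-perturbation}.
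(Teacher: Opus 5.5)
Your proof is correct, and it is exactly the reasoning the paper leaves implicit (the paper states this as an observation without proof): since $0\leq\varphi\leq N$, one has $0\leq g(t)-g(s)\leq N(t-s)$ for $s<t$. Your partition remark is harmless but not needed, because $\varphi$ is already well defined by its ``if \dots otherwise'' clause.
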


\begin{lemma}\label{L:RL-constr-large-scale-constant-est}
    For any $t_0\in \reals$, $R\in (0,\infty)$, $N\in \nat$ and any $s,t\in \reals$ with $|s-t|\geq 2R$, it holds that
    \begin{equation}\label{E:RL-constr-lip-est-main}
        \frac{|g(s)-g(t)|}{|s-t|}\leq \frac{\lfloor \frac{1}{R}|s-t|\rfloor+1}{\lfloor \frac{1}{R}|s-t|\rfloor-1}
    \end{equation}
\end{lemma}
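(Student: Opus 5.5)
Throughout, $g$ is non-decreasing, so the claim reduces to counting full periods. By symmetry assume $s<t$ and write $L=t-s\geq 2R$. The plan is to bound $g(t)-g(s)$ from above by the number of periods $[t_0+kR,t_0+(k+1)R)$ that $[s,t]$ meets, and to bound $L$ from below in terms of $\lfloor L/R\rfloor$.

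The key computation is already in the proof of Lemma \ref{L:RL-constr-sup-norm-est}. There it is shown that for $u\in I_k\cup J_k=[t_0+kR,t_0+(k+1)R)$ we have
\begin{equation*}
    g(u)\in[kR,(k+1)R].
\end{equation*}
This holds for every $k\in\integers$, using $\int_{I_k}\varphi=R$ and $\int_{J_k}\varphi=0$. Hence if $s\in I_a\cup J_a$ and $t\in I_b\cup J_b$, then
\begin{equation*}
    0\leq g(t)-g(s)\leq (b+1)R-aR=(b-a+1)R.
\end{equation*}

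Next we relate $b-a$ to $L$. Since $s\geq t_0+aR$ and $t<t_0+(b+1)R$, we get $L<(b-a+1)R$. It follows that $b-a+1>L/R\geq\lfloor L/R\rfloor$, so $b-a\geq\lfloor L/R\rfloor$. Similarly, $s<t_0+(a+1)R$ and $t\geq t_0+bR$ give $L>(b-a-1)R$, which is the wrong direction for an upper bound on $b-a$. The right tool is instead integer counting: $(b-a-1)R<L$ gives $b-a-1<L/R$, and since $b-a-1$ is an integer, $b-a-1\leq\lceil L/R\rceil-1\leq\lfloor L/R\rfloor$. Thus $b-a+1\leq\lfloor L/R\rfloor+2$.

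This last estimate is off by one, and fixing that is the main obstacle. The plan is to sharpen the upper bound on $g(t)-g(s)$ itself rather than on $b-a$. The cleaner route is to set $M=\lfloor L/R\rfloor$ and note that the interval $[s,t]$, of length $L<(M+1)R$, is contained in a half-open interval of length $(M+1)R$ starting at $s$. Because $\varphi$ is $R$-periodic with integral $R$ per period, the integral of $\varphi$ over any half-open interval of length $(M+1)R$ equals exactly $(M+1)R$, regardless of where it starts. Therefore
\begin{equation*}
    g(t)-g(s)=\int_s^t\varphi\leq (M+1)R.
\end{equation*}
Combining this with $L\geq MR$ gives
\begin{equation*}
    \frac{g(t)-g(s)}{L}\leq\frac{M+1}{M}\leq\frac{M+1}{M-1}.
\end{equation*}
The hypothesis $L\geq2R$ guarantees $M\geq 2$, so the denominators $M$ and $M-1$ are positive. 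This yields \eqref{E:RL-constr-lip-est-main}, and in fact a slightly stronger bound.
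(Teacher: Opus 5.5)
Your proof is correct, but the argument you finally use differs from the paper's. The paper fixes the period indices $k(s),k(t)$ with $n=|k(t)-k(s)|\ge 2$. It bounds $|g(s)-g(t)|\le (n+1)R$ using the period endpoints and monotonicity of $g$, and bounds $|s-t|\ge (n-1)R$. Only at the end does it convert to $\lfloor |s-t|/R\rfloor$, via $n\ge\lfloor |s-t|/R\rfloor$ and the fact that $n\mapsto\frac{n+1}{n-1}$ is decreasing.

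You instead use that $\varphi\ge 0$ is $R$-periodic with integral $R$ per period. This gives $\int_s^t\varphi\le (M+1)R$ with $M=\lfloor L/R\rfloor$ directly, and you pair it with the trivial bound $L\ge MR$. Your route removes the index bookkeeping and the monotonicity step, and yields the sharper constant $\frac{M+1}{M}$. The paper's version has the advantage of reusing verbatim the estimate $g(u)\in[kR,(k+1)R]$ from Lemma~\ref{L:RL-constr-sup-norm-est}.

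The ``off by one'' obstacle you identify is not real. Your intermediate bound $b-a+1\le M+2$ already gives $\frac{g(t)-g(s)}{L}\le\frac{M+2}{M}$. Moreover $\frac{M+2}{M}\le\frac{M+1}{M-1}$, since $(M+2)(M-1)=M^2+M-2\le M^2+M$. So your first route closes too, and it is essentially the paper's argument. Likewise, $L>(b-a-1)R$ is not ``the wrong direction'': it is exactly the upper bound on $b-a$ that you then use.
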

\begin{proof}
    Fix $n\in \nat$, $n\geq 2$ and let $k_0\in\integers$, $k_1\in\integers$ satisfy $k_1-k_0=n$. Suppose $s\in I_{k_0}\cup J_{k_0}$, $t\in I_{k_1} \cup J_{k_1}$.
    Now we consider what happens at the endpoints of the intervals.
    We have $s<t$ and
    \begin{equation*}
        s\leq t_0+(k_0+1)R \quad \text{and} \quad t\geq t_0+k_1R,   
    \end{equation*}
    altogether:
    \begin{equation*}
        |s-t|\geq |t_0+k_1R - t_0 + (k_0+1)R| = nR-R.
    \end{equation*}
    Similarly, using the fact that $g$ is non-decreasing,
    \begin{equation*}
        |g(s) - g(t)| \leq |g(t_0+(k_1+1)R)-g(t_0+k_0 R)| = nR+R.
    \end{equation*}
    Thus,
    \begin{equation}\label{E:RL-constr-glob-lip-est}
        \frac{|g(s)-g(t)|}{|s-t|}
        \leq \frac{nR+R}{nR-R}
        =\frac{n+1}{n-1}.
    \end{equation}
    For $t\in\reals$, define $k(t)\in\integers$ to be the unique integer for which $t\in I_{k(t)}\cup J_{k(t)}$.
    For $t,s\in \reals$, let $n(s,t)=|k(t)-k(s)|$. Observe that if $|t-s|\geq 2R$, then $n(s,t)\geq 2$. Thus, \eqref{E:RL-constr-glob-lip-est} implies that, whenever $|s-r|\geq 2R$,
    \begin{equation}\label{E:RL-constr-est-with-nst}
        \frac{|g(s)-g(t)|}{|s-t|}\leq \frac{n(s,t)+1}{n(s,t)-1}.
    \end{equation}
    As the length of each $I_k\cup J_k$ is $R$, we have
    \begin{equation*}
        n(s,t)\geq \lfloor \tfrac{1}{R}|s-t|\rfloor.
    \end{equation*}
    Thus, by \eqref{E:RL-constr-est-with-nst}, using also the fact that $n\mapsto \frac{n+1}{n-1}$ is decreasing for $n\geq 2$, \eqref{E:RL-constr-lip-est-main} holds.
\end{proof}

\begin{corollary}\label{C:RL-constr-faraway-lip-lim}
    For any fixed $N\in \nat$ with $N\geq 2$ and $r>0$, one has
    \begin{equation*}
        \lim_{R\to 0_+} \sup_{|s-t|\geq r} \frac{|g_{R,N,t_0}(s)-g_{R,N,t_0}(t)|}{|s-t|}=1,
    \end{equation*}
    the expression being independent of $t_0\in\reals$.
\end{corollary}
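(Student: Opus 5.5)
The corollary follows by squeezing the quotient between $1$ from below and the bound of Lemma \ref{L:RL-constr-large-scale-constant-est} from above. For the upper bound, fix $N\geq 2$ and $r>0$, and take $R<r/2$, so that every pair with $|s-t|\geq r$ satisfies $|s-t|\geq 2R$ and Lemma \ref{L:RL-constr-large-scale-constant-est} applies. Set $n=\lfloor |s-t|/R\rfloor$. Since $n\geq \lfloor r/R\rfloor$ and $n\mapsto \frac{n+1}{n-1}$ is decreasing for $n\geq 2$, we get
\begin{equation*}
    \sup_{|s-t|\geq r}\frac{|g(s)-g(t)|}{|s-t|}\leq \frac{\lfloor r/R\rfloor+1}{\lfloor r/R\rfloor-1}.
\end{equation*}
This bound does not depend on $t_0$, and it tends to $1$ as $R\to 0_+$, because $\lfloor r/R\rfloor\to\infty$.

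For the lower bound we need pairs $s<t$ with $t-s\geq r$ and quotient close to $1$. Any $s,t$ that are left endpoints of the periodic blocks work, namely $s=t_0+k_0R$ and $t=t_0+k_1R$. In the proof of Lemma \ref{L:RL-constr-sup-norm-est} we saw that $g(t_0+kR)=kR$. Hence
\begin{equation*}
    g(t)-g(s)=(k_1-k_0)R=t-s,
\end{equation*}
so the quotient equals $1$ exactly. Choosing $k_1-k_0\geq r/R$ makes $t-s\geq r$, so the supremum is at least $1$ for every $R$ and every $t_0$.

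Combining the two bounds, for $R<r/2$ the supremum lies between $1$ and $\frac{\lfloor r/R\rfloor+1}{\lfloor r/R\rfloor-1}$, uniformly in $t_0$, and the limit is $1$. The only point that needs care is the reduction to Lemma \ref{L:RL-constr-large-scale-constant-est}: we must take $R$ small enough that the hypothesis $|s-t|\geq 2R$ holds, and use the monotonicity of $\frac{n+1}{n-1}$ to replace $\lfloor |s-t|/R\rfloor$ by $\lfloor r/R\rfloor$. Otherwise the argument is routine.
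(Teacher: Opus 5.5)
Your proof is correct and follows the paper's route: the upper bound comes from Lemma \ref{L:RL-constr-large-scale-constant-est}, for which you supply the details the paper leaves implicit (taking $R<r/2$ and using that $\frac{n+1}{n-1}$ is decreasing), and the lower bound comes from a pair on which $g$ has difference quotient exactly $1$. Your choice of that pair is better than the paper's: you take block endpoints $t_0+k_0R$, $t_0+k_1R$ with $(k_1-k_0)R\geq r$, whereas the paper takes $s=t_0$, $t=t_0+R$ with $R\leq r$, which has $|s-t|=R\leq r$ and so lies outside the range of the supremum unless $R=r$.
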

\begin{proof}
    The inequality ``$\leq$'' follows from Lemma \ref{L:RL-constr-large-scale-constant-est}. For the converse, suppose $R\leq r$ and consider $s=t_0$, $t=t_0+R$. Then $g(s)=0$ and $g(t)=R$, whence 
    \begin{equation*}
        \sup_{|s-t|\geq r} \frac{|g_{R,N,t_0}(s)-g_{R,N,t_0}(t)|}{|s-t|} \geq 1.
    \end{equation*}
\end{proof}

\begin{theorem}\label{T:construction-in-real-line}
    For any $\eta>0$, there exists $L\in[1,\infty)$ such that the following holds. Given any $D>0$, $r>0$, any Borel probability measure $\mu$ on $[-D,D]$ and any $\varepsilon>0$, there exists a map $h\colon [-D,D] \to\reals$ such that
    \begin{enumerate}
        \item\label{Enum:RL-C-1} $\lVert h- \Id \rVert_{\infty}< \varepsilon$,
        \item\label{Enum:RL-C-2} $h$ is $L$-Lipschitz,
        \item\label{Enum:RL-C-3} $|h(s)-h(t)|\leq |s-t|$ whenever $s,t\in [-D,D]$ satisfy $|s-t|\geq r$,
        \item\label{Enum:RL-C-4} there is some compact set $E\subset [-D,D]$ such that $\mu(E)\geq 1- \eta$ and $h(E)$ is finite.
    \end{enumerate}
\end{theorem}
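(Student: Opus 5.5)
The plan is to take $h$ to be a slightly shrunk copy of the function $g=g_{R,N,t_0}$ from Subsection \ref{SS:real-line}. We fix $N\geq 2$ depending only on $\eta$ and set $L=N$. The point of the shrinking is that Lemma \ref{L:RL-constr-large-scale-constant-est} only gives a large-scale Lipschitz ratio $\frac{n+1}{n-1}>1$, whereas \ref{Enum:RL-C-3} requires ratio at most $1$. Concretely, fix $\eta>0$, choose an integer $N\geq 2$ with $\frac1N\leq\eta/2$, and put $L=N$. Then let $D,r,\mu,\varepsilon$ be given, and take $R\in(0,r/4]$ small, to be fixed at the end. Set $n_0=\lfloor r/R\rfloor\geq 2$ and $c=\frac{n_0-1}{n_0+1}\in(0,1)$. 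By the discussion before \eqref{E:RL-constr-total-measure-est}, choose $t_0\in[0,R)$ with $\mu(\bigcup_k I_k(R,N,t_0))\leq \frac1N\leq \eta/2$. Finally define $h=c\, g_{R,N,t_0}$ restricted to $[-D,D]$.

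We verify the four properties in order.

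\ref{Enum:RL-C-2}: By Observation \ref{O:RL-constr-lip}, $g$ is $N$-Lipschitz, so $h$ is $cN\leq N=L$ Lipschitz. Note that $L$ does not depend on $D,r,\mu,\varepsilon$.

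\ref{Enum:RL-C-3}: If $|s-t|\geq r\geq 2R$, Lemma \ref{L:RL-constr-large-scale-constant-est} bounds $|g(s)-g(t)|/|s-t|$ by $\frac{m+1}{m-1}$ with $m=\lfloor |s-t|/R\rfloor\geq n_0$. Since $m\mapsto\frac{m+1}{m-1}$ is decreasing, this is at most $\frac{n_0+1}{n_0-1}=c^{-1}$. Hence $|h(s)-h(t)|\leq |s-t|$.

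\ref{Enum:RL-C-1}: By Lemma \ref{L:RL-constr-sup-norm-est}, for $|t|\leq D$ we have $|g(t)|\leq D+2R$. Therefore
\begin{equation*}
|h(t)-t|\leq |g(t)-t|+(1-c)|g(t)|\leq 2R+\tfrac{2}{n_0+1}(D+2R).
\end{equation*}
As $R\to0$ we have $n_0\to\infty$, so this tends to $0$, and we choose $R$ so small that it is $<\varepsilon$.

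\ref{Enum:RL-C-4}: The set $A=[-D,D]\cap\bigcup_k J_k$ has $\mu(A)\geq 1-\eta/2$. Only finitely many $k$ satisfy $J_k\cap[-D,D]\neq\emptyset$, and $\varphi=0$ on each $J_k$, so $g$ is constant on each $J_k$. Hence $h(A)$ is finite. By inner regularity of the finite Borel measure $\mu$ on $[-D,D]$, there is a compact $E\subset A$ with $\mu(E)\geq 1-\eta$, and $h(E)\subset h(A)$ is finite.

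The only genuinely delicate point is reconciling \ref{Enum:RL-C-3} with the fact that $g$ itself is never $1$-Lipschitz at large scales. The rescaling by $c$ handles this, at the cost of an error $(1-c)(D+2R)$ in the sup-norm estimate. That error is harmless because $D$ is fixed before $R$ is chosen, so $R$ can be taken small enough to absorb it.
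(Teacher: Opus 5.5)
Your proof is correct and follows the paper's construction: you shrink $g_{R,N,t_0}$ by a constant factor below $1$ so that the large-scale ratio bound of Lemma \ref{L:RL-constr-large-scale-constant-est} yields \ref{Enum:RL-C-3}, with $L=N$ and $t_0$ chosen by pigeonhole. The one difference is in your favour: your factor $c=\frac{n_0-1}{n_0+1}$ tends to $1$ as $R\to0$ with $D$ fixed, so you correctly control the error $(1-c)|g|\le(1-c)(D+2R)$, whereas the paper's fixed factor $1-\frac{\varepsilon}{2}$ in \eqref{E:RL-contrs-lambda-est} tacitly needs $|g|\le 1$ on $[-D,D]$ and should really depend on $D$.
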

\begin{proof}
    Find $N\in\nat$ such that $\frac{1}{N}\leq \eta$ and
    let $L=N$.
    Given any $\varepsilon>0$, if $|g(t)-t|< \frac{1}{2}\varepsilon$ for $t\in [-D,D]$, then
    \begin{equation}\label{E:RL-contrs-lambda-est}
        |(1-\frac{\varepsilon}{2})g(t) - t|< \varepsilon \quad \text{for all $t\in[-D,D]$.}
    \end{equation}
    By Lemma \ref{C:RL-constr-faraway-lip-lim}, there is some $R>0$ such that
    \begin{equation}\label{E:RL-constr-lip-est-lambda}
        \sup_{|s-t|\geq r} \frac{|g_{R,N,t_0}(s)-g_{R,N,t_0}(t)|}{|s-t|} < \frac{1}{1-\frac{\varepsilon}{2}},
    \end{equation}
    for any $t_0\in \reals$. Let $R>0$ be such that \eqref{E:RL-constr-lip-est-lambda} holds and $2R<\frac{1}{2}\varepsilon$.
    Let $t_0\in [0,R)$ be such that \eqref{E:RL-constr-total-measure-est} holds. Denote $g=g_{R,N,t_0}$ and $h=(1-\frac{\varepsilon}{2}) g_{|[-D,D]}$. Using Lemma \ref{L:RL-constr-sup-norm-est}, we have
    \begin{equation*}
        |g(t) - t |< 2R < \frac{1}{2}\varepsilon \quad\text{for $t\in[-D,D]$.}
    \end{equation*}
    so that via \eqref{E:RL-contrs-lambda-est} we have
    \begin{equation*}
        |h(t)- t|< \varepsilon \quad\text{for all $t\in [-D,D]$,}
    \end{equation*}
   which is \ref{Enum:RL-C-1}.

    Using the estimate \eqref{E:RL-constr-total-measure-est}, we have for $\tilde{E}=[-D,D]\setminus \bigcup_{k\in \integers} I_k(R,N,t_0)$ the estimate
    \begin{equation*}
        \mu(\tilde{E})\geq 1- \frac{1}{N}\geq 1-\eta.
    \end{equation*}
    By definition of $g$, $g(\tilde{E})$ is finite, and therefore also $h(\tilde{E})$ is finite.
    While $\tilde{E}$ might not be compact, we can simply take $E=h^{-1}(h(\tilde{E}))$, which is closed (as $h(E)$ is finite and thus closed) and bounded, thus compact. 
    Therefore, we have asserted \ref{Enum:RL-C-4}. 
    
    The fact that \ref{Enum:RL-C-2} holds is an immediate consequence of Observation \ref{O:RL-constr-lip} and of the definition of $h$.
    It remains to show \ref{Enum:RL-C-3}, which is a consequence of \eqref{E:RL-constr-lip-est-lambda} and the fact that $h=(1-\frac{\varepsilon}{2}) g_{R,N,t_0}$.
\end{proof}

\subsection{Construction of a real-valued perturbation that concentrates measure into finite sets}\label{SS:into-real-line}

In the previous subsection we constructed a map which, while $1$-Lipschitz on scales which are not too small, can have a very large Lipschitz constant due to points at small scales. In order to utilize this construction, we combine it with Proposition \ref{prop:basic-perturbation}.
We next show that that the good properties of the aforementioned proposition can be preserved after extending the relevant maps.

\begin{lemma}\label{L:RVP-extension}
    Let $X$ be a metric space, $\delta>0$ and $\varphi\colon X \to \reals$ a Lipschitz map. For every $\varepsilon>0$, there is $\varepsilon_0>0$ such that for any set $S\subset X$ and $g\colon S \to \reals$ satisfying
    \begin{equation}\label{E:RVP-ext-est}
        |g(y)-g(z)|\leq |\varphi(y)-\varphi(z)|+\delta d(y,z),
    \end{equation}
    for all $y,z\in S$ and
    \begin{equation}\label{E:RVP-ext-nearness-est}
        |g(x)-\varphi(x)|< \varepsilon_0,
    \end{equation}
    for all $x\in S$,
    there exists an extension of $g$ (denoted again by $g$) to $X$ such that
    \eqref{E:RVP-ext-est} holds for all $y,z\in X$ and \eqref{E:RVP-ext-nearness-est} holds for all $x\in X$ upon replacing $\varepsilon_0$ with $\varepsilon$.
\end{lemma}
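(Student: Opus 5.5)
The idea is to reinterpret \eqref{E:RVP-ext-est} as a Lipschitz condition with respect to a new metric on $X$, extend using McShane's theorem for that metric, and then truncate the extension to keep it close to $\varphi$. We will take $\varepsilon_0=\varepsilon/2$; this choice does not depend on $S$ or $g$, as the statement requires.

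First, define
\begin{equation*}
    \rho(y,z)=|\varphi(y)-\varphi(z)|+\delta\, d(y,z),\qquad y,z\in X.
\end{equation*}
This is a genuine metric on $X$. Indeed, $(y,z)\mapsto|\varphi(y)-\varphi(z)|$ is a pseudometric, and adding $\delta d$ with $\delta>0$ makes it positive definite while preserving symmetry and the triangle inequality. Hypothesis \eqref{E:RVP-ext-est} on $S$ says exactly that $g\colon (S,\rho)\to\reals$ is $1$-Lipschitz. McShane's extension theorem, applied in the metric space $(X,\rho)$, therefore gives an extension $\hat g\colon X\to\reals$ that is $1$-Lipschitz with respect to $\rho$. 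Equivalently, $\hat g$ satisfies \eqref{E:RVP-ext-est} for all $y,z\in X$.

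The only remaining issue is the nearness estimate, since $\hat g$ may drift far from $\varphi$ away from $S$. To fix this, truncate:
\begin{equation*}
    \tilde g=\max\{\min\{\hat g,\varphi+\varepsilon_0\},\varphi-\varepsilon_0\}.
\end{equation*}
The functions $\varphi\pm\varepsilon_0$ are $1$-Lipschitz with respect to $\rho$, because $|\varphi(y)-\varphi(z)|\le\rho(y,z)$. Pointwise maxima and minima of $1$-Lipschitz functions are again $1$-Lipschitz, so $\tilde g$ still satisfies \eqref{E:RVP-ext-est} on all of $X$.

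On $S$ we have $|g-\varphi|<\varepsilon_0$ by \eqref{E:RVP-ext-nearness-est}, so the truncation does not change $g$ there. Hence $\tilde g$ is still an extension of $g$. Everywhere on $X$ we have $|\tilde g-\varphi|\le\varepsilon_0=\varepsilon/2<\varepsilon$, which is the required nearness bound.

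I do not expect a real obstacle here. The only points needing care are checking that $\rho$ is a metric, so that McShane applies verbatim, and checking that the truncating functions are $\rho$-$1$-Lipschitz. The latter is immediate from the form of $\rho$. The Lipschitz assumption on $\varphi$ is not even needed for this argument beyond ensuring that $\rho$ is finite.
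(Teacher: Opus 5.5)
Your proof is correct, but it controls the distance to $\varphi$ differently from the paper. The paper uses the same metric $|\varphi(y)-\varphi(z)|+\delta d(y,z)$ and McShane's theorem, but it prescribes values \emph{before} extending rather than truncating afterwards. It sets $G=g$ on $S$ and $G=\varphi$ on $X\setminus B(S,\Delta)$. It checks that this partial function is still $1$-Lipschitz for the new metric, which needs $\varepsilon_0\le\delta\Delta$ so that the error $|g-\varphi|<\varepsilon_0$ on $S$ is absorbed by $\delta d(x,y)\ge\delta\Delta$. It then extends by McShane and bounds $|G-\varphi|$ on the $\Delta$-neighbourhood of $S$ using $\tLip(\varphi)$, which forces $\Delta(2\tLip(\varphi)+\delta)+\varepsilon_0<\varepsilon$.

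Your truncation by $\varphi\pm\varepsilon_0$ replaces all of this bookkeeping. It is shorter and gives $\varepsilon_0=\varepsilon/2$ (indeed any $\varepsilon_0<\varepsilon$) independently of $\delta$, $\varphi$ and $X$. It also makes the Lipschitz hypothesis on $\varphi$ superfluous. Your remark that Lipschitzness is needed for $\rho$ to be finite is not quite right: $\rho$ is finite for any real-valued $\varphi$, so the hypothesis is not needed at all.

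In exchange, the paper's construction gives an extension that coincides exactly with $\varphi$ outside a $\Delta$-neighbourhood of $S$. That localisation is never used: Theorem~\ref{T:RVP-main} only invokes the two estimates.
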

\begin{proof}
    Define 
    \begin{equation*}
        \mathfrak{d}(x,y)=|\varphi(x)-\varphi(y)|+\delta d(x,y),\quad \text{for $x,y\in X$.}
    \end{equation*}
    The map $\mathfrak{d}$ is a metric on $X$.
    There exist $\Delta>0$ and $\varepsilon>\varepsilon_0>0$ such that both
    \begin{equation}
        \label{A:RVP-epsilon0-est1}\varepsilon_0\leq \delta \Delta
    \end{equation}
    and
    \begin{align}
        \label{A:RVP-epsilon0-est2}\Delta(2\tLip(\varphi)+\delta)+\varepsilon_0<\varepsilon
    \end{align}
    hold.
    Now let $g$ satisfy \eqref{E:RVP-ext-est} and \eqref{E:RVP-ext-nearness-est} and define
    \begin{equation*}
        G(x)=\begin{cases}
				g(x)
					& \text{if $x\in S$},
					\\
				\varphi(x)
					& \text{if $x\in X\setminus B(S,\Delta)$}.
			\end{cases}
    \end{equation*}
    Take $x\in S$ and $y\in X\setminus B(S,\Delta)$. Then, as \eqref{A:RVP-epsilon0-est1} holds and $\Delta\leq d(x,y)$, \eqref{E:RVP-ext-nearness-est} gives
    \begin{equation*}
        |G(x)-G(y)|\leq |g(x)-\varphi(x)|+|\varphi(x)-\varphi(y)|
        \leq \varepsilon_0 + |\varphi(x)-\varphi(y)|\leq \delta d(x,y)+|\varphi(x)-\varphi(y)|.
    \end{equation*}
    In other words, $G$ is $1$-Lipschitz on its domain equipped with the metric $\mathfrak{d}$. Using McShane's extension theorem, we may extend $G$ to $X$ (the extension being denoted again by $G$) in such a way that
    \begin{equation*}
        |G(x)-G(y)|\leq |\varphi(x)-\varphi(y)|+\delta \metric(x,y)\quad \text{for all $x,y\in X$}.
    \end{equation*}
    Note that $G$ is also an extension of $g$ and so it suffices to show that 
    \begin{equation*}
        |G(x)-\varphi(x)|< \varepsilon \quad \text{for all $x\in X$.}
    \end{equation*}
    If $x\in S$, this follows by $\varepsilon> \varepsilon_0$, while if $x\in X\setminus B(S,\Delta)$, this is immediate since $G(x)=\varphi(x)$. Therefore, it remains to consider the case when $x\in B(S, \Delta)\setminus S$. Let $\eta>0$ be such that
    \begin{equation*}
        (\eta+\Delta)(2\tLip(f)+\delta)+\varepsilon_0<\varepsilon.
    \end{equation*}
    The existence of such an $\eta$ is guaranteed by \eqref{A:RVP-epsilon0-est2}. Find $y\in S$ such that $d(x,y)<\Delta + \eta$. Then
    \begin{equation*}
        \begin{split}
            |G(x)-\varphi(x)|&\leq |G(x)-G(y)|+|G(y)-\varphi(y)|+|\varphi(y)-\varphi(x)|\\
            &\leq \delta d(x,y)+ |\varphi(x)-\varphi(y)|+\varepsilon_0+ |\varphi(x)-\varphi(y)|\\
            &\leq \delta(\Delta + \eta) +\tLip(\varphi)(\Delta+\eta) + \varepsilon_0 +\tLip(\varphi)(\Delta+\eta)\\
            &\leq (\Delta+ \eta)(2\tLip(\varphi) + \delta)+\varepsilon_0< \varepsilon.
        \end{split}
    \end{equation*}
\end{proof}

Given two subsets $E,F$ of a metric space $(X,\metric)$ we use the notation
\begin{equation*}
    \dist(E,F)= \inf_{x\in E, y\in F} \metric(x,y).
\end{equation*}

\begin{lemma}\label{L:RVP-domain-to-target-flatness}
    Let $S$ be a compact metric space and, $\rho>0$, $\delta>0$ and suppose that $f\colon S \to \reals$ is a Lipschitz function such that for every $x,y \in S$ one has
    \begin{equation}\label{E:RVP-almost-flat-est}
        \metric(x,y)\leq \rho \implies |f(x)-f(y)|\leq \delta \metric(x,y).
    \end{equation}
    Let $K=f(S)\subset \reals$ and define
    \begin{equation*}
        \mathfrak{d}(s,t)=\dist(f^{-1}(s), f^{-1}(t))\quad \text{for $s,t\in K$.}
    \end{equation*}
    Then there exists some $r>0$ such that if $s,t\in K$ satisfy $|s-t|\leq r$, then
    \begin{equation}\label{E:RVP-pushforward-est}
        \mathfrak{d}(s,t)\geq \frac{|s-t|}{\delta}.
    \end{equation}
\end{lemma}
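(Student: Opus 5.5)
The plan is a direct pointwise argument that takes $r=\delta\rho$ and bounds the distance of \emph{every} pair of preimage points from below. Taking the infimum of that bound then gives \eqref{E:RVP-pushforward-est}. No attainment of the infimum defining $\mathfrak{d}$ is needed, so compactness of $S$ only guarantees that the fibres are nonempty closed sets. The Lipschitz property of $f$ is not really used beyond making the setting meaningful.

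Fix $r=\delta\rho$ and let $s,t\in K$ with $|s-t|\leq r$. If $s=t$ there is nothing to prove, since the right-hand side of \eqref{E:RVP-pushforward-est} is zero. Otherwise, take arbitrary points $x\in f^{-1}(s)$ and $y\in f^{-1}(t)$; these exist because $s,t\in K=f(S)$. We split into two cases according to the size of $\metric(x,y)$.

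If $\metric(x,y)\leq\rho$, then \eqref{E:RVP-almost-flat-est} applies and gives
\begin{equation*}
|s-t|=|f(x)-f(y)|\leq\delta\,\metric(x,y),
\end{equation*}
that is, $\metric(x,y)\geq |s-t|/\delta$. If instead $\metric(x,y)>\rho$, then, because $|s-t|\leq r=\delta\rho$,
\begin{equation*}
\metric(x,y)>\rho\geq \frac{|s-t|}{\delta}.
\end{equation*}
In either case $\metric(x,y)\geq |s-t|/\delta$.

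Since $x\in f^{-1}(s)$ and $y\in f^{-1}(t)$ were arbitrary, taking the infimum over all such pairs yields $\mathfrak{d}(s,t)\geq |s-t|/\delta$, which is \eqref{E:RVP-pushforward-est}. There is no real obstacle here. The only point needing care is the choice $r=\delta\rho$, which ensures that far-apart pairs, for which \eqref{E:RVP-almost-flat-est} gives no information, automatically satisfy the bound.
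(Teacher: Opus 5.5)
Your proof is correct and follows the paper's argument: you choose $r\le\delta\rho$ and split into the case where the flatness hypothesis applies and the case where the points are more than $\rho$ apart. The only difference is that you run the dichotomy on every pair $(x,y)$ of preimage points and take the infimum at the end. The paper instead splits on $\mathfrak{d}(s,t)$ itself and uses compactness of the fibres to realise that distance by a pair, so your version does not need compactness at all (nonemptiness of the fibres comes from $s,t\in f(S)$, not from compactness as you suggest).
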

\begin{proof}
    Let $r>0$ be such that $\frac{\rho}{r}\geq \frac{1}{\delta}$.
    Now, assume $s,t\in K$ are such that $|s-t|\leq r$. We may assume $s\not= t$ as otherwise the inequality is satisfied. There are two possible cases. Either, $\mathfrak{d}(s,t)\leq \rho$, or not.
    In the affirmative case, since both $f^{-1}(s)$ and $f^{-1}(t)$ are compact sets, there exists some $x\in f^{-1}(s)$ and $y\in f^{-1}(t)$ such that $\metric(x,y)=\mathfrak{d}(s,t)\leq \rho$ and we are done as \eqref{E:RVP-pushforward-est} is now implied by \eqref{E:RVP-almost-flat-est}.
    On the other hand, if $\mathfrak{d}(s,t)> \rho$, then as $|s-t|\leq r$, we have
    \begin{equation*}
        \mathfrak{d}(s,t)>|s-t| \frac{\rho}{r},
    \end{equation*}
    which in combination with the estimate $\frac{\rho}{r}\geq \frac{1}{\delta}$ implies \eqref{E:RVP-pushforward-est}.
\end{proof}

We now combine the previous results with Proposition \ref{prop:basic-perturbation}.

\begin{theorem}\label{T:RVP-main}
    For every $\eta\in (0,1)$ and $\delta>0$, there exists $\theta \in (0,1)$ such that the following is true. Let $X$ be a metric space and $S\subset X$ a compact set. 
    Suppose $V$ is a finite-dimensional Banach space and
    $F\colon X \to V$ is a Lipschitz map. Suppose further that $W\subset V$ is a subspace such that
    \begin{equation*}
        \H^1(\gamma \cap S)=0 \quad \text{for every $\gamma \in \Gamma(X)$ in the $F$-direction of $E(W,\theta).$}
    \end{equation*}
    Let $T\colon V \to \reals$ be a linear map with the property that $W\subset \ker T$.
    Then, for every $\varepsilon>0$ and every Borel probability measure $\mu$ on $S$, there exist a function $g\colon X \to \reals$ and a compact set $E\subset S$ such that
    \begin{enumerate}
        \item \label{Enum:RVP-main-1} for all $y,z\in X$ it holds that
        \begin{equation*}
            |g(y)-g(z)|\leq |T(F(y)-F(z))|+\delta \lVert T \rVert \tLip(F) \metric(y,z),
        \end{equation*}
        \item \label{Enum:RVP-main-2} for every $x\in X$ it holds that
        \begin{equation*}
            |TF(x)-g(x)|< \varepsilon,
        \end{equation*}
        \item \label{Enum:RVP-main-3} $\mu(E)\geq 1- \eta$ and $g(E)$ is a finite set.
    \end{enumerate}
\end{theorem}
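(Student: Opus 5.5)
We will compose three constructions. First Proposition \ref{prop:basic-perturbation} produces a map $f$ that is almost flat near $S$. Then Theorem \ref{T:construction-in-real-line} gives a map $h$ on the line that collapses most of $f_\#\mu$ to finitely many points. Finally Lemma \ref{L:RVP-extension} extends $h\circ f$ from $S$ to all of $X$.

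\textbf{Choice of $\theta$.} Given $\eta$ and $\delta$, let $L\geq 1$ be the constant supplied by Theorem \ref{T:construction-in-real-line} for this $\eta$. Choose $\theta$ so close to $1$ that $3(1-\theta)L\leq \delta/2$; then in particular $3(1-\theta)\leq\delta/2$. This is the only constraint on $\theta$, so it depends only on $\eta$ and $\delta$. We may assume $T\neq 0$ and $F$ is nonconstant; otherwise $g=TF$ works with $E=S$.

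\textbf{Construction on $S$.} Fix $\varepsilon>0$ and $\mu$. Apply Lemma \ref{L:RVP-extension} with $\varphi=TF$ and $\delta'=\delta\lVert T\rVert\tLip(F)$ to get $\varepsilon_0$. Apply Proposition \ref{prop:basic-perturbation} with accuracy $\varepsilon_0/2$. This gives a Lipschitz $f$ and a radius $\rho>0$ such that:
\begin{itemize}
\item $|f(y)-f(z)|\leq |T(F(y)-F(z))|+\tfrac{\delta}{2}\lVert T\rVert\tLip(F)\metric(y,z)$ on $X$;
\item $|TF-f|<\varepsilon_0/2$;
\item $f$ is $\delta_1$-Lipschitz on $S$ at scales below $\rho$, where $\delta_1:=3(1-\theta)\lVert T\rVert\tLip(F)$ and $L\delta_1\leq\tfrac{\delta}{2}\lVert T\rVert\tLip(F)$.
\end{itemize}
By compactness of $S$ and a covering argument, the third property gives a statement at the fixed scale $\rho/2$: $|f(y)-f(z)|\leq\delta_1\metric(y,z)$ whenever $y,z\in S$ and $\metric(y,z)\leq\rho/2$.

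Next apply Lemma \ref{L:RVP-domain-to-target-flatness} to $f|_S$. It yields $r>0$ such that for $s,t\in f(S)$ with $|s-t|\leq r$, the fibres satisfy $\dist(f^{-1}(s)\cap S,f^{-1}(t)\cap S)\geq |s-t|/\delta_1$.

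Now apply Theorem \ref{T:construction-in-real-line} to the probability measure $(f|_S)_\#\mu$ on an interval $[-D,D]\supset f(S)$, with this $r$ and accuracy $\varepsilon_0/2$. This gives $h$. Set $g=h\circ f$ on $S$ and $E=S\cap f^{-1}(E_h)$, where $E_h$ is the compact set from that theorem. Then $E$ is compact, $\mu(E)\geq 1-\eta$, and $g(E)\subset h(E_h)$ is finite, which is \ref{Enum:RVP-main-3}. On $S$ we also have $|g-TF|\leq|h\circ f-f|+|f-TF|<\varepsilon_0$.

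\textbf{The main obstacle} is the Lipschitz estimate \ref{Enum:RVP-main-1} on $S$, because $h$ itself is only $L$-Lipschitz. Take $y,z\in S$ and split by the size of $|f(y)-f(z)|$.
\begin{itemize}
\item If $|f(y)-f(z)|\geq r$, property \ref{Enum:RL-C-3} of $h$ gives $|g(y)-g(z)|\leq|f(y)-f(z)|$, and the bound on $f$ then gives \ref{Enum:RVP-main-1} with $\delta/2$ in place of $\delta$.
\item If $|f(y)-f(z)|<r$, write $s=f(y)$ and $t=f(z)$. Then $|g(y)-g(z)|\leq L|s-t|$. By Lemma \ref{L:RVP-domain-to-target-flatness}, $|s-t|\leq\delta_1\,\dist(f^{-1}(s),f^{-1}(t))\leq\delta_1\metric(y,z)$. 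Hence $|g(y)-g(z)|\leq L\delta_1\metric(y,z)\leq\tfrac{\delta}{2}\lVert T\rVert\tLip(F)\metric(y,z)$.
\end{itemize}
This is exactly why $\theta$ had to be chosen after $L$: the expansion by $L$ is absorbed by the flatness of $f$ on $S$.

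\textbf{Extension.} Finally, Lemma \ref{L:RVP-extension} extends $g$ from $S$ to $X$ while keeping \ref{Enum:RVP-main-1} and giving $|g-TF|<\varepsilon$ everywhere. This is \ref{Enum:RVP-main-2}, and \ref{Enum:RVP-main-3} is unaffected because $E\subset S$.
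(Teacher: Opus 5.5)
Your proposal is correct and follows the paper's proof essentially step for step. In both, $\theta$ is chosen after the constant $L$ of Theorem \ref{T:construction-in-real-line} so that $3L(1-\theta)$ is absorbed into $\delta$, followed by Proposition \ref{prop:basic-perturbation}, Lemma \ref{L:RVP-domain-to-target-flatness}, the real-line construction applied to $f_\#\mu$, the two-case Lipschitz estimate for $h\circ f$ on $S$, and Lemma \ref{L:RVP-extension}. Your additions (handling the degenerate case $\lVert T\rVert\tLip(F)=0$, checking the fixed-scale flatness hypothesis, which follows by taking $x=y$ with no covering argument, and setting $E=S\cap f^{-1}(E_h)$) only tidy up points the paper leaves implicit.
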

\begin{proof}
    Given $\eta>0$ find $L\in [1,\infty)$ from Theorem \ref{T:construction-in-real-line}.
    Now let $\theta \in (0,1)$ be such that
    \begin{equation}\label{E:delta-choice}
        L3(1-\theta)\leq \delta.
    \end{equation}
    We use Lemma \ref{L:RVP-extension}, replacing $\delta$ with $\delta \lVert T \rVert \tLip(F)$ and $\varphi$ with $T\circ F$ to find some $\varepsilon_0$ having the property specified in Lemma \ref{L:RVP-extension}.
    
    Using Proposition \ref{prop:basic-perturbation} we can find a function $f\colon X \to \reals$ and $\rho>0$ such that
    \begin{enumerate}[label={\rm(\arabic*)}]
        \item\label{Enum:app-Bate-1} for every $y,z\in X$:
        \begin{equation*}
            |f(y)-f(z)|\leq |T(F(y)-F(z))|+3(1-\theta)\lVert T \rVert \tLip(F) \metric(y,z),
        \end{equation*}
        \item\label{Enum:app-Bate-2} for every $x\in X$:
        \begin{equation*}
            |T(F(x))-f(x)|< \frac{1}{2}\varepsilon_0,
        \end{equation*}
        \item\label{Enum:app-Bate-3} for all $x\in S$ and $y,z\in B(x,\rho)\cap S$:
        \begin{equation*}
            |f(y)-f(z)|\leq 3(1-\theta)\lVert T \rVert \tLip(F)\metric(y,z).
        \end{equation*}
    \end{enumerate}
    
    Now let $K=f(S)$ and $D>0$ be such that $K\subset [-D,D]$. Let us define
    \begin{equation*}
        \mathfrak{d}(s,t)=\dist(f^{-1}(s), f^{-1}(t))\quad \text{for $s,t\in K$.}
    \end{equation*}
    Recalling Lemma \ref{L:RVP-domain-to-target-flatness}, there is some $r>0$ such that
    \begin{equation}\label{E:RVP-metric-pfwd-est}
        \mathfrak{d}(s,t)\geq \frac{|s-t|}{3(1-\theta)\lVert T \rVert\tLip(F)}\quad \text{for all $s,t\in K$ satisfying $|s-t|\leq r$.}
    \end{equation}

    Since $f_\# \mu$ is a Borel measure on $\reals$, using Theorem \ref{T:construction-in-real-line}, we find a function $h\colon [-D,D] \to \reals$ such that
    \begin{enumerate}[label={\rm(\alph*)}]
        \item\label{Enum:app-T-RLC-a} $\lVert h- \Id \rVert_{\infty}< \frac{1}{2}\varepsilon_0$
        \item\label{Enum:app-T-RLC-b} $|h(s)-h(t)|\leq L$ if $|s-t|\leq r$ and $|h(s)-h(t)|\leq 1$ if $|s-t|\geq r$,
        \item\label{Enum:app-T-RLC-c} there is some compact $\widetilde H \subset [-D,D]$
        such that $f_{\#}\mu(\widetilde H)\geq 1-\eta$ and $h(\widetilde H)$ is finite.
    \end{enumerate}
    Let $H=\widetilde H \cap K$. As $K$ is of full measure with respect to $f_{\#}\mu$, \ref{Enum:app-T-RLC-c} remains true if we replace $\widetilde H$ with $H$.

    Let $g=h\circ f_{|S} \colon S \to \reals$. Then, by \ref{Enum:app-Bate-2} and \ref{Enum:app-T-RLC-a} we have
    \begin{equation*}
        |T(F(x))-g(x)|<\varepsilon_0\quad \text{for every $x\in S$.}
    \end{equation*}
    If $|f(y)-f(z)|\leq r$, then by the definition of $\mathfrak{d}$, from the estimate \eqref{E:RVP-metric-pfwd-est}, the first half of \ref{Enum:app-T-RLC-b} and \eqref{E:delta-choice}, we have
    \begin{equation*}
        |g(y)-g(z)|\leq L3(1-\theta) \lVert T \rVert \tLip(F)d(y,z)\leq \delta \lVert T \rVert\tLip(F) d(y,z).
    \end{equation*}
    On the other hand, if $|f(y)-f(z)|\geq r$, then by the second half of \ref{Enum:app-T-RLC-b} and \ref{Enum:app-Bate-1}, we have
    \begin{equation*}
    \begin{split}
        |g(y)-g(z)|&\leq |f(y)-f(z)|\leq |T(F(y)-F(z))|+3(1-\theta)\lVert T \rVert\tLip(F) d(y,z)\\
        &\leq |T(F(y)-F(z))|+\delta\lVert T \rVert\tLip(F) d(y,z).
    \end{split}
    \end{equation*}
    Combining the above, we have
    \begin{equation*}
        |g(y)-g(z)|\leq |T(F(y)-F(z))|+\delta\lVert T \rVert\tLip(F) d(y,z) \quad \text{for all $y,z \in S$.}
    \end{equation*}
    Whence, recalling the definition of $\varepsilon_0$ and using Lemma \ref{L:RVP-extension}, $g$ admits an extension to $X$ such that \ref{Enum:RVP-main-1} and \ref{Enum:RVP-main-2} hold. Let $E=g^{-1}(H)\cap S$. Then $E$ satisfies \ref{Enum:RVP-main-3} due to our definition of $H$ and the fact that $\mu$ is supported on $S$.
\end{proof}

\subsection{Obtaining local vector-valued perturbations from the real valued perturbations}\label{SS:vector-valued-local}

In this subsection we follow, to the word, the definitions and conventions of \cite[Section 4]{B}. The approach to obtain vector-valued perturbations given well-behaved real-valued perturbations is the same as in the aforementioned paper. We omit some of the details and justifications, as they can all be found therein.

Given a finite-dimensional vector space $V$ and a basis $b_1,\dots, b_m$ of $V$ consisting of unit vectors, we denote by $b^*_i$, $i\in\{1,\dots, m\}$ the $i$-th coordinate functional:
\begin{equation*}
    b^*_i\left(\sum_{j=1}^{m}\lambda_j b_j\right)=\lambda_i.
\end{equation*}

We let $K_u$ be the minimal constant $K$ such that the inequality
\begin{equation*}
    \lVert \sum_{i=1}^m l_i b_i^*(x)b_i\rVert \leq K \lVert l \rVert_{\infty} \lVert x \rVert_V
\end{equation*}
holds for every $x\in V$ and $l\in \ell^m_\infty$. The quantity $K_u$ is always finite and $K_u=1$ if $V=\ell_p^m$ for any $p\in[1,\infty]$.

\begin{definition}\label{D:K(V,d)}
    For $V$ a finite dimensional Banach space and an integer $d\geq 0$, we let $\tilde K(V,d)$ be the least $K \geq 1$ for which the following is true:
    There exists $K_d,K_p >0$ and, for any $d$-dimensional subspace $W$ of $V$, a basis $b_1,\ldots,b_m$ of $V$ consisting of unit vectors and projections $P\colon V\to W$ and $Q\colon V\to \ker P$ such that:
    \begin{enumerate}
	  \item  $P(x)+ Q(x) = x$         for all $x\in V$;
        \item $\|b_i^*\|\leq K_p$ for each $1\leq i \leq m$;
        \item $\|P\|,\|Q\|\leq K_d$;
        \item For each $x\in V$ and $l\in \ell_\infty^m$ with $\|l\|_\infty \leq 1$,
            \begin{equation*}
                \left\|P(x)+ \sum_{i=1}^m l_i b_i^*(Q(x))b_{i}\right\| \leq K \|x\|.
            \end{equation*}
    \end{enumerate}
\end{definition}    

We remark that one always has
\begin{equation*}
    \tilde K(V,d)\leq K_u(2\sqrt{d}+1),
\end{equation*}
therefore the quantity $\tilde K(V,d)$ is always finite and in the case of $V=\ell^m_p$, the quantity is independent of the dimension $m$.

\begin{observation}[\cite{B}, Observation 4.2]
    For any $m\in \nat$ and $d\geq 0$,
    \begin{equation*}
        \tilde K(\ell^m_2,d)=1.
    \end{equation*}
\end{observation}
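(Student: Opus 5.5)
The plan is to exhibit, for an arbitrary $d$-dimensional subspace $W\subset\ell_2^m$, an explicit admissible choice of basis and projections in Definition \ref{D:K(V,d)} with $K=1$. Since $\tilde K(V,d)\geq 1$ holds by definition, this gives equality. Everything rests on orthogonality, so I would use the Euclidean structure throughout.

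\textbf{Choice of data.} Given $W$, use Gram--Schmidt to pick an orthonormal basis $b_1,\dots,b_m$ of $\ell_2^m$ such that $b_1,\dots,b_d$ span $W$ and $b_{d+1},\dots,b_m$ span $W^\perp$. Let $P$ be the orthogonal projection onto $W$, and set $Q=\Id-P$, which is the orthogonal projection onto $W^\perp=\ker P$. Then condition (i) holds by construction. Because the basis is orthonormal, $b_i^*(x)=\langle x,b_i\rangle$, so $\|b_i^*\|=1$ and we may take $K_p=1$. Orthogonal projections have norm at most $1$, so we may take $K_d=1$.

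\textbf{The key inequality.} Fix $x\in\ell_2^m$ and $l\in\ell_\infty^m$ with $\|l\|_\infty\leq 1$. Since $Q(x)\in W^\perp$, we have $b_i^*(Q(x))=0$ for $i\leq d$. Hence the sum $\sum_i l_i b_i^*(Q(x))b_i$ lies in $W^\perp$ and is orthogonal to $P(x)\in W$. By Pythagoras and Parseval,
\begin{equation*}
    \Big\|P(x)+\sum_{i=1}^m l_i b_i^*(Q(x))b_i\Big\|^2=\|P(x)\|^2+\sum_{i=d+1}^m l_i^2\langle Q(x),b_i\rangle^2\leq \|P(x)\|^2+\|Q(x)\|^2=\|x\|^2 .
\end{equation*}
This verifies condition (iv) with $K=1$.

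No step here is a genuine obstacle. The only point needing care is choosing the basis adapted to the splitting $W\oplus W^\perp$. That choice is what makes the perturbed part $\sum_i l_i b_i^*(Q(x))b_i$ stay orthogonal to $P(x)$; it lets the coefficients $l_i$ shrink the $W^\perp$-component without ever increasing the norm.
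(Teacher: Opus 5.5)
Your proof is correct and is essentially the standard argument behind the cited observation in \cite{B}. You choose an orthonormal basis adapted to $W\oplus W^\perp$ and take $P,Q$ to be the orthogonal projections, which gives $K_p=K_d=1$ uniformly in $W$; condition (iv) then holds with $K=1$ by Pythagoras, because the perturbed term stays in $W^\perp$.
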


\begin{observation}[\cite{B}, Observation 4.3]\label{O:k=1-pun}
    Suppose $V$ admits a basis such that $K_u=1$. Then
    \begin{equation*}
        \tilde K(V,0)=1.
    \end{equation*}
    In particular,
    \begin{equation*}
        \tilde K(\ell^p_m, 0)=1,
    \end{equation*}
    for every $p\in[0,\infty]$ and $m\in\nat$.
\end{observation}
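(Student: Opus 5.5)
The plan is to verify Definition \ref{D:K(V,d)} directly with $K=1$, using the trivial splitting available when $d=0$. Since $\tilde K(V,0)\geq 1$ by definition (it is the least $K\geq 1$ with the stated property), it suffices to show that $K=1$ is admissible.

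\textbf{Choice of data.} Let $b_1,\dots,b_m$ be a basis of $V$ consisting of unit vectors for which $K_u=1$, as the hypothesis provides. The only $0$-dimensional subspace is $W=\{0\}$. For it I take $P\equiv 0\colon V\to W$ and $Q=\Id\colon V\to\ker P=V$, together with the basis $b_1,\dots,b_m$ above, and the constants $K_d=1$ and $K_p=\max_i\lVert b_i^*\rVert$. The constant $K_p$ is finite because the $b_i^*$ are linear functionals on a finite dimensional space.

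\textbf{Checking the four conditions.}
\begin{enumerate}
\item $P(x)+Q(x)=0+x=x$.
\item $\lVert b_i^*\rVert\leq K_p$ holds by the choice of $K_p$.
\item $\lVert P\rVert=0\leq 1$ and $\lVert Q\rVert=1$, so both are at most $K_d=1$.
\item For $\lVert l\rVert_\infty\leq 1$ and $x\in V$, the definition of $K_u$ gives
\begin{equation*}
\Bigl\lVert P(x)+\sum_{i=1}^m l_i b_i^*(Q(x))b_i\Bigr\rVert=\Bigl\lVert \sum_{i=1}^m l_i b_i^*(x)b_i\Bigr\rVert\leq K_u\lVert l\rVert_\infty\lVert x\rVert\leq \lVert x\rVert .
\end{equation*}
\end{enumerate}
Hence $K=1$ works, and so $\tilde K(V,0)=1$.

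\textbf{The case $\ell^m_p$.} Here $p\in[1,\infty]$, since only then is $\ell^m_p$ a Banach space. Take the standard basis $e_1,\dots,e_m$, which consists of unit vectors and whose coordinate functionals are $e_i^*(x)=x_i$. Then
\begin{equation*}
\Bigl\lVert \sum_i l_i x_i e_i\Bigr\rVert_p=\bigl\lVert (l_ix_i)_i\bigr\rVert_p\leq \lVert l\rVert_\infty\lVert x\rVert_p,
\end{equation*}
because $|l_ix_i|\leq\lVert l\rVert_\infty|x_i|$ coordinatewise and the $\ell^p$ norm is monotone in the absolute values of the coordinates. So $K_u\leq 1$ for this basis.

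Conversely, taking $l=(1,\dots,1)$ gives $\lVert x\rVert_p\leq K_u\lVert x\rVert_p$, so $K_u\geq 1$ and therefore $K_u=1$. The first part then gives $\tilde K(\ell^m_p,0)=1$.

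There is no real obstacle here. The only point needing care is that the $d=0$ case forces $W=\{0\}$, so condition (iv) collapses exactly to the defining inequality of $K_u$.
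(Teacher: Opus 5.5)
Your argument is correct and is the same direct verification as in \cite{B}, Observation 4.3, which this paper only cites without proof. For $d=0$ the subspace $W=\{0\}$ forces $P=0$ and $Q=\Id$, so condition (iv) of Definition~\ref{D:K(V,d)} reduces exactly to the defining inequality of $K_u$, and the standard basis of $\ell^m_p$ has $K_u=1$; you are also right that the range of $p$ should read $[1,\infty]$.
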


For the remainder of this subsection, unless stated otherwise, we consider $V$ to be a fixed finite-dimensional Banach space. We also fix $d\geq 0$ and a $d$-dimensional subspace $W\subset V$. We assume that $K_d, K_p>0$, $P$, $Q$, $b_1,\dots, b_m$ are as in Definition \ref{D:K(V,d)}.
We let $T_i=b_i^* \circ Q\colon V \to \reals$.
The following is essentially the main content of \cite[Lemma 4.5]{B}.

\begin{lemma}\label{L:scalar-to-vector-B}
    There is some $C_V>0$, depending only on $V$, such that the following is true.
    Suppose $X$ is a metric space and $F\colon X \to V$ is Lipschitz and $\delta>0$. 
    Suppose that for each $i=1,\dots, m$, there is a function $g_i \colon X \to \reals$ such that
    \begin{enumerate}[label={\rm(\alph*)}]
        \item\label{Enum:scalar-to-vector-a} for all $y,z\in X$ it holds that
        \begin{equation*}
            |g_i(y)-g_i(z)|\leq |T_i(F(y)-F(z))|+\delta \lVert T_i \rVert\tLip(F) \metric(y,z),
        \end{equation*}
        \item\label{Enum:scalar-to-vector-b} for every $x\in X$ it holds that
        \begin{equation*}
            |T_i F(x)-g_i(x)|< \varepsilon.
        \end{equation*}
    \end{enumerate}
    Let $\sigma\colon X \to V$ be given by
    \begin{equation*}
        \sigma(x)=P(F(x))+ \sum_{i=1}^m g_i(x)b_i.
    \end{equation*}
    Then $\sigma$ satisfies the following:
    \begin{enumerate}
        \item\label{Enum:scalar-to-vector-i} The Lipschitz constant of $\sigma$ satisfies $\tLip(\sigma)\leq \tilde{K}(V,d)\tLip(F)+\delta C_V$ and
        \item\label{Enum:scalar-to-vector-ii} for every $x\in X$,
        \begin{equation*}
            \lVert \sigma(x)-F(x)\rVert\leq m\varepsilon.
        \end{equation*}
    \end{enumerate}
\end{lemma}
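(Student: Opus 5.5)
The plan is to compare $\sigma$ with $F$ written in the form $F(x)=P(F(x))+Q(F(x))$, and to expand $Q(F(x))=\sum_i b_i^*(Q(F(x)))b_i=\sum_i T_i(F(x))b_i$ in the basis $b_1,\dots,b_m$. This gives
\begin{equation*}
    \sigma(x)-F(x)=\sum_{i=1}^m \bigl(g_i(x)-T_iF(x)\bigr)b_i .
\end{equation*}
Item \ref{Enum:scalar-to-vector-ii} then follows immediately. By \ref{Enum:scalar-to-vector-b} each coefficient is smaller than $\varepsilon$ in absolute value, and each $b_i$ is a unit vector, so the triangle inequality yields $\lVert\sigma(x)-F(x)\rVert\leq m\varepsilon$.

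For \ref{Enum:scalar-to-vector-i}, fix $y,z\in X$ and put $v=F(y)-F(z)$. For each $i$, we split the increment $g_i(y)-g_i(z)$ into a main part and an error part, using \ref{Enum:scalar-to-vector-a}. Concretely, we choose numbers $l_i\in[-1,1]$ and $e_i$ such that
\begin{equation*}
    g_i(y)-g_i(z)=l_i T_i(v)+e_i,\qquad |e_i|\leq \delta\lVert T_i\rVert\tLip(F)\metric(y,z).
\end{equation*}
The choice is as follows. If $|g_i(y)-g_i(z)|\leq|T_i(v)|$, take $e_i=0$, and take $l_i=(g_i(y)-g_i(z))/T_i(v)$ (or $l_i=0$ if $T_i(v)=0$). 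Otherwise, take $l_i=\pm1$ with the sign of $g_i(y)-g_i(z)$ relative to $T_i(v)$, and let $e_i$ be the remainder. In that case $|e_i|=|g_i(y)-g_i(z)|-|T_i(v)|$, which is at most $\delta\lVert T_i\rVert\tLip(F)\metric(y,z)$ by \ref{Enum:scalar-to-vector-a}. With this decomposition,
\begin{equation*}
    \sigma(y)-\sigma(z)=P(v)+\sum_i l_i b_i^*(Q(v))b_i+\sum_i e_i b_i .
\end{equation*}
Property (iv) of Definition \ref{D:K(V,d)} applies because $\lVert l\rVert_\infty\leq1$, and it bounds the first two terms by $\tilde K(V,d)\lVert v\rVert\leq\tilde K(V,d)\tLip(F)\metric(y,z)$. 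For the last term, $\lVert T_i\rVert\leq\lVert b_i^*\rVert\lVert Q\rVert\leq K_pK_d$, so it is bounded by $m\delta K_pK_d\tLip(F)\metric(y,z)$.

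The main obstacle is the form of the constant. The bound just obtained carries a factor $\tLip(F)$, whereas the statement asks for $\delta C_V$ with $C_V$ depending only on $V$. I would resolve this by taking $C_V=mK_pK_d$ and normalising $\tLip(F)\leq1$, which is the setting in which the lemma is used; equivalently, the constant can be read as $\delta C_V\tLip(F)$. If the normalisation were not available, one would instead absorb $\tLip(F)$ by rescaling $\delta$ before applying the hypotheses. The constants $K_p,K_d$ and $m=\dim V$ depend only on $V$ (once a choice as in Definition \ref{D:K(V,d)} is fixed for each $W$), so $C_V$ depends only on $V$ as required. Everything else is the triangle inequality, together with the fact that $P+Q=\Id$ allows $F$ to be split exactly.
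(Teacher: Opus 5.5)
Your proposal is correct and uses the same argument the paper relies on; the paper simply defers to the proof of \cite[Lemma 4.5]{B}. There, the basis expansion $\sigma-F=\sum_i(g_i-T_iF)b_i$ gives (ii), and for (i) one splits $g_i(y)-g_i(z)=l_iT_i(v)+e_i$ with $|l_i|\leq 1$, applies property (iv) of Definition \ref{D:K(V,d)} to the main part, and bounds the errors by $m\delta K_pK_d\tLip(F)\metric(y,z)$.

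Your reading of the constant is also right. What this argument actually gives is $\tLip(\sigma)\leq\tilde K(V,d)\tLip(F)+\delta C_V\tLip(F)$. The literal bound is in fact false for large $\tLip(F)$: take $V=\reals$, $d=0$, $b_1=1$, $X=[0,1]$, $F(x)=\Lambda x$ and $g_1=(1+\delta)F$, so $\tLip(\sigma)=(1+\delta)\Lambda$, which exceeds $\Lambda+\delta C_V$ once $\Lambda>C_V$. So state the lemma with the factor $\tLip(F)$ instead of invoking a normalisation $\tLip(F)\leq 1$. That normalisation is not how the lemma is used: Theorem \ref{T:StV-main} applies it to $L$-Lipschitz $F$ with arbitrary $L$, and the extra factor is harmless there only because $\delta$ is chosen after $F$.
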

\begin{proof}
    In the proof of \cite[Lemma 4.5]{B}, the function $\sigma$ is defined using a particular instance of functions $g_i$ (denoted by $f_i$ therein), which satisfy the properties \ref{Enum:scalar-to-vector-a} and \ref{Enum:scalar-to-vector-b}. The author then proves that the function $\sigma$ satisfies \ref{Enum:scalar-to-vector-i} and \ref{Enum:scalar-to-vector-ii}. However, the only properties of the particular functions $f_i$ used in the proof are \ref{Enum:scalar-to-vector-a} and \ref{Enum:scalar-to-vector-b} (possibly with renamed constants). Therefore, the proof applies in our case word for word.
\end{proof}

Using the preceding lemma, together with the real-valued perturbations obtained in Theorem \ref{T:RVP-main}, it is now easy to obtain vector-valued perturbations, which concentrate measure locally (i.e.~on a possibly small subset of the metric space $X$).

\begin{proposition}\label{P:StV-local}
    For any $\eta \in (0,1)$ and $\delta>0$, there exists $\theta\in(0,1)$, depending only on $\eta$ and $\delta$, such that the following is true. Suppose $X$ is a metric space and $S\subset X$ is compact. Assume $F\colon X \to V$ is a Lipschitz function such that
    \begin{equation*}
        \H^1(\gamma \cap S)=0 \quad \text{for all $\gamma \in \Gamma(X)$ in the $F$-direction of $E(W,\theta)$.}
    \end{equation*}
    Then, for any $\varepsilon>0$ and a Borel probability measure $\mu$ on $S$, there exists $G\colon X \to V$ and a compact set $E$ such that
    \begin{enumerate}
        \item\label{Enum:scalat-to-vector-prop-1} the Lipschitz constant of $G$ satisfies $\tLip(G)\leq \tilde{K}(V,d)\tLip(F)+\delta C_V$,
        \item\label{Enum:scalat-to-vector-prop-2} for any $x\in X$,
        \begin{equation*}
            \lVert F(x) - G(x) \rVert_{V}\leq \varepsilon,
        \end{equation*}
        \item\label{Enum:scalat-to-vector-prop-3} there is a set $\mathcal{F}\subset V$, which is a finite union of affine $d$-dimensional subspaces of $V$, such that $G(E)\subset \mathcal{F}$ and $\mu(E)\geq 1-\eta$.
    \end{enumerate}
\end{proposition}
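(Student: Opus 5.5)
The plan is to apply Theorem \ref{T:RVP-main} once for each coordinate functional $T_i=b_i^*\circ Q$, $i=1,\dots,m$, and then assemble the scalar perturbations with Lemma \ref{L:scalar-to-vector-B}. Given $\eta$ and $\delta$, we first set $\eta'=\eta/m$. Here $m=\dim V$, and $V$ is fixed in this subsection, so this choice is allowed. We then take $\theta$ to be the value that Theorem \ref{T:RVP-main} provides for the pair $(\eta',\delta)$. The hypotheses of Theorem \ref{T:RVP-main} hold for every $i$: the assumption on curves in the $F$-direction of $E(W,\theta)$ is exactly the one we are given, and $W\subset\ker T_i$ because $Q$ vanishes on $W$. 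Indeed, $P$ is a projection onto $W$ and $P+Q=\Id$, so $Q(w)=w-P(w)=0$ for $w\in W$.

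For each $i$, Theorem \ref{T:RVP-main} with accuracy $\varepsilon/m$ gives a function $g_i\colon X\to\reals$ and a compact set $E_i\subset S$ with three properties:
\begin{itemize}
\item $|g_i(y)-g_i(z)|\leq|T_i(F(y)-F(z))|+\delta\lVert T_i\rVert\tLip(F)\metric(y,z)$;
\item $|T_iF(x)-g_i(x)|<\varepsilon/m$;
\item $\mu(E_i)\geq 1-\eta/m$, and $g_i(E_i)$ is finite.
\end{itemize}
These are precisely hypotheses \ref{Enum:scalar-to-vector-a} and \ref{Enum:scalar-to-vector-b} of Lemma \ref{L:scalar-to-vector-B}, with $\varepsilon$ replaced by $\varepsilon/m$. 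We therefore define
\begin{equation*}
    G(x)=P(F(x))+\sum_{i=1}^m g_i(x)b_i .
\end{equation*}
The lemma then yields $\tLip(G)\leq\tilde K(V,d)\tLip(F)+\delta C_V$, which is \ref{Enum:scalat-to-vector-prop-1}, and $\lVert G-F\rVert_\infty\leq m\cdot\varepsilon/m=\varepsilon$, which is \ref{Enum:scalat-to-vector-prop-2}.

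For \ref{Enum:scalat-to-vector-prop-3}, we let $E=\bigcap_i E_i$. This set is compact, and a union bound gives $\mu(E)\geq 1-\sum_i\eta/m=1-\eta$. For $x\in E$, each coefficient $g_i(x)$ lies in the finite set $g_i(E_i)$. Hence $G(x)\in W+\sum_i c_ib_i$ for one of finitely many tuples $(c_1,\dots,c_m)\in\prod_i g_i(E_i)$, using that $P(F(x))\in W$. So $G(E)$ lies in a finite union of translates of the $d$-dimensional subspace $W$, which is the required set $\mathcal F$.

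I expect no serious obstacle. The only care needed is bookkeeping: $\theta$ must depend only on $\eta$ and $\delta$ (given the fixed $V$, hence fixed $m$), which holds because Theorem \ref{T:RVP-main} gives $\theta$ depending only on $(\eta/m,\delta)$. In addition, the constant $C_V$ in Lemma \ref{L:scalar-to-vector-B} must absorb the factors $\lVert T_i\rVert\leq K_pK_d$; the lemma's statement already accounts for this, so no extra work is required there.
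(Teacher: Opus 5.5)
Your proposal is correct and follows the paper's proof. Both apply Theorem~\ref{T:RVP-main} to each $T_i=b_i^*\circ Q$, assemble $G=P\circ F+\sum_i g_ib_i$ via Lemma~\ref{L:scalar-to-vector-B}, and intersect the sets $E_i$. The differences are minor.
\begin{itemize}
\item The paper uses that $b_1,\dots,b_d$ span $W$, so $g_i\equiv 0$ for $i\le d$. It then reads off (iii) from the finitely many values of $b_i^*\circ G=g_i$ for $i>d$. Your observation that $G(E)$ lies in finitely many translates of $W$ does not need the basis to be adapted to $W$.
\item Your uniform choice $\eta/m$ gives a single $\theta$ directly. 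This avoids taking the largest of the $\theta_i$ produced for different $\eta_i$, a step the paper leaves implicit.
\end{itemize}
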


\begin{proof}
    Since $b_1, \dots, b_d$ form a basis of $W$ it follows that $T_i=b_i^*\circ Q$ are identically zero for $i=1,\dots, d$. We let $g_i\colon X \to \reals$ be identically equal to $0$ and $E_i=X$ for such $i$'s. Let $\eta_i\in (0,1)$, $i=d+1,\dots, m$ be such that
    \begin{equation*}
        1- \sum_{i=d+1}^m \eta_i\geq 1-\eta.
    \end{equation*}
    For each of these $\eta_i$, we recall Theorem \ref{T:RVP-main} and find a function $g_i\colon X \to \reals$ and a compact set $E_i\subset X$ such that
    \begin{enumerate}[label={\rm(\roman*)$_{i}$}]
        \item\label{Enum:scalat-to-vector-prop-1-i} for all $y,z \in X$,
        \begin{equation*}
            |g_i(y)-g_i(z)|\leq |T_i(F(y)-F(z))| + \delta \lVert T_i \rVert\tLip(F) \metric(y,z),
        \end{equation*}
        \item\label{Enum:scalat-to-vector-prop-2-i} for all $x\in X$,
        \begin{equation*}
            |T_i F(x) - g_i(x)|\leq \frac{1}{m}\varepsilon,
        \end{equation*}
        \item\label{Enum:scalat-to-vector-prop-3-i} we have $\mu(E_i)\geq 1- \eta_i$ and $g_i(E_i)$ is finite.
    \end{enumerate}

    We let $E=\cap_{i=1}^m E_i$ and
    \begin{equation*}
        G(x)=P(F(x))+\sum_{i=1}^m g_i(x)b_i, \quad \text{for $x\in X$.}
    \end{equation*}
    Then, using Lemma \ref{L:scalar-to-vector-B} and \ref{Enum:scalat-to-vector-prop-1-i} and \ref{Enum:scalat-to-vector-prop-2-i}, we see that \ref{Enum:scalat-to-vector-prop-1} and \ref{Enum:scalat-to-vector-prop-2} are satisfied.

    To show \ref{Enum:scalat-to-vector-prop-3}, we first observe that, for each $i\geq d+1$,
    \begin{equation*}
        b_i^*\circ G = b_i^*\circ(PF + \sum_{j=1}^m g_j b_j)= b_i^*\circ(\sum_{j=1}^m g_j b_j)=g_i.
    \end{equation*}
    Indeed, as $b_i$'s form a basis of $V$ and $b_1, \dots, b_d$ form a basis of $W$, it follows that $P(V)= W \subset \ker b_i^*$ for $i\geq d+1$. 
    Therefore $H_i=b_i^*(G(E))$ is finite for each $i=d+1,\dots m$. Furthermore,
    \begin{equation}\label{E:scalar-to-vector-d-dim-cont}
        G(E)\subset \bigcap_{i=d+1}^m (b_i^*)^{-1}(H_i).
    \end{equation}
    Elementary linear algebra shows that for any $t_{d+1}, \dots, t_d\in \reals$,
    \begin{equation*}
        \dim\left(\bigcap_{i=d+1}^m (b_i^*)^{-1}(t_i)\right)=d.
    \end{equation*}
    Whence, for $H=\bigcup_{i=d+1}^m H_i$, the set
    \begin{equation*}
        \bigcap_{i=d+1}^m (b^*_i)^{-1}(H)
    \end{equation*}
    is a finite union of $d$-dimensional affine subspaces of $V$, which in combination with \eqref{E:scalar-to-vector-d-dim-cont} yields \ref{Enum:scalat-to-vector-prop-3}. 
\end{proof}

\begin{observation}\label{O:finiteness-subspaces}
    Suppose $V$ is a finite-dimensional Banach space and $\mathcal{F}\subset V$ is a finite union of $d$-dimensional affine subspaces. If $F\subset \mathcal{F}$ is compact, then $\H^d(F)<\infty$.
\end{observation}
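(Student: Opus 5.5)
The statement is local, so the plan is to reduce to a single affine subspace and then use its finite-dimensionality. Write $\mathcal{F}=A_1\cup\dots\cup A_k$, where each $A_j=v_j+W_j$ is an affine $d$-dimensional subspace of $V$. Since $F\subset\mathcal{F}$, we have
\begin{equation*}
    F=\bigcup_{j=1}^k (F\cap A_j).
\end{equation*}
Each $F\cap A_j$ is compact, being the intersection of a compact set with a closed set. By subadditivity of $\H^d$, it suffices to show $\H^d(F\cap A_j)<\infty$ for each $j$.

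Fix $j$. Every norm on a finite-dimensional space is equivalent to the Euclidean one, so we can choose a linear isomorphism $\Phi\colon\reals^d\to W_j$ that is bi-Lipschitz. Concretely, take a basis of $W_j$ and note that the norm of $V$ restricted to $W_j$ is comparable to the Euclidean norm of the coordinates. The map $x\mapsto v_j+\Phi(x)$ is then a bi-Lipschitz bijection from $\reals^d$ onto $A_j$. Its inverse $\Psi\colon A_j\to\reals^d$ is $L$-Lipschitz for some $L$. The set $\Psi(F\cap A_j)$ is compact, hence bounded, and so it is contained in some cube $Q\subset\reals^d$.

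Lipschitz maps scale Hausdorff measure by at most the $d$-th power of the Lipschitz constant. Applying this to the map $x\mapsto v_j+\Phi(x)$, which is $\tLip(\Phi)$-Lipschitz, gives
\begin{equation*}
    \H^d(F\cap A_j)\leq \tLip(\Phi)^d\,\H^d(Q).
\end{equation*}
Here $\H^d(Q)<\infty$, because the $d$-dimensional Hausdorff measure on $\reals^d$ is a constant multiple of Lebesgue measure; alternatively, one can cover $Q$ by $n^d$ subcubes of diameter comparable to $1/n$. This proves $\H^d(F\cap A_j)<\infty$, and summing over $j$ gives $\H^d(F)<\infty$.

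There is no real obstacle here. The only point needing care is that the Hausdorff measure is computed with respect to the norm of $V$ rather than a Euclidean norm. This is handled by the equivalence of norms in finite dimensions and the bound $\H^d(f(A))\leq\tLip(f)^d\H^d(A)$.
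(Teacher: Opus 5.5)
Your proof is correct and follows the same route as the paper. Both split $F$ into the finitely many compact pieces $F\cap A_j$, use that each affine $d$-plane in $V$ is bi-Lipschitz to $\reals^d$ to get $\H^d(F\cap A_j)<\infty$, and sum. You supply details the paper leaves implicit, namely the affine translation and the bound $\H^d(f(A))\leq\tLip(f)^d\H^d(A)$.
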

\begin{proof}
    As any $d$-dimensional subspace $U$ of $V$ is biLipschitz to $\reals^d$, and since $F\cap U$ is compact, $\mathcal{H}^d(F\cap U)< \infty$. Therefore, $F$ is a finite union of sets of finite measure.
\end{proof}

\subsection{Obtaining global vector-valued perturbations from local ones}\label{SS:vector-valued-global}

\begin{theorem}\label{T:StV-main}
    Let $X$ be a complete metric space, let $S\subset X$, assume $V$ is a finite-dimensional Banach space, $L>0$ and $F\colon X \to V$ is an $L$-Lipschitz function. Let $\eta\in (0,1)$ and $\varepsilon>0$ and assume that $S$ is an $\tilde{A}(F,d)$ set. Finally, suppose $\mu$ is a Borel probability measure on $S$. Then there exists a function $G\colon X \to V$ and a compact set $E\subset S$ such that
    \begin{enumerate}
        \item\label{Enum:StV-main-1} the Lispchitz constant of $G$ satisfies $\tLip(G)\leq \tilde{K}(V,d) L$,
        \item\label{Enum:StV-main-2} for any $x\in X$,
        \begin{equation*}
            \lVert F(x) -G(x)\rVert_{V}\leq \varepsilon,
        \end{equation*}
        \item\label{Enum:StV-main-3} $\mu(E)\geq 1-\eta$ and $G(E)$ is contained in a finite union of $d$-dimensional affine subspaces of $V$.
    \end{enumerate}
\end{theorem}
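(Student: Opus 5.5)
We glue local perturbations from Proposition \ref{P:StV-local} together, following the gluing scheme of \cite[Section 4]{B}. Throughout we write $\tilde K=\tilde K(V,d)$.

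\emph{Step 1: normalisation and reduction to finitely many compact pieces.} First, by inner regularity ($\mu$ is finite and supported on $S$, and we may pass to a separable part of $S$), we find a compact $S'\subset S$ with $\mu(S\setminus S')<\eta/4$. Second, we choose $\lambda<1$ close to $1$ and replace $F$ by $\lambda F$. This changes $F$ by at most $(1-\lambda)\sup\|F\|$ on the relevant bounded region, which we make less than $\varepsilon/2$; if $F$ is unbounded, we first truncate far from $S'$. It also leaves a Lipschitz-constant margin of $(1-\lambda)\tilde K L$. Since $S\in\tilde A(F,d)$ and $\tilde A$ is invariant under rescaling $F$, $S$ is also an $\tilde A(\lambda F,d)$ set. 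Third, fix $\delta>0$ small; Proposition \ref{P:StV-local} with $\eta/4$ gives a width $\theta$. Since $S\in\tilde A(F,d,\theta)$, we have a Borel decomposition $S=S_1\cup\dots\cup S_M$ with subspaces $W_i$. By inner regularity again, we take pairwise disjoint compact sets $K_i\subset S_i\cap S'$ whose union carries all but $\eta/4$ of $\mu$. Being disjoint and compact, they are at positive mutual distance $4\Delta>0$.

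\emph{Step 2: local perturbations.} For each $i$ with $\mu(K_i)>0$, we apply Proposition \ref{P:StV-local} to $\lambda F$, the compact set $K_i$ (which has no curves in the $\lambda F$-direction of $E(W_i,\theta)$), the normalised measure $\mu_{|K_i}/\mu(K_i)$, the subspace $W_i$, and a small $\varepsilon'$. This yields $G_i$ with $\tLip(G_i)\le \lambda\tilde K L+\delta C_V$ and $\|G_i-\lambda F\|\le\varepsilon'$, together with compact $E_i\subset K_i$ losing at most a proportion $\eta/4$ of $\mu(K_i)$ and with $G_i(E_i)$ in a finite union of affine $d$-planes. We choose $\delta$ so that $\delta C_V<(1-\lambda)\tilde K L/2$.

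\emph{Step 3: gluing.} We define $G=G_i$ on $B(K_i,\Delta)$ and $G=\lambda F$ outside $\bigcup_i B(K_i,2\Delta)$, and interpolate in between. For the interpolation, the cleanest route is to take cutoffs $\psi_i(x)=\max\{0,1-\dist(x,B(K_i,\Delta))/\Delta\}$ and set $G=\lambda F+\sum_i\psi_i(G_i-\lambda F)$. Since the $B(K_i,2\Delta)$ are disjoint, at most one term is nonzero at each point. The Lipschitz constant is then at most $\max_i\tLip(G_i)+\tLip(\lambda F)$-type terms plus $\varepsilon'/\Delta$. This is the main obstacle: a naive convex combination gives $\tLip\le(1-\psi)\tLip(\lambda F)+\psi\tLip(G_i)+\|G_i-\lambda F\|\tLip\psi\le\lambda\tilde KL+\delta C_V+\varepsilon'/\Delta$, provided we observe $\tLip(\lambda F)\le\lambda L\le\lambda\tilde K L$ (as $\tilde K\ge1$). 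Because $\Delta$ is fixed before $\varepsilon'$ is chosen, we may take $\varepsilon'<\Delta(1-\lambda)\tilde KL/2$, giving $\tLip(G)\le\tilde K L$. The convex-combination estimate requires care, since for $y,z$ in different regions one must write $G(y)-G(z)$ as a telescoping sum using $\psi(y)$ for both points. This is the standard computation, and the error term is precisely $|\psi(y)-\psi(z)|\,\|G_i(z)-\lambda F(z)\|$. Points lying in different annuli are at distance at least $2\Delta$, so there we can estimate through a point outside all annuli, where $G=\lambda F$.

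\emph{Step 4: conclusion.} We set $E=\bigcup_i E_i$, which is compact and has $\mu(E)\ge 1-\eta$. Since $G=G_i$ on $E_i$, $G(E)$ lies in a finite union of affine $d$-planes. Finally, $\|G-F\|\le\varepsilon'+\varepsilon/2\le\varepsilon$.
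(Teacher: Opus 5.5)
This is essentially the paper's proof. You decompose $S$ via $\tilde A(F,d,\theta)$, pass to disjoint compact pieces, apply Proposition~\ref{P:StV-local} on each, and glue at the scale of their separation, absorbing the $\varepsilon'/\Delta$ error in a Lipschitz margin. The paper cites \cite[Lemma 4.6]{B} for the cutoff gluing that you do by hand, and its ``without loss of generality $\tLip(F)<L$'' is your rescaling by $\lambda$. That rescaling is harmless for bounded $F$, as in the application to $\tLip_L(X,V)$, but your ``truncate far from $S'$'' does not rescue unbounded $F$, since truncation destroys $\|F-G\|\le\varepsilon$ far from $S'$.

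One small repair is needed in Step 3. With separation $4\Delta$, points in different $2\Delta$-neighbourhoods can be arbitrarily close, and a general metric space offers no intermediate point to route through. However, since $\psi_i(z)=\psi_j(y)=0$, you get $\|\psi_i(y)(G_i-\lambda F)(y)\|\le\varepsilon' d(y,z)/\Delta$, and likewise for $j$. This gives $\tLip(G)\le\lambda\tilde K L+\delta C_V+2\varepsilon'/\Delta$, matching the paper's $2\teps/\rho$ term.
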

\begin{proof}
    Without loss of generality, we may assume $\tLip(F)<L$. There is some $\delta>0$ such that
    \begin{equation}\label{E:StV-main-delta-choice}
        \tilde{K}(V,d)\tLip(F)+\delta C_V < \tilde{K}(V,d)L.
    \end{equation}
    Using Proposition \ref{P:StV-local}, there is some $\theta \in (0,1)$ such that the following holds. Whenever $S_0\subset X$ is compact and $W\subset V$ is a $d$-dimensional subspace such that
    \begin{equation*}
        \H^1(\gamma \cap S_0)=0 \quad \text{for all $\gamma\in \Gamma(X)$ in the $F$-direction of $E(W,\theta)$,}
    \end{equation*}
    then for every $\teps$ and any Borel finite non-trivial measure $\mu_0$ on $S_0$, there exists $G_0\colon X \to V$ and a compact set $E_0\subset S_0$ such that
    \begin{enumerate}[label={\rm(\alph*)}]
        \item\label{Enum:StV-main-a} the Lipschitz constant of $G_0$ satisfies $\tLip(G_0)\leq \tilde{K}(V,d)\tLip(F)+\delta C_V$,
        \item\label{Enum:StV-main-b} for all $x\in X$,
        \begin{equation*}
            \lVert F(x) -G_0(x) \rVert_{V}\leq \teps,
        \end{equation*}
        \item\label{Enum:StV-main-c} $\mu_0(E_0)\geq\mu_0(S_0) \sqrt{1-\eta}$ and there is some finite union $\mathcal{F}_0$ of $d$-dimensional affine subspaces of $V$ such that $G_0(E_0)\subset \mathcal{F}_0$.
    \end{enumerate}
    As $S$ is an $\tilde{A}(F,d)$ set, we can find a decomposition $S=S_1\cup \dots \cup S_M$ and $d$-dimensional subspaces $W_1, \dots, W_M\subset V$ such that
    \begin{equation*}
        \H^1(\gamma \cap S_i)=0 \quad \text{for every $\gamma \in \Gamma(X)$ in the $F$-direction of $E(W_i,\theta)$.}
    \end{equation*}
    Using the regularity of $\mu$, we find compact subsets $\tilde{S}_i \subset S_i$ such that
    \begin{equation}\label{E:StV-main-tilde-S-est}
        \mu(\bigcup_i \tilde{S}_i)\geq \sqrt{1-\eta}.
    \end{equation}
    As the sets $\tilde{S}_i$ are pairwise disjoint and compact, there is some $\rho>0$ such that the sets $B(\tilde{S}_i, \rho)$ are pairwise disjoint.
    Denote $\mu_i=\mu_{|\tilde{S}_i}$.

    Given $\teps>0$ and $i=1,\dots, M$, if $\mu_i$ is trivial, let $E_i=\emptyset$, $\mathcal{F}_i=\emptyset$ and $G_i=F$. 
    If $\mu_i$ is not trivial, then by our choice of $\theta$, there exists a function $G_i\colon X \to V$ and a compact set $E_i \subset \tilde{S_i}$ such that \ref{Enum:StV-main-a}, \ref{Enum:StV-main-b} and \ref{Enum:StV-main-c} above hold with $\mu_0=\mu_i$, $G_0=G_i$ 
    and $E_0=E_i$ and the respective finite unions of $d$-dimensional affine subspaces denoted by $\mathcal{F}_i$.

    Let $\mathcal{F}=\bigcup_i \mathcal{F}_i$ and observe that $\mathcal{F}$ is a finite union of $d$-dimensional affine subspaces of $V$. Further, let $E= \bigcup_i E_i$. Denote
    \begin{equation*}
        L(\teps)=\tilde{K}(V,d)\tLip(F)+\delta C_V + \frac{2\teps}{\rho}.
    \end{equation*}
    By \cite[Lemma 4.6]{B}, there exists a function $G\colon X \to V$ (depending in particular on $\teps$) such that for each $i$, one has $G_{|\tilde{S}_i}=G_{i|\tilde{S}_i}$, $\tLip(G)\leq L(\teps)$ and for every $x\in X$, one has
    \begin{equation*}
        \lVert F(x)- G(x) \rVert_{V}\leq \teps.
    \end{equation*}
    Recalling \eqref{E:StV-main-delta-choice}, there exists some $\teps\in (0,\varepsilon)$ such that $G$ satisfies \ref{Enum:StV-main-1} and \ref{Enum:StV-main-2}. Moreover, by definition of $E$ and $\mathcal{F}$ and since $G_{|\tilde{S_i}}=G_{i{|\tilde{S_i}}}$, we have
    \begin{equation*}
        G(E)= G(\bigcup_i E_i)= \bigcup_i G(E_i)\subset \bigcup_i \mathcal{F}_i=\mathcal{F},
    \end{equation*}
    whence $G(E)$ is a subset of $\mathcal{F}$. Finally, from \ref{Enum:StV-main-c} and \eqref{E:StV-main-tilde-S-est}, we have
    \begin{equation*}
        \mu(E) = \sum_i \mu(E_i) \geq \sum_{i} \sqrt{1-\eta} \mu(\tilde{S}_i) \geq \sqrt{1-\eta}\sqrt{1-\eta}=1-\eta.
    \end{equation*}
    Thus, \ref{Enum:StV-main-3} is also satisfied.
\end{proof}

\section{Residuality with Euclidian targets}\label{S:residuality-Euclidean}

Equipped with Theorem \ref{T:StV-main}, the final ingredient necessary to carry out the proof of Theorem \ref{T:intro-squashing} lies in showing that certain sets of maps are open. The basic tool to do this is the following result concerning Hausdorff content.

\begin{lemma}\label{L:semicont}
    Suppose $S$ is a compact metric space, $Y$ is a metric space and $s\in (0,\infty)$. The functional
    \begin{equation*}
        f\mapsto \H^s_\infty(f(X))
    \end{equation*}
    is upper semi-continuous on the space $\tLip_L(X,Y)$ for any $L\in [0,\infty)$.
\end{lemma}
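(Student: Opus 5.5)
Note first a typo in the statement: the functional should read $f\mapsto \H^s_\infty(f(S))$ on $\tLip_L(S,Y)$ (or, equivalently, on maps defined on a larger space but evaluated on the compact set $S$). I will prove it in that form. Upper semi-continuity here means: if $f_k\to f$ uniformly, then $\limsup_k \H^s_\infty(f_k(S))\leq \H^s_\infty(f(S))$.

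The argument mirrors the proof of Lemma \ref{L:H1-Borel}, with compactness of $f(S)$ replacing compactness of a curve image.

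\textbf{Step 1: open covers.} Fix $f$ and $\varepsilon>0$. Choose a cover $\{A_i\}$ of $f(S)$ with $\sum_i \diam(A_i)^s<\H^s_\infty(f(S))+\varepsilon$. Each $A_i$ can be replaced by an open set $U_i\supset A_i$ with $\diam U_i\leq \diam A_i+\varepsilon_i$; for instance take $U_i=\{y:\dist(y,A_i)<\varepsilon_i/2\}$. Choosing the $\varepsilon_i$ small enough (possible because $s>0$ and $t\mapsto t^s$ is continuous) gives
\[
\sum_i(\diam A_i+\varepsilon_i)^s<\H^s_\infty(f(S))+2\varepsilon .
\]
If $\diam A_i=0$, the contribution $\varepsilon_i^s$ can still be made summably small. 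This is the only step that needs care, and it is routine.

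\textbf{Step 2: a uniform margin.} The set $f(S)$ is compact, being a continuous image of the compact set $S$. Hence finitely many of the $U_i$ cover it. By the Lebesgue number lemma, or by compactness and openness, there is $r>0$ with $B(f(S),r)\subset\bigcup_i U_i$.

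\textbf{Step 3: nearby maps.} If $\lVert g-f\rVert_\infty<r$, then $g(S)\subset B(f(S),r)\subset\bigcup_i U_i$. The $U_i$ are therefore an admissible cover of $g(S)$, so
\[
\H^s_\infty(g(S))<\H^s_\infty(f(S))+2\varepsilon .
\]
Since $\varepsilon>0$ was arbitrary, upper semi-continuity follows.

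The Lipschitz bound $L$ plays no role. The statement in fact holds on all continuous maps with the supremum metric, and in particular on the subspace $\tLip_L$.
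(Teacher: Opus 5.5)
Your proof is correct. Your reading of the statement is the form in which the lemma is used later, namely to show that each set $\{f:\H^{d+\delta}_\infty(f(E))<\varepsilon\}$ is open: the functional is $f\mapsto\H^s_\infty(f(S))$ for the compact set $S$, with maps possibly defined on a larger space. The paper gives no argument of its own and simply defers to \cite[Lemma 5.3]{B}. Your argument is the standard proof of that fact: enlarge a near-optimal cover to an open one, then use compactness of $f(S)$ to get a uniform margin $r$. It is also the same mechanism the paper itself uses for upper semicontinuity of $\H^1_\delta$ at compact sets in Lemma \ref{L:H1-Borel}.
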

\begin{proof}
    This is an equivalent way of stating \cite[Lemma 5.3]{B}.
\end{proof}

\begin{theorem}
    Suppose $X$ is a complete metric space and $S\subset X$.
    Suppose $d\in \nat$ and let $V$ be a finite-dimensional Banach space for which $\tilde{K}(V,d)=1$.
    Let $\mu$ be a $\sigma$-finite Borel measure on $S$ for which there exists a set $N\subset S$ with $\mu(N)=0$ such that $S\setminus N$ is $\sigma$-compact and an $\tilde{A}(F,d)$ set for every Lipschitz $F\colon X\to V$.
    Then, for any $L\in [0,\infty)$, the set
    \begin{equation*}
        \{f\in \tLip_L(X,V): \dim_H(f_\# \mu)\leq d\}
    \end{equation*}
    is residual and $G_\delta$ in $\tLip_L(X,V)$.
\end{theorem}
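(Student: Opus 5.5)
We will write the target set as a countable intersection of open dense sets. First we reduce to a finite measure supported on a compact set. Since $\mu$ is $\sigma$-finite and $S\setminus N$ is $\sigma$-compact, we may write $S\setminus N=\bigcup_j K_j$ with $K_j$ compact, increasing, and $\mu(K_j)<\infty$. It suffices to produce, for each $j$ and each $q\in\nat$, a residual $G_\delta$ set $\mathcal{G}_{j,q}$ with the following property: for every $f\in\mathcal{G}_{j,q}$ there is a $\mu$-null set $N'$ with $\dim_H f(K_j\setminus N')\leq d+1/q$. Indeed, a countable intersection of residual $G_\delta$ sets is again residual and $G_\delta$. For $f$ in the intersection, taking the union of the null sets over $q$ and $j$ gives a $\mu$-null set $N''\supset N$ such that $f(S\setminus N'')\subset\bigcup_j f(K_j\setminus N'')$ has dimension at most $d+1/q$ for every $q$, hence at most $d$. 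So fix a compact $K=K_j$ and normalise $\mu|_K$ to a probability measure, still called $\mu$; if $\mu(K)=0$ there is nothing to prove. Fix also $s=d+1/q>d$.

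For $k,l\in\nat$ we define $\mathcal{O}_{k,l}$ to be the set of $f\in\tLip_L(X,V)$ for which there is a compact $E\subset K$ with $\mu(E)>1-2^{-k}$ and $\H^s_\infty(f(E))<1/l$. Two properties of these sets drive the argument.

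\textbf{Openness.} By Lemma \ref{L:semicont} applied to $E$, the map $g\mapsto\H^s_\infty(g(E))$ is upper semicontinuous on $\tLip_L$. The same compact $E$ therefore witnesses membership for every $g$ close enough to $f$, so $\mathcal{O}_{k,l}$ is open.

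\textbf{Density.} Let $f\in\tLip_L(X,V)$ and $\varepsilon>0$. First shrink $f$ slightly to $(1-\varepsilon')f$, whose Lipschitz constant is $L'<L$. This map is still Lipschitz, so $K$ is an $\tilde A((1-\varepsilon')f,d)$ set by hypothesis. Theorem \ref{T:StV-main} applied with $F=(1-\varepsilon')f$ and the Lipschitz bound $L'$ gives $G$ with $\tLip G\leq\tilde K(V,d)L'=L'\leq L$, with $\|G-F\|_\infty\leq\varepsilon$, and a compact $E\subset K$ with $\mu(E)\geq1-2^{-k-1}$ and $G(E)$ contained in a finite union $\mathcal{F}$ of $d$-planes. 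This is exactly where $\tilde K(V,d)=1$ is needed. By Observation \ref{O:finiteness-subspaces}, $G(E)$ is compact with $\H^d(G(E))<\infty$. Since $s>d$, this forces $\H^s(G(E))=0$, hence $\H^s_\infty(G(E))=0<1/l$. Boundedness of $G$ is automatic, since $G$ is within $\varepsilon$ of a bounded map. Thus $G\in\mathcal{O}_{k,l}$ and $G$ is $2\varepsilon$-close to $f$, so $\mathcal{O}_{k,l}$ is dense.

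Now set $\mathcal{G}=\bigcap_k\bigcap_l\bigcup_{k'\geq k}\mathcal{O}_{k',l}$. Each union is open and dense, so $\mathcal{G}$ is residual and $G_\delta$. The main point needing care is extracting a single null set from membership in $\mathcal{G}$. Take $f\in\mathcal{G}$. For each $l$, choose $k_l$ with $\sum_l 2^{-k_l}$ finite and small, and a compact $E_l$ with $\mu(K\setminus E_l)<2^{-k_l}$ and $\H^s_\infty(f(E_l))<1/l$. Counting instead in blocks, for each $m$ choose $E_{l}$ with $\sum_{l\geq m}\mu(K\setminus E_l)<2^{-m}$ and put $A_m=\bigcap_{l\geq m}E_l$. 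Then $\H^s_\infty(f(A_m))=0$, and since $\H^s_\infty$ and $\H^s$ have the same null sets, $\H^s(f(A_m))=0$. Finally $\mu(K\setminus\bigcup_m A_m)=0$, so with $N'=K\setminus\bigcup_m A_m$ we get $\H^s(f(K\setminus N'))=0$. This yields $\dim_H f(K\setminus N')\leq s$, which is the required property of $\mathcal{G}_{j,q}=\mathcal{G}$.
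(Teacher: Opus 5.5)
\textbf{Verdict: a genuine gap.} Your openness step, density step and extraction of a single null set are correct, and they essentially coincide with the paper's argument (Lemma \ref{L:semicont}, then Theorem \ref{T:StV-main} with Observation \ref{O:finiteness-subspaces}, then a Borel--Cantelli-type intersection). The gap is in the $G_\delta$ half of the statement.

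You only prove $\bigcap_{j,q}\mathcal{G}_{j,q}\subset\{f:\dim_H(f_\#\mu)\le d\}$. That shows the target set contains a dense $G_\delta$, i.e.\ that it is residual. It does not show the target set \emph{is} $G_\delta$, because a set containing a dense $G_\delta$ need not be $G_\delta$. So your opening claim that this ``suffices'' is false for the theorem as stated. The paper proves \emph{equality} of the target set with the countable intersection, and you need the reverse inclusion. It is easy:
\begin{itemize}
\item Suppose $Z$ is $\mu$-null with $\dim_H f(S\setminus Z)\le d$, and let $s=d+1/q$. Then $\H^s(f(K_j\setminus Z))=0$.
\item By Borel regularity of $\H^s$, pick a Borel $B\supset f(K_j\setminus Z)$ with $\H^s(B)=0$. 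Then $K_j\cap f^{-1}(B)$ is a Borel set of full $\mu|_{K_j}$-measure.
\item Inner regularity gives a compact $E\subset K_j\cap f^{-1}(B)$ of normalised measure $>1-2^{-k}$ with $\H^s_\infty(f(E))=0$.
\end{itemize}
Hence $f\in\mathcal{O}_{k,l}$ for all $k,l$ and all $j,q$. The target set therefore equals $\bigcap_{j,q,k,l}\mathcal{O}^{(j,q)}_{k,l}$, which is $G_\delta$.

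A smaller point: your first step claims you can write $S\setminus N$ itself as a countable union of compact sets of finite $\mu$-measure. This is false in general. Take $S=[0,1]\subset\reals$ (which satisfies the hypotheses for $d\ge 1$) and $\mu$ the counting measure on $\mathbb{Q}\cap[0,1]$. Such a decomposition would make $[0,1]\setminus\mathbb{Q}$ an $F_\sigma$ set, contradicting Baire.

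As the paper does, you must first discard a further null set:
\begin{itemize}
\item cover $S$ by sets $A_n$ of finite measure;
\item write $S\setminus N=\bigcup_i C_i$ with $C_i$ compact;
\item apply inner regularity to each $A_n\cap C_i$ to get compact sets of finite measure covering $S$ up to a $\mu$-null set $\tilde N\supset N$.
\end{itemize}
Subsets of $\tilde A(F,d)$ sets are again $\tilde A(F,d)$ sets, and your argument only needs the $K_j$ to cover $S$ up to a null set. So this repair costs nothing, and the reverse inclusion above is unaffected.
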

\begin{proof}
    We begin by proving the assertion assuming $S$ is a 
    compact $\tilde{A}(F,d)$ set and $\mu$ is a Borel probability measure on $S$, in which case we may simply take $N=\emptyset$. Let us write
    \begin{equation*}
        \mathcal{G}=\{f\in \tLip_L(X,V): \dim_H(f_\# \mu)\leq d\}
    \end{equation*}
    and
    \begin{equation*}
        \mathcal{G}_{\delta, \eta, \varepsilon}=\{f\in \tLip_L(X,V): \exists E \subset S\;\text{compact and such that,}\; \mu(E)\geq 1-\eta,\; \H^{d+\delta}_\infty(f(E)) < \varepsilon\},
    \end{equation*}
    for $\delta, \varepsilon>0$ and $\eta \in (0,\infty)$.
    Firstly, we shall assert that
    \begin{equation*}
        \mathcal{G}=\bigcap_{\varepsilon, \delta>0, \eta\in (0,1)} \mathcal{G}_{\delta, \eta, \varepsilon}.
    \end{equation*}
    The inclusion from left to right is immediate, we prove the converse inclusion. To that end, let $\delta>0$ and assume $f\in \mathcal{G}_{\delta, \eta, \varepsilon}$ for every $\eta\in (0,1)$ and $\varepsilon>0$. Then, by definition, for every $n,i,k\in \nat$, there exists a compact set $E(n,i,k)\subset S$ such that
    \begin{enumerate}
        \item\label{Enum:global-main-i} $\mu(S\setminus E(n,i,k))< \frac{1}{i}2^{-n}$,
        \item\label{Enum:global-main-ii} $\H^{d+\delta}_\infty(f(E(n,i,k)))< 2^{-i} \frac{1}{k}$.
    \end{enumerate}
    We let
    \begin{equation*}
        E_\delta=\bigcap_{k\in\nat} \bigcup_{i\in\nat} \bigcap_{n\in \nat} E(n,i,k).
    \end{equation*}
    Then \ref{Enum:global-main-i} and \ref{Enum:global-main-ii} imply 
    \begin{enumerate}[label={\rm(\alph*)}]
        \item $\mu(S\setminus E_\delta)=0$ and
        \item $\H^{d+\delta}_\infty(f(E_\delta))=0$.
    \end{enumerate}

    It follows that $\H^{d+\delta}(f(E_\delta))=0$, whence also $\dim_H(f(E_\delta))\leq d+\delta$. Taking
    \begin{equation*}
        E=\bigcap_{n\in \nat} E_{\frac{1}{n}},
    \end{equation*}
    yields $\mu(S\setminus E)=0$ and $\dim_H(f(E))\leq d$. In other words, $\dim_H(f_\# \mu)\leq d$ and hence $f\in \mathcal{G}$.

    Next, we show that for every $\delta, \varepsilon>0$, $\eta \in (0,1)$, the set $\mathcal{G}_{\delta, \eta, \varepsilon}$ is open and dense in $\tLip_L(X,V)$.
    Firstly, density. Fix $\delta>0$ and suppose $f\in \tLip_L(X,V)$. Since $S$ is an $\tilde{A}(f,d)$ set and $\tilde{K}(V,d)=1$, for any $\Delta>0$, we have from Theorem \ref{T:StV-main} (and Observation \ref{O:finiteness-subspaces}) a function $g\in \tLip_L(X,V)$ and a compact set $E\subset S$ such that $\mu(E)\geq 1-\eta$,
    \begin{equation*}
        \H^d(g(E))< \infty \quad \text{and}\quad \lVert f-g \rVert_{\infty}< \Delta.
    \end{equation*}
    This implies that $g\in \mathcal{G}_{\delta,\eta,\varepsilon}$ for every $\varepsilon>0$ and $\delta>0$. As $\Delta$ is arbitrary, this shows density of $\mathcal{G}_{\delta,\eta,\varepsilon}$ in $\tLip_L(X,V)$.

    To show openness, we write $\mathcal{K}(S)$ for the family of compact subsets of $S$ and
    \begin{equation*}
        \mathcal{G}_{\delta, \eta, \varepsilon}=\bigcup_{E\in \mathcal{K}(X); \mu(E)\geq 1-\eta}\{f\in \tLip_L(X,V): \H^{d+\delta}_\infty(f(E)) < \varepsilon\}.
    \end{equation*}
    Each of the sets inside the union on the right hand side is open due to Lemma \ref{L:semicont}. Therefore, the set $\mathcal{G}_{\delta, \eta, \varepsilon}$ is open.
    Since we may write
    \begin{equation*}
        \mathcal{G}=\bigcap_{\delta, \varepsilon>0, \eta\in(0,1)} \mathcal{G}_{\delta, \eta, \varepsilon}=\bigcap_{n,k,i\in \nat} \mathcal{G}_{\frac{1}{n}, \frac{1}{k}, \frac{1}{i}},
    \end{equation*}
    the set $\mathcal{G}$ is a countable intersection of open dense sets.

    In the general case, unless $\mu$ is trivial, in which case there is nothing to prove, it follows from  our assumptions, that there exists a set $ N\subset \tilde N \subset S$ and \emph{compact} $\tilde{A}(F,d)$ sets $S_1, S_2,\dots\subset S$ such that 
    \begin{enumerate}[label={\rm(\arabic*)}]
        \item $\mu(\tilde N)=0$,
        \item $0<\mu(S_i)<\infty$ and
        \item $S\setminus \tilde{N}= \bigcup_i S_i$.
    \end{enumerate}
    Denote $\mu_i= \frac{1}{\mu(S_i)}\mu_{|S_i}$. As each $S_i$ is compact and each $\mu_i$ is a probability measure on $S_i$, we have already shown that the sets
    \begin{equation*}
        \mathcal{F}_i=\{f\in \tLip_L(X,V): \dim_H(f_\# \mu_i)\leq d\}
    \end{equation*}
    are residual $G_\delta$ and thus, also the set
    \begin{equation*}
        \mathcal{F}=\bigcap_i \mathcal{F}_i
    \end{equation*}
    is residual $G_\delta$. It remains to show
    \begin{equation*}
        \{f\in \tLip_L(X,V): \dim_H(f_\# \mu)\leq d\}=\mathcal{F}.
    \end{equation*}
    The inclusion from left to right is immediate. Suppose $f\in \mathcal{F}$. Then, for each $i\in \nat$, there is a $E_i\subset S_i$ with $\mu(S_i\setminus E_i)=0$ and such that
    \begin{equation}\label{E:dim-leq-d}
        \dim_H(f(E_i))\leq d.
    \end{equation}
    Let $E=\bigcup_i E_i$. Then we have $f(E)=\bigcup_i f(E_i)$ and so by \eqref{E:dim-leq-d} we have
    \begin{equation*}
        \dim_H(f(E))= \dim_H(\bigcup f(E_i))\leq d,
    \end{equation*}
    showing that $\dim_H(f_\# \mu)\leq d$.
\end{proof}
The following is a corollary of the previous result combined with the characterisation in Theorem \ref{T:afd-wtf}.
\begin{corollary}\label{C:1}
    Suppose $X$ is a complete metric space and $S\subset X$.
    Suppose $d\in \nat$ and let $V$ be a finite-dimensional Banach space for which $\tilde{K}(V,d)=1$.
    Let $d\in \nat$ and let $\mu$ be a $\sigma$-finite Borel measure on $S$ for which there exists a set $N\subset S$ with $\mu(N)=0$ such that $S\setminus N$ is $\sigma$-compact and admits a $d$-dimensional weak tangent field with respect to every Lipschitz $F\colon X\to V$.
    Then, for any $L\in [0,\infty)$, the set
    \begin{equation*}
        \{f\in \tLip_L(X,V): \dim_H(f_\# \mu)\leq d\}
    \end{equation*}
    is residual and $G_\delta$ in $\tLip_L(X,V)$.
\end{corollary}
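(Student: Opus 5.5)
The plan is to reduce Corollary \ref{C:1} to the theorem immediately preceding it, using Theorem \ref{T:afd-wtf} as a dictionary between weak tangent fields and $\tilde{A}(F,d)$ sets. All quantitative content (density via Theorem \ref{T:StV-main}, openness via Lemma \ref{L:semicont}, the $G_\delta$ structure) is already in that theorem. So it suffices to check that the hypotheses of the corollary imply those of the theorem.

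Fix $\mu$, $N$ and $S\setminus N$ as in the statement, and fix an arbitrary Lipschitz map $F\colon X\to V$. By assumption, $S\setminus N$ admits a $d$-dimensional weak tangent field with respect to $F$. Theorem \ref{T:afd-wtf} is stated for an arbitrary subset of $X$ and an arbitrary finite-dimensional target $V$, the general $V$ being handled there via a linear bi-Lipschitz identification with $\reals^m$. Applying it to $S\setminus N$ and $F$ shows that $S\setminus N$ is an $\tilde{A}(F,d)$ set. Since $F$ was arbitrary, $S\setminus N$ is an $\tilde{A}(F,d)$ set for every Lipschitz $F\colon X\to V$. The remaining hypotheses ($\mu(N)=0$, $\sigma$-compactness of $S\setminus N$, $\tilde K(V,d)=1$, $\mu$ $\sigma$-finite Borel on $S$) carry over verbatim. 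The preceding theorem therefore applies and gives that $\{f\in \tLip_L(X,V): \dim_H(f_\#\mu)\leq d\}$ is residual and $G_\delta$ in $\tLip_L(X,V)$ for every $L\in[0,\infty)$.

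The only step needing care is that the two notions are matched with respect to the same map $F$. The preceding theorem quantifies over Lipschitz $F\colon X\to V$ defined on all of $X$. The weak tangent field in Definition \ref{def:atilde} is also defined relative to a Lipschitz $F$ on $X$, with curves $\gamma\in\Gamma(X)$, so both sides refer to the same class of curves and maps.

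A second point is that Definition \ref{def:atilde} asks for $S$ to be Borel when speaking of weak tangent fields, while here $S\setminus N$ is merely $\sigma$-compact. A $\sigma$-compact set is Borel, so this causes no trouble.

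I expect no genuine obstacle: once Theorem \ref{T:afd-wtf} is granted in the generality of an arbitrary finite-dimensional $V$, the corollary is a direct substitution.
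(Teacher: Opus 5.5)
Your proposal is correct and is the paper's argument: the paper also obtains this corollary from the immediately preceding theorem. It uses Theorem~\ref{T:afd-wtf} to convert, for each Lipschitz $F\colon X\to V$, the weak tangent field hypothesis on $S\setminus N$ into the $\tilde{A}(F,d)$ hypothesis, with all other assumptions carried over unchanged.
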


We continue by collecting corollaries in a slightly simpler form for the space $V=\reals^m$, recalling in particular that $\tilde K(\reals^m, d)=1$. 

\begin{corollary}\label{C:main-euclidian}
    Suppose $X$ is a complete metric space, $d\in \nat$ and $S\subset X$ is separable and admits a $d$-dimensional weak tangent field with respect to every Lipschitz $F\colon X\to \reals^m$. Then, for every $\sigma$-finite Borel measure $\mu$ on $S$, the set
    \begin{equation*}
        \{f\in \tLip_L(X,\reals^m): \dim_H(f_\# \mu)\leq d\}
    \end{equation*}
    is residual in $\tLip_L(X,\reals^m)$.
\end{corollary}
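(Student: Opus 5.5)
This follows from Corollary \ref{C:1} with $V=\reals^m$, using $\tilde K(\reals^m,d)=\tilde K(\ell_2^m,d)=1$ (Observation 4.2 of \cite{B}). What remains is to verify the hypotheses of Corollary \ref{C:1}. The only one not explicitly assumed is that, after removing a $\mu$-null set $N$, the set $S\setminus N$ is $\sigma$-compact.

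First I reduce to a finite, supported-on-a-Polish-space situation. Since $\mu$ is $\sigma$-finite, I write $S=\bigcup_j A_j$ with $\mu(A_j)<\infty$. The set $S$ is separable, so $\mu$ is concentrated on the Polish space $\overline S$, and by the inner regularity recalled in the preliminaries each $\mu_{|A_j}$ is inner regular. For each $j$ I therefore pick compact sets $K_{j,k}\subset A_j$ (intersected with $S$, working with Borel hulls if $A_j$ is not Borel) such that $\mu(A_j\setminus\bigcup_k K_{j,k})=0$. Put $N=S\setminus\bigcup_{j,k}K_{j,k}$. Then $\mu(N)=0$ and $S\setminus N=\bigcup_{j,k}K_{j,k}$ is $\sigma$-compact.

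Next, the weak tangent field property passes to subsets. If $\tau$ is a $d$-dimensional weak tangent field to $S$ with respect to $F$, then $\tau$ restricted to $S\setminus N$ is one as well, because the defining condition only gets weaker on a smaller set. Alternatively, by Theorem \ref{T:afd-wtf}, $\tilde A(F,d)$ sets are closed under taking Borel subsets, by intersecting the decomposition. The hypothesis is stated for every Lipschitz $F\colon X\to\reals^m$; if it is instead phrased for $F\colon S\to\reals^m$, a Lipschitz $F$ on $X$ restricts to $S$, so the hypothesis applies either way. Corollary \ref{C:1} then yields that the set in question is residual (indeed $G_\delta$) in $\tLip_L(X,\reals^m)$.

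The main obstacle I anticipate is measure-theoretic. The compact sets must lie inside $S$, and the weak tangent field in Definition \ref{def:atilde} is required to be defined on a Borel set. If $S$ is not Borel, I would pass to a Borel (or $\sigma$-compact) subset $S'\subset S$ of full $\mu$-measure, which inner regularity provides, and apply everything to $S'$. The case $d=0$ is also needed for Theorem \ref{T:intro-main-1d}. The displayed argument behind Corollary \ref{C:1} uses Theorem \ref{T:StV-main}, which is valid for all $d\ge0$, so I would note that the restriction $d\in\nat$ can be relaxed to $d\in\nat\cup\{0\}$ without change.
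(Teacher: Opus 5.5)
Your proposal is correct and is essentially the paper's argument. You reduce to Corollary~\ref{C:1} with $V=\reals^m$ (where $\tilde K(\reals^m,d)=1$), get a $\sigma$-compact subset of $S$ of full $\mu$-measure from inner regularity on the Polish space $\overline S$, and, usefully, make explicit that the tangent field restricts to $S\setminus N$, which the paper leaves implicit. Your one inaccuracy is the fallback for non-Borel $S$: a finite Borel measure on a non-Borel separable $S$ need not be carried by any $\sigma$-compact subset of $S$ (for instance the trace of Lebesgue measure on a Bernstein set), but this is moot because weak tangent fields are only defined on Borel sets.
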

\begin{proof}
    If $S$ is Polish, then $\mu$ is supported on a $\sigma$-compact set and so the statement follows from Corollary \ref{C:1}. Since $S$ is separable and $X$ is complete, the closure of $S$ in $X$, $\overline{S}$, is Polish. So, on defining the Borel measure $\tilde{\mu}(E)=\mu(E\cap S)$ for $E\subset \overline{S}$, the set 
    \begin{equation*}
        \{f\in \tLip_L(X,\reals^m): \dim_H(f_\# \tilde{\mu})\leq d\}
    \end{equation*}
    is residual in $\tLip_L(X,\reals^m)$.
    As $\dim_H(f_\# \tilde{\mu})\leq d$ is equivalent to $\dim_H(f_\# \mu)\leq d$, the statement follows.
\end{proof}

Theorems \ref{T:intro-main-1d} and \ref{T:intro-squashing} now immediately follow from the preceding Corollary \ref{C:main-euclidian}.
We continue by stating a result for measures directly, as opposed to quantifying over all measures on a given set. This statement is what we are able to obtain a converse to, in Section \ref{S:stability}, for Euclidean ambient spaces.

\begin{corollary}\label{C:main-euclidean-measure}
    Suppose $X$ is a complete separable metric space, $d\in \nat$ and $\mu$ is a Borel measure on $X$ with $\dim_T \mu\leq d$. Then the set
    \begin{equation*}
        \{f\in \tLip_L(X,\reals^m): \dim_H(f_\#\mu)\leq d\}
    \end{equation*}
    is residual in $\tLip_L(X, \reals^m)$.
\end{corollary}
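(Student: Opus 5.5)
The plan is to reduce Corollary \ref{C:main-euclidean-measure} to Corollary \ref{C:1}, applied with $V=\reals^m$; recall that $\tilde K(\reals^m,d)=1$. Since $\dim_T\mu\leq d$, Theorem \ref{T:dimA-iff-wtf} gives a Borel set $E\subset X$ with $\mu(X\setminus E)=0$ which admits a $d$-dimensional weak tangent field. By the definition of admitting a weak tangent field, for every $m$ and every Lipschitz $F\colon E\to\reals^m$ there is a $d$-dimensional weak tangent field to $E$ with respect to $F$. In particular this holds for restrictions to $E$ of Lipschitz $F\colon X\to\reals^m$.

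One small point needs checking. The tangent field condition for $F_{|E}$ is stated for curves in $E$, whereas Corollary \ref{C:1} concerns curves $\gamma\in\Gamma(X)$ and the derivative of $F\circ\gamma$ on $\gamma^{-1}(E)$. For $\H^1$-a.e. density point $t$ of $\gamma^{-1}(E)$, the derivative of $F\circ\gamma$ at $t$ agrees with that of $F\circ\gamma_{|\gamma^{-1}(E)}$. So the curve fragment $\gamma_{|\gamma^{-1}(E)}\in\Gamma(E)$ witnesses the same tangent directions, and the condition transfers. If the intended reading of the definition already quantifies over curves in $X$, this step is vacuous.

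Next I would take $S=E$ in Corollary \ref{C:1} and produce the $\sigma$-compact set it requires. Assume first that $\mu$ is $\sigma$-finite; this is implicit in the use of Theorem \ref{T:dimA-iff-wtf}, and I would read the hypothesis accordingly. Because $X$ is Polish, $\mu$ is inner regular. Hence there are compact sets $K_i\subset E$ such that $N=E\setminus\bigcup_i K_i$ is $\mu$-null, and $E\setminus N=\bigcup_iK_i$ is $\sigma$-compact. A subset of a set admitting a $d$-dimensional weak tangent field still admits one: restrict $\tau$ and use that $\H^1$-a.e. statements on $\gamma^{-1}(E)$ pass to subsets. So $E\setminus N$ satisfies the hypothesis of Corollary \ref{C:1}, applied to the measure $\mu_{|E}$, which is a $\sigma$-finite Borel measure on $S=E$.

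Corollary \ref{C:1} then shows that $\{f: \dim_H(f_\#\mu_{|E})\leq d\}$ is residual in $\tLip_L(X,\reals^m)$. Since $\mu(X\setminus E)=0$, we have $f_\#\mu=f_\#\mu_{|E}$ for every $f$, so the two sets of maps coincide. I expect the main obstacle to be bookkeeping rather than mathematics: the $\sigma$-finiteness hypothesis, and the transfer of the weak tangent field from Lipschitz maps on $E$ to Lipschitz maps on $X$ together with curves in $X$. The latter is handled by the density-point argument above, or directly by Theorem \ref{T:afd-wtf} once $E$ is known to be an $\tilde A(F,d)$ set for every Lipschitz $F\colon X\to\reals^m$, which is exactly what Theorem \ref{T:dimA-iff-AFd} provides.
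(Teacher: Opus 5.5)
Your argument is correct and is essentially the paper's own proof. The paper takes the full-measure Borel set from Theorem \ref{T:dimA-iff-wtf} and invokes Corollary \ref{C:main-euclidian}, whose proof is exactly your inner-regularity reduction to Corollary \ref{C:1}; you additionally make explicit the $\sigma$-finiteness assumption and the transfer from Lipschitz maps on $E$ to Lipschitz maps on $X$, both of which the paper leaves implicit.

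One small caveat concerns your closing aside. Theorem \ref{T:dimA-iff-AFd} only gives, for each $F$, some full-measure $\tilde A(F,d)$ set depending on $F$. Corollary \ref{C:1} instead needs a single null set $N$ that works for all $F$ simultaneously. So that step should be justified through the weak tangent field on the fixed set $E$ (your density-point transfer, or Theorem \ref{T:afd-wtf}), not through Theorem \ref{T:dimA-iff-AFd}.
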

\begin{proof}
    By Theorem \ref{T:dimA-iff-wtf}, there exists a Borel set $S\subset X$, admitting a $d$-dimensional tangent field such that $\mu$ is supported on $S$. Thus, the result follows from Corollary \ref{C:main-euclidian}. 
\end{proof}

Finally, when $d=0$, it is not necessary to assume that the target vector space is Euclidean due to Observation \ref{O:k=1-pun}. Indeed, it is enough to assume that the target space admits a basis for which the constant $K_u=1$, which is in particular true for $\ell^p_m$ spaces. 

\begin{corollary}
    Suppose $X$ is a complete metric space and $S\subset X$ is separable and purely 1-unrectifiable. Then for every Borel finite measure $\mu$ supported on $S$, $m\in\nat$, $p\in[0,1]$ and $L\in[0,\infty)$, the set
    \begin{equation*}
        \{f\in \tLip_L(X,\ell^p_m): \dim_H(f_\#\mu)=0\}
    \end{equation*}
    is residual in $\tLip_L(X,\ell^p_n)$.
\end{corollary}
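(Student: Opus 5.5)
The plan is to run the residuality argument of the main theorem of this section with $d=0$ and $V=\ell^p_m$, feeding it $\tilde K(\ell^p_m,0)=1$ from Observation \ref{O:k=1-pun}. (I read $p\in[1,\infty]$, since the statement presumably means $\ell^p_m$ to be a Banach space.)

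\textbf{Step 1: the hypothesis on $S$.} Pure $1$-unrectifiability of $S$ should mean that $S$ is an $\tilde A(F,0)$ set for every Lipschitz $F\colon X\to\ell^p_m$. Take $W=\{0\}$. Then $E(W,\theta)=V$, so a curve in the $F$-direction of $E(\{0\},\theta)$ is just a fragment $\gamma$ with $(F\circ\gamma)'\neq 0$ a.e. Suppose such a $\gamma$ had $\H^1(\gamma\cap S)>0$. Then $\gamma(\gamma^{-1}(S))$ would be a $1$-rectifiable set of positive measure (positive length is transported because $F\circ\gamma$ has non-vanishing derivative). This contradicts pure unrectifiability. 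Hence the one-piece decomposition $M=1$, $S_1=S$, $W_1=\{0\}$ works for every $\theta$. Equivalently, the zero field is a $0$-dimensional weak tangent field, and one can quote Theorem \ref{T:afd-wtf}. Borel measurability of $S_1$ is not an issue after we pass to $\sigma$-compact subsets in Step 2.

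\textbf{Step 2: reduction to a Polish setting.} As in the proof of Corollary \ref{C:main-euclidian}, replace $X$ by the Polish space $\overline S$ and $\mu$ by $\tilde\mu(E)=\mu(E\cap S)$. By inner regularity there is a $\mu$-null $N$ with $S\setminus N$ $\sigma$-compact. Subsets of $\tilde A(F,0)$ sets are again $\tilde A(F,0)$, so $S\setminus N$ satisfies the hypotheses of the main theorem of this section with $d=0$ and $V=\ell^p_m$.

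\textbf{Step 3: density and openness.} The main theorem was stated for $d\in\nat$, but its proof uses $d$ only through Theorem \ref{T:StV-main} and Observation \ref{O:finiteness-subspaces}. Both hold for $d=0$: a finite union of $0$-dimensional affine subspaces is a finite set. So the density step goes through with Lipschitz constant $\tilde K(\ell^p_m,0)L=L$. Openness comes from Lemma \ref{L:semicont} for $\H^{\delta}_\infty$. This gives that $\{f:\dim_H(f_\#\mu)\le 0\}$ is residual. Since Hausdorff dimension is nonnegative, this is exactly the set $\{\dim_H=0\}$ in the statement.

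\textbf{Main obstacle.} The only point needing care is checking that every step of the main proof really survives $d=0$. In particular, in Proposition \ref{P:StV-local} with $W=\{0\}$ we get $P=0$ and $Q=\Id$, and all $m$ coordinates are squashed via Theorem \ref{T:RVP-main}. Since the image $G(E)$ lies in the product of finite sets $b_i^*(G(E))$, it is finite, which is what the density step needs.
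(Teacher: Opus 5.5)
This is essentially the paper's own argument: the paper gets the corollary in one line from the residuality theorem of Section~\ref{S:residuality-Euclidean} (via Corollary~\ref{C:1} and the reduction in Corollary~\ref{C:main-euclidian}), taking $d=0$, $V=\ell^p_m$ and $\tilde K(\ell^p_m,0)=1$ from Observation~\ref{O:k=1-pun}, and your checks are exactly the details it leaves implicit ($W=\{0\}$ gives $E(W,\theta)=V$, Theorem~\ref{T:StV-main} and Observation~\ref{O:finiteness-subspaces} survive $d=0$, and $\dim_H\le 0$ means $\dim_H=0$). The one tacit point, which the paper's passage to $\overline S$ shares, is that inner regularity yields a $\sigma$-compact full-measure subset of $S$ itself only when $\mu$ is concentrated on a Borel subset of $S$ (e.g.\ $S$ Borel, or $\mu$ Borel regular), so the corollary should be read under that assumption.
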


\section{Stability of the number of Alberti representations under Lipschitz perturbations}\label{S:stability}
The purpose of this section is to prove Theorem \ref{T:intro-preserve}. This is allows us to obtain the converse to Corollary \ref{C:main-euclidean-measure} whenever the ambient space is Euclidean, namely Corollary \ref{C:converse}. Unfortunately, we are not able to do this in the setting of general metric spaces. The issue that arises is explained below in Subsection \ref{SS:perturbations}. We obtain results for Euclidean spaces and also some results in the similar spirit for strictly convex finite dimensional Banach spaces.

\subsection{On perturbations of Lipschitz curve fragments}\label{SS:perturbations}

In this subsection we assert a particular property of (small) perturbations of Lipschitz curve fragments. Roughly speaking, the property we require is that the perturbed curve goes in almost the same direction as the original curve for ``most of the time'' provided the perturbation is small enough. 
The problem is that this particular property is not satisfied in general metric spaces. Geometrically, the crucial point is the following. Suppose $\gamma$ is a unit speed curve in some normed linear space $X$ with a uniformly convex unit ball. Suppose $\xi$ is a $1$-Lipschitz curve, which is uniformly very close to $\gamma$ (and having the same domain). Then on a large portion of the domain, $\gamma'-\xi'$ is small. This fails without the convexity assumption. For example, in the plane $\reals^2_\infty$ equipped with the supremum norm, we can consider the pair of curves $\gamma$ and $\xi$ as in Figure \ref{fig:sawtooth}.

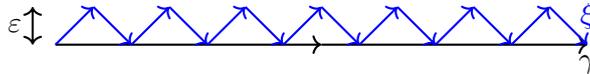
\begin{figure}[h]
\bigskip
    \centering
    \begin{tikzpicture}
    \draw[black, thick, ->] (0, 0) -- (3.5, 0);  
    \draw[black, thick, ->] (3.5, 0) -- (7, 0) node[below] {\(\gamma\)}; 

    \draw[blue, thick, ->] (0, 0) -- ++(0.5, 0.5);
    \draw[blue, thick, ->] (0.5, 0.5) -- ++(0.5, -0.5);
    \draw[blue, thick, ->] (1.0, 0) -- ++(0.5, 0.5);
    \draw[blue, thick, ->] (1.5, 0.5) -- ++(0.5, -0.5);
    \draw[blue, thick, ->] (2.0, 0) -- ++(0.5, 0.5);
    \draw[blue, thick, ->] (2.5, 0.5) -- ++(0.5, -0.5);
    \draw[blue, thick, ->] (3.0, 0) -- ++(0.5, 0.5);
    \draw[blue, thick, ->] (3.5, 0.5) -- ++(0.5, -0.5);
    \draw[blue, thick, ->] (4.0, 0) -- ++(0.5, 0.5);
    \draw[blue, thick, ->] (4.5, 0.5) -- ++(0.5, -0.5);
    \draw[blue, thick, ->] (5.0, 0) -- ++(0.5, 0.5);
    \draw[blue, thick, ->] (5.5, 0.5) -- ++(0.5, -0.5);
    \draw[blue, thick, ->] (6.0, 0) -- ++(0.5, 0.5);
    \draw[blue, thick, ->] (6.5, 0.5) -- ++(0.5, -0.5)
        node[above, yshift=2pt, blue] {\(\xi\)};
    \draw[<->, thick] (-0.3, 0) -- (-0.3, 0.5) 
        node[midway, left] {\(\varepsilon\)};
\end{tikzpicture}
    \caption{The curve $\gamma$ goes in the direction of $e_1$ in the plane $\reals^2_\infty$ equipped with supremum norm. 
    The curve $\xi$ is very close to $\gamma$ uniformly, but the \emph{directions} of $\gamma$ and $\xi$ disagree by a lot on their entire domain. 
    By refining the ``frequency'' of $\xi$, one may achieve $\lVert \gamma-\xi\rVert_\infty<\varepsilon$ for arbitrary $\varepsilon>0$, but one always has $\gamma'=e_1$ and $\xi'=e_1\pm e_2$ a.e.} 
    \label{fig:sawtooth}
\end{figure}

The reason this problem is ``allowed'' to happen is that the curve $\xi$ in fact has length equal to the length of $\gamma$ (note that the projection from the image of $\xi$ to the image of $\gamma$ is an isometry). Such a problem cannot happen in normed spaces whose unit ball is strictly convex.

\begin{lemma}\label{L:rect-estimate}
    Let $(X, |\cdot|)$ be a finite dimensional Banach space.
    Let $I=[a,b]\subset \reals$ be a non-degenerate interval and suppose $\varepsilon>0$. Suppose $\gamma\colon I \to X$ is a Lipschitz curve with unit speed and let
    \begin{equation*}
        u=\frac{\gamma(b)-\gamma(a)}{b-a}.
    \end{equation*}
    Let $f\colon \gamma \to X$ be $1$-Lipschitz with $\lVert f -\Id \rVert_{\infty}\leq \varepsilon$. Suppose $\varphi$ is a linear functional on $\reals^n$ of norm $1$ satisfying also $\varphi(u)=|u|$.
    Let
    \begin{equation*}
        A=1-|u|+\frac{2\varepsilon}{b-a}.
    \end{equation*}
    Then
    \begin{equation*}
        \frac{1}{b-a}\H^1(\{t\in I: \varphi((f\circ \gamma)'(t))> 1-\sqrt{A}\})\geq 1-\sqrt{A}.
    \end{equation*}
\end{lemma}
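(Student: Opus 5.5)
The plan is to reduce the statement to the averaging Lemma \ref{L:av-new}, applied on $I$ with the normalised measure $\frac{1}{b-a}\H^1_{|I}$ and the function $\xi(t)=\varphi((f\circ\gamma)'(t))$.

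First, $\xi$ is defined and bounded a.e. Since $\gamma$ is $1$-Lipschitz (unit speed) and $f$ is $1$-Lipschitz, the curve $f\circ\gamma$ is $1$-Lipschitz from $I$ into the finite-dimensional space $X$. Hence it is differentiable a.e., with $|(f\circ\gamma)'(t)|\leq 1$ for a.e.\ $t$. Because $\lVert\varphi\rVert=1$, we get $\xi\leq 1$ a.e., and $\xi$ is measurable, being a.e.\ the limit of difference quotients. This gives the first hypothesis of Lemma \ref{L:av-new}.

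Next I bound the average of $\xi$ from below, using the fundamental theorem of calculus for Lipschitz curves composed with the linear functional $\varphi$:
\begin{equation*}
    \frac{1}{b-a}\int_a^b \xi\,\diff t=\frac{\varphi\bigl(f(\gamma(b))-f(\gamma(a))\bigr)}{b-a}.
\end{equation*}
Write $f(\gamma(b))=\gamma(b)+e_b$ and $f(\gamma(a))=\gamma(a)+e_a$ with $|e_a|,|e_b|\leq\varepsilon$. Since $\lVert\varphi\rVert=1$ and $\varphi(u)=|u|$, the right-hand side is at least
\begin{equation*}
    \varphi(u)-\frac{2\varepsilon}{b-a}=|u|-\frac{2\varepsilon}{b-a}=1-A.
\end{equation*}

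It remains to check $A\in[0,1]$ so that the lemma applies. We have $|u|\leq 1$ because $\gamma$ has unit speed, so $A\geq 0$. If instead $A>1$, then $\sqrt{A}>1$ and $1-\sqrt{A}<0$, so the claimed inequality is trivial; hence we may assume $A\leq 1$. Lemma \ref{L:av-new} with this $A$ then gives exactly the stated estimate on $\frac{1}{b-a}\H^1(\{t\in I:\xi(t)>1-\sqrt A\})$.

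The argument is essentially routine. The only points needing care are that $f$ is defined only on the image of $\gamma$, which is harmless since we only differentiate $f\circ\gamma$ on $I$, and the measurability of $\xi$, which follows from a.e.\ differentiability.
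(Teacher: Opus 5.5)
Your proposal is correct and follows essentially the same route as the paper: the fundamental theorem of calculus together with $\lVert f-\Id\rVert_\infty\leq\varepsilon$ gives $\frac{1}{b-a}\int_I\varphi((f\circ\gamma)')\geq |u|-\frac{2\varepsilon}{b-a}=1-A$. The bound $\xi\leq 1$ a.e.\ follows from $\lVert\varphi\rVert=1$ and $f\circ\gamma$ being $1$-Lipschitz, and Lemma \ref{L:av-new} then finishes the proof. Your explicit reduction to $A\in[0,1]$ (the case $A>1$ being trivial) is a small detail the paper leaves implicit.
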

\begin{proof}
    By the fundamental theorem of calculus, and using also the estimate on the uniform distance of $f$ to the identity, we obtain
    \begin{equation*}
        \int_I (f\circ \gamma)'(t)\diff \H^1(t)\in B((b-a)u, 2\varepsilon).
    \end{equation*}
    Thus, applying the linear map $\varphi$ to both sides, using also linearity of integration and the fact that $\varphi(u)=|u|$, we arrive at
    \begin{equation*}
        \int_I \varphi((f\circ \gamma)'(t))\diff \H^1(t)\geq (b-a)|u|-2\varepsilon.
    \end{equation*}
    Thus, denoting $\xi(t)= \varphi((f\circ \gamma)'(t))$ and dividing both sides by $b-a$, we obtain
    \begin{equation*}
        \frac{1}{b-a}\int_I \xi \diff \H^1 \geq |u|-\frac{2\varepsilon}{b-a}=1-A.
    \end{equation*}
    Since $\varphi$ is of norm $1$, $\xi\leq 1$ $\H^1$-a.e. Therefore the required inequality follows from Lemma \ref{L:av-new}.
\end{proof}

\begin{definition}
    Let $(X, |\cdot|)$ be a normed linear space.
    Given a non-degenerate interval $I\subset \reals$ and a Lipschitz curve $\gamma\colon I \to X$ of unit speed, we call the number
    \begin{equation*}
        \left|\frac{\gamma(b)-\gamma(a)}{b-a}\right|
    \end{equation*}
    the \emph{slope} of $\gamma$.
\end{definition}
We only define slope for curves of unit speed. Moreover if the slope of $\gamma$ is equal to $1$, it is necessary that $\gamma$ is a geodesic. Thus, if $X$ has unique geodesics and $\gamma$ has slope $1$, then $\gamma$ must be a line segment. 

\begin{lemma}\label{L:part}
    Let $(X, |\cdot|)$ be a finite dimensional Banach space.
    Let $I=[a,b]\subset \reals$ be a non-degenerate interval. Suppose $\gamma\colon I \to X$ is a $C^1$ curve with unit speed and let $s<1$, $\varepsilon>0$. Then there exists a partition $a=t_0<t_1<\dots<t_k=b$ of $I$ such that for every $i=1, \dots, n$, the slope of $\gamma_{|[t_{i-1},t_i]}$ is greater than $s$ and, moreover
    \begin{equation*}
        |\gamma'(t)-\frac{\gamma(t_i)-\gamma(t_{i-1})}{t_i-t_{i-1}}|<\varepsilon\quad\text{for every $t\in[t_{i-1}, t_i]$}
    \end{equation*}
\end{lemma}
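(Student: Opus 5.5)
The proof rests on uniform continuity of $\gamma'$. Since $\gamma$ is $C^1$ on the compact interval $I$, $\gamma'$ is uniformly continuous. Hence for any $\delta>0$ there is $h>0$ such that $|\gamma'(t)-\gamma'(t')|<\delta$ whenever $|t-t'|\leq h$. We take the uniform partition $t_i=a+i(b-a)/k$ with $k$ so large that $(b-a)/k\leq h$, and the task is to choose $\delta$ in terms of $s$ and $\varepsilon$.

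On a subinterval $[t_{i-1},t_i]$ the fundamental theorem of calculus (valid for $C^1$ curves in a finite dimensional Banach space) gives
\begin{equation*}
    \frac{\gamma(t_i)-\gamma(t_{i-1})}{t_i-t_{i-1}}=\frac{1}{t_i-t_{i-1}}\int_{t_{i-1}}^{t_i}\gamma'(\tau)\diff\tau .
\end{equation*}
This averaged vector is an average of vectors each lying within $\delta$ of $\gamma'(t)$, for any fixed $t\in[t_{i-1},t_i]$. By convexity of the ball $B(\gamma'(t),\delta)$, the average also lies within $\delta$ of $\gamma'(t)$; concretely, take the norm inside the integral. This gives
\begin{equation*}
    \Big|\gamma'(t)-\frac{\gamma(t_i)-\gamma(t_{i-1})}{t_i-t_{i-1}}\Big|\leq\delta\quad\text{for every } t\in[t_{i-1},t_i],
\end{equation*}
which is the second requirement once $\delta<\varepsilon$.

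For the slope, unit speed gives $|\gamma'(t)|=1$. The triangle inequality then yields that the slope of $\gamma_{|[t_{i-1},t_i]}$ is at least $1-\delta$. Choosing $\delta<\min\{\varepsilon,1-s\}$ gives slope $>s$. Note that $s<1$ ensures $1-s>0$, so such a $\delta>0$ exists.

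There is no serious obstacle here. The only point requiring care is that "unit speed" should be read as $|\gamma'|=1$ everywhere, which holds for a $C^1$ unit speed curve since $|\gamma'|$ is continuous and equals $1$ a.e.; it is used in the slope estimate. One should also make the inequalities strict by choosing $\delta$ strictly smaller than the required quantities. Finally, the index range in the statement should read $i=1,\dots,k$.
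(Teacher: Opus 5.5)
Your proof is correct and follows essentially the same route as the paper: uniform continuity of $\gamma'$ on $I$, the fundamental theorem of calculus to write the difference quotient as an average of $\gamma'$, pulling the norm inside the integral, and then the triangle inequality with $|\gamma'|=1$ to bound the slope from below. Your choice $\delta<\min\{\varepsilon,1-s\}$ secures the strict inequalities in the statement more carefully than the paper does, since the paper only arranges $\varepsilon\leq 1-s$ and obtains non-strict bounds.
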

\begin{proof}
    Since the function $\gamma'\colon I \to \reals^n$ is continuous and $I$ is compact, $\gamma'$ is uniformly continuous. It follows that for any $\varepsilon>0$, there exists a partition $a=t_0<t_1<\dots<t_k=b$ of $I$ such that
    \begin{equation*}
        \sup_{t, s\in [t_{i-1}, t_i]} |\gamma'(t)-\gamma'(s)|\leq \varepsilon \quad \text{for each $i=1, \dots, n$.}
    \end{equation*}
    Let us denote
    \begin{equation*}
        S_i=\frac{1}{t_i-t_{i-1}}\int_{t_{i-1}}^{t_i}\gamma'(t)\diff \H^1(t).
    \end{equation*}
    Then, by the fundamental theorem of calculus, $|S_i|$ is the slope of $\gamma_{|[t_{i-1},t_i]}$.
    We can estimate
    \begin{equation*}
        |S_i-\gamma'(t_{i})|=\left|\frac{1}{t_i-t_{i-1}}\int_{t_{i-1}}^{t_i}\gamma'(t)-\gamma'(t_i)\diff \H^1(t)\right|
        \leq \frac{1}{t_i-t_{i-1}}\int_{t_{i-1}}^{t_i}|\gamma'(t)-\gamma'(t_i)|\diff \H^1(t)\leq \varepsilon.
    \end{equation*}
    Since $\gamma$ has unit speed, it follows that $|\gamma'(t_{i})|=1$ and therefore, by the triangle inequality, $|S_i|\geq 1-\varepsilon$. Thus, the statement follows by possibly reducing $\varepsilon$ so that $\varepsilon\leq 1-s$.
\end{proof}

Recall that a normed linear space $(X, \lVert \cdot \rVert)$, its unit ball denoted by $B_X$, is called \emph{uniformly convex} if for every $\varepsilon>0$, there exists some $\delta>0$ such that whenever $x,y\in B_X$ satisfy $\lVert x- y \rVert\geq \varepsilon$, then
\begin{equation*}
    \left\lVert\frac{x+y}{2}\right\rVert\leq 1-\delta.
\end{equation*}
The space $X$ is called \emph{strictly convex} if the unit sphere of $X$ contains no line segments. If the space $X$ is not strictly convex, then by taking for $x$ and $y$ any pair of distinct points on any line segment in the unit sphere, it follows that $X$ is also not uniformly convex. In other words, uniformly convex spaces are strictly convex.

The following lemma has a standard proof relying on compactness of the unit ball in finite dimensional spaces.
\begin{lemma}\label{L:strict-to-uniform}
    A finite dimensional normed linear space $X$ is strictly convex if and only if it is uniformly convex.
\end{lemma}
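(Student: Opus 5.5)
The plan is to prove the two implications separately. The direction ``uniformly convex implies strictly convex'' was already noted just before the statement, so only the converse needs work. For that we argue by contradiction and use compactness of the unit ball $B_X$, which holds because $X$ is finite dimensional.

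Suppose $X$ is strictly convex but not uniformly convex. Then there is some $\varepsilon>0$ for which no $\delta>0$ works. Taking $\delta=1/n$, we obtain points $x_n,y_n\in B_X$ with
\begin{equation*}
    \lVert x_n-y_n\rVert\geq \varepsilon\quad\text{and}\quad \left\lVert\frac{x_n+y_n}{2}\right\rVert> 1-\frac{1}{n}.
\end{equation*}
Since $B_X$ is compact, we may pass to a subsequence along which $x_n\to x$ and $y_n\to y$ with $x,y\in B_X$. Continuity of the norm then gives $\lVert x-y\rVert\geq\varepsilon$, so $x\neq y$, and also $\lVert (x+y)/2\rVert\geq 1$.

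Next we show that both endpoints lie on the unit sphere. By the triangle inequality,
\begin{equation*}
    1\leq \left\lVert \frac{x+y}{2}\right\rVert\leq \frac{\lVert x\rVert+\lVert y\rVert}{2}\leq 1.
\end{equation*}
Hence equality holds throughout, which forces $\lVert x\rVert=\lVert y\rVert=1$ and $\lVert (x+y)/2\rVert=1$.

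It remains to show that the whole segment $[x,y]$ lies in the unit sphere, which contradicts strict convexity. For $t\in[0,1]$, convexity of the norm gives $\lVert tx+(1-t)y\rVert\leq 1$. For the reverse inequality, suppose $t\le 1/2$; the case $t\geq 1/2$ is symmetric. Then the midpoint is the convex combination
\begin{equation*}
    \frac{x+y}{2}=\lambda\bigl(tx+(1-t)y\bigr)+(1-\lambda)x,\qquad \lambda=\frac{1}{2(1-t)}\in\left[\tfrac12,1\right].
\end{equation*}
If $\lVert tx+(1-t)y\rVert<1$ with $\lambda>0$, the triangle inequality would give $\lVert (x+y)/2\rVert<1$, a contradiction. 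So every point of the segment has norm $1$.

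No step is a genuine obstacle. The only point needing care is this final one: the definition of strict convexity asks for a whole segment in the sphere, not just one midpoint, and the convex-combination argument above supplies that.
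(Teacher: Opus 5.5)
Your proof is correct and is the standard compactness argument that the paper points to. The paper gives no details beyond attributing the lemma to compactness of the unit ball, and it records the easy direction in the remark just before the statement. Your steps (taking limits of the bad pairs, forcing $\lVert x\rVert=\lVert y\rVert=\lVert (x+y)/2\rVert=1$, then using the convex-combination trick to show the whole segment $[x,y]$ lies in the sphere) form a complete proof.
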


\begin{proposition}\label{P:uniform-extremality}
    Suppose a normed linear space $X$ is uniformly convex. Then to each $u$ in $\partial B_X$, the unit sphere of $X$, a unit linear functional $\varphi_u\in X^*$ can be found in such a way that the following holds. To each $\varepsilon>0$ there exists $\delta>0$ such that whenever $u\in \partial B_X$, $v\in B_X$ and $\varphi_u(u-v)< \delta$, then $\lVert u - v \rVert < \varepsilon$.
\end{proposition}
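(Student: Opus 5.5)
The plan is to choose $\varphi_u$ as any norming functional of $u$: by Hahn--Banach, for each $u\in\partial B_X$ pick $\varphi_u\in X^*$ with $\lVert\varphi_u\rVert=1$ and $\varphi_u(u)=1$. No measurability or continuity in $u$ is needed, since the statement only asks for some assignment $u\mapsto\varphi_u$. The whole content is then a uniform estimate coming from uniform convexity, and the $\delta$ we produce will depend only on $\varepsilon$ through the modulus of convexity, not on $u$.

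Fix $\varepsilon>0$. Let $\delta_0>0$ be given by uniform convexity for $\varepsilon$: whenever $x,y\in B_X$ satisfy $\lVert x-y\rVert\geq\varepsilon$, we have $\lVert\frac{x+y}{2}\rVert\leq 1-\delta_0$. Set $\delta=2\delta_0$. We argue by contrapositive. Suppose $u\in\partial B_X$, $v\in B_X$ and $\lVert u-v\rVert\geq\varepsilon$. Applying uniform convexity to $x=u$, $y=v$ gives $\lVert\frac{u+v}{2}\rVert\leq 1-\delta_0$. Since $\lVert\varphi_u\rVert=1$,
\begin{equation*}
    \frac{1+\varphi_u(v)}{2}=\varphi_u\left(\frac{u+v}{2}\right)\leq\left\lVert\frac{u+v}{2}\right\rVert\leq 1-\delta_0 .
\end{equation*}
Rearranging gives $\varphi_u(v)\leq 1-2\delta_0$, that is, $\varphi_u(u-v)=1-\varphi_u(v)\geq 2\delta_0=\delta$. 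Hence $\varphi_u(u-v)<\delta$ forces $\lVert u-v\rVert<\varepsilon$, which is the claim.

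I do not expect a real obstacle. The only point to check is that uniform convexity as defined in the paper requires both points in the closed ball $B_X$, which holds here because $u\in\partial B_X\subset B_X$ and $v\in B_X$. If an open-ball convention were intended instead, I would rescale $v$ slightly, or use that the modulus of convexity extends to the closed ball by continuity of the norm.
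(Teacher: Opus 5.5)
Your proof is correct and follows essentially the same route as the paper: a Hahn--Banach norming functional $\varphi_u$, combined with the uniform convexity estimate applied to $u$ and $v$. The only difference is cosmetic: you argue by contrapositive and get the slightly better constant $\delta=2\delta_0$, whereas the paper argues by contradiction with $\delta=\delta_0$ directly.
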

\begin{proof}
    By Hahn-Banach, for each $u\in\partial B_X$ there exists a $\varphi_u\in X^*$ such that $\varphi_u(u)=1$ and $\lVert \varphi \rVert = 1$.
    Let $\varepsilon>0$ and suppose $\delta>0$ is from the definition of uniform convexity. That is, for any $u,v\in B_X$ with $\lVert u- v \rVert\geq \varepsilon$, it holds that
\begin{equation}\label{E:unif-conv}
    \left\lVert\frac{u+v}{2}\right\rVert\leq 1-\delta.
\end{equation}
    Now let $u\in\partial B_X$ and $v\in B_X$ be such that
    \begin{equation}\label{E:varphi-u-est1}
        \varphi_u(u-v)<\delta.
    \end{equation}
    For contradiction, let us also assume that $\lVert u -v \rVert \geq \varepsilon$. We may then use \eqref{E:unif-conv} together with $\lVert \varphi \rVert = 1$ to assert that
    \begin{equation}\label{E:varphi-u-est2}
        \varphi_u(u) + \varphi_u(v)\leq 2- 2\delta.
    \end{equation}
    Adding the inequalities \eqref{E:varphi-u-est1} and \eqref{E:varphi-u-est2} yields
    \begin{equation*}
        2\varphi_u(u)< 2 - \delta.
    \end{equation*}
    This contradicts the fact that $\varphi_u(u)=1$.
\end{proof} 

It can be immediately verified, that if $X$ has finite dimension and satisfies the property in the conclusion of Proposition \ref{P:uniform-extremality}, then it must be strictly convex, and therefore, by Lemma \ref{L:strict-to-uniform}, also uniformly convex. 

\begin{remark}
    The arguments in the remainder of this section, concerning the tangents of Lipschitz curves and directions of Alberti representations are carried out in the setting of strictly convex, \emph{finite dimensional} Banach spaces $X$. In these spaces, the conclusion of Proposition \ref{P:uniform-extremality} holds, which is of principal importance. 
    Since any uniformly convex Banach space $X$ possesses the Radon-Nikod\'ym property, an appropriate notion of a tangent to a Lipschitz curve $\gamma\colon [0,1] \to X$ can be defined (and exist $\H^1$-a.e.). Moreover, Proposition \ref{P:uniform-extremality} does \emph{not} assume finite dimension of $X$. Therefore, it would also be possible to work in infinite dimensional uniformly convex Banach spaces. This extra generality, however, is not of so much importance and would require significant additional preliminaries, which is why we only work in the finite dimensional setting.
\end{remark}

\begin{theorem}\label{T:con-in-measure-C1}
    Suppose $(X, |\cdot|)$ is a finite dimensional strictly convex Banach space. Let $I=[a,b]\subset \reals$ be a non-degenerate interval and $\gamma\colon I \to X$ a $C^1$ curve of unit speed. Then, to each $\delta>0$, there exists $\varepsilon>0$ such that the following holds. If $f\colon \gamma \to X$ is $1$-Lipschitz and $\lVert f- \Id \rVert_\infty<\varepsilon$, then
    \begin{equation}\label{E:cont-in-measure-for-C^1}
        \frac{1}{b-a}\H^1(\{t\in I: |(f\circ \gamma)'(t)-\gamma'(t)|\geq \delta\})\leq \delta.
    \end{equation}
\end{theorem}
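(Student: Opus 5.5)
The plan is to combine Lemma \ref{L:part}, Lemma \ref{L:rect-estimate} and Proposition \ref{P:uniform-extremality}. The last of these is available here because, by Lemma \ref{L:strict-to-uniform}, $X$ is uniformly convex.

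\textbf{Choosing constants.} Fix $\delta>0$. First take $\delta_1>0$ from Proposition \ref{P:uniform-extremality} applied with accuracy $\delta/2$: whenever $u\in\partial B_X$, $v\in B_X$ and $\varphi_u(u-v)<\delta_1$, we get $|u-v|<\delta/2$. Next pick a small $\beta>0$ (to be tuned, with $\sqrt{\beta}\le\min\{\delta_1/2,\delta/2\}$ roughly). Apply Lemma \ref{L:part} with $s=1-\beta/2$ and accuracy $\beta'$ small. This gives a partition $a=t_0<\dots<t_k=b$ such that on each $I_i=[t_{i-1},t_i]$ the slope $|u_i|$ of $\gamma|_{I_i}$ exceeds $1-\beta/2$ and $|\gamma'(t)-u_i|<\beta'$, where $u_i=(\gamma(t_i)-\gamma(t_{i-1}))/(t_i-t_{i-1})$. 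Since the partition is finite, $\ell=\min_i(t_i-t_{i-1})>0$. Finally choose $\varepsilon$ so small that $2\varepsilon/\ell\le\beta/2$.

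\textbf{Estimate on each piece.} On each $I_i$, set $\hat u_i=u_i/|u_i|\in\partial B_X$ and $\varphi=\varphi_{\hat u_i}$, so that $\varphi(u_i)=|u_i|$ and $\|\varphi\|=1$. Lemma \ref{L:rect-estimate} applies with $A_i=1-|u_i|+2\varepsilon/(t_i-t_{i-1})\le\beta$. It shows that $\varphi((f\circ\gamma)'(t))>1-\sqrt\beta$ on a subset of $I_i$ with relative measure at least $1-\sqrt\beta$. At such $t$, $v=(f\circ\gamma)'(t)\in B_X$, because $f$ is $1$-Lipschitz and $\gamma$ has unit speed. Hence $\varphi(\hat u_i-v)<\sqrt\beta<\delta_1$, which gives $|\hat u_i-v|<\delta/2$. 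Moreover
\[
|\gamma'(t)-\hat u_i|\le|\gamma'(t)-u_i|+\bigl(1-|u_i|\bigr)<\beta'+\beta/2 .
\]
Taking $\beta'+\beta/2\le\delta/2$ yields $|(f\circ\gamma)'(t)-\gamma'(t)|<\delta$ on that subset.

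\textbf{Conclusion.} Summing over $i$, the bad set has measure at most $\sqrt\beta\sum_i(t_i-t_{i-1})=\sqrt\beta(b-a)\le\delta(b-a)$, which is \eqref{E:cont-in-measure-for-C^1}.

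\textbf{Technical points.} The derivative $(f\circ\gamma)'$ exists a.e. because $f\circ\gamma$ is a Lipschitz curve in a finite dimensional space. One also needs the uniformity of $\delta_1$ over $u\in\partial B_X$, which Proposition \ref{P:uniform-extremality} provides.

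\textbf{Main obstacle.} The delicate point is the ordering of choices: $\varepsilon$ must be chosen after the partition, so that $2\varepsilon/\ell$ is controlled. This is legitimate precisely because $\gamma$ is fixed and $C^1$ before $\varepsilon$ is chosen.
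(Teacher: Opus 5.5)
Your proposal is correct and follows essentially the same route as the paper. Both take a uniform constant from Proposition~\ref{P:uniform-extremality}, partition via Lemma~\ref{L:part} into pieces with slopes close to $1$, choose $\varepsilon$ relative to the shortest subinterval so that Lemma~\ref{L:rect-estimate} applies on each piece, and finish with the triangle inequality and summation. Your bookkeeping, which explicitly requires $\sqrt{\beta}\le\delta/2$, is slightly tidier than the paper's: the paper only records $A\le\delta$, whereas its argument actually uses $\sqrt{A}\le\delta$.
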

\begin{proof}
    Let us denote by $B$ the unit ball of $X$.
    For $u\in \partial B$ let $\varphi_u$ be from Proposition \ref{P:uniform-extremality}. There exists some $\delta\geq A>0$ such that for any $u\in \partial B$ and any $v\in B$
    \begin{equation*}
        \varphi_u(u-v)\leq \sqrt{A} \implies |u-v|\leq \frac{\delta}{3}.
    \end{equation*}
    Note that in particular, since $\varphi_u(u)=1$, we have
    \begin{equation}\label{E:est-via-proj}
        \varphi_u(v)\geq 1- \sqrt{A} \implies |u-v|\leq \frac{\delta}{3}.
    \end{equation}
    Now let $s<1$ be such that $s>1-A$ and $1-s\leq \frac{\delta}{3}$. For any $\varepsilon>0$, using Lemma \ref{L:part}, we can find a partition $a=t_0<\dots, <t_k=b$ of $I$ such that the slope of each $\gamma_{|[t_{i-1}, t_i]}$ is at least $s$ and, moreover, if we denote
    \begin{equation*}
        w_i=\frac{\gamma(t_i)-\gamma(t_{i-1})}{t_i-t_{i-1}};\quad A_i=1-|w_i|+ \frac{2\varepsilon}{t_i-t_{i-1}},
    \end{equation*}
    then
    \begin{equation}\label{E:slope-to-curve-est}
        |w_i-\gamma'(t)|\leq \frac{\delta}{3}\quad \text{for all $t\in[t_{i-1}, t_i]$.}
    \end{equation}
    Note that the condition on slopes is equivalent to $|w_i|\geq s$.
    Let
    \begin{equation*}
        r=\min_{i=1, \dots, k} t_{i}-t_{i-1}.
    \end{equation*}
    Finally, we choose the $\varepsilon>0$ which is defined by
    \begin{equation*}
        A=1-s+\frac{2\varepsilon}{r}.
    \end{equation*}
    Then $A_i\leq A$.
    Let $u_i=\frac{w_i}{|w_i|}$ and $\varphi_i=\varphi_{u_i}$. Then $\varphi_i(w_i)=|w_i|$ and thus we may use Lemma \ref{L:rect-estimate} to obtain
    \begin{equation*}
        \frac{1}{t_i-t_{i-1}}\H^1(\{t\in[t_{i-1}, t_i]: \varphi_i((f\circ\gamma)'(t))>1-\sqrt{A_i}\})\geq 1-\sqrt{A_i}.
    \end{equation*}
    Using \eqref{E:est-via-proj}, $A_i\leq A$ and $\sqrt{A}\leq \delta$, this implies
    \begin{equation*}
        \frac{1}{t_i-t_{i-1}}\H^1(\{t\in[t_{i-1}, t_i]: |(f\circ\gamma)'(t)-u_i|\leq \frac{\delta}{3}\})\geq 1-\sqrt{A_i}\geq 1-\sqrt{A}\geq 1-\delta.
    \end{equation*}
    Thus, passing to the complement,
    \begin{equation*}
        \frac{1}{t_i-t_{i-1}}\H^1(\{t\in[t_{i-1}, t_i]: |(f\circ\gamma)'(t)-u_i|> \frac{\delta}{3}\})\leq \delta.
    \end{equation*}
    Now let $t\in [t_{i-1}, t_i]$ be such that $|(f\circ\gamma)'(t)-u_i|> \frac{\delta}{3}$. Then using the estimate \eqref{E:slope-to-curve-est}, $1-s< \frac{\delta}{3}$ and the triangle inequality yields
    \begin{equation*}
        |(f\circ\gamma)'(t)-\gamma'(t)|\leq|(f\circ\gamma)'(t)-u_i|+|u_i-w_i|+|w_i-\gamma'(t)|\leq \frac{\delta}{3}+|1-s|+\frac{\delta}{3}\leq \delta.
    \end{equation*}
    Thus, combining the last two estimates, we arrive at
    \begin{equation*}
        \frac{1}{t_i-t_{i-1}}\H^1(\{t\in[t_{i-1}, t_i]: |(f\circ\gamma)'(t)-\gamma'(t)|> \delta\})\leq\delta.
    \end{equation*}
    The required inequality \eqref{E:cont-in-measure-for-C^1} follows by summing over $i=1, \dots, k$.
\end{proof}


\begin{lemma}\label{L:conv-hulls-of-neighb}
    Suppose $K$ is a compact subset of a Banach space $X$ and $C\subset X$ is a convex set containing an open neighbourhood of $K$. Then $C$ also contains an open neighbourhood of $\conv K$.
\end{lemma}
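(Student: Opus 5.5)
Since $K$ is compact and $C$ contains an open neighbourhood $U$ of $K$, the plan is to show first that $C$ contains a \emph{uniform} neighbourhood $B(K,r)$ for some $r>0$. Then I would use convexity to pass this uniform thickness from $K$ to $\conv K$.

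For the first step, consider the continuous function $x\mapsto \dist(x, X\setminus U)$ on $K$. If $U=X$ there is nothing to prove. Otherwise this function is strictly positive on $K$, because $U$ is open and contains $K$. By compactness it attains a positive minimum $2r$, and hence the open ball $B^\circ(x,2r)$ lies in $U\subset C$ for every $x\in K$. In other words, $K+B^\circ(0,2r)\subset C$.

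For the second step, take $y\in\conv K$. By definition of the convex hull, $y=\sum_{i=1}^n\lambda_i x_i$ with $x_i\in K$, $\lambda_i\geq 0$ and $\sum_i\lambda_i=1$. Let $z\in X$ with $\lVert z\rVert<2r$. Then
\begin{equation*}
    y+z=\sum_{i=1}^n\lambda_i (x_i+z).
\end{equation*}
Each $x_i+z$ lies in $K+B^\circ(0,2r)\subset C$, so by convexity of $C$ we get $y+z\in C$. Therefore the open set $\conv K+B^\circ(0,2r)$ is contained in $C$, and it is an open neighbourhood of $\conv K$, being a union of open balls centred at points of $\conv K$.

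The only step that needs any care is the first one, obtaining the uniform radius. This is exactly where compactness of $K$ enters, through the continuity of the distance function. No completeness or finite-dimensionality of $X$ is needed; we also never need $\conv K$ to be closed, since the open neighbourhood is built directly.
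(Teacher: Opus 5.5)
Your proof is correct and matches the paper's argument essentially step for step. Both first obtain a uniform radius with $K+B(0,r)\subset C$ from compactness, then write $y+z=\sum_i\lambda_i(x_i+z)$ and use convexity of $C$; your explicit justification of the uniform radius via the continuous function $x\mapsto\dist(x,X\setminus U)$ simply fills in a step the paper asserts without detail.
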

\begin{proof}
    Let $G$ be an open neighbourhood of $K$ contained in $C$. Since $K$ is compact, there exists some $\delta>0$ such that $B(K,\delta)\subset G$. In particular, for every $v\in X$ and $x\in K$ with $\lVert v \rVert\leq \delta$, it holds that $x+v\in G$.

    Fix $x\in \conv K$. To show that $C$ contains on open neighbourhood of $\conv K$, it suffices to show that $x+v\in C$ whenever $\lVert v \rVert\leq \delta$. By definition, there are $\lambda_1,\dots, \lambda_N\in[0,1]$ with $\sum_i\lambda_i=1$ and $x_1, \dots, x_N\in K$ such that $x=\sum_i\lambda_i x_i$. Then it holds that
    \begin{equation*}
        x+v=\sum_i\lambda_i(x_i+v).
    \end{equation*}
    It follows that $x+v\in\conv G\subset \conv C = C$.
\end{proof}

\begin{theorem}\label{T:con-in-measure-frag}
    Suppose $(X, |\cdot|)$ is a finite dimensional strictly convex Banach space and let $\gamma \in \Gamma(X)$ with $\H^1(\dom\gamma)>0$. Suppose $C\subset X$ is a convex set containing an open neighbourhood of $\gamma$. Then, to each $\delta>0$, there exists $\varepsilon>0$ such that the following holds. If $f\colon C \to X$ is $1$-Lipschitz and $\lVert f- \Id \rVert_\infty<\varepsilon$, then
    \begin{equation*}
        \frac{1}{\H^1(\dom\gamma)}\H^1(\{t\in \dom\gamma: |(f\circ \gamma)'(t)-\gamma'(t)|\geq \delta\})\leq \delta.
    \end{equation*}
\end{theorem}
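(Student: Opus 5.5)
The plan is to reduce Theorem \ref{T:con-in-measure-frag} to the $C^1$ case, Theorem \ref{T:con-in-measure-C1}. A curve fragment is approximated, on most of its domain, by finitely many $C^1$ unit-speed curves whose images lie inside $C$.

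\textbf{Step 1: reparametrise.} First replace $\gamma$ by a unit-speed object. The set of $t\in\dom\gamma$ with $\gamma'(t)=0$ contributes nothing bad: there, differentiability of $\gamma$ and the $1$-Lipschitz property of $f$ give $(f\circ\gamma)'(t)=0=\gamma'(t)$ wherever the derivative exists. So we restrict attention to $\{t:\gamma'(t)\neq 0\}$.

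\textbf{Step 2: cover by $C^1$ pieces.} By a Whitney/Lusin-type $C^1$ approximation for Lipschitz maps into $\reals^n$ (via a linear isomorphism with $X$), find compact sets $K_1,\dots,K_N\subset\dom\gamma$ and $C^1$ maps $\gamma_j$ on intervals $I_j\supset K_j$ with the following properties:
\begin{itemize}
\item $\gamma_j=\gamma$ and $\gamma_j'=\gamma'$ on $K_j$;
\item $\gamma_j'\neq 0$ on $I_j$;
\item $\H^1(\dom\gamma\setminus\bigcup_j K_j)\le\frac{\delta}{2}\H^1(\dom\gamma)$.
\end{itemize}
Shrink the intervals $I_j$ so that each $\gamma_j(I_j)$ lies within a small distance of $\gamma(K_j)$. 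Then, by Lemma \ref{L:conv-hulls-of-neighb} together with the hypothesis that $C$ contains an open neighbourhood of $\gamma$, the image $\gamma_j(I_j)$ lies in $C$. Reparametrise each $\gamma_j$ by arc length to get a unit-speed $C^1$ curve $\tilde\gamma_j$. At a density point of $K_j$ where both derivatives exist, we have $(f\circ\gamma)'(t)=(f\circ\gamma_j)'(t)$, because $f\circ\gamma$ and $f\circ\gamma_j$ agree on $K_j$. The chain rule then transfers the estimate from $\tilde\gamma_j$ to $\gamma_j$: the error set in $I_j$ is mapped by a bi-Lipschitz reparametrisation, with constants controlled by $\min|\gamma_j'|$ and $\max|\gamma_j'|$. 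Wherever $|\gamma_j'|$ is close to constant this is harmless. To be safe, the pieces should be chosen so that $|\gamma'|$ is nearly constant on each $K_j$, with a relative tolerance, which can be arranged by Lusin's theorem.

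\textbf{Step 3: apply the $C^1$ result and sum.} Apply Theorem \ref{T:con-in-measure-C1} to each $\tilde\gamma_j$, with $\delta$ replaced by a small $\delta'$ chosen in terms of $\delta$, $N$, the lengths $|I_j|$, and the speed bounds. This gives thresholds $\varepsilon_j$; take $\varepsilon=\min_j\varepsilon_j$. Restrict $f$ to $\tilde\gamma_j$, which is allowed since its image lies in $C$. Then the error set
\[
\{t\in K_j: |(f\circ\gamma)'(t)-\gamma'(t)|\ge\delta\}
\]
has measure at most $\frac{\delta}{2N}\H^1(\dom\gamma)$. Summing over $j$ and adding the leftover set from Step 2 gives the claimed bound.

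\textbf{Main obstacle.} The main obstacle is the bookkeeping in Step 2. The $C^1$ extensions must stay inside the convex set $C$, which is where Lemma \ref{L:conv-hulls-of-neighb} is used. The passage between the unit-speed reparametrisation and the original parameter must preserve smallness of $|(f\circ\gamma)'-\gamma'|$: writing $\gamma'=|\gamma'|\tilde\gamma_j'$, the error scales by at most $\operatorname{Lip}\gamma$, so $\delta'$ must absorb that factor. One must also handle the fact that $\gamma$ need not be injective; the statement does not need injectivity, since everything is computed in the parameter domain.
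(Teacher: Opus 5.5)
Your proposal is correct and shares the paper's overall strategy: reduce to Theorem \ref{T:con-in-measure-C1} via a Whitney-type $C^1$ approximation plus an arc-length reparametrisation. You organise that reduction differently, though.

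The paper first fills the gaps of $\dom\gamma$ affinely to get $\overline{\gamma}$ on $\conv(\dom\gamma)$. It uses convexity of $C$ and Lemma \ref{L:conv-hulls-of-neighb} to keep a neighbourhood of $\overline{\gamma}$ inside $C$, and approximates by a single global $C^1$ curve $\beta$ uniformly close to $\overline{\gamma}$. It then simply asserts a unit-speed $C^1$ reparametrisation with the same image. That last step is glossed over: a $C^1$ curve whose derivative vanishes somewhere need not admit a $C^1$ unit-speed reparametrisation (think of a cusp), and the formula $\beta'(\beta^{-1}(\gamma(t)))$ presupposes injectivity.

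Your route handles exactly these issues, at the price of more bookkeeping. You use local pieces with $|\gamma_j'|$ bounded away from zero on all of $I_j$, treat $\{\gamma'=0\}$ separately, and compute entirely in the parameter domain. The explicit factors $\tLip\gamma$ (size of the error) and $1/\min|\gamma_j'|$ (measure distortion) make the transfer back to the original parameter honest.

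A side benefit: because your pieces live on short intervals, their images stay uniformly close to the compact set $\gamma(\dom\gamma)$. So you only need $C$ to contain some open neighbourhood of $\gamma$. Lemma \ref{L:conv-hulls-of-neighb} and the convexity of $C$ are superfluous in your argument, whereas the paper's gap-filling genuinely uses them.

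Two bookkeeping details need fixing.
\begin{enumerate}
\item You cannot ``shrink'' an interval $I_j\supset K_j$ below $\conv K_j$. Instead, fix the $C^1$ maps first, so that $K_j\subset\{|\gamma'|\geq c\}$, $|\gamma_j'|\leq M$ on $I_j$, and $\gamma_j'$ has a modulus of continuity. Then cut $I_j$ into subintervals short enough that $|\gamma_j'|\geq c/2$ persists and the image moves by less than the neighbourhood radius, replacing $K_j$ by its traces on these subintervals.
\item The measure bound in the third bullet of Step 2 must be taken relative to $\{\gamma'\neq 0\}$. Requiring $\gamma_j'=\gamma'\neq0$ on $K_j$ is incompatible with covering points where $\gamma'=0$.
\end{enumerate}
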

\begin{proof}
    This follows from Theorem \ref{T:con-in-measure-C1} via a standard measure-theoretic argument, using the fact that Lipschitz curve fragments agree, on an arbitrarily large subset of the domain, with restrictions of $C^1$ curves. Therefore, we only give a proof sketch.

    The complement of $\dom\gamma$ in $\conv(\dom\gamma)$ is a countable union of disjoint open intervals (the collection of the intervals is unique). One may extend $\gamma$ in an affine way (uniquely) on each of those open intervals to obtain a Lipschitz curve $\overline{\gamma}\colon\conv(\dom\gamma)\to \reals^n$.
    Since $C$ is convex, it contains, via Lemma \ref{L:conv-hulls-of-neighb}, an open neighbourhood of the image of $\overline{\gamma}$. Let us denote $I=\conv(\dom\gamma)$.

    By Whitney's extension theorem, for any $\eta>0$, $\delta>0$  there exists a $C^1$-curve $\beta$ (with the domain $I$) such that $\beta$ agrees with $\overline{\gamma}$ on a set $E\subset I$ with $\H^1(I\setminus E)<\eta$
    and, moreover, $\lVert \overline{\gamma}-\beta\rVert_\infty< \delta$.
    Thus, by choosing $\delta$ small enough, we may obtain that $\beta$ lies inside $C$.
    By Lebesgue's density theorem, $\beta'$ agrees with $\overline{\gamma}'$ a.e.~in $E$. Thus, since $\eta$ may be taken arbitrarily small, we see that it is sufficient to prove the statement for $C^1$ curves.

    Thus, let us assume, without loss of generality, that $\gamma$ is a $C^1$ curve. Now we may find an interval $J$ and a $C^1$ curve of unit speed $\beta$ such that the image of $\beta$ agrees with the image of $\gamma$ and, whenever $\gamma'(t)\not=0$, we have $\frac{\gamma'(t)}{|\gamma'(t)|}=\beta'(\beta^{-1}(\gamma(t)))$. If $\gamma'(t)=0$, we also have $(f\circ\gamma)'(t)=0$, and if not, then
    \begin{equation*}
        (f\circ \beta)(\beta^{-1}(\gamma(t)))=(f\circ\gamma)'(t)\frac{1}{|\gamma'(t)|}.
    \end{equation*}
    Thus, it is enough to show the statement for $C^1$ curves of unit speed. This is exactly what Theorem \ref{T:con-in-measure-C1} asserts.
\end{proof}

\subsection{Restrictions and pushforwards of Alberti representations}

In this subsection we fix a strictly convex Banach space $X$ and a sequence of 1-Lipschitz maps $f_i\colon D \to X$ defined on some subset of $X$ that converge uniformly to the identity.
In Corollary \ref{C:pw-main}, we will show that, if a measure $\mu$ on $D$ has $d$ independent Alberti representations (for some $d\in\nat$, defined in Definition \ref{D:AR}), then so does a large proportion of $(f_i)_{\#}\mu$, for sufficiently large $i$.

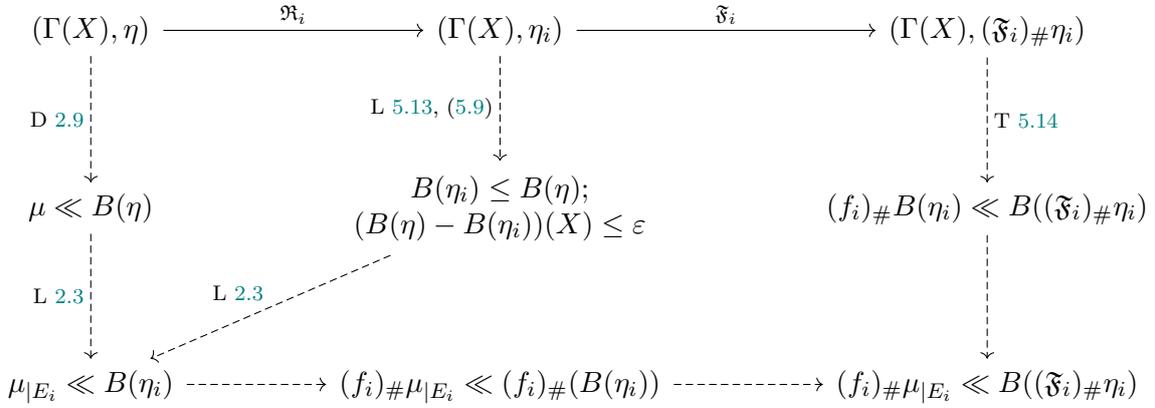
\begin{figure}[ht]
    \centering
\[\begin{tikzcd}
	{(\Gamma(X), \eta)} && {(\Gamma(X), \eta_i)} && {(\Gamma(X), (\mathfrak{F}_i)_\#\eta_i)} \\
	\\
	{\mu\ll B(\eta)} && \begin{array}{c} B(\eta_i)\leq B(\eta);\\ (B(\eta)-B(\eta_i))(X)\leq\varepsilon \end{array} && {(f_i)_\#B(\eta_i)\ll B((\mathfrak{F}_i)_\#\eta_i)} \\
	\\
	{\mu_{|E_i}\ll B(\eta_i)} && {(f_i)_\#\mu_{|E_i}\ll(f_i)_\#(B(\eta_i))} && {(f_i)_\#\mu_{|E_i}\ll B((\mathfrak{F}_i)_\#\eta_i)}
	\arrow["{\mathfrak{R}_i}", from=1-1, to=1-3]
	\arrow["{\text{D \ref{D:AR}}}"', dashed, from=1-1, to=3-1]
	\arrow["{\mathfrak{F}_i}", from=1-3, to=1-5]
	\arrow["\text{L \ref{L:D-convergence}, \eqref{E:barycenter-estimate}}"', dashed, from=1-3, to=3-3]
	\arrow["\text{T \ref{T:commutativity}}", dashed, from=1-5, to=3-5]
	\arrow["\text{L \ref{L:ac-with-error}}"', dashed, from=3-1, to=5-1]
	\arrow["\text{L \ref{L:ac-with-error}}"', dashed, from=3-3, to=5-1]
	\arrow["", dashed, from=3-5, to=5-5]
	\arrow[""', dashed, from=5-1, to=5-3]
	\arrow[""', dashed, from=5-3, to=5-5]
\end{tikzcd}\]
    \caption{Diagram representing the argument used to obtain Alberti representation of pushforwards.}
    \label{fig:diagram}
\end{figure}

The proof of this statement uses a lot of notation and fairly abstract definitions since fairly non-trivial measurability issues need to be resolved. Yet, given the theory on restriction operators developed developed in Subsection \ref{sec:restriction}, our statements follow essentially by unpacking the relevant definitions and using the preliminary results. Figure \ref{fig:diagram} illustrates the argument. The full arrows therein correspond to operations, while the dotted arrows don't have a specific meaning, they simply indicate, informally,  the direction of the argument we are carrying out. Our goal is the bottom right node in the diagram. We remark that, where two arrows enter a node, they are both requirements for the argument, not alternate approaches.

Let $X$ be a strictly convex finite dimensional Banach space. Let $\mu$ be a finite Borel measure on $X$, which is moreover supported inside a compact set $K\subset X$. Suppose further that $\eta$ is an Alberti representation of $\mu$. Let $C\subset X$ be convex and assume that $C$ contains a neighbourhood of $K$.

Let us fix $u\in X^*$ and a thickness $\theta\in(0,1)$ and recall the definition of the cone
\begin{equation*}
    C(u,\theta)=\{v\in X: \langle u, v \rangle \geq \lVert v \rVert_X (1-\theta)\}.
\end{equation*}

Assume that there is a Borel set $\mathcal{U}\subset \Gamma(X)$ of full $\eta$-measure and there are $m>0$ and $L<\infty$ such that every $\gamma\in \mathcal{U}$ has the following properties:
\begin{enumerate}
    \item \label{Enum:speed} for $\H^1$-a.e.~$t\in\dom\gamma$, $|\gamma'|(t)\geq \delta$,
    \item \label{Enum:cone} for $\H^1$-a.e.~$t\in\dom\gamma$, $\gamma'(t)\in C(u,\theta)$,
    \item \label{Enum:domain} $\H^1(\dom\gamma)\geq m$,
    \item \label{Enum:Lipschitz} every $\gamma\in \mathcal{U}$ is $L$-Lipschitz.
\end{enumerate}
Let $X\supset D \supset C$ and suppose $f_i\colon D \to X$ is a sequence of $1$-Lipschitz maps converging uniformly to the identity. Given any Lipschitz map $g\colon X \to X$, we define $\E^u_g\colon P(X)\to \reals$ by
\begin{equation*}
    \E_g^u(\gamma,t)=\frac{1}{\lVert (g\circ \gamma)'(t)\rVert_X}\langle (g\circ \gamma)', u\rangle,
\end{equation*}
whenever the right-hand side is well defined, and we let $\E_g^u=0$ otherwise. Similarly, we define $\mathcal{F}_g\colon P(X)\to \reals$ by
\begin{equation*}
    \mathcal{F}_g(\gamma, t)=\lVert(g\circ \gamma)'(t)\rVert_X,
\end{equation*}
whenever the right-hand side is well defined, and we let $\mathcal{F}_g=0$ otherwise. The following lemma is an immediate consequence of the definitions.

\begin{lemma}\label{L:cone-vs-speed}
    For a given curve fragment $\gamma$, it holds that $\E^u_g(\gamma, t)\geq 1-\theta$ for $\H^1$-a.e.~$t\in\dom\gamma$ if and only if $g\circ\gamma$ goes in the direction of the cone $C(u,\theta)$. 
\end{lemma}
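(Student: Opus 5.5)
The plan is to unpack both sides of the equivalence through the definitions of $\E^u_g$ and of ``going in the direction of a cone'' (Definition \ref{d:alberti-direction} with $F=\Id$ on $X$, i.e.\ Definition \ref{D:AR-independence}), and to compare them pointwise in $t$. Fix the curve fragment $\gamma$. Since $g$ is Lipschitz and $X$ is finite dimensional, $g\circ\gamma$ is a Lipschitz curve fragment. By Rademacher's theorem, $(g\circ\gamma)'(t)$ exists for $\H^1$-a.e.\ $t\in\dom\gamma$. So there is an $\H^1$-null set $N\subset\dom\gamma$ outside of which the derivative exists. I restrict attention to $t\in\dom\gamma\setminus N$ and check that the two conditions coincide there.

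First take $t\notin N$ with $(g\circ\gamma)'(t)\neq 0$. Then $\E^u_g(\gamma,t)$ is given by the defining formula. Multiplying by the positive number $\lVert (g\circ\gamma)'(t)\rVert_X$ shows that $\E^u_g(\gamma,t)\geq 1-\theta$ is equivalent to $\langle u,(g\circ\gamma)'(t)\rangle\geq(1-\theta)\lVert (g\circ\gamma)'(t)\rVert_X$. By the definition of $C(u,\theta)$, this says exactly that $(g\circ\gamma)'(t)\in C(u,\theta)\setminus\{0\}$.

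Next take $t\notin N$ with $(g\circ\gamma)'(t)=0$. The right-hand side in the definition of $\E^u_g$ is then undefined, so by convention $\E^u_g(\gamma,t)=0<1-\theta$, because $\theta<1$. On the other side, $(g\circ\gamma)'(t)=0\notin C(u,\theta)\setminus\{0\}$. So both conditions fail at such $t$.

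Combining the two cases, the sets $\{t:\E^u_g(\gamma,t)\geq 1-\theta\}$ and $\{t:(g\circ\gamma)'(t)\in C(u,\theta)\setminus\{0\}\}$ agree outside the null set $N$. Hence one has full $\H^1$-measure in $\dom\gamma$ if and only if the other does, which is the claimed equivalence.

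The only point that needs care is the degenerate case of vanishing derivative. It relies on the convention $\E^u_g=0$ together with $\theta<1$, which is why the cone excludes the origin. There is no real obstacle.
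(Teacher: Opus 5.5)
Your proof is correct and follows the same approach as the paper, which gives no argument beyond calling the lemma an immediate consequence of the definitions. Your case split is exactly the unpacking this requires: where $(g\circ\gamma)'(t)\neq 0$, the inequality $\E^u_g(\gamma,t)\geq 1-\theta$ is equivalent to $(g\circ\gamma)'(t)\in C(u,\theta)\setminus\{0\}$, while vanishing or nonexistent derivatives are handled by the convention $\E^u_g=0<1-\theta$ together with an $\H^1$-null set.
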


Finally we fix $\delta>r>0$ and introduce the following notation.
\begin{equation*}
        \mathcal{G}^i_{\textnormal{cone}}=\{(\gamma, t)\in P(X): \E^u_{f_i}(\gamma, t)\geq 1-\theta -r\};\quad
        \mathcal{G}^i_{\textnormal{speed}}=\{(\gamma, t)\in P(X):\F_{f_i}(\gamma, t)\geq \delta- r\}.
\end{equation*}
Since the metric space $P(X)$ (defined at the beginning of Subsection \ref{sec:alberti}) is complete and separable and the measure $(\eta\times\H^1_{[0,1]})_{P(X)}$ is Borel, it is also inner regular by compact sets. Thus, for each $i\in\nat$ there are compact sets $\mathcal{K}^i_{\textnormal{cone}}\subset \mathcal{G}^i_{\textnormal{cone}}$ and $\mathcal{K}^i_{\textnormal{speed}}\subset\mathcal{G}^i_{\textnormal{speed}}$ such that
\begin{equation}\label{E:size-cone-est}
    (\eta\times\H^1_{[0,1]})_{P(X)}(\mathcal{G}^i_{\textnormal{cone}}\setminus \mathcal{K}^i_{\textnormal{cone}})\leq \frac{1}{i} (\eta\times\H^1_{[0,1]})_{P(X)}(\mathcal{G}^i_{\textnormal{cone}})
\end{equation}
and
\begin{equation}\label{E:size-speed-est}
    (\eta\times\H^1_{[0,1]})_{P(X)}(\mathcal{G}^i_{\textnormal{speed}}\setminus \mathcal{K}^i_{\textnormal{speed}})\leq \frac{1}{i} (\eta\times\H^1_{[0,1]})_{P(X)}(\mathcal{G}^i_{\textnormal{speed}}).
\end{equation}
We let $\mathcal{K}^i=\mathcal{K}^i_{\textnormal{cone}}\cap \mathcal{K}^i_{\textnormal{speed}}$.

\begin{theorem}\label{T:m-convergence-for-speeds}
    It holds that
    \begin{equation*}
        \E_{f_i}^u\overset{m}{\to}\E^u_{\Id}\quad \text{and}\quad \F_{f_i}\overset{m}{\to}\F_{\Id} \quad \text{on $K$.}
    \end{equation*}
\end{theorem}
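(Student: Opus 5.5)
The plan is to reduce both $m$-convergences to Theorem \ref{T:con-in-measure-frag} and then use continuity of the maps $v\mapsto\lVert v\rVert_X$ and $v\mapsto\langle u,v\rangle/\lVert v\rVert_X$.

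Fix $\gamma\in\Gamma(K)$. If $\H^1(\dom\gamma)=0$, convergence in measure on $\dom\gamma$ holds vacuously, so assume $\H^1(\dom\gamma)>0$. The image of $\gamma$ lies in $K$, and $C$ is convex and contains an open neighbourhood of $K$, hence of $\gamma$. Each $f_i$ is $1$-Lipschitz on $D\supset C$, and $\lVert f_i-\Id\rVert_\infty\to 0$. For every $\delta'>0$, Theorem \ref{T:con-in-measure-frag} applied to $f_i{}_{|C}$ therefore gives, for all $i$ large enough,
\begin{equation*}
  \H^1(\{t\in\dom\gamma: |(f_i\circ\gamma)'(t)-\gamma'(t)|\geq\delta'\})\leq \delta'\,\H^1(\dom\gamma).
\end{equation*}
That is, $(f_i\circ\gamma)'\to\gamma'$ in measure on $\dom\gamma$.

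For the speeds, the norm is $1$-Lipschitz, so
\begin{equation*}
  |\F_{f_i}(\gamma,t)-\F_{\Id}(\gamma,t)|\leq|(f_i\circ\gamma)'(t)-\gamma'(t)|
\end{equation*}
at every $t$ where both derivatives exist, which is $\H^1$-a.e. Hence $\F_{f_i}\overset{m}{\to}\F_{\Id}$ on $K$.

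For the cone functionals, the map $\Phi(v)=\langle u,v\rangle/\lVert v\rVert_X$ is discontinuous at $0$, so we split $\dom\gamma$ into three pieces.
\begin{enumerate}
  \item On $Z=\{t:\gamma'(t)=0\}$ we use that $f_i$ is $1$-Lipschitz. At a.e.\ such $t$ we get $\lVert(f_i\circ\gamma)'(t)\rVert\leq\lVert\gamma'(t)\rVert=0$, so by the convention in the definition both $\E^u_{f_i}$ and $\E^u_{\Id}$ vanish there.
  \item Given $\kappa>0$, let $A_\kappa=\{t:0<\lVert\gamma'(t)\rVert<\kappa\}$. These sets decrease to the empty set as $\kappa\to0$, so we may choose $\kappa$ with $\H^1(A_\kappa)$ as small as we like.
  \item On $\{\lVert\gamma'\rVert\geq\kappa\}$, the function $\Phi$ is uniformly continuous on the closed set $\{\lVert v\rVert\geq\kappa/2\}$, since it is a quotient of Lipschitz functions with denominator bounded below. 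If $|(f_i\circ\gamma)'(t)-\gamma'(t)|<\min(\kappa/2,\omega)$ for a suitable modulus $\omega$, then $|\E^u_{f_i}(\gamma,t)-\E^u_{\Id}(\gamma,t)|<\varepsilon'$. The convergence in measure of the derivatives then controls the measure of the exceptional set here.
\end{enumerate}
Adding the three contributions and letting first $i\to\infty$ and then $\kappa\to0$ gives $\E^u_{f_i}\overset{m}{\to}\E^u_{\Id}$ on $K$.

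The main obstacle is the singularity of $\Phi$ at $0$. The plan handles it through the observation that a $1$-Lipschitz map cannot create velocity where $\gamma$ has none, together with the smallness of $A_\kappa$. Borel measurability of $\E^u_g$ and $\F_g$ on $P(X)$ is assumed, as the definition of $m$-convergence presupposes it.
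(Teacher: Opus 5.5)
Your proposal is correct and follows the paper's route. The paper's entire proof is the remark that both convergences are immediate from Theorem~\ref{T:con-in-measure-frag}; you supply the omitted details, namely the pointwise bound $|\F_{f_i}-\F_{\Id}|\leq|(f_i\circ\gamma)'-\gamma'|$ and the treatment of the singularity of $\Phi(v)=\langle u,v\rangle/\lVert v\rVert_X$ at $0$ (using $1$-Lipschitzness of $f_i$ on $\{\gamma'=0\}$ together with the small set $A_\kappa$).

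One minor fix: a quotient of Lipschitz functions with denominator bounded below need not be uniformly continuous in general. Here it is, because $\Phi$ is bounded and $0$-homogeneous, which gives $|\Phi(v)-\Phi(w)|\leq 2\lVert u\rVert\,\lVert v-w\rVert_X/\lVert v\rVert_X$.
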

\begin{proof}
    Both of the convergences are an immediate consequence of Theorem \ref{T:con-in-measure-frag}.
\end{proof}

\begin{corollary}\label{C:Ki-fill}
    It holds that
    \begin{equation*}
        (\eta\times\H^1_{[0,1]})_{P(X)}(P(X)\setminus \mathcal{K}^i) \to 0.
    \end{equation*}
\end{corollary}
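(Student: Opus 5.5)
The plan is to split the complement of $\mathcal{K}^i$ into four pieces, each of which is small either by the choice of the compact sets \eqref{E:size-cone-est}, \eqref{E:size-speed-est} or by the $m$-convergence of Theorem \ref{T:m-convergence-for-speeds}. Write $\lambda=(\eta\times\H^1_{[0,1]})_{|P(X)}$. This is a finite measure, with $\lambda(P(X))\le \eta(\Gamma(X))$, because every domain lies in $[0,1]$. Since $\mathcal{K}^i=\mathcal{K}^i_{\textnormal{cone}}\cap\mathcal{K}^i_{\textnormal{speed}}$, we have
\begin{equation*}
P(X)\setminus\mathcal{K}^i\subset (P(X)\setminus\mathcal{G}^i_{\textnormal{cone}})\cup(\mathcal{G}^i_{\textnormal{cone}}\setminus\mathcal{K}^i_{\textnormal{cone}})\cup(P(X)\setminus\mathcal{G}^i_{\textnormal{speed}})\cup(\mathcal{G}^i_{\textnormal{speed}}\setminus\mathcal{K}^i_{\textnormal{speed}}).
\end{equation*}
By \eqref{E:size-cone-est} and \eqref{E:size-speed-est}, the second and fourth pieces each have $\lambda$-measure at most $\frac1i\lambda(P(X))\le \frac1i\eta(\Gamma(X))$, which tends to $0$.

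For the first piece I use the limit function $\E^u_{\Id}$. By property \ref{Enum:cone} of $\mathcal{U}$ and Lemma \ref{L:cone-vs-speed}, for every $\gamma\in\mathcal{U}$ we have $\E^u_{\Id}(\gamma,t)\ge 1-\theta$ for $\H^1$-a.e.\ $t\in\dom\gamma$. The map $\E^u_{\Id}$ is Borel, and $\eta(\Gamma(X)\setminus\mathcal{U})=0$, so Fubini's theorem (as in the proof of Lemma \ref{L:m-to-measure}) shows that the set $N=\{(\gamma,t):\E^u_{\Id}(\gamma,t)<1-\theta\}$ is $\lambda$-null. On the other hand,
\begin{equation*}
P(X)\setminus\mathcal{G}^i_{\textnormal{cone}}\subset N\cup\{(\gamma,t): |\E^u_{f_i}(\gamma,t)-\E^u_{\Id}(\gamma,t)|\ge r\}.
\end{equation*}
Theorem \ref{T:m-convergence-for-speeds} gives $\E^u_{f_i}\overset{m}{\to}\E^u_{\Id}$ on $K$. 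Lemma \ref{L:m-to-measure} then upgrades this to convergence in $\lambda$-measure, so the $\lambda$-measure of the last set tends to $0$. The third piece is handled identically. Property \ref{Enum:speed} gives $\F_{\Id}(\gamma,t)=|\gamma'(t)|\ge\delta$ for a.e.\ $t$ and $\eta$-a.e.\ $\gamma$. Hence
\begin{equation*}
P(X)\setminus\mathcal{G}^i_{\textnormal{speed}}\subset N'\cup\{|\F_{f_i}-\F_{\Id}|\ge r\}
\end{equation*}
with $N'$ a $\lambda$-null set, and the convergence $\F_{f_i}\overset{m}{\to}\F_{\Id}$ from Theorem \ref{T:m-convergence-for-speeds} finishes this piece. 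Summing the four bounds gives the claim.

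The step that needs care is the hypothesis of Lemma \ref{L:m-to-measure} that $\eta$ be supported on $\Gamma(K)$. The $m$-convergence is only asserted for curves lying in $K$, because that is where the convex set $C$ contains a neighbourhood, as required by Theorem \ref{T:con-in-measure-frag}. In the standing setting I will take $\mathcal{U}\subset\Gamma(K)$, i.e.\ I read the set-up as having already restricted the curves to $K$; this is harmless since $\mu$ is supported on $K$ and one may pass to $(R_K)_\#\eta$ via Lemma \ref{L:restrictions}. If this is not already built in, I would carry out that restriction first.

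A minor point is that $\E^u_{f_i}$ and $\F_{f_i}$ must be Borel on $P(X)$ for the Fubini arguments. This is already implicit in the definition of $m$-convergence used in Theorem \ref{T:m-convergence-for-speeds}, so I will take it as given.
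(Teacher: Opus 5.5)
Your proof is correct and follows the paper's argument. The paper's proof is a one-line citation of the same ingredients (Theorem \ref{T:m-convergence-for-speeds}, Lemma \ref{L:m-to-measure}, Lemma \ref{L:cone-vs-speed} with \ref{Enum:cone}, and the estimates \eqref{E:size-cone-est}, \eqref{E:size-speed-est}), and your four-piece decomposition makes it explicit; your reading that the curves in $\mathcal{U}$ lie in $K$ matches how the paper applies the set-up (property \ref{Enum:pw-constr-supp}), and your use of \ref{Enum:speed} for the speed piece supplies a hypothesis the paper leaves implicit.
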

\begin{proof}
    This is an immediate consequence of Theorem \ref{T:m-convergence-for-speeds}, Lemma \ref{L:m-to-measure}, Lemma \ref{L:cone-vs-speed} together with \ref{Enum:cone} and the estimates \eqref{E:size-cone-est} and \eqref{E:size-speed-est}.
\end{proof}

Recall the projection map $p\colon P(X)\to \Gamma(X)$ given by $p(\gamma,t)=\gamma$.
For each $i\in\nat$ we define
\begin{equation*}
    \mathcal{U}_i=p(\mathcal{K}_i)\cap \mathcal{U}
\end{equation*}
and the restriction $\mathfrak{R}_i\colon \mathcal{U}_i\to \Gamma(X)$ given by
\begin{equation*}
    \mathfrak{R}_i(\gamma)=\gamma_{|\{t\in\dom\gamma: (\gamma,t)\in \mathcal{K}^i\}}.
\end{equation*}
The restriction $\mathfrak{R}_i$ is a Borel restriction on $\mathcal{U}_i$ by Theorem \ref{T:Borel-restriction}.
Finally, we let $\mathcal{D}_i\colon \mathcal{U} \to [0,1]$ be equal to the relative density of $\mathfrak{R}_i$ on $\mathcal{U}_i\subset \mathcal{U}$ and equal to $0$ otherwise. We let
\begin{equation*}
    \eta_i= (\mathfrak{R}_i)_\#(\eta_{|\mathcal{U}_i}).
\end{equation*}
Because $\eta=\eta_{|\mathcal{U}_i}+\eta_{|\mathcal{U}\setminus\mathcal{U}_i}$ and since the barycenter is linear, we have
\begin{equation*}
    B(\eta)= B(\eta_{|\mathcal{U}_i})+B(\eta_{|\mathcal{U}\setminus\mathcal{U}_i}).
\end{equation*}
Thus, using the estimate (recall $\mathcal{D}_i$ is set to $0$ outside $\mathcal{U}_i$)
\begin{equation*}
    B(\eta_{|\mathcal{U}\setminus\mathcal{U}_i})(X)\leq L\eta(\mathcal{U}\setminus\mathcal{U}_i)
    =\int_{\mathcal{U}\setminus\mathcal{U}_i}(1-\mathcal{D}_i(\gamma))L\diff \eta,
\end{equation*}
in combination with Theorem \ref{T:density-est-for-restrictions} we obtain
\begin{equation}\label{E:barycenter-estimate}
    B( \eta_i)\leq B(\eta)\quad\text{and}\quad (B(\eta)-B(\eta_i))(X)\leq \int_{\mathcal{U}}(1-\mathcal{D}_i(\gamma))L\diff\eta(\gamma).
\end{equation}

\begin{lemma}\label{L:D-convergence}
    It holds that $(B(\eta)-B(\eta_i))(X) \to 0$.
\end{lemma}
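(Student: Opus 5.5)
By \eqref{E:barycenter-estimate} it suffices to show that
\begin{equation*}
    \int_{\mathcal{U}}(1-\mathcal{D}_i(\gamma))L\diff\eta(\gamma)\to 0 .
\end{equation*}
The integrand is bounded by $L$ and $\eta$ is finite. So it is enough to show $\mathcal{D}_i\to 1$ in $\eta$-measure on $\mathcal{U}$, and then apply bounded convergence in its in-measure form. Equivalently, it is enough to show $\int_{\mathcal{U}}(1-\mathcal{D}_i)\diff\eta\to 0$.

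To obtain this, I will relate $1-\mathcal{D}_i(\gamma)$ to the slice of $P(X)\setminus\mathcal{K}^i$ over $\gamma$. For $\gamma\in\mathcal{U}_i$, the fragment $\mathfrak{R}_i(\gamma)$ is $\gamma$ restricted to $\{t:(\gamma,t)\in\mathcal{K}^i\}$. Write $A_i(\gamma)=\{t\in\dom\gamma:(\gamma,t)\notin\mathcal{K}^i\}$. Then
\begin{equation*}
    \H^1(\gamma)-\H^1(\mathfrak{R}_i(\gamma))\le \H^1(\gamma(A_i(\gamma)))\le L\,\H^1(A_i(\gamma)).
\end{equation*}
Here I use that $\eta$ is supported on injective fragments, so $\H^1(\gamma)-\H^1(\mathfrak{R}_i\gamma)=\H^1(\gamma(\dom\gamma)\setminus\gamma(\dom\mathfrak R_i\gamma))$. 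I also need a lower bound on $\H^1(\gamma)$. By properties \ref{Enum:speed} and \ref{Enum:domain}, together with the area formula for injective $\gamma$, we get $\H^1(\gamma)\ge \delta m>0$. Hence
\begin{equation*}
    1-\mathcal{D}_i(\gamma)\le \frac{L}{\delta m}\H^1(A_i(\gamma))\quad\text{for }\gamma\in\mathcal{U}_i.
\end{equation*}
If $\gamma\in\mathcal{U}\setminus\mathcal{U}_i$, then $\gamma\notin p(\mathcal{K}^i)$, so $A_i(\gamma)=\dom\gamma$. Since $\H^1(\dom\gamma)\ge m$, we get $1-\mathcal{D}_i(\gamma)=1\le \H^1(A_i(\gamma))/m$. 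So in all cases
\begin{equation*}
    1-\mathcal{D}_i(\gamma)\le c\,\H^1(A_i(\gamma)),\qquad c=\max\{L/(\delta m),1/m\}.
\end{equation*}

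Integrating and using Fubini gives
\begin{equation*}
    \int_{\mathcal{U}}(1-\mathcal{D}_i)\diff\eta\le c\,(\eta\times\H^1_{[0,1]})_{P(X)}(P(X)\setminus\mathcal{K}^i).
\end{equation*}
The measurability of $\gamma\mapsto\H^1(A_i(\gamma))$ is the same slice argument as in Lemma \ref{L:m-to-measure}, since $\mathcal{K}^i$ is compact. By Corollary \ref{C:Ki-fill}, the right-hand side tends to $0$, which proves the lemma.

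The main obstacle is the pointwise comparison between $1-\mathcal{D}_i$ and the slice measure. It needs a positive lower bound on $\H^1(\gamma)$, which comes from the speed bound and injectivity. It also needs care with $\eta$-null exceptional sets, namely the fragments where \ref{Enum:speed} fails on a null set, and the requirement that $\eta$ be concentrated on $\mathcal{U}$ and on injective fragments. If the area-formula lower bound is awkward to justify, I would instead work directly with $\int_{\dom\gamma}|\gamma'|$ and use formula \eqref{E:formula-Barycenter}: that formula makes $B(\eta)-B(\eta_i)$ literally equal to $\int\int_{A_i(\gamma)}|\gamma'|\diff\H^1\diff\eta\le L(\eta\times\H^1)(P(X)\setminus\mathcal{K}^i)$, up to the $\eta$-mass of $\Gamma(X)\setminus\mathcal{U}$, which is zero. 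This route bypasses $\mathcal{D}_i$ entirely.
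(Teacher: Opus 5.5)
Your argument is correct, but it is organised differently from the paper's. The paper's proof is qualitative. From Corollary~\ref{C:Ki-fill} and property~\ref{Enum:domain} it gets $\eta(\mathcal{U}\setminus\mathcal{U}_i)\to 0$. It then passes to a subsequence along which $\eta$-a.e.\ $\gamma$ eventually lies in $\mathcal{U}_i$ and, by Fubini (implicitly with a further subsequence), the slices $\{t:(\gamma,t)\in\mathcal{K}^i\}$ fill up $\dom\gamma$. From this it deduces $\mathcal{D}_i\to1$ $\eta$-a.e., and concludes with dominated convergence in \eqref{E:barycenter-estimate} and the subsequence principle.

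You instead prove one pointwise inequality, $1-\mathcal{D}_i(\gamma)\le c\,\H^1(A_i(\gamma))$, and integrate it. Corollary~\ref{C:Ki-fill} then applies directly, with no subsequences, and you get an explicit rate $(B(\eta)-B(\eta_i))(X)\le \frac{L^2}{\delta m}\,(\eta\times\H^1_{[0,1]})_{P(X)}(P(X)\setminus\mathcal{K}^i)$. The price is the uniform lower bound $\H^1(\gamma)\ge\delta m$. This needs the speed bound~\ref{Enum:speed} and injectivity of $\eta$-a.e.\ fragment, which the paper's pointwise argument does not use: the convention $\mathcal{D}=1$ when $\H^1(\gamma)=0$ makes a lower bound unnecessary there. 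Injectivity is also not needed for your upper bound $\H^1(\gamma)-\H^1(\mathfrak{R}_i\gamma)\le L\,\H^1(A_i(\gamma))$, since subadditivity suffices; it matters only for the lower bound. Your explicit Lipschitz comparison $\H^1(\gamma(A))\le L\,\H^1(A)$ is exactly what the paper leaves implicit when passing from slice convergence to $\mathcal{D}_i\to1$.

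Your fallback through \eqref{E:formula-Barycenter} is the most economical route. It gives directly
$(B(\eta)-B(\eta_i))(X)=\int_{\mathcal{U}}\int_{A_i(\gamma)}|\gamma'|\diff\H^1\diff\eta(\gamma)\le L\,(\eta\times\H^1_{[0,1]})_{P(X)}(P(X)\setminus\mathcal{K}^i)$.
This uses only~\ref{Enum:Lipschitz} and $\eta(\Gamma(X)\setminus\mathcal{U})=0$. It bypasses $\mathcal{D}_i$, property~\ref{Enum:domain}, and the lossy replacement of $\H^1(\gamma)$ by $L$ in \eqref{E:barycenter-estimate}.
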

\begin{proof}
    Let $i_j$ be an increasing sequence of indices and consider the sequence $(B(\eta)-B(\eta_{i_j}))(X)$.
    Using \ref{Enum:domain}, we see that $p_\#(\eta \times \H^1_{[0,1]})\geq \frac{1}{m} \eta$ and therefore Corollary \ref{C:Ki-fill} implies $\eta(\mathcal{U}\setminus\mathcal{U}_{i_j})\to 0$.
    Thus, there exists a further increasing sequence of indices $j_k$ such that for $\eta$-a.e. $\gamma\in \mathcal{U}$, there exists some $k_0\in\nat$ such that for all $k\geq k_0$, $\gamma\in \mathcal{U}_{{i_j}_k}$.
   From Fubini's theorem, it follows that for $\eta$-a.e.~such $\gamma\in \mathcal{U}$, we have, for the slices of $\mathcal{K}^i$,
    \begin{equation*}
        \H^1(\{t\in[0,1]: (\gamma,t)\in \mathcal{K}^{{i_j}_k}\})\to \H^1(\{t\in[0,1]:(\gamma,t)\in P(X)\}) = \H^1(\dom\gamma).
    \end{equation*}
    Thus, $\mathcal{D}_{{i_j}_k}\to 1$ $\eta$-a.e. The Lebesgue dominated convergence theorem together with \eqref{E:barycenter-estimate} implies $(B(\eta)-B(\eta_{{i_j}_k}))(X)\to 0$. Since the sequence $i_j$ was arbitrary, it follows that $(B(\eta)-B(\eta_i))(X) \to 0$.
\end{proof}

Let us now define the maps $\mathfrak{F}_i\colon \Gamma(X)\to \Gamma(X)$ given by $\mathfrak{F}_i(\gamma)=(f_i\circ \gamma)$. Since $f_i$ are Lipschitz, so are $\mathfrak{F}_i$, in particular, they are also Borel. 

\begin{theorem}[Commutatitvity of pushforwards and barycenters]\label{T:commutativity}
    For any $\gamma\in\mathfrak{R}_i(\mathcal{U}_i)$, the curve fragment $(f_i\circ \gamma)$ goes in the direction of $C(u,\theta+r)$ and has speed at least $\delta-r$. Moreover,
    \begin{equation*}
        (f_i)_\# B(\eta_i)\ll B((\mathfrak{F}_i)_\#\eta_i).
    \end{equation*}
\end{theorem}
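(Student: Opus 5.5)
The proof has two parts: first the pointwise properties of the curves $f_i\circ\gamma$, then the absolute continuity of barycenters, which I would prove as an inequality between measures.

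\emph{Pointwise properties.} Take $\gamma=\mathfrak{R}_i(\tilde\gamma)$ with $\tilde\gamma\in\mathcal{U}_i$. Then $\dom\gamma=\{t\in\dom\tilde\gamma:(\tilde\gamma,t)\in\mathcal{K}^i\}$. At $\H^1$-a.e. $t\in\dom\gamma$ we may assume three things: $t$ is a density point of $\dom\gamma$, $f_i\circ\tilde\gamma$ is differentiable at $t$, and $f_i\circ\gamma$ is differentiable at $t$. At such $t$ the two derivatives agree,
\begin{equation*}
(f_i\circ\gamma)'(t)=(f_i\circ\tilde\gamma)'(t),
\end{equation*}
since a derivative at a density point is determined by the values on the set. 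Because $(\tilde\gamma,t)\in\mathcal{K}^i\subset\mathcal{G}^i_{\textnormal{cone}}\cap\mathcal{G}^i_{\textnormal{speed}}$, we get
\begin{equation*}
\F_{f_i}(\tilde\gamma,t)\geq\delta-r>0\quad\text{and}\quad \E^u_{f_i}(\tilde\gamma,t)\geq 1-\theta-r.
\end{equation*}
In particular the derivative is nonzero, so $\E^u_{f_i}$ is given by its defining formula rather than the default value $0$. By Lemma \ref{L:cone-vs-speed}, applied with width $\theta+r$, the curve $f_i\circ\gamma$ goes in the direction of $C(u,\theta+r)$ and has speed at least $\delta-r$.

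\emph{Absolute continuity.} I would prove the quantitative comparison
\begin{equation*}
(f_i)_\#B(\eta_i)\leq \frac{L}{\delta-r}\,B((\mathfrak{F}_i)_\#\eta_i),
\end{equation*}
which gives the claim at once. Fix a Borel set $A\subset X$. By \eqref{E:formula-Barycenter} and the definition of pushforward,
\begin{equation*}
(f_i)_\#B(\eta_i)(A)=\int\int_{\gamma^{-1}(f_i^{-1}(A))}|\gamma'|\diff\H^1\diff\eta_i(\gamma).
\end{equation*}
Likewise,
\begin{equation*}
B((\mathfrak{F}_i)_\#\eta_i)(A)=\int\int_{(f_i\circ\gamma)^{-1}(A)}|(f_i\circ\gamma)'|\diff\H^1\diff\eta_i(\gamma).
\end{equation*}
The two inner integrals are over the same set of parameters $\gamma^{-1}(f_i^{-1}(A))=(f_i\circ\gamma)^{-1}(A)$. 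On that set, for $\eta_i$-a.e. $\gamma$, we have $|\gamma'|\leq L$ by \ref{Enum:Lipschitz} and $|(f_i\circ\gamma)'|\geq\delta-r$ by the first part. So the first integrand is at most $\frac{L}{\delta-r}$ times the second, and integrating in $\eta_i$ gives the inequality.

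\emph{Measurability and the main obstacle.} The measurability of the inner integrals as functions of $\gamma$ is already covered by the remark after the definition of $B(\eta)$. The measure $(\mathfrak{F}_i)_\#\eta_i$ is Borel because $\mathfrak{F}_i$ is Lipschitz.

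One point does need checking: Definition \ref{D:AR} requires curves to be injective, and this might fail after composing with $f_i$. However, \eqref{E:formula-Barycenter} holds with multiplicity and does not use injectivity, so the comparison above is unaffected.

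The main subtlety is the first part: identifying the derivative of the restricted curve $f_i\circ\gamma$ with that of $f_i\circ\tilde\gamma$ almost everywhere. This relies on Lebesgue density points of the compact slice $\dom\gamma$.
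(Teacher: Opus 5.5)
Your proof is correct and essentially the paper's. The pointwise part is the same unpacking of $\mathcal{K}^i\subset\mathcal{G}^i_{\textnormal{cone}}\cap\mathcal{G}^i_{\textnormal{speed}}$ with Lemma \ref{L:cone-vs-speed}, and for absolute continuity both arguments compare the two barycenters through integrals over the common parameter set $(f_i\circ\gamma)^{-1}(A)$, using the speed bound $\delta-r$. The paper only tracks null sets, whereas your extra use of the $L$-Lipschitz bound gives the domination $(f_i)_\#B(\eta_i)\leq\frac{L}{\delta-r}B((\mathfrak{F}_i)_\#\eta_i)$, with $B$ computed by the multiplicity formula \eqref{E:formula-Barycenter}. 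If $B$ is instead read via $\H^1$ of images for the possibly non-injective curves $f_i\circ\gamma$, that constant can fail, but by the area formula the null sets are the same, so the absolute continuity still holds.
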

\begin{proof}
    By definition of $\mathfrak{R}_i$, each $\gamma\in \mathfrak{R}_i(\mathcal{U}_i)$ satisfies $\gamma\subset K$ with the additional property that $(f_i\circ\gamma)$ goes in the direction of the cone $C(u, \theta +r)$ and has speed at least $\delta-r$. Indeed, the speed follows immediately from the definition of $\eta_i$ and the cone follows from the definition and Lemma \ref{L:cone-vs-speed}.

    For the second statement, let us assume that $E\subset X$ is a Borel set such that
    \begin{equation*}
        0= B((\mathfrak{F}_i)_\#\eta_i)=\int_{\Gamma(X)}\int_\gamma \chi_E \diff\H^1\diff [(\mathfrak{F}_i\circ \mathfrak{R_i})_\#(\eta_{|\mathcal{U}_i})](\gamma)
        =\int_{\mathcal{U}_i}\int_{f_i\circ \gamma_{|\{t:(\gamma,t)\in\mathcal{K}^i\}}}\chi_E\diff \H^1 \diff \eta.
    \end{equation*}
    For a curve $\gamma\in \Gamma(X)$, let us denote $A_\gamma=\{t\in\dom\gamma: (\gamma, t)\in \mathcal{K}^i\;\text{and}\; f_i(\gamma(t))\in E\}$.
    Using the equality above, together with the fact that $\eta$-a.e.~$\gamma$ is injective with some minimal speed, we obtain for $\eta$-a.e.~$\gamma$
    \begin{equation}\label{E:measurability-0}
        0=\H^1((f_i\circ \gamma_{|\{t:(\gamma,t)\in\mathcal{K}^i\}})^{-1}(E))=\H^1(A_\gamma).
    \end{equation}
    By unpacking the definition, one can verify that
    \begin{equation*}
        (f_i)_\#B(\eta_i)(E)=\int_{\Gamma(X)}\int_{A_\gamma}|\gamma'(t)|\diff \H^1(t) \diff \eta,
    \end{equation*}
    which equals zero by \eqref{E:measurability-0}.   
\end{proof}

\begin{theorem}\label{T:pw-main}
    For any $\varepsilon>0$, there exists some $i_0\in \nat$ such that for every $i\geq i_0$ a Borel set $E_i\subset K$ can be found such that
    \begin{equation*}
        \mu(K\setminus E_i)\leq \varepsilon
    \end{equation*}
     and $(\mathfrak{F}_i)_\#\eta_i$ is an Alberti representation of $(f_i)_\#\mu_{|E_i}$.
\end{theorem}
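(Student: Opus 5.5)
The plan is to follow the diagram in Figure \ref{fig:diagram} from the top-left node to the bottom-right one. The only genuinely analytic input is Lemma \ref{L:D-convergence}; the rest amounts to chaining absolute continuities.

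\emph{Step 1: produce the sets $E_i$.} Since $\eta$ is an Alberti representation of $\mu$, we have $\mu\ll B(\eta)$, and both are finite measures on $X$. Apply Lemma \ref{L:ac-with-error} with $R=X$, $\nu=B(\eta)$ and $\alpha=\varepsilon$. This gives some $\varepsilon'>0$ with the following property: whenever $\tilde\nu\leq B(\eta)$ and $(B(\eta)-\tilde\nu)(X)\leq\varepsilon'$, there is a Borel set $E$ with $\mu(X\setminus E)\leq\varepsilon$ and $\mu_{|E}\ll\tilde\nu$.

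By \eqref{E:barycenter-estimate} we have $B(\eta_i)\leq B(\eta)$. By Lemma \ref{L:D-convergence} there is $i_0$ such that $(B(\eta)-B(\eta_i))(X)\leq\varepsilon'$ for all $i\geq i_0$. So for each such $i$ we may take $\tilde\nu=B(\eta_i)$ and obtain a Borel set $E_i'$ with $\mu(X\setminus E_i')\leq \varepsilon$ and $\mu_{|E_i'}\ll B(\eta_i)$. Set $E_i=E_i'\cap K$. Since $\mu$ is supported on $K$, we get $\mu(K\setminus E_i)\leq\varepsilon$, and $\mu_{|E_i}\ll B(\eta_i)$ still holds.

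\emph{Step 2: push forward and compare barycenters.} Absolute continuity is preserved under pushforward: if $(f_i)_\#B(\eta_i)(A)=0$ then $B(\eta_i)(f_i^{-1}(A))=0$, hence $\mu_{|E_i}(f_i^{-1}(A))=0$. Therefore $(f_i)_\#\mu_{|E_i}\ll (f_i)_\#B(\eta_i)$. Combining this with Theorem \ref{T:commutativity} gives
\begin{equation*}
(f_i)_\#\mu_{|E_i}\ll B((\mathfrak{F}_i)_\#\eta_i).
\end{equation*}
One small point needs checking here. The maps $f_i$ are only defined on $D\supset C\supset K$. The curves in the support of $\eta_i$ lie in $K$, since $\mathfrak{R}_i$ only restricts curves of $\mathcal{U}$, which are $\eta$-a.e.\ contained in $K$ because $\eta$ represents a measure supported on $K$. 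Hence $\mathfrak{F}_i$ is well defined $\eta_i$-a.e. Measurability of $(\mathfrak{F}_i)_\#\eta_i$ follows because $\mathfrak{F}_i$ is Lipschitz on $\Gamma(K)$ and $\mathfrak{R}_i$ is Borel by Theorem \ref{T:Borel-restriction}.

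\emph{Step 3: verify the definition of an Alberti representation.} It remains to check that $(\mathfrak{F}_i)_\#\eta_i$ is a finite Borel measure supported on injective curve fragments. Finiteness is immediate, since its total mass is $\eta(\mathcal{U}_i)\leq\eta(\Gamma(X))$.

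Injectivity is the step I expect to need the most care, because $f_i$ need not be injective. I would derive it from Theorem \ref{T:commutativity}. For $\gamma\in\mathfrak{R}_i(\mathcal{U}_i)$, the fragment $f_i\circ\gamma$ goes in the direction of the cone $C(u,\theta+r)$. Assuming $\theta+r<1$, the functional $t\mapsto\langle u,f_i\circ\gamma(t)\rangle$ then has a.e.\ derivative at least $(1-\theta-r)(\delta-r)>0$ on the domain. To turn this into strict monotonicity between any two domain points, we need it to hold across gaps of the domain as well. I would handle the gaps through the affine extension $\overline{\gamma}$ used in Theorem \ref{T:con-in-measure-frag}. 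If that fails, the fallback is to further restrict to a compact subset of the domain on which $\langle u,f_i\circ\gamma\rangle$ is increasing. By the same dominated convergence as in Lemma \ref{L:D-convergence}, such a subset can be chosen of almost full measure, so only an arbitrarily small amount of barycenter mass is lost; this is absorbed into the choice of $\varepsilon'$.

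Given monotonicity, $\langle u,\cdot\rangle\circ f_i\circ\gamma$ is strictly increasing, so $f_i\circ\gamma$ is injective. This completes the verification that $(\mathfrak{F}_i)_\#\eta_i$ is an Alberti representation of $(f_i)_\#\mu_{|E_i}$.
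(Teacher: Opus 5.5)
Your Steps 1 and 2 are exactly the paper's proof: Lemma \ref{L:ac-with-error} with $\nu=B(\eta)$ and $\tilde\nu=B(\eta_i)$, fed by \eqref{E:barycenter-estimate} and Lemma \ref{L:D-convergence}, then pushing $\mu_{|E_i}\ll B(\eta_i)$ forward and applying Theorem \ref{T:commutativity}. One small correction there: $\eta$-a.e.\ curve lying in $K$ does not follow from $\mu$ being concentrated on $K$. It is a standing assumption of the setup (property \ref{Enum:pw-constr-supp} in the proof of Theorem \ref{T:stability-for-X}). It can always be arranged by passing to $(R_K)_\#(\eta_{|\Gamma_K(X)})$, whose barycenter is $B(\eta)_{|K}$.

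The gap is in Step 3. The paper reads injectivity directly off Theorem \ref{T:commutativity}. You are right that cone direction plus a speed bound does not give it. For instance, take $\gamma(t)=(t,0)$ and $f_i(x,y)=(\phi_i(x),y)$, where $\phi_i$ equals $x$, then $2a-x$ on $[a,a+\epsilon_i]$, then $x-2\epsilon_i$. This $f_i$ is $1$-Lipschitz and $2\epsilon_i$-close to $\Id$, yet the kept parts of the domain on the two sides of the fold can have overlapping images.

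Neither of your repairs works, however. Let $u=e_1$ and let $\gamma\in\mathcal U$ run along $[0,1]\times\{0\}$ on $[0,\tfrac13]$ and along $[0,1]\times\{1\}$ on $[\tfrac23,1]$. It is injective and satisfies all standing assumptions, but $\langle u,\gamma(\tfrac23)-\gamma(\tfrac13)\rangle=-1$.
\begin{itemize}
\item The affine extension across the gap therefore points out of the cone.
\item Any subset of the domain on which $\langle u,f_i\circ\gamma\rangle$ is increasing misses roughly half of it, however large $i$ is.
\end{itemize}
So the relative densities of your extra restrictions need not tend to $1$, and ``the same dominated convergence as in Lemma \ref{L:D-convergence}'' has nothing to act on. You would also be changing $\eta_i$, and you would need a Borel choice of the subsets.

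Global monotonicity of $\langle u,\cdot\rangle$ is the wrong target: decompose instead of restricting. Set $c=\tfrac12(1-\theta-r)(\delta-r)$, and let $A_k$ be the set of $t\in\dom\gamma$ such that $\langle u,f_i\gamma(s)-f_i\gamma(t)\rangle(s-t)\geq c(s-t)^2$ for all $s\in\dom\gamma$ with $|s-t|<1/k$. Then:
\begin{itemize}
\item $A_k$ is closed.
\item Every $t$ at which $(f_i\circ\gamma)'(t)$ exists, lies in $C(u,\theta+r)$ and has norm at least $\delta-r$ belongs to some $A_k$.
\item $\langle u,f_i\circ\gamma\rangle$ is strictly increasing on every compact piece of $A_k$ of diameter less than $1/k$.
\end{itemize}
Now take a weighted sum (weights $2^{-k-l}$, say) of the pushforwards of $\eta_i$ under $\gamma\mapsto f_i\circ\gamma$ restricted to these countably many pieces, with the usual care for Borel measurability. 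This is a finite measure on injective fragments whose barycenter has the same null sets as $B((\mathfrak F_i)_\#\eta_i)$. No mass is lost, so nothing has to be absorbed into $\varepsilon'$.
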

\begin{proof}
    By Theorem \ref{T:commutativity}, $(\mathfrak{F}_i)_\#\eta_i$ is supported on injective curves. The rest follows by application of Lemma \ref{L:ac-with-error}, Theorem \ref{T:commutativity} and the estimate \eqref{E:barycenter-estimate} together with Lemma \ref{L:D-convergence}.
\end{proof}

Now it is time to consider a number of different Alberti representations. We fix functionals $u^1, \dots, u^d\in X^*$ and thicknesses $\theta^1, \dots, \theta^d$ and Alberti representations $\eta^1, \dots, \eta^d$ of the measure $\mu$. We assume each $\eta^j$ is supported in a set $\mathcal{U}^j$, having the properties \ref{Enum:pw-constr-supp}, $\dots$, \ref{Enum:pw-constr-length} above, where we replace $C(u,\theta)$ with $C(u^j, \theta^j)$. We assume that the cones $C(u^j, \theta^j)$ are independent, so that there is some $r>0$ with $\delta> r$ and such that the cones $C(u^j, \theta^j+ r)$ are still independent.

\begin{corollary}\label{C:pw-main}
    For any $\varepsilon>0$, there exists some $i_0\in\nat$ such that for every $i\geq i_0$ a Borel set $E_i\subset K$ can be found such that
    \begin{equation*}
        \mu(K\setminus E_i)\leq \varepsilon
    \end{equation*}
    and $(f_i)_\# \mu_{|E_i}$ has $d$ independent Alberti representations.
\end{corollary}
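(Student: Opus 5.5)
The plan is to apply Theorem \ref{T:pw-main} to each of the $d$ Alberti representations $\eta^1,\dots,\eta^d$ separately and then intersect the resulting good sets. Fix $\varepsilon>0$.

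For each $j=1,\dots,d$, the data $u^j$, $\theta^j$, $\eta^j$ and $\mathcal{U}^j$ satisfy the standing assumptions of the subsection. The common $r>0$, with $\delta>r$ and the widened cones $C(u^j,\theta^j+r)$ still independent, is used in the definitions of $\mathcal{G}^i_{\textnormal{cone}}$ and $\mathcal{G}^i_{\textnormal{speed}}$ for every $j$. Theorem \ref{T:pw-main}, applied with $\varepsilon/d$, gives an index $i_0^j$ with the following property: for every $i\geq i_0^j$ there is a Borel set $E_i^j\subset K$ with $\mu(K\setminus E_i^j)\leq \varepsilon/d$ such that $(\mathfrak{F}_i)_\#\eta^j_i$ is an Alberti representation of $(f_i)_\#\mu_{|E_i^j}$.

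Set $i_0=\max_j i_0^j$ and, for $i\geq i_0$, let $E_i=\bigcap_j E_i^j$. A union bound gives $\mu(K\setminus E_i)\leq\varepsilon$.

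Next we check that each $(\mathfrak{F}_i)_\#\eta^j_i$ remains an Alberti representation of $(f_i)_\#\mu_{|E_i}$. Since $E_i\subset E_i^j$, we have $(f_i)_\#\mu_{|E_i}\leq (f_i)_\#\mu_{|E_i^j}$. Absolute continuity with respect to $B((\mathfrak{F}_i)_\#\eta^j_i)$ is inherited by smaller measures. Support on injective curve fragments is a property of the representation alone, and Theorem \ref{T:pw-main} already provides it.

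It remains to check independence in the sense of Definition \ref{D:AR-independence}, taking the Lipschitz map $F$ to be the identity (or a linear isomorphism $X\cong\reals^m$). By Theorem \ref{T:commutativity}, every curve in the support of $(\mathfrak{F}_i)_\#\eta^j_i$ has the form $f_i\circ\gamma$ with $\gamma\in\mathfrak{R}_i(\mathcal{U}_i^j)$. Such a curve goes in the direction of $C(u^j,\theta^j+r)$, because $\E^{u^j}_{f_i}\geq 1-\theta^j-r$ on $\mathcal{K}^i$. It also has nonvanishing derivative a.e., since its speed is at least $\delta-r>0$, which is exactly what Definition \ref{d:alberti-direction} requires ($D\setminus\{0\}$). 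Hence $(\mathfrak{F}_i)_\#\eta^j_i$ goes in the direction of $C(u^j,\theta^j+r)$ for each $j$. These cones are independent by the choice of $r$, so the $d$ representations are independent.

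The main point requiring care is this last step. The argument depends on a single choice of $r$, made before any index $i$ is fixed, so that the same slightly widened independent cones serve for all $j$ and all large $i$. It also depends on the speed lower bound $\delta-r>0$, which rules out zero derivatives, since these do not lie in $D\setminus\{0\}$. One should also confirm that, under the identification of $X$ with $\reals^m$, the cones in $X$ correspond to independent cones in $\reals^m$. This holds because a linear isomorphism preserves linear independence and maps a cone into a (possibly wider) cone, and independence of a set of cones depends only on which vectors they contain.
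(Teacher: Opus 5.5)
Your proposal is correct and is essentially the paper's own proof. You apply Theorem \ref{T:pw-main} to each $\eta^j$ with errors summing to $\varepsilon$, take $i_0$ as the maximum of the resulting indices, intersect the sets $E_i^j$, and invoke Theorem \ref{T:commutativity} with the independent widened cones $C(u^j,\theta^j+r)$. One caveat concerns your closing remark: enclosing images of cones in \emph{wider} Euclidean cones can destroy independence, so that justification fails as written. The step is not needed, however, since the paper works with cones in $X$ directly, and transporting the norm of $X$ to $\reals^m$ through the isomorphism carries each cone onto a cone exactly.
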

\begin{proof}
    Let $\varepsilon_1, \dots, \varepsilon_d>0$ be such that $\sum_j \varepsilon_j\leq \varepsilon$. For each $\varepsilon_j$, apply Theorem \ref{T:pw-main} so as to obtain $i_j$ and for every $i\geq i_j$ a Borel set $E^j_i\subset K$ with
    \begin{equation*}
        \mu(K\setminus E^j_i)\leq \varepsilon_j
    \end{equation*}
    and such that $(\mathfrak{F}_i)_\#\eta_i^j$ is an Alberti representation of $\mu_{|E^j_i}$. Let $i_0=\max_{j=1,\dots, d}\{i_j\}$ and fix $i\geq i_0$. By Theorem \ref{T:commutativity}, $(\mathfrak{F}_i)_\#\eta_i^j$-a.e.~$\gamma$ goes in the direction of $C(u^j,\theta^j+r)$ and has speed at least $\delta-r$. It follows therefore that $(\mathfrak{F}_i)_\#\eta_i^1, \dots, (\mathfrak{F}_i)_\#\eta_i^d$ are independent Alberti representations of
    \begin{equation*}
        \mu_{|E_i}, \quad \text{where}\quad E_i=\cap_{j=1}^d E^j_i.
    \end{equation*}
    Our condition on the sum of $\varepsilon_j$'s implies that
    \begin{equation*}
        \mu(K\setminus E_i)\leq\mu(\bigcup_{j=1}^d K\setminus E^j_i)\leq \sum_{j=1}^d \varepsilon_j\leq \varepsilon.
    \end{equation*}

\end{proof}

\subsection{Stability of dimension under perturbations}
We now conclude the proof of Theorem \ref{T:intro-preserve}, which is an immediate consequence of the following Theorem \ref{T:stability-for-X}.
\begin{theorem}\label{T:stability-for-X}
    Suppose $X$ is a strictly convex finite dimensional Banach space. Let $D\subset X$ be a Borel set. Let $\mu$ be a $\sigma$-finite Borel measure on $D$. Suppose there is a compact set $K\subset D$ such that $\infty>\mu(K)>0$ and $\dim_T \mu_{|K} \geq d \in \nat$. Suppose also that there is a convex set $C$ containing an open neighbourhood of $K$ such that $C\subset D$. Then any $1$-Lipschitz map $F\colon D \to X$ which satisfies $F_{|C}=\Id_{|C}$ is in the interior of
    \begin{equation*}
        \{f\in \tLip_1(D,X): \dim_T f_\#\mu \geq d\}.
    \end{equation*}
\end{theorem}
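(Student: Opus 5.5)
The plan is to argue by contradiction using sequences. The interior condition is equivalent to the following: there is no sequence $f_i\in\tLip_1(D,X)$ with $f_i\to F$ uniformly and $\dim_T (f_i)_\#\mu<d$ for all $i$. Since $F$ is the identity on $C$, any such $f_i$ converges uniformly to the identity on $C$, and in particular on $K$.

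\textbf{Step 1: reduction to a good piece with quantitative representations.} By Theorem \ref{T:dimA-iff-wtf}, $\dim_T\mu_{|K}\geq d$ means $\mu_{|K}$ has a piece with $d$ independent Alberti representations. So there is a compact $K'\subset K$ with $\mu(K')>0$ such that $\mu_{|K'}$ has independent representations $\eta^1,\dots,\eta^d$ going in the directions of independent cones $C(u^j,\theta^j)$ with respect to some Lipschitz $F_0$. In $X$ we would like $F_0$ to be the identity. I would handle this as follows: since $X$ is finite dimensional, independence with respect to a general $F_0$ can be converted, after refining the cones and decomposing, into independence in $X$ itself; alternatively one invokes the Euclidean characterisation directly. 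Next I would shrink the cones slightly and restrict the curves (using the restriction machinery of Subsection \ref{sec:restriction}, in particular Theorem \ref{T:Borel-restriction}, together with inner regularity). After possibly replacing $K'$ by a smaller compact set of positive measure, this gives each $\eta^j$ supported on a Borel $\mathcal{U}^j$ of curves in $K'$ that have speed at least $\delta$, derivative in $C(u^j,\theta^j)$, domain length at least $m$, and are $L$-Lipschitz. Finally, I choose $r<\delta$ so that the cones $C(u^j,\theta^j+r)$ remain independent.

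\textbf{Step 2: apply Corollary \ref{C:pw-main}.} The hypotheses of Subsection 5.2 now hold with $K'$ in place of $K$ and $C$ convex containing a neighbourhood of $K'$. Take $\varepsilon<\mu(K')$. For large $i$ there is a Borel $E_i\subset K'$ with $\mu(E_i)>0$ such that $(f_i)_\#\mu_{|E_i}$ has $d$ independent Alberti representations, with curves $f_i\circ\gamma$ going in directions of the independent cones. Since $(f_i)_\#\mu_{|E_i}\leq (f_i)_\#\mu$ and the piece is nonzero, an inner-regular compact subset of $f_i(E_i)$ of positive measure yields a piece of $(f_i)_\#\mu$ with $d$ independent Alberti representations. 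By Theorem \ref{T:dimA-iff-wtf}, $\dim_T (f_i)_\#\mu\geq d$, which contradicts the choice of $f_i$.

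\textbf{Main obstacle.} The hardest point is Step 1: normalising an arbitrary family of $d$ independent Alberti representations into one with uniform speed, length and Lipschitz bounds, and with directions measured by the identity map of $X$ rather than by an auxiliary $F_0$. Uniform bounds come from decomposing $\Gamma$ into countably many pieces and keeping one of positive barycentric mass, using Lemma \ref{L:ac-with-error} to keep absolute continuity on a large subset. For the identity issue I would use that independence is preserved after precomposition with a linear isomorphism. I would also use the stated equivalence, Theorem \ref{T:dimA-iff-AFd}, applied to $F=\Id$, so that the measure must admit representations independent with respect to the identity; this is the step I would verify most carefully.
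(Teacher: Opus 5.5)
Your proposal follows the paper's proof essentially step for step: the sequential reformulation of the interior, the exhaustion and restriction giving a positive-measure compact piece on which each $\eta^j$ lives on curves with uniform speed, domain-length and Lipschitz bounds, the widening of the cones by $r$, and the application of Corollary~\ref{C:pw-main} (your explicit choice $\varepsilon<\mu(K')$, which guarantees $\mu(E_i)>0$, is a welcome precision).

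Two small remarks. First, the paper simply takes the $\eta^j$ to go in directions of independent cones in $X$ itself, so the passage from an auxiliary $F_0$ to the identity that you flag is assumed there too. Theorem~\ref{T:dimA-iff-AFd} with $F=\Id$ would not supply it on its own, since a piece with $d$ independent representations only yields failure of the $\tilde A(\cdot,d-1)$ property for some map, e.g.\ $F_0$, not for $\Id$. Second, in your last step it is safer to argue by monotonicity, $\dim_T (f_i)_\#\mu\geq\dim_T (f_i)_\#(\mu_{|E_i})\geq d$, because restricting $(f_i)_\#\mu$ to a compact subset of $f_i(E_i)$ may pick up mass coming from outside $E_i$.
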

\begin{proof}
    The proof is complete once we show that Corollary \ref{C:pw-main} applies.

    Let $f_i\in \tLip_1(D,X)$ converge to any $F$ satisfying $F_{|C}=\Id_{|C}$.
    There exists a compact subset $\tilde{K}\subset K$ such that $\mu_{|\tilde{K}}$ has $d$-independent Alberti representations. It is without loss of generality to assume $\tilde K = K$. Thus, let us take $\eta^1, \dots, \eta^d$ Alberti representations of $\mu$ going in the directions of independent cones $C(u^i,\theta^i)$. It is without loss of generality to assume that each $\eta^j$ is supported on curve fragments $\gamma \subset K$.
    
    Since $K$ is compact and $\eta_j$-a.e.~$\gamma$ goes in the direction of $C(u^i, \theta^i)$, there exists some $M<\infty$ such that $\eta_j$-a.e.~$\gamma$ has length at most $M$. Using a standard exhaustion argument, for every $\varepsilon>0$, there exists a Borel set $\mathcal{U}^j\subset \Gamma(X)$ and $\delta_j, m_j>0$, $L_j<\infty$ such that
    \begin{equation}\label{E:restr-final}
        \eta^j(\Gamma(X)\setminus \mathcal{U}^j)\leq \varepsilon,
    \end{equation}
    and
    \begin{enumerate}
        \item\label{Enum:pw-constr-supp} every $\gamma\in \mathcal{U}^j$ is supported inside $K$,
        \item\label{Enum:pw-constr-cone} every $\gamma\in \mathcal{U}^j$ goes in the direction of $C(u^j, \theta^j)$ and has speed at least $\delta_j$,
        \item\label{Enum:pw-constr-Lip} every $\gamma\in \mathcal{U}^j$ is $L_j$-Lipschitz,
        \item\label{Enum:pw-constr-length} every $\gamma\in \mathcal{U}^j$ satisfies $\H^1(\dom\gamma)\geq m_j$.
    \end{enumerate}
    Let us take $\delta=\min_{j=1,\dots, d}\{\delta_j\}$, $m=\min_{j=1,\dots, d}\{m_j\}$, $L=\max_{j=1,\dots, d}\{L_j\}$. The inequality \eqref{E:restr-final} implies
    \begin{equation*}
        (B(\eta^j)-B(\eta^j_{|\mathcal{U}^j}))(X)\leq M \varepsilon.
    \end{equation*}
    Thus, by taking $\varepsilon>0$ small enough, there exists a set $\tilde K \subset K$ with $\mu(\tilde K)>0$, such that $\eta^j_{|\mathcal{U}^j}$ is an Alberti representation of $\mu_{\tilde K}$ for each $j=1,\dots, d$. 

    Now we may apply Corollary \ref{C:pw-main} to the measure $\mu_{|\tilde K}$ to obtain $i_0\in\nat$ such that for all $i\geq i_0$, $(f_i)_\# \mu_{|\tilde K}$ has $d$-independent Alberti representations. It follows that for such $i$, $\dim_T (f_i)_\# \mu\geq d$. Thus we have shown that for any sequence converging to $F$, eventually the elements of the sequence are in the set $ \{f\in \tLip_1(D,X): \dim_T f_\#\mu \geq d\}$. This is equivalent to $F$ being in the interior of the set.
\end{proof}

\begin{theorem}\label{T:stability-for-Euclidean}
    Let $\mu$ be a Borel measure on $\reals^k$ with $\dim_T \mu\geq d$. Let $m\geq d$. Then the set
    \begin{equation}\label{E:the-set}
        \{f\in \tLip_1(\reals^k, \reals^m): \dim_T f_\# \mu\geq d\}
    \end{equation}
    has non-empty interior. For any compact set $K\subset \reals^k$ such that $\dim_T \mu_{|K}\geq d$, any $F\in \tLip_1(\reals^k, \reals^m)$ satisfying $F_{|C}=I_{|C}$ for some isometry $I\colon \reals^k \to \reals^m$ and a convex set $C$ containing an open neighbourhood of $K$, is in the interior of the set in \eqref{E:the-set}.
\end{theorem}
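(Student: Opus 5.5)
The plan is to reduce Theorem \ref{T:stability-for-Euclidean} to Theorem \ref{T:stability-for-X}, with $X=\reals^m$ (which is strictly convex and finite dimensional), by working on the isometric copy $I(\reals^k)$ inside $\reals^m$. An isometry $I\colon\reals^k\to\reals^m$ between Euclidean spaces is affine. Composing with a translation, we may assume $I$ is linear, so $I(\reals^k)$ is a $k$-dimensional subspace. Since $m\geq d$ and $\dim_T\mu\geq d$, Theorem \ref{T:dimA-iff-wtf} gives a piece of $\mu$ with $d$ independent Alberti representations, hence $d\leq k$. By inner regularity we can choose a compact $K$ with $0<\mu(K)<\infty$ and $\dim_T\mu_{|K}\geq d$. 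Take $C$ to be a closed ball containing a neighbourhood of $K$. Then $F=I$ (modified to be bounded, e.g.\ by composing with a $1$-Lipschitz retraction onto a large ball, which is the identity on $I(C)$) is a map of the required form. This gives non-emptiness of the interior once the second claim is proven, so the second claim is the main statement.

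For the second claim, fix $K$, $C$, $I$, $F$ as in the statement and set $\nu=I_\#\mu$, a measure on $\reals^m$. Isometries preserve curve fragments, derivatives and independence of cones, since a linear isometry maps independent cones to independent cones after completing to the ambient space. Hence $\dim_T\nu_{|I(K)}\geq d$. Next, let $D=\reals^m$ and $\tilde C=\conv(I(C))$ when $C$ is convex; it contains an open neighbourhood of $I(K)$ in $\reals^m$. Here one must be careful, because $I(C)$ is lower dimensional. So I would instead take $\tilde C=I(C)+B(0,\rho)$, intersected appropriately; this is convex and contains a neighbourhood of $I(K)$.

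The key device is to compare maps on $\reals^k$ with maps on $\reals^m$ via the orthogonal projection $\pi\colon\reals^m\to I(\reals^k)$. Given $f\in\tLip_1(\reals^k,\reals^m)$ close to $F$, define $g=f\circ I^{-1}\circ\pi\colon\reals^m\to\reals^m$. This map is $1$-Lipschitz, satisfies $g_\#\nu=f_\#\mu$, and is uniformly close to $F\circ I^{-1}\circ\pi$ on $\tilde C$. However, $F\circ I^{-1}\circ\pi$ equals $\pi$ rather than $\Id$ on $\tilde C$, so Theorem \ref{T:stability-for-X} does not apply verbatim. This is the main obstacle.

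I would instead rerun the proof of Theorem \ref{T:stability-for-X} directly. It only uses Corollary \ref{C:pw-main}, which in turn needs Theorem \ref{T:con-in-measure-frag} for curve fragments $\gamma\subset K$. That theorem only requires the perturbation to be defined on a convex neighbourhood in the domain of $\gamma$ and to be $1$-Lipschitz and uniformly close to an isometric embedding. Concretely, I would check that Theorem \ref{T:con-in-measure-frag} holds for $f\colon C\to\reals^m$ with $\lVert f-I\rVert_\infty<\varepsilon$ and conclusion $|(f\circ\gamma)'-I\gamma'|<\delta$ off a small set. The proof goes through unchanged: apply Lemma \ref{L:rect-estimate} with $u$ replaced by $I u$ and $\varphi$ a norming functional of $Iu$, because $I$ preserves slopes and Euclidean uniform convexity holds in $\reals^m$.

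Then the $m$-convergence of Theorem \ref{T:m-convergence-for-speeds} holds with $\E^u_{\Id}$ replaced by the corresponding quantity for $I$ and cones $C(u^j,\theta^j)$ replaced by their images, which are cones around $Iu^j$ in $\reals^m$. The restriction/pushforward machinery of Theorem \ref{T:pw-main} and Corollary \ref{C:pw-main} is purely measure-theoretic and transfers verbatim. It yields, for $f_i\to F$, sets $E_i$ on which $(f_i)_\#\mu_{|E_i}$ has $d$ independent Alberti representations. Therefore $\dim_T (f_i)_\#\mu\geq d$ eventually, exactly as in the last paragraph of the proof of Theorem \ref{T:stability-for-X}, so $F$ is an interior point.
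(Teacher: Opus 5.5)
Your argument for the second claim is correct, but the reduction differs from the paper's.

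\textbf{How the paper argues.} The paper reproves nothing. The case $m=k$, $I=\Id$ is literally Theorem \ref{T:stability-for-X}. For $m>k$, the paper avoids the obstacle you found in your third paragraph by \emph{post}-composing, rather than pre-composing, with the orthogonal projection $P_I$ onto $I(\reals^k)$. Then $I^{-1}\circ P_I\circ f_i\to\Id$ uniformly on $C$, and Theorem \ref{T:stability-for-X} gives $\dim_T (P_I\circ f_i)_\#\mu\geq d$. It then returns to $f_i$ by asserting that $1$-Lipschitz pushforwards cannot increase $\dim_T$.

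\textbf{How you argue.} You open the black box. The proofs of Lemma \ref{L:rect-estimate} and Theorems \ref{T:con-in-measure-C1}, \ref{T:con-in-measure-frag} only use two facts: $I$ preserves slopes, and the \emph{target} is uniformly convex. So these results hold for $1$-Lipschitz maps close to a linear isometric embedding. The restriction/pushforward machinery then yields Alberti representations of $(f_i)_\#\mu$ itself.

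\textbf{What each route buys.} The paper's route is shorter, but its final step is not valid as a general principle: $\dim_T$ can increase under an orthogonal projection. The natural measure on the (purely $1$-unrectifiable) four-corner Cantor set has $\dim_T=0$. Its projection onto the line spanned by $(1,2)$ is a multiple of Lebesgue measure on a segment, which has $\dim_T=1$. So that step really depends on the specific representations constructed, and pushing them forward by $f_i$ instead of $P_I\circ f_i$ is exactly what you do. Your route needs no such monotonicity. Its cost is rechecking Section \ref{S:stability} with $I$ in place of $\Id$.

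\textbf{Two points to tighten.}

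First, the derivatives $(f_i\circ\gamma)'$ do not lie in $I(\reals^k)$. So the cones you need are the full cones $C(Iu^j,\theta^j+r)\subset\reals^m$, not the images $I(C(u^j,\theta^j+r))$. You only assert their independence; it follows by projecting. Let $v\in C(Iu,\theta)\setminus\{0\}$ and let $P$ be the orthogonal projection onto $I(\reals^k)$. Then $\langle Iu,Pv\rangle=\langle Iu,v\rangle\geq(1-\theta)|v|\geq(1-\theta)|Pv|$, and $\langle Iu,v\rangle>0$. Hence $Pv$ is a nonzero element of $I(C(u,\theta))$. A linear relation among the $v_j$ would therefore project to one among nonzero elements of independent cones.

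Second, your proof of non-empty interior, like the paper's, needs an isometry $\reals^k\to\reals^m$, which exists only when $m\geq k$. The case $d\leq m<k$, which the statement permits, is not covered and needs a separate argument. One option is to replace $I$ by the orthogonal projection onto an $m$-plane containing the axes of suitably narrowed cones. That projection is almost isometric on those cones, so the slope argument still applies.
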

\begin{proof}
    This follows from the fact that $\reals^k$ is strictly convex and Theorem \ref{T:stability-for-Euclidean}. Indeed, we see that the statement holds true if $k=m$ and $I$ is the identity. Therefore, it also holds if $I$ is any isometry. If $m>k$, let $P_I$ be the orthogonal projection in $\reals^m$ to $I(\reals^k)$. Since $P_I$ is $1$-Lipschitz, we see that for any sequence $f_i\in \tLip_1(\reals^k, \reals^m)$ with $f_i\to F$ uniformly, we also have $P_I \circ f_i \to F$ uniformly on $C$. Hence, there exists some $i_0\in \nat$ such that for all $i\geq i_0$, $\dim_T(P_I\circ f_i)_\#\mu \geq d$. Since $1$-Lipschitz pushforwards cannot increase the dimension $\dim_T$, it follows that $\dim_T (f_i)_\# \mu \geq d$.
\end{proof}

We now collect the relevant corollaries which in particular immediately imply Corollary \ref{C:converse}.
The following results follow from Lemma \ref{L:DeRindler}
and Theorems \ref{T:stability-for-X} and \ref{T:stability-for-Euclidean} respectively.

\begin{corollary}\label{C:stability-for-X-Haus}
     Suppose $X$ is a strictly convex finite dimensional Banach space. Let $D\subset X$ be a Borel set. Let $\mu$ be a $\sigma$-finite Borel measure on $D$. Suppose there is a compact set $K\subset D$ such that $\infty>\mu(K)>0$ and $\dim_T \mu_{|K} \geq d \in \nat$. Suppose also that there is a convex set $C$ containing an open neighbourhood of $K$ such that $C\subset D$. Then any map $F\colon D \to X$ which satisfies $F_{|C}=\Id_{|C}$ is in the interior of
    \begin{equation*}
        \{f\in \tLip_1(D,X): \dim_H f_\#\mu \geq d\}.
    \end{equation*}
\end{corollary}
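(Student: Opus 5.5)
The plan is to deduce Corollary \ref{C:stability-for-X-Haus} from Theorem \ref{T:stability-for-X} by converting the tangential dimension lower bound into a Hausdorff dimension lower bound with Lemma \ref{L:DeRindler}. Two points need attention. Theorem \ref{T:stability-for-X} assumes $F$ is $1$-Lipschitz, and the corollary as stated does not; since the interior is taken in $\tLip_1(D,X)$, I will read $F$ as an element of that space, because otherwise the statement is vacuous. Theorem \ref{T:stability-for-X} then gives a neighbourhood $\mathcal{V}$ of $F$ in $\tLip_1(D,X)$ such that every $f\in\mathcal{V}$ satisfies $\dim_T f_\#\mu\geq d$. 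It therefore suffices to show that $\dim_T \nu\geq d$ implies $\dim_H\nu\geq d$ for $\nu=f_\#\mu$, a $\sigma$-finite Borel measure on the Polish space $X$.

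To prove this implication I will use Theorem \ref{T:dimA-iff-wtf}, which identifies $\dim_T\nu$ with the supremum of the $d'$ for which $\nu$ has a piece with $d'$ independent Alberti representations. Since these $d'$ are integers, $\dim_T\nu\geq d$ gives a compact $K'$ with $\nu(K')>0$ such that $\nu_{|K'}$ has $d$ independent Alberti representations. Lemma \ref{L:DeRindler} then gives $\dim_H \nu_{|K'}\geq d$.

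It remains to pass from the piece $\nu_{|K'}$ to $\nu$, using monotonicity of the Hausdorff dimension of a measure. Suppose $N$ is $\nu$-null and $\H^s(X\setminus N)<\infty$. Then $N$ is also $\nu_{|K'}$-null, and the same set witnesses the condition for $\nu_{|K'}$. So $\dim_H\nu\geq\dim_H\nu_{|K'}\geq d$.

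Alternatively, and more directly, the proof of Theorem \ref{T:stability-for-X} already shows that $(f_i)_\#\mu_{|\tilde K}$ itself has $d$ independent Alberti representations, so Lemma \ref{L:DeRindler} can be applied to it without the supremum step. I expect no real obstacle here. The only delicate point is making sure that the notion of ``piece'' and the definition of $\dim_H$ for measures are compatible with restriction, and both are checked in one line as above.
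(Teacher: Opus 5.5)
Your proposal is correct and follows the paper's own one-line derivation: apply Theorem \ref{T:stability-for-X} to the $1$-Lipschitz $F$, obtain a piece of the pushforward with $d$ independent Alberti representations, use Lemma \ref{L:DeRindler}, and pass from the piece to the whole measure by monotonicity of $\dim_H$. One caution is that $f_\#\mu$ need not be $\sigma$-finite (for instance, $f$ can collapse infinitely many atoms of $\mu$ lying outside $C$ onto a single point), so Theorem \ref{T:dimA-iff-wtf} does not apply to it directly; your second route, through the finite measure $(f_i)_\#\mu_{|\tilde K}$, avoids this issue and is the one to use.
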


\begin{corollary}\label{C:stability-for-Euclidean-Haus}
     Let $\mu$ be a Borel measure on $\reals^k$ with $\dim_T \mu\geq d$. Let $m\geq d$. Then the set
    \begin{equation}\label{E:the-set2}
        \{f\in \tLip_1(\reals^k, \reals^m): \dim_H f_\# \mu\geq d\}
    \end{equation}
    has non-empty interior. For any compact set $K\subset \reals^k$ such that $\dim_T \mu_{|K}\geq d$, any $F\in \tLip_1(\reals^k, \reals^m)$ satisfying $F_{|C}=I_{|C}$ for some isometry $I\colon \reals^k \to \reals^n$ and a convex set $C$ containing an open neighbourhood of $K$, is in the interior of the set in \eqref{E:the-set2}.
\end{corollary}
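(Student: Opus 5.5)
The plan is to deduce Corollary \ref{C:stability-for-Euclidean-Haus} directly from Theorem \ref{T:stability-for-Euclidean} together with the lower bound $\dim_H\geq\dim_T$ for measures supplied by Lemma \ref{L:DeRindler}. The whole argument reduces to the inclusion
\begin{equation*}
    \{f\in \tLip_1(\reals^k, \reals^m): \dim_T f_\# \mu\geq d\}\subset\{f\in \tLip_1(\reals^k, \reals^m): \dim_H f_\# \mu\geq d\}.
\end{equation*}
Once this inclusion holds, the interior of the left-hand set is contained in the interior of the right-hand set. Both assertions of the corollary then follow from the corresponding assertions of Theorem \ref{T:stability-for-Euclidean}: the non-emptiness of the interior, and the fact that every $F$ agreeing with an isometry on a convex neighbourhood $C$ of $K$ is an interior point.

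To prove the inclusion, I fix $f$ with $\dim_T f_\#\mu\geq d$ and set $\nu=f_\#\mu$. I would reduce to the case where $\mu$, and hence $\nu$, is finite or $\sigma$-finite. In the situation of Theorem \ref{T:stability-for-Euclidean}, we may replace $\mu$ by $\mu_{|K}$ for a compact $K$ of finite positive measure. The set in question only becomes smaller, since $\dim_H f_\#\mu\geq \dim_H f_\#\mu_{|K}$ by monotonicity of the infimum defining $\dim_H$.

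By Theorem \ref{T:dimA-iff-wtf}, $\dim_T\nu$ equals the supremum of those $d'$ for which $\nu$ has a piece with $d'$ independent Alberti representations. So $\nu$ has a piece with $d$ independent Alberti representations: there is a compact $K'$ with $\nu(K')>0$ such that $\nu_{|K'}$ has $d$ independent Alberti representations. Lemma \ref{L:DeRindler}, applied in the Polish space $\reals^m$, gives $\dim_H\nu_{|K'}\geq d$.

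Next I need $\dim_H\nu\geq\dim_H\nu_{|K'}$. Suppose $N$ is $\nu$-null and $\H^s(\reals^m\setminus N)<\infty$. Then $N\cup(\reals^m\setminus K')$ is $\nu_{|K'}$-null, and its complement $K'\setminus N$ has finite $\H^s$ measure. Hence every $s$ admissible for $\nu$ is admissible for $\nu_{|K'}$, and the inequality follows.

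The only delicate point is the bookkeeping with the supremum in Theorem \ref{T:dimA-iff-wtf}. Since the values are integers, $\dim_T\nu\geq d$ genuinely yields a piece with exactly $d$ independent representations, possibly after discarding surplus representations if more exist. Also, Theorem \ref{T:dimA-iff-wtf} requires $\sigma$-finiteness, which the reduction to $\mu_{|K}$ guarantees. I do not expect any substantive obstacle beyond this.
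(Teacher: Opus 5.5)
Your proposal is correct and follows the paper's own route: the paper deduces this corollary by combining Theorem \ref{T:stability-for-Euclidean} with Lemma \ref{L:DeRindler}, which amounts to the inclusion of the $\dim_T$-set into the $\dim_H$-set. Your additional steps make explicit details the paper leaves implicit: monotonicity of $\dim_H$ under restriction, reduction to a finite measure $\mu_{|K}$ so that Theorem \ref{T:dimA-iff-wtf} applies to $f_\#\mu$, and extracting exactly $d$ independent representations.
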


\begin{remark}\label{R:AC}
    One can use the deep result of DePhilippis and Rindler \cite[Corollary 1.12]{DeRindler} (instead of Lemma \ref{L:DeRindler}) to assert a stronger version of the previous two corollaries. In Corollary \ref{C:stability-for-X-Haus}, one may replace the set in question with the set
    \begin{equation*}
        \{f\in \tLip_1(D,X): \exists \; E\subset K, \mu(E)>0 \;\textnormal{such that $f_\#\mu_{|E}\ll \H^d$}\},
    \end{equation*}
    where the set $E$ is assumed to be Borel. Similar considerations hold also for Corollary \ref{C:stability-for-Euclidean-Haus}.
\end{remark}

Finally, we collect a corollary which provides a characterisation of the stabilized Hausdorff dimension in Euclidean spaces, which was discussed in the introduction. This follows from the combination Corollary \ref{C:main-euclidean-measure} and the previous Corollary \ref{C:stability-for-Euclidean-Haus}.

We remark that one may state this type of a result in many equivalent ways such as replacing ``residual'' with ``dense'' or requiring instead of density only approximation of the identity (cf.~the different definitions \eqref{E:stable-dense} and \eqref{E:stable-id}). Since one may pass from each of these types of statements to any other, we only state the result in one form, the one which we find the most convenient for the reader.

\begin{corollary}\label{C:final}
    Suppose $X$ is a complete and separable metric space and $\mu$ is a Borel measure on $X$. Then
    \begin{equation*}
        \begin{split}
            \dim_T \mu \geq \inf\{s\in[0,\infty]&: 
            \{f\in\tLip_1(X, \reals^m): \dim_H(f_\#\mu)\leq s\}\;\text{is residual in $\tLip_1(X,\reals^m)$}\\
            &\text{for every $m\in\nat$.}\}.
        \end{split}
    \end{equation*}
    If, moreover, $X=\reals^k$ for some $k\in\nat$, then
    \begin{equation*}
        \begin{split}
            \dim_T \mu =\inf\{s\in[0,\infty]&: 
            \{f\in\tLip_1(\reals^k, \reals^m): \dim_H(f_\#\mu)\leq s\}\;\text{is residual in $\tLip_1(\reals^k,\reals^m)$}\\
            &\text{for every $m\in\nat$.}\}.
        \end{split}
    \end{equation*}
    In particular, the quantity on the right hand side is an integer.
\end{corollary}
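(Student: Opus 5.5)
The first inequality comes from Corollary~\ref{C:main-euclidean-measure}, and the equality for $X=\reals^k$ from a lower bound supplied by Corollary~\ref{C:stability-for-Euclidean-Haus} together with the Baire category theorem. Write $s^*$ for the infimum on the right-hand side.

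For the inequality, we may assume $d=\dim_T\mu<\infty$. Corollary~\ref{C:main-euclidean-measure}, applied with $L=1$ and each $m\in\nat$, shows that $\{f\in\tLip_1(X,\reals^m):\dim_H(f_\#\mu)\le d\}$ is residual. So $s=d$ is admissible in the infimum, and $s^*\le d$. One wrinkle: that corollary is stated for $d\in\nat$. For $d=0$ we instead use Corollary~\ref{C:main-euclidian}, or the theorem preceding it, whose density step via Theorem~\ref{T:StV-main} and whose openness step are written for general $d\ge 0$. If $\mu$ is only $\sigma$-finite, we reduce to finite pieces exactly as in the last part of the proof of that theorem.

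For the equality on $X=\reals^k$ it remains to show $s^*\ge\dim_T\mu$. Suppose $s<d\le\dim_T\mu$ with $d\in\nat$; we show that $s$ is not admissible. By Theorem~\ref{T:dimA-iff-wtf}, $\mu$ has a piece with $d$ independent Alberti representations, so there is a compact $K$ with $\dim_T\mu_{|K}\ge d$. Choose $m\ge\max(d,k)$, which is allowed because the infimum requires residuality for every $m$. Let $I\colon\reals^k\to\reals^m$ be the standard isometric embedding. Take $C$ to be a large closed ball containing a neighbourhood of $K$, and let $F$ be a bounded $1$-Lipschitz map that equals $I$ on $C$ (for instance, $I$ composed with the nearest-point retraction onto a larger ball). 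Corollary~\ref{C:stability-for-Euclidean-Haus} then says that $F$ lies in the interior of $\{f:\dim_H f_\#\mu\ge d\}$, so this set contains a non-empty open set $\mathcal{O}$. If $\{f:\dim_H(f_\#\mu)\le s\}$ were residual, Baire's theorem would make it meet $\mathcal{O}$. That is impossible, since $s<d$. Therefore $s^*\ge d$ for every integer $d\le\dim_T\mu$, which gives $s^*\ge\dim_T\mu$ (including the case $\dim_T\mu=\infty$). Integrality of $s^*$ follows immediately, since $\dim_T\mu$ is an integer or $\infty$.

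The main point needing care is the quantifier ``for every $m$''. The converse needs only one $m\ge k$, which sidesteps the requirement $m\ge d$ in Corollary~\ref{C:stability-for-Euclidean-Haus}. A further check is that $\dim_T\mu\le k$ on $\reals^k$, so the case $\dim_T\mu=\infty$ does not occur and the inequality covers everything. The step I expect to need the most care is making sure that the measure hypotheses of Corollaries~\ref{C:main-euclidean-measure} and~\ref{C:stability-for-Euclidean-Haus} ($\sigma$-finiteness and separable support) are satisfied for a general Borel $\mu$. If they are not, I would restrict to a $\sigma$-finite part carrying the relevant piece $K$.
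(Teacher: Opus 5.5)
Your proposal is correct and follows the paper's proof, which simply cites Corollary~\ref{C:main-euclidean-measure} for the inequality and Corollary~\ref{C:stability-for-Euclidean-Haus} for the converse on $\reals^k$. Your extra details are what the paper leaves implicit: the Baire category step turning non-empty interior of $\{\dim_H f_\#\mu\ge d\}$ into non-residuality of $\{\dim_H f_\#\mu\le s\}$ for $s<d$, the choice $m\ge k$ so that an isometry $\reals^k\to\reals^m$ exists, and the retraction making $F$ bounded.
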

\begin{proof}
    The first inequality ``$\geq$'' is Corollary \ref{C:main-euclidean-measure}.
    The converse inequality in the case $X=\reals^k$ is
    Corollary \ref{C:stability-for-Euclidean-Haus}.
\end{proof}

\newpage 

\bibliographystyle{abbrvnat} 

\renewcommand{\bibname}{Bibliography}
\bibliography{bibliography}

@article {B,
    AUTHOR = {Bate, David},
     TITLE = {Purely unrectifiable metric spaces and perturbations of
              {L}ipschitz functions},
   JOURNAL = {Acta Math.},
  FJOURNAL = {Acta Mathematica},
    VOLUME = {224},
      YEAR = {2020},
    NUMBER = {1},
     PAGES = {1--65},
      ISSN = {0001-5962},
   MRCLASS = {31E05 (26A16 28A75 49Q15)},
  MRNUMBER = {4086714},
MRREVIEWER = {Jeremy T. Tyson},
       DOI = {10.4310/acta.2020.v224.n1.a1},
       URL = {https://doi.org/10.4310/acta.2020.v224.n1.a1},
}

@book {AT,
    AUTHOR = {Ambrosio, Luigi and Tilli, Paolo},
     TITLE = {Topics on analysis in metric spaces},
    SERIES = {Oxford Lecture Series in Mathematics and its Applications},
    VOLUME = {25},
 PUBLISHER = {Oxford University Press, Oxford},
      YEAR = {2004},
     PAGES = {viii+133},
      ISBN = {0-19-852938-4},
   MRCLASS = {28-01 (28A78 30C65 31C15 46E35 49J45 49K27 53C23)},
  MRNUMBER = {2039660},
MRREVIEWER = {Vasily A. Chernecky},
}

@book {Mat,
    AUTHOR = {Mattila, Pertti},
     TITLE = {Geometry of sets and measures in {E}uclidean spaces},
    SERIES = {Cambridge Studies in Advanced Mathematics},
    VOLUME = {44},
      NOTE = {Fractals and rectifiability},
 PUBLISHER = {Cambridge University Press, Cambridge},
      YEAR = {1995},
     PAGES = {xii+343},
      ISBN = {0-521-46576-1; 0-521-65595-1},
   MRCLASS = {28A75 (49Q20)},
  MRNUMBER = {1333890},
MRREVIEWER = {Harold Parks},
       DOI = {10.1017/CBO9780511623813},
       URL = {https://doi.org/10.1017/CBO9780511623813},
}

@article {DeRindler,
    AUTHOR = {De Philippis, Guido and Rindler, Filip},
     TITLE = {On the structure of {$\mathcal{A}$}-free measures and applications},
   JOURNAL = {Ann. of Math. (2)},
  FJOURNAL = {Annals of Mathematics. Second Series},
    VOLUME = {184},
      YEAR = {2016},
    NUMBER = {3},
     PAGES = {1017--1039},
      ISSN = {0003-486X,1939-8980},
   MRCLASS = {49Q20 (28B20 46A22)},
  MRNUMBER = {3549629},
MRREVIEWER = {Pei\ Biao\ Zhao},
       DOI = {10.4007/annals.2016.184.3.10},
       URL = {https://doi.org/10.4007/annals.2016.184.3.10},
}

@misc{BW,
Author = {David Bate and Julian Weigt},
Title = {Alberti representations, rectifiability of metric spaces and higher integrability of measures satisfying a {PDE}},
Year = {2025},
Eprint = {arXiv:2501.02948},
URL = {https://arxiv.org/abs/2501.02948}
}

@article {BKO,
    AUTHOR = {Bate, David and Kangasniemi, Ilmari and Orponen, Tuomas},
     TITLE = {Cheeger's differentiation theorem via the multilinear {K}akeya
              inequality},
   JOURNAL = {Pure Appl. Funct. Anal.},
  FJOURNAL = {Pure and Applied Functional Analysis},
    VOLUME = {8},
      YEAR = {2023},
    NUMBER = {6},
     PAGES = {1587--1602},
      ISSN = {2189-3756},
   MRCLASS = {28A15 (28A50 30L99 49J52 49Q15)},
  MRNUMBER = {4686569},
MRREVIEWER = {Thomas Z\"{u}rcher},
}

@article {AM,
    AUTHOR = {Alberti, Giovanni and Marchese, Andrea},
     TITLE = {On the differentiability of {L}ipschitz functions with respect
              to measures in the {E}uclidean space},
   JOURNAL = {Geom. Funct. Anal.},
  FJOURNAL = {Geometric and Functional Analysis},
    VOLUME = {26},
      YEAR = {2016},
    NUMBER = {1},
     PAGES = {1--66},
      ISSN = {1016-443X,1420-8970},
   MRCLASS = {49Q15 (26A27 26B05 28A75)},
  MRNUMBER = {3494485},
MRREVIEWER = {Gareth\ Speight},
       DOI = {10.1007/s00039-016-0354-y},
       URL = {https://doi.org/10.1007/s00039-016-0354-y},
}

@article {BateLi,
    AUTHOR = {Bate, David and Li, Sean},
     TITLE = {Characterizations of rectifiable metric measure spaces},
   JOURNAL = {Ann. Sci. \'Ec. Norm. Sup\'er. (4)},
  FJOURNAL = {Annales Scientifiques de l'\'Ecole Normale Sup\'erieure.
              Quatri\`eme S\'erie},
    VOLUME = {50},
      YEAR = {2017},
    NUMBER = {1},
     PAGES = {1--37},
      ISSN = {0012-9593,1873-2151},
   MRCLASS = {28A75 (42B35)},
  MRNUMBER = {3621425},
       DOI = {10.24033/asens.2314},
       URL = {https://doi.org/10.24033/asens.2314},
}

@article {Kaufman,
    AUTHOR = {Kaufman, Robert},
     TITLE = {On {H}ausdorff dimension of projections},
   JOURNAL = {Mathematika},
  FJOURNAL = {Mathematika. A Journal of Pure and Applied Mathematics},
    VOLUME = {15},
      YEAR = {1968},
     PAGES = {153--155},
      ISSN = {0025-5793},
   MRCLASS = {54.70 (26.00)},
  MRNUMBER = {248779},
MRREVIEWER = {F.\ Raymond},
       DOI = {10.1112/S0025579300002503},
       URL = {https://doi.org/10.1112/S0025579300002503},
}

@article {Mars,
    AUTHOR = {Marstrand, J. M.},
     TITLE = {Some fundamental geometrical properties of plane sets of
              fractional dimensions},
   JOURNAL = {Proc. London Math. Soc. (3)},
  FJOURNAL = {Proceedings of the London Mathematical Society. Third Series},
    VOLUME = {4},
      YEAR = {1954},
     PAGES = {257--302},
      ISSN = {0024-6115,1460-244X},
   MRCLASS = {27.2X},
  MRNUMBER = {63439},
MRREVIEWER = {L.\ C.\ Young},
       DOI = {10.1112/plms/s3-4.1.257},
       URL = {https://doi.org/10.1112/plms/s3-4.1.257},
}

@misc{AdolfoPrivate,
Author = {Adolfo Arroyo-Rabasa},
Title = {Private communications},
Year = {2025}
}

@article {SchDer,
    AUTHOR = {Schioppa, Andrea},
     TITLE = {Derivations and {A}lberti representations},
   JOURNAL = {Adv. Math.},
  FJOURNAL = {Advances in Mathematics},
    VOLUME = {293},
      YEAR = {2016},
     PAGES = {436--528},
      ISSN = {0001-8708,1090-2082},
   MRCLASS = {58C20 (46J15 53C23)},
  MRNUMBER = {3474327},
MRREVIEWER = {Thomas\ Z\"urcher},
       DOI = {10.1016/j.aim.2016.02.013},
       URL = {https://doi.org/10.1016/j.aim.2016.02.013},
}

@article {BateLDS,
    AUTHOR = {Bate, David},
     TITLE = {Structure of measures in {L}ipschitz differentiability spaces},
   JOURNAL = {J. Amer. Math. Soc.},
  FJOURNAL = {Journal of the American Mathematical Society},
    VOLUME = {28},
      YEAR = {2015},
    NUMBER = {2},
     PAGES = {421--482},
      ISSN = {0894-0347,1088-6834},
   MRCLASS = {46G05 (30L99 49J52 53C23)},
  MRNUMBER = {3300699},
MRREVIEWER = {Riikka\ Korte},
       DOI = {10.1090/S0894-0347-2014-00810-9},
       URL = {https://doi.org/10.1090/S0894-0347-2014-00810-9},
}

@article {AGPP,
    AUTHOR = {Aliaga, Ram\'on J. and Gartland, Chris and Petitjean, Colin
              and Proch\'azka, Anton\'in},
     TITLE = {Purely 1-unrectifiable metric spaces and locally flat
              {L}ipschitz functions},
   JOURNAL = {Trans. Amer. Math. Soc.},
  FJOURNAL = {Transactions of the American Mathematical Society},
    VOLUME = {375},
      YEAR = {2022},
    NUMBER = {5},
     PAGES = {3529--3567},
      ISSN = {0002-9947,1088-6850},
   MRCLASS = {51F30 (28A78 30L05 46B20 46B22 54E45)},
  MRNUMBER = {4402669},
MRREVIEWER = {Mikhail\ Ostrovskii},
       DOI = {10.1090/tran/8591},
       URL = {https://doi.org/10.1090/tran/8591},
}

@article {ArRaDimension,
    AUTHOR = {Arroyo-Rabasa, Adolfo},
     TITLE = {An elementary approach to the dimension of measures satisfying
              a first-order linear {PDE} constraint},
   JOURNAL = {Proc. Amer. Math. Soc.},
  FJOURNAL = {Proceedings of the American Mathematical Society},
    VOLUME = {148},
      YEAR = {2020},
    NUMBER = {1},
     PAGES = {273--282},
      ISSN = {0002-9939,1088-6826},
   MRCLASS = {28A78 (35F35 49Q15)},
  MRNUMBER = {4042849},
       DOI = {10.1090/proc/14732},
       URL = {https://doi.org/10.1090/proc/14732},
}

@article {BCJ,
    AUTHOR = {Bouchitt\'e, G. and Champion, T. and Jimenez, C.},
     TITLE = {Completion of the space of measures in the {K}antorovich norm},
   JOURNAL = {Riv. Mat. Univ. Parma (7)},
  FJOURNAL = {Rivista di Matematica della Universit\`a{} di Parma. Serie 7},
    VOLUME = {4*},
      YEAR = {2005},
     PAGES = {127--139},
      ISSN = {0035-6298},
   MRCLASS = {49J45 (28A33 46E27)},
  MRNUMBER = {2197484},
MRREVIEWER = {Wilfrid\ Gangbo},
}
\end{document}